\documentclass[12pt]{article}

\usepackage[margin=2.5cm, top=2.5cm, bottom=2.5cm]{geometry}

\usepackage[utf8]{inputenc}
\usepackage[T1]{fontenc}
\usepackage{soul}
\usepackage{microtype}
\usepackage{amsmath,amssymb,amsfonts,amsthm}
\usepackage{nicefrac}
\usepackage{array,booktabs}
\usepackage{graphicx}
\usepackage{subcaption}
\usepackage{enumitem}
\usepackage{changepage}
\usepackage{stmaryrd}
\usepackage{url}
\usepackage{thmtools}
\usepackage{xspace}
\usepackage[hidelinks,hypertexnames=false]{hyperref}
\usepackage[capitalise]{cleveref}
\crefname{equation}{}{}
\crefname{enumi}{}{}
\Crefname{enumi}{}{}
\crefformat{enumi}{#2#1#3}
\Crefformat{enumi}{#2#1#3}
\allowdisplaybreaks
\newcommand{\RIP}{\mathrm{RIP}}
\newtheorem{theorem}{Theorem}[section]
\declaretheorem[
  name=Fact,          
  sibling=theorem,      
  refname={Fact,Facts}   
]{fact}
\declaretheorem[name=Lemma,       sibling=theorem, refname={Lemma,Lemmas}]{lemma}
\declaretheorem[name=Corollary,   sibling=theorem, refname={Corollary,Corollaries}]{corollary}

\declaretheorem[name=Proposition, sibling=theorem, refname={Proposition,Propositions}]{proposition}

\declaretheorem[name=Example,     sibling=theorem, refname={Example,Examples}]{example}

 \declaretheorem[
  name=Definition,
  sibling=theorem,
  refname={Definition,Definitions},
  style=definition
]{definition}

\newcommand{\N}{\mathbb{N}}
\newcommand{\R}{\mathbb{R}}
\newcommand{\eR}{\overline{\R}}
\newcommand{\p}{^}

\newcommand{\cP}{\mathcal{P}}
\newcommand{\rO}{\mathrm{O}}
\newcommand{\rA}{\mathrm{A}}
\newcommand{\fg}{\mathfrak{g}}
\newcommand{\rnum}[1]{\textup{(\romannumeral #1)}}
\newcommand{\Sym}{\mathbb{S}} % real symmetric matrices
\newcommand{\GL}{\mathrm{GL}}
\newcommand{\cL}{\mathcal{L}}
\newcommand{\Orth}{\mathrm{O}}
\newcommand{\cS}{\mathcal{S}}

\newcommand{\cp}{\overline{\partial}}
\newcommand{\bS}{\mathbb{S}}
\newcommand{\Alpha}{\mathrm{A}}
\newcommand{\Beta}{\mathrm{B}}
\newcommand{\Mr}{\R^{m\times n}_r}
\newcommand{\orbit}{\mathcal{O}}

\newcommand{\cM}{\mathcal{M}}
\newcommand{\cN}{\mathcal{N}}
\newcommand{\KL}{K\L{}\xspace}%{\L{}ojasiewicz}
\def\ker{\mathrm{Ker}}
\DeclareMathOperator{\rk}{rk}
\DeclareMathOperator{\tr}{tr}
\DeclareMathOperator{\diag}{diag}
\DeclareMathOperator{\gph}{gph}
\DeclareMathOperator{\epi}{epi}
\DeclareMathOperator{\dom}{dom}
\DeclareMathOperator{\im}{Im}
\DeclareMathOperator{\co}{co}

\DeclareMathOperator{\spa}{span}

\title{\LARGE Computing Kurdyka-\L{}ojasiewicz exponents via composition and symmetry}

\allowdisplaybreaks
\begin{document}

\author{\large C\'edric Josz\thanks{\url{cj2638@columbia.edu,wo2205@columbia.edu}, IEOR, Columbia University, New York.} \and Wenqing Ouyang\footnotemark[\value{footnote}]}

\date{}

\maketitle

\begin{center}
    \textbf{Abstract}
    \end{center}
    \vspace*{-2.5mm}
 \begin{adjustwidth}{0.2in}{0.2in}
 ~~~~ We devise calculus rules for the Kurdyka-\L{}ojasiewicz exponent using the rank theorem and Lie group actions. They apply to a wide class of composite and invariant functions, and are particularly suitable for handling nonisolated local minima. Notably, smoothness plays no role, eschewing gradient and Hessian computations. This provides a unified framework for establishing linear convergence of various algorithms in matrix factorization, $\ell_1$-matrix factorization, matrix sensing, and linear neural networks.
\end{adjustwidth} 
\vspace*{3mm}
\noindent{\bf Keywords:} differential geometry, Kurdyka-\L{}ojasiewicz inequality, subanalytic geometry, variational analysis.
\vspace*{3mm}

\noindent{\bf MSC 2020:} 32B20, 49J53, 53-XX.

% 14P10 Semialgebraic set and related spaces
% 34A60 Differential inclusions 
% 49-XX Calculus of variations and optimal control, optimization
% 53-XX Differential geometry
% 32B20 Semi-analytic sets and subanalytic sets
% 49J52: Nonsmooth analysis
% 49J53: Set-valued and variational analysis

\tableofcontents

\section{Introduction}

Given a smooth function $f:\R^ n \to \R_+$, Polyak's seminal paper \cite{polyak1963gradient} provides a simple criterion for linear convergence of gradient descent: 
$$\forall x \in \R^ n, ~~~ |\nabla f(x)| \geq c\sqrt{f(x)},$$
where $c>0$ and $|\cdot|:=\sqrt{\langle \cdot , \cdot \rangle }$ denotes the Euclidean norm. The local version of this inequality is, of course, a special case of the \L{}ojasiewicz gradient inequality \cite{lojasiewicz1963propriete}:
$$\forall x \in B_r(\overline{x}), ~~~ |\nabla f(x)| \geq c f(x)^ \alpha,$$
where $\overline{x}\in \R^ n$, $r,c>0$, and $\alpha\in [0,1)$. Originally conceived for real analytic functions, this inequality was later extended to the $C\p 1$ definable \cite{kurdyka1998gradients} and nonsmooth \cite{bolte2007clarke} settings, leading to what is now known as the Kurdyka-\L{}ojasiewicz (\KL) inequality. The \KL exponent $\alpha$ plays a key role in the convergence rate of algorithms. Roughly speaking (details in \cref{subsec:subanalytic-geometry}):
\begin{enumerate}[label=\rm{(\rm{\roman*})},nosep]
    \item $\alpha \in [0,1/2)$: finite/linear convergence;
    \item $\alpha = 1/2$: linear convergence;
    \item $\alpha \in(1/2,1)$: sublinear convergence.
\end{enumerate}

Determining a \KL exponent $\alpha$ is a challenging task however.
%\cite{kuo1974computation, lichtin1981estimation,vui2018computation,son2007effective,bivia2009local,krasinski2009lojasiewicz,d2005explicit,rodak2011lojasiewicz,paunescu1997lojasiewicz,brzostowski2015lojasiewicz}. It has been studied for holomorphic functions \cite{oka2022lojasiewicz}, for Newton polyhedra \cite{brzostowski2023lojasiewicz}, for bivariate real analytic functions \cite{dinh2025computation}.
It has been the object of several recent works in connection with the Stiefel manifold \cite{liu2016quadratic,liu2019quadratic,wang2023linear}, the inf-projection operation \cite{yu2022kurdyka}, the Hadamard parametrization \cite{ouyang2025kurdyka}, and the square reparametrization \cite{ouyang2025square}. Notably, Li and Pong \cite{li2018calculus} designed several useful calculus rules in the context of optimization. 

In particular, suppose one has a composite structure
$f := g \circ F$
where $g:\R^ m \to\eR$, $\eR:=\R\cup\{\infty\}$, is lower semicontinuous (lsc) and $F:\R^ n \to\R^ m$ is $C\p 1$ smooth. If $g$ has \KL exponent $\alpha$ at $F(\overline{x})$ and $F$ is a submersion at $\overline{x}$, then $f$ has \KL exponent $\alpha$ at $\overline{x}$ \cite[Theorem 3.2]{li2018calculus}. Rebjock and Boumal recently proved a related result. They consider $g:\cN\to \R$ and $F:\cM\to\cN$ where $\cM,\cN$ are Riemannian manifolds. If $g$ is $C\p 2$, $\nabla g(F(\overline{x}))=0$, $\nabla\p 2 g(F(\overline x))\succ 0$, and $F$ is $C\p 2$ with constant rank near $\overline{x}$, then $f$ has \KL exponent $1/2$ \cite[Propositions 2.3, 2.4, 2.8]{rebjock2024fast}. 

Another route relies on the original \L{}ojasiewicz inequality \cite{lojasiewicz1959}:
$$\forall x \in B_\rho(\overline{x}), ~~~  f(x) \geq \kappa\hspace{.4mm}d(x,[f=0])^ \beta,$$
where we refer to $\beta\geq 1$ as a growth exponent, and $\rho,\kappa>0$ are constants. This inequality is generally weaker, but Pham showed that isolated local minima of continuous semi-algebraic functions with growth exponent $\beta$ have \KL exponent $\alpha=1-1/\beta$ \cite[Theorem 4.2]{pham2020local}. Also, if $f$ is $C\p 2$ and has quadratic growth (i.e., $\beta=2$) at a local minimum, then it has \KL exponent $1/2$, regardless of whether it is isolated \cite[Propositions 2.4, 2.8]{rebjock2024fast} (see also \cite[Corollary 3.2]{drusvyatskiy2013second}). Quadratic growth is equivalent to the Morse-Bott property \cite[Definition 1.1]{rebjock2024fast}, which posits that the level set $\cM=[f=f(\overline{x})]$ is a $C\p 1$ embedded submanifold near a local minimum $\overline{x}$ and $\ker \nabla\p 2 f(\overline{x})=T_{\overline{x}}\cM$, where $T_{\overline{x}}\cM$ is the tangent space to $\cM$ at $\overline{x}$.

Nevertheless, several important applications fall outside the scope of these calculus rules. 
%these calculus rules are not always applicable in practice. 
For concreteness, consider rank-one matrix factorization
\begin{equation*}
    \begin{array}{cccc}
         f: & \mathbb{R}^{m}\times \mathbb{R}^{n} & \longrightarrow & \mathbb{R} \\
        & (x,y) & \longmapsto & \|xy^ T - M\|_F^ 2
    \end{array}
    \end{equation*}
%$$f(x,y)=\|xy^ T - M\|_F^ 2$$
where % $(x,y) \in \mathbb{R}^{m}\times \mathbb{R}^{n}$, 
$M \in \mathbb{R}^{m\times n}$ has full rank and $\|\cdot\|_F$ is the Frobenius norm. Then the inner mapping $F(x,y):= xy^ T$ is not a submersion at the global minima. While it does have constant rank near any of them, the Riemannian Hessian of the outer function %$g(\cdot):=\|\cdot-M\|_2$ 
$g:\im F\ni A\to \|A-M\|_F\p 2\in \R$
is not 
%locally optimal 
positive definite
at $F(\overline{x},\overline{y})$ if $(\overline{x},\overline{y})$ is a global minimum of $f$. The minima of $f$ are also not isolated since they are invariant under the action $(0,\infty)\times \R\p m\times \R\p n\ni(t,x,y)\to (xt,y/t)$. Moreover, it is not clear how to establish quadratic growth, especially when considering higher-rank factors. Finally, when $M=0$, the solution set is not an embedded submanifold near the global minimum $(0,0)$.
%Indeed, with $F(x,y):= xy^ T$, one has $dF_{(x,y)}(u,v) = xv^ T+uy^ T$, whose image is a subset of the rank-two matrices of size $m\times n$. 

%Most modern applications are however devoid of isolated local minima, usually due to the presence of continuous symmetries. In particular, the above example is invariant under the action $(0,\infty)\times \R\p m\times \R\p n\ni(t,x,y)\to (xt,y/t)$. 

%The issue is that proving quadratic growth remains a challenge in applications, including the above example, although it has been established in some cases. Another 

In this paper, we seek to bridge the gap with applications. In order to do so, we propose two new calculus rules -- a composition rule and symmetry rule -- using tools from differential geometry. %not previously considered in this context, to the best of our knowledge. 
%The former relaxes the submersion requirement to having constant rank locally, or in other words, to being a submersion after restricting the codomain to the image. 
The former simulateneously generalizes Pong and Li's and Rebjock and Boumal's composition rules. We consider the setting in which
%we allow the outer function $g$ to remain lsc while relaxing the inner mapping $F$ to have constant rank. 
$$f:=g\circ F~\text{where}~g:\R\p m\to\eR~\text{is lsc and}~F:\R\p n\to\R\p m~\text{has constant rank near}~\overline{x}.$$
It enables us to convert both the growth and the \KL exponent from $g$ to $f$. The main ingredient in the proof is naturally the rank theorem. Allowing extended real values in the outer function is crucial even for dealing with compositions of smooth functions $g$ and $F$. Indeed, we will often consider another outer function using the identity $g\circ F =(g+\delta_{\im F})\circ F$, where $\delta$ denotes the indicator function.
%%Allowing extended real values for $g$ is crucial even for dealing with compositions of smooth functions $h$ and $F$. Indeed, $h\circ F =g\circ F$ where $g:= h+\delta_{\im F}$ and

The latter rule considers lsc objectives $f$ that are invariant under general Lie group actions. One only needs to check the growth and \KL inequalities on a supplementary subspace $L$ of the tangent space $T_{\overline{x}}G\overline{x}$ at the point of interest $\overline{x}$. The setup is simple:
$$\forall g\in G,~~ f(g\cdot x)=f(x) ~~~\text{and}~~~ T_{\overline{x}}G\overline{x}+L=\R\p n$$
where $G$ is a Lie group. By specializing $L$ to be the normal space $N_{\overline{x}}G\overline{x}$, we generalize Pham's result to nonisolated local minima of lsc subanalytic functions if the level set is locally homogeneous (i.e., a single orbit) and embedded. Since the Morse-Bott property is tantamount to showing quadratic growth on the normal space, our result can be viewed as an extension to general growth exponents $\beta$ relying on invariance instead of smoothness. Even in the smooth case, this avoids computing derivatives, which can be tedious in applications.

% To the best of our knowledge, a \KL exponent has been stated explicitly in only two cases: exactly parametrized symmetric matrix factorization and matrix sensing \cite[Theorem 1]{bi2022local}. Nonetheless, it follows easily in exactly parametrized symmetric \cite[Lemma 5.4]{tu2016low} \cite[Lemma 7]{ma2020implicit} and asymmetric \cite[Theorem 4.6]{charisopoulos2021low} matrix factorization as they have quadratic growth. One can also apply Rebjock and Boumal's composition rule \cite[Section 1.2]{rebjock2024fast}.
% %The Morse-Bott condition for symmetric exact parametrized matrix completion is proved in \cite[Lemma 7]{ma2020implicit}, which also yields $1/2$ \KL exponent. 
% %These cases turn out to be immediate consequences of our composition rule. 
% The \KL exponent in overparametrized matrix factorization, $\ell_1$-matrix factorization, and matrix sensing with full rank $M$ already follows from Li and Pong's calculus rule, even if it hasn't been stated before. 
% On the other hand, while exactly parametrized asymmetric and symmetric $\ell_1$-matrix factorization are sharp (i.e., linear growth, $\beta=1$) by equivalence of norms (even with sparse noise \cite[Theorem 3]{gong2026certifying}), there is no clear way to convert this into a \KL exponent since they are not $C\p 2$ and $\beta\neq 2$. Determining a \KL exponent in these cases is an immediate consequence of our composition rule, for which it is irrelevant whether the objective is smooth or not.

The new rules enable one to compute a \KL exponent in various problems of interest, as summarized in \cref{table:exponent}.
There are essentially two hard instances that remained untouched by previous work, totaling in 8 cases in \cref{table:exponent}: 1) underparametrized matrix factorization, 2) overparametrized $\ell_1$-matrix factorization and matrix sensing with rank deficient data (asymmetric and symmetric in both instances).

\begin{table}[h!]
    \centering
    \caption{\KL exponent at global minima.}
    \label{tab:example}
    \begin{tabular}{|c|c|c|c|}
        \hline
        Parametrization & Matrix fact. & $\ell_1$-matrix fact. & Matrix sensing RIP \\ \hline
        $X\in\R\p{m\times r}, Y\in \R\p {r\times n}, M\in \R\p{m\times n}$ & $\|XY-M\|_F\p 2$ & $\|XY-M\|_1$ & $\sum_i \langle A_i , XY-M\rangle\p 2$ \\
        \hline
        under $r<\rk(M)$ & 1/2 & ? & ? \\
        exact $r=\rk(M)$ & ~~~~~ 1/2 \cite{charisopoulos2021low,rebjock2024fast} & 0 & 1/2  \\
        over $r>\rk(M)$, full rank $M$ & ~~~~~ 1/2 \cite{li2018calculus,rebjock2024fast} & ~~~~~ 0 \cite{li2018calculus} & ~~~~~ 1/2 \cite{li2018calculus} \\
        over $r>\rk(M)$, rank deficient $M$ & 3/4 (a.e.\hspace{.4mm}1/2) & 1/2 (a.e.\hspace{.4mm}0) & 3/4 (a.e.\hspace{.4mm}1/2) \\
        \hline
        $X\in\R\p{n\times r}, M\in \Sym_+\p n$ & $\|XX\p T-M\|_F\p 2$ & $\|XX\p T -M\|_1$ & $\sum_i \langle A_i , XX\p T-M\rangle\p 2$ \\
        \hline
        under $r<\rk(M)$ & 1/2 & ? & ? \\
        exact $r=\rk(M)$ & %~~~~~~~~~~~ 
        1/2 \cite{tu2016low,ma2020implicit,bi2022local,rebjock2024fast} & 0 & ~~~ 
        1/2 \cite{bi2022local} \\
        over $r>\rk(M)$, full rank $M$ & ~~~~~ 1/2 \cite{li2018calculus,rebjock2024fast} & ~~~~~ 
        0 \cite{li2018calculus} & ~~~~ 
        1/2 \cite{li2018calculus} \\
        over $r>\rk(M)$, rank deficient $M$ & 3/4 & 1/2 & 3/4 \\
        \hline
    \end{tabular}
    \label{table:exponent}
    \vspace{2mm}
\begin{minipage}{0.5\linewidth}
\footnotesize
(`a.e.' means for almost every global minimum.) 
\end{minipage}
\end{table}

The first instance is particularly relevant in data science as it allows one to find a best rank-$r$ approximation of a data matrix $M\in \R\p {m\times n}$ where $r\leq \rk(M)$:
%and perform principal component analysis:
$$\min_{A\in \R\p{m\times n}} \|A-M\|_F\p 2 ~~~ \text{subject to}~~~ \rk(A)=r.$$ By the Eckart-Young theorem \cite{eckart1936approximation}, this problem admits a closed-form solution by truncating a singular value decomposition of $M$, only keeping the top $r$ singular values. The \KL exponent 1/2 in underparametrized matrix factorization, together with the absence of spurious second-order stationary points \cite{baldi1989neural,valavi2020revisiting}, implies linear convergence of gradient descent to a global minimum from almost every initial point. This is due to a general global convergence property of gradient descent \cite{josz2023global}.

The second instance reveals an interesting phenomenon. Rank deficiency in overparametrized matrix sensing causes the \KL exponent to increase from 1/2 to 3/4, yielding the sublinear rate $O(1/k\p 2)$. This pathological behavior affects all the global minima in the symmetric case, but only a negligible subset of the global minima in the asymmetric case. This helps to explain why asymmetric parametrization can exponentially speed up convergence \cite{xiong2024over}. 

In asymmetric matrix factorization, we show that the unbalanced initialization \cite{ward2023convergence} $(X_0,Y_0)=(MA,B)$, for almost every $A,B$, suffices to bring the convergence of gradient descent with constant step size back to linear. This has been achieved with high probability \cite[Theorem 4.2]{jiang2023algorithmic} with small random initialization under a nondegeneracy assumption on the singular values of $M$. As for the symmetric case, linear convergence of gradient descent is possible with a preconditioner \cite[Corollary 5]{zhang2021preconditioned} or adaptive step sizes \cite[Sections 7.1, 7.2]{davis2025gradient}.

More generally, we prove that overparametrized linear neural networks
$$f(W)=\|W_\ell\cdots W_1X-Y\|_F\p 2$$
have \KL exponent $1/2$ for almost every input matrix $X$ and almost every full row rank output matrix $Y$. Indeed, the inner mapping $W\mapsto W_\ell\cdots W_1X$ has constant rank near every global minimum, so the result follows from our composition rule (Rebjock and Boumal's rule already applies here). The \KL exponent 1/2 has been established in linear neural network regression \cite{marion2024deep} and regularized deep matrix factorization \cite{chen2025error}.

This paper is organized as follows. \cref{sec:Background} contains background material on variational analysis, subanalytic geometry, and differential geometry. \cref{sec:calculus-rules} proposes calculus rules for \KL exponents in the presence of a composite structure or symmetry. Finally, \cref{sec:Applications} deals with applications.

%we propose a new and shorter proof of the absence of second-order stationary points in matrix factorization. It 
%Lipschitz Examples include neural networks with smooth activation functions like GeLU (used in tranformers) and robust principal component analysis.

\section{Background}
\label{sec:Background}

We will borrow notions from variational analysis \cite{rockafellar2009variational}, subanalytic geometry \cite{shiota2012geometry}, and differential geometry \cite{lee2012smooth}. Let us start with some notations. Let $[m]:=\{1,\dots,m\}$. For a matrix $A\in \R^{m\times n}$ and an SVD $A=U \Sigma(A)V^\top$, the matrix $\Sigma(A)\in \R^{m\times n}$ is the diagonal matrix consisting of all the singular values of $A$. The $i$-th largest singular value of $A$ is denoted by $\sigma_i(A)$. For a symmetric matrix $A\in \bS^n$ and an eigenvalue decomposition $A=U\Lambda(A) U^\top$, we denote the diagonal matrix consisting of all the eigenvalues of $A$ by $\Lambda(A)\in\bS^n$. The $i$-th largest eigenvalue of $A$ is denoted by $\lambda_i(A)$. For matrix norms, $\|A\|_F$ denotes the Frobenius norm of $A$, $\|A\|_2$ denotes the spectral norm of $A$, and $\|A\|_1$ is the entry-wise $\ell_1$ norm of $A$, i.e., the sum of the absolute values of all the entries of $A$. For two matrices $A,B$ of the same dimension, the inner product $\langle A,B \rangle$ is defined as $\tr(A^\top B)$, where $\tr(C)$ is the sum of the diagonal elements of $C$. For any $I\subseteq [m]$ and $J\subseteq [n]$, let $A_{IJ}\in \R^{|I|\times |J|}$ denote the submatrix of $A$ obtained by retaining the rows indexed by $I$ and the columns indexed by $J$. The set of positive matrices is denoted by $\R^{m\times n}_{++}$. 

% and $I\subseteq [m]$ and $J\subseteq [n]$, we define $A_{IJ}\in \R^{|I|\times |J|}$ to be the submatrix of $A$, whose rows are in $I$ and columns are in $J$. 

% For a subset $S\subseteq \R^{m\times n}$ and $A\in \R^{m\times n}$, the distance from $A$ to $S$ is defined as $d(A,S)=\inf_{B\in S}\|A-B\|_F$. The projection of $x$ onto a set $\Omega$ is denoted by $P_\Omega(x)$. 
A map $F:A\to B$ is called open, where $A$ and $B$ are two topological spaces, if $F$ maps open sets in $A$ to open sets in $B$. The map $F$ is said to be an open map near $x\in \mathcal{A}$, if there exists a neighborhood $U$ of $x$, such that $F|_{U}$ is an open map. For a linear map $F:A \to B$, where $A$ and $B$ are Hilbert spaces, the adjoint operator $F^*:B\to A$ is defined as the unique linear map from $B$ to $A$ such that $\langle y, Fx \rangle=\langle F^*y, x \rangle$ for all $x\in A$ and $y\in B$.

For $x\in \R$, we denote its nonnegative part $\max\{x,0\}$ by $x_+$. For a linear subspace $L\subseteq \R^n$, the orthogonal complement of $L$ is denoted by $L^\perp$. 

\subsection{Variational analysis}
\label{subsec:variational-analysis}

Given $x \in \mathbb{R}^n$ and $S \subseteq \R^ n$, let 
    $$d(x,S) := \inf \{|x-y|: y \in S\} ~~~ \text{and} ~~~
    P_S(x) := \arg\min \{ |x-y| : y \in S\}.$$
For matrices, we use the Frobenius norm. Given $f:\R^ n \to \overline{\R}$ and $\ell\in \R$, let $$[f = \ell] := \{x\in \R^ n : f(x) = \ell\}$$ and define other expressions like $[f \leq \ell]$ similarly. Let $\dom f := \{ x\in \mathbb{R}^n : f(x) < \infty\}$, 
$$\gph f:=\{ (x,t)\in \mathbb{R}^n\times \R : f(x)=t\}, ~~~\text{and}~~~ \epi f:=\{ (x,t)\in \mathbb{R}^n\times \R : f(x)\leq t\}.$$
A function $f:\mathbb{R}^n\rightarrow\overline{\mathbb{R}}$ is lsc at $\overline{x} \in \dom f$ if $\liminf_{x\rightarrow \overline{x}} f(x) \geq   f(\overline{x})$ \cite[Definition 1.5]{rockafellar2009variational}. It is lsc if it is so at every point in its domain. This is equivalent to requiring that $\epi f$
is closed \cite[Theorem 1.6]{rockafellar2009variational}.

The regular normal cone and normal cone \cite[Definition 6.3]{rockafellar2009variational} are defined by
\begin{gather*}
    \widehat{N}_C (\overline{x}) := \{ v \in \mathbb{R}^n : \langle v , x - \overline{x}\rangle \leq  o(|x-\overline{x}|)~\text{for}~x\in C~\text{near}~\overline{x} \}, \\ 
    N_C(\overline{x}) := \{ v \in \mathbb{R}^n : \exists x_k \xrightarrow[C]{} \overline{x}~\text{and}~ \exists v_k \rightarrow v~\text{with}~v_k \in \widehat{N}_C (x_k) \},
\end{gather*}
where $x_k \xrightarrow[C]{} \overline{x}$ is a shorthand for $x_k \rightarrow \overline{x}$ and $x_k \in C$. Explicitly, the $o$ means that 
\begin{equation*}
    \limsup_{\scriptsize \begin{array}{c}x \xrightarrow[C]{} \overline{x}\\ x\neq \overline{x}\end{array}} \frac{\langle v , x - \overline{x}\rangle}{|x-\overline{x}|} \leqslant  0.
\end{equation*} 
A set $C \subseteq \mathbb{R}^n$ is regular \cite[Definition 6.4]{rockafellar2009variational} at one of its points $\overline{x}$ if it is locally closed\footnote{A subset $S$ of a topological space $X$ is locally closed if every point $p\in S$ admits a neighborhood such that $S\cap U$ is closed in $U$.} and $\widehat{N}_C(\overline{x}) = N_C(\overline{x})$.

Given $f:\mathbb{R}^n\rightarrow \overline{\mathbb{R}}$ and a point $\overline{x}\in\mathbb{R}^n$ where $f(\overline{x})$ is finite, the regular subdifferential, subdifferential, horizon subdifferential \cite[Definition 8.3]{rockafellar2009variational}, and Clarke subdifferential of $f$ at $\overline{x}$ \cite[Definition 4.1]{drusvyatskiy2015curves} are respectively given by
\begin{gather*}
    \widehat{\partial} f (\overline{x}) := \{ v \in \mathbb{R}^n : f(x) \geq   f(\overline{x}) + \langle v , x - \overline{x} \rangle + o(|x-\overline{x}|) ~\text{near}~ \overline{x} \}, \\
    \partial f(\overline{x}) := \{ v \in \mathbb{R}^n : \exists (x_k,v_k)\in \gph\hspace*{.5mm}\widehat{\partial} f: (x_k, f(x_k), v_k)\rightarrow(\overline{x}, f(\overline{x}), v) \}, \\[1mm]
    \partial^\infty f(\overline{x}) := \{ v \in \mathbb{R}^n : \exists (x_k,v_k)\in \gph\hspace*{.5mm}\widehat{\partial} f: \exists \tau_k \downarrow 0: (x_k, f(x_k), \tau_kv_k)\rightarrow(\overline{x}, f(\overline{x}), v) \}, \\[2mm]
    \cp f(\overline{x}) := \overline{\mathrm{co}} [\partial f(\overline{x}) + \partial^\infty f(\overline{x})],
\end{gather*}
where $\co$ denotes the convex hull, and $\overline\co$ its closure.
The $o$ means that
\begin{equation*}
    \liminf_{\scriptsize \begin{array}{c}x \rightarrow \overline{x} \\ x\neq \overline{x}\end{array}} \frac{f(x) - f(\overline{x}) - \langle v , x - \overline{x} \rangle}{|x-\overline{x}|} \geq   0.
\end{equation*} 
A point $x\in \R^ n$ is critical (resp. Clarke critical) if $0\in \partial f(x)$ (resp. $0\in \cp f(x)$).  

Following in the footsteps of Li and Pong \cite{li2018calculus}, we will use a change of coordinates to devise one of our calculus rules.

\begin{fact}
    \label{fact:change_coordinates}
    Let $f := g \circ F$ where $g:\R^ m \to \overline{\R}$ is lsc and $F:\R^ n \to \R^ m$ is $C^ 1$ near $\overline{x}\in\dom f = F^ {-1}(\dom g)$. Suppose $dF_{\overline{x}}$ is surjective. Then
    $$\partial f(\overline{x}) = (dF_{\overline{x}})^ * \partial g(F(\overline{x})),\quad \cp f(\overline{x}) = (dF_{\overline{x}})^ * \cp g(F(\overline{x})).$$
\end{fact}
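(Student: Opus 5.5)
The plan is to reduce to the case where $F$ is a linear surjection, using a local $C^1$ change of coordinates. Since $dF_{\overline{x}}$ is surjective, the rank theorem (or the implicit function theorem) gives a neighborhood $U$ of $\overline{x}$ and a $C^1$ diffeomorphism $\Phi:U\to\Phi(U)\subseteq\R^n$ with
$$F\circ\Phi^{-1}(z)=\pi(z):=(z_1,\dots,z_m)+\text{const}.$$
Concretely, I will complete $F$ with $x\mapsto Px$, where $P$ is a linear map onto a complement of $\ker dF_{\overline{x}}$'s orthogonal counterpart. That is, $\Phi(x):=(F(x),Qx)$, where $Q:\R^n\to\R^{n-m}$ is linear and restricts to an isomorphism on $\ker dF_{\overline{x}}$. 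Then $d\Phi_{\overline{x}}$ is invertible, and locally $f\circ\Phi^{-1}(y,w)=g(y)$.

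The next step is to compute subdifferentials of $h(y,w):=g(y)$ directly from the definitions.
\begin{enumerate}[label=\rm{(\roman*)}]
    \item For regular subgradients, $(v,u)\in\widehat\partial h(y,w)$ if and only if $u=0$ and $v\in\widehat\partial g(y)$. Taking $y$ fixed and varying $w$ forces $u=0$; the rest is the definition.
    \item Passing to limits, with $h(y_k,w_k)=g(y_k)\to g(y)$ exactly matching the attentive convergence for $g$, gives $\partial h=\partial g\times\{0\}$.
    \item The same argument with $\tau_k$ gives $\partial^\infty h=\partial^\infty g\times\{0\}$.
    \item Convex hull and closure commute with the linear embedding $v\mapsto(v,0)$, so $\cp h=\cp g\times\{0\}$.
\end{enumerate}

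Finally, I will transport these formulas back through $\Phi$. For a $C^1$ diffeomorphism, the standard chain rule \cite[Exercise 10.7]{rockafellar2009variational} gives $\widehat\partial(h\circ\Phi)(x)=(d\Phi_x)^*\widehat\partial h(\Phi(x))$. It is exact here, since $h=(h\circ\Phi)\circ\Phi^{-1}$ gives the reverse inclusion. The same holds for $\partial$ and $\partial^\infty$ after passing to limits, using continuity of $x\mapsto d\Phi_x$ together with invertibility near $\overline{x}$, and then for $\cp$ because $(d\Phi_{\overline{x}})^*$ is a linear isomorphism commuting with closed convex hulls. Since $d\Phi_{\overline{x}}=(dF_{\overline{x}},Q)$, we have
$$(d\Phi_{\overline{x}})^*(v,0)=(dF_{\overline{x}})^*v.$$
This yields both identities.

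The main obstacle is the limiting step for $\partial$ and $\partial^\infty$ under a merely $C^1$ diffeomorphism. There, one must check that the $o(|x-\overline{x}|)$ remainder is preserved: $\Phi$ is only differentiable, so $\Phi(x)-\Phi(x')=d\Phi_{x'}(x-x')+o(|x-x'|)$. It must also be checked that the vectors $(d\Phi_{x_k})^*v_k$ converge to $(d\Phi_{\overline{x}})^*v$, and that the horizon sequences $\tau_k v_k$ transform correctly. I expect this to follow from local Lipschitzness of $\Phi$ and $\Phi^{-1}$ and continuity of their derivatives.
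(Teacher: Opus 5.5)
Your proof is correct, but it is more self-contained than the paper's. The paper takes the formulas for $\partial f(\overline{x})$ and $\partial^\infty f(\overline{x})$ directly from \cite[Exercise 10.7]{rockafellar2009variational}, applied to the surjective map $F$ itself. It then gets the Clarke formula by pushing the convex hull through $(dF_{\overline{x}})^*$ and noting that closure commutes with an injective linear map.

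You instead unpack that exercise:
\begin{itemize}
\item With $\Phi=(F,Q)$ you have $f=h\circ\Phi$ (in fact globally, since $h(\Phi(x))=g(F(x))$).
\item The product structure of $h(y,w)=g(y)$ makes each of $\widehat\partial h$, $\partial h$, $\partial^\infty h$, $\cp h$ equal to the corresponding object for $g$ times $\{0\}$.
\item Transport through the local diffeomorphism only needs differentiability at the base point for $\widehat\partial$, with the reverse inclusion coming from the same one-sided estimate applied to $\Phi^{-1}$.
\item For the limiting and horizon subgradients, it only needs continuity of $x\mapsto(d\Phi_x)^{-*}$ near $\overline{x}$.
\item The closed convex hull is taken where it is trivial, under the embedding $v\mapsto(v,0)$ and the isomorphism $(d\Phi_{\overline{x}})^*$, and $(d\Phi_{\overline{x}})^*(v,0)=(dF_{\overline{x}})^*v$ concludes.
\end{itemize}
So the ``main obstacle'' you flag is routine. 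The paper's route buys brevity. Yours shows exactly where $C^1$ is used, and it also yields $\widehat\partial f(\overline{x})=(dF_{\overline{x}})^*\widehat\partial g(F(\overline{x}))$. Your diffeomorphism step is the same fact the paper invokes again in \cref{lemma:G_chain}.

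Two small remarks. First, citing Exercise 10.7 for the diffeomorphism step is redundant: that exercise already treats surjective $F$, so citing it would let you skip $\Phi$ altogether, and your $\Phi^{-1}$ argument already makes the citation unnecessary. Second, the slips in your first paragraph are harmless: the ``$+\text{const}$'', ``rank theorem'' where the inverse function theorem is what you use, and the garbled sentence about $P$. The concrete choice $\Phi=(F,Q)$ gives $F\circ\Phi^{-1}(y,w)=y$ exactly.
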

\begin{proof}
    The chain rule for $\partial f$ follows from  \cite[Exercise 10.7]{rockafellar2009variational}, which also gives 
\[    \partial ^\infty f(\overline{x})=(dF_{\overline{x}})^*\partial^\infty g(F(\overline{x})).      \]
Utilizing the linearity of the operation of taking convex hull \cite[Chapter 1, Exercise 2.5]{brondsted2012introduction}, we have 
\[   \co[\partial f(\overline{x})+\partial^\infty f(\overline{x})]=(dF_{\overline{x}})^*\co[\partial g(F(\overline{x}))+\partial^\infty g(F(\overline{x}))].       \]
Then, by utilizing the closure criterion in \cite[Theorem 9.1]{rockafellar1970convex}, which implies that if a linear mapping $A$ is injective then $\overline{AC}=A\overline{C}$ for any convex sets, we have 
\begin{equation*}
    \cp f(\overline{x}) = \overline{\co}(\partial f(\overline{x})+\partial^\infty f(\overline{x}))=(dF_{\overline{x}})^*\cp g(F(\overline{x})). \qedhere     
\end{equation*}   
\end{proof}
% \begin{fact}
% \label{fact:hull}
%     If $L:V\to W$ is a linear map between vectors spaces $V,W$ and $X$ is a subset of $V$, then $\co L(X) = L(\co X)$.
% \end{fact}
The outer and inner second-order tangent set to $C$ at $\overline{x}$ in the direction $v$ are defined as follows \cite[Definition 3.28]{bonnans2013perturbation}:
\begin{align*}
     T^2_C(\overline{x},v):=\{\xi\in \R^n:~ \exists t_i\downarrow 0,~ d(\overline{x}+t_iv+\frac{t_i^2}{2}\xi  ,C)= o (t_i^2)\},\\
     T^{i,2}_C(\overline{x},v):=\{\xi \in \R^n:~d(\overline{x}+tv+\frac{t^2}{2}\xi  ,C)= o (t^2),~t\geq 0  \}.
\end{align*}
A set $C$ is said to be outer second-order regular at $\overline{x}$ if the next condition holds, 
\[ \forall v\in T_C(\overline{x}),~y_i=\overline{x}+t_iv+\frac{t_i^2}{2}w_i,~t_i\downarrow 0,~t_i|w_i|\to0,~~ \text{ it holds that } \lim_{i\to\infty}d(w_i,T_C^2(\overline{x},v))=0.  \]
A set $C$ is said to be second-order regular at $\overline{x}$, if it is outer second-order regular at $\overline{x}$ and $T^2_C(\overline{x},h)=T^{2,i}_C(\overline{x},h)$ for all $h\in T_C(\overline{x})$. A main category of sets that are second-order regular is the $C^2$-cone reducible sets \cite[Proposition 3.136]{bonnans2013perturbation}. A set $C\subseteq \R^n$ is said to be $C^2$-cone reducible at $\overline{x}$ \cite[Definition 3.135]{bonnans2013perturbation}, if there exist a neighborhood $U$ of $\overline{x}$, a $C^2$ mapping $G: U\to\R^m$ and a pointed closed convex cone $K\subseteq \R^m$, such that $DG(\overline{x})$ is surjective, $G(\overline{x})=0$ and $U\cap C=U\cap G^{-1}(K)$. Since $C^2$ embedded submanifold of $\R^n$ is $C^2$-cone reducible at any point in the manifold by using the defining equation \cite[Proposition 5.16]{lee2012smooth}, we have the following fact: 
\begin{fact}
    \label{defn:soregular}
    Any $C^2$ embedded submanifold $\cM$ of $\R^n$ is second-order regular at any $x\in \cM$.
\end{fact}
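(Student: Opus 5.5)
The plan is to reduce the claim to the known fact that $C^2$-cone reducible sets are second-order regular \cite[Proposition 3.136]{bonnans2013perturbation}. So it suffices to show that $\cM$ is $C^2$-cone reducible at every $x\in\cM$, which is what the sentence preceding the fact indicates.

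Fix $x\in\cM$ and let $k$ be the dimension of $\cM$ near $x$. Since $\cM$ is a $C^2$ embedded submanifold of $\R^n$, the local defining function characterization \cite[Proposition 5.16]{lee2012smooth} (in its $C^2$ version, which follows from the $C^2$ slice chart property) gives a neighborhood $U$ of $x$ and a $C^2$ map $G:U\to\R^{n-k}$ with three properties:
\begin{itemize}
\item $\cM\cap U=G^{-1}(0)$;
\item $DG(y)$ is surjective for all $y\in U$;
\item after translating, $G(x)=0$.
\end{itemize}
Take $K:=\{0\}\subseteq\R^{n-k}$. This cone is closed and convex. It is also pointed, since $K\cap(-K)=\{0\}$. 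Then $U\cap\cM=U\cap G^{-1}(K)$, $DG(x)$ is surjective, and $G(x)=0$. This is exactly \cite[Definition 3.135]{bonnans2013perturbation}, so $\cM$ is $C^2$-cone reducible at $x$.

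The only point needing care is whether the definitions of second-order regularity recalled above match those in \cite{bonnans2013perturbation}, in particular whether local closedness of $\cM$ near $x$ is required. Local closedness follows from embeddedness, since the slice chart shows $\cM\cap U$ is closed in $U$. The inner-tangent notation $T^{i,2}$ versus $T^{2,i}$ is a cosmetic mismatch only.

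If one prefers a self-contained argument, second-order regularity can also be checked directly from the defining map $G$. For $v\in T_\cM(x)=\ker DG(x)$, both $T^2_\cM(x,v)$ and $T^{i,2}_\cM(x,v)$ equal the affine space $\{\xi: DG(x)\xi+D^2G(x)[v,v]=0\}$. This follows by Taylor expanding $G$ and using metric regularity of $G$ near $x$, i.e.\ $d(y,\cM)\le c|G(y)|$, which holds by the Lyusternik–Graves theorem. The same estimate applied to $y_i=x+t_iv+\tfrac{t_i^2}{2}w_i$ gives $|DG(x)w_i+D^2G(x)[v,v]|\to0$, hence $d(w_i,T^2_\cM(x,v))\to0$, which is outer regularity. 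I expect the main obstacle to be only this bookkeeping; the reduction via cone reducibility is immediate.
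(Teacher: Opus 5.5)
Your proposal is correct and follows essentially the same route as the paper. Both arguments write $\cM$ locally as $G^{-1}(0)$ for a $C^2$ defining map with surjective derivative, take the pointed closed convex cone $K=\{0\}$ to obtain $C^2$-cone reducibility, and then invoke the Bonnans--Shapiro result that $C^2$-cone reducible sets are second-order regular; your optional direct verification via Taylor expansion of $G$ is also sound but is not needed for the paper's argument.
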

\subsection{Subanalytic geometry}
\label{subsec:subanalytic-geometry}
A subset $X$ of $\mathbb{R}^n$ is subanalytic \cite{shiota2012geometry} if for any $x\in \R^n$, there is a neighborhood $U$ of $x$ such that $X\cap U$ is of the form $\im(f_1)-\im(f_2)$, where $f_1,f_2$ are topologically proper maps from real analytic manifolds to $\R^n$. A subset $X\subseteq \R^n$ is said to be globally subanalytic if $G(X)$ is subanalytic \cite{van1996geometric}, where $G:\R^n\to [-1,1]^n$ is defined as 
   $$    G(x_1,\dots,x_n):=\left(\frac{x_1}{\sqrt{1+x_1^2}},\dots,\frac{x_n}{\sqrt{1+x_n^2}}\right).              $$
Let us note that globally subanalytic sets are always subanalytic, but not vice versa. However, bounded subanalytic sets are always globally subanalytic \cite{van1996geometric}. Globally subanalytic sets form an o-minimal structure on $\R$ \cite{van1986generalization}, while subanalytic sets is not an o-minimal structure \cite{shiota2012geometry}.

A function $f:\R^ n \to \overline\R$ is (globally) subanalytic if $\gph f$ is (globally) subanalytic. Globally subanalytic lsc functions satisfy two important inequalities.
    
    % If $f:\R^ n \to \overline\R$ is globally subanalytic and $\overline{x}\in \dom f$, and $f$ is lsc around $\overline{x}$, \cite{bai2022equivalence} then  
\begin{definition}
\label{def:weak}
    A function $f:\R^ n \to \overline\R$ has growth exponent $\beta\in[1,\infty)$ at $\overline{x}\in\dom f$ if there exist $\rho,\kappa>0$ such that
    $$\forall x\in B_\rho(\overline{x}),~~~(f(x)-f(\overline{x}))_+ \geq \kappa\hspace{.4mm}d(x,[f\leq f(\overline{x})])^ \beta.$$
\end{definition}
%    A function $f:\R^ n \to \overline\R$ has growth exponent $\beta\in[1,\infty)$ at $\overline{x}\in \dom f$ if $f-f(\overline{x})$ has growth exponent $\beta$ at $\overline{x}$.
    Globally subanalytic lsc functions admit a growth exponent at every point in their domain. This can be proved using the same arguments as in \cite[Theorem 1.14]{pham2016genericity}. 
\begin{definition}
\label{def:strong}
    A function $f:\R^ n \to \overline\R$ has \KL exponent $\alpha\in[0,1)$ at $\overline{x}\in \dom f$ if there exist $r,\ell,c>0$ such that
    $$ \forall x\in B_r(\overline{x})\cap[0<f-f(\overline{x})<\ell],~~~ d(0,\cp f(x)) \geq c(f(x)-f(\overline{x}))^\alpha.$$
\end{definition}
%A function $f:\R^ n \to \overline\R$ has \KL exponent $\alpha\in[0,1)$ at $\overline{x}\in \dom f$ if $f-f(\overline{x})$ has \KL exponent $\alpha$ at $\overline{x}$.
Globally subanalytic lsc functions admit a \KL exponent at every point in their domain \cite[Corollary 16]{bolte2007clarke}. In the literature \cite{li2018calculus,yu2022kurdyka,ouyang2025kurdyka}, the \KL exponent is often used for $\partial f$ rather than $\cp f$. Here, we use $\cp f$ because $\partial f\subseteq \cp f$, and hence \KL exponent for $\cp f$ is stronger than the \KL exponent for $\partial f$. Besides, in continuous-time dynamics, it is usually preferable to use a convex-valued subdifferential to ensure the existence of solutions. 

The \KL exponent $\alpha$ plays a crucial role in determining how fast algorithms converge:
\begin{itemize}
    \item $\alpha = 0$: finite (resp. linear) convergence for descent methods \cite[Theorem 2(i)]{attouch2009convergence} (resp. subgradient methods on weakly convex functions \cite{davis2018subgradient,davis2024stochastic,li2020nonconvex});
    \item $\alpha\in (0,1/2)$: finite convergence for descent methods \cite[Theorem 3]{bento2025convergence};
    \item $\alpha=1/2$: linear convergence
    for descent methods for iterates and function values \cite{polyak1963gradient,attouch2009convergence};
    \item $\alpha \in (1/2,1)$: sublinear convergence
    for descent methods at the rate $O(1/k^{(1-\alpha)/(2\alpha-1)})$ for the iterates \cite[Theorem 2(iii)]{attouch2009convergence} and $O(1/k^{1/(2\alpha-1)})$ for the function values \cite[Theorem 3.4(iii)(1)]{frankel2015splitting}.
\end{itemize}

    % In fact, under the above assumptions, one can strengthen the inequality by replacing $\partial f(x)$ with the superset $\cpf(x)$. In that case, we'll speak of \L{}ojasiewicz subgradient inequality and \KL exponent. Of course, if $f$ is Lipschitz continuous and regular near $\overline{x}$, then $\partial f(x)  =\cp f(x)$ near $\overline{x}$ by \cite[Theorem 9.61]{rockafellar2009variational}, so both inequalities coincide. The distinction is important when considering differential inclusions. Indeed, the local existence of trajectories is only guaranteed for set-valued mappings with convex values \cite[Theorem 3 p. 98]{aubin1984differential}. 

    The relationship between the growth and the \KL exponent is vital in this paper.  
    \begin{fact}
    \label{fact:conversion}
        Let $f:\R^ n \to \overline \R$ be lsc and $\overline{x} \in\R\p n$. If $f$ has \KL exponent $\alpha\in [0,1)$ at $\overline{x}$, then it has growth exponent $\beta =  1/(1-\alpha)$ at $\overline{x}$.
        %The \L{}ojasiewicz subgradient inequality with exponent $\alpha\in (-\infty,1)$ at $\overline{x}$ implies the \L{}ojasiewicz inequality with exponent $\beta =  1/(1-\alpha)$ at $\overline{x}$.
    \end{fact}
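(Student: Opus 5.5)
The plan is to run the classical descent argument behind the implication ``\KL implies error bound''. Let $\overline{x}$, $r,\ell,c$ be as in \cref{def:strong}. Set $\varphi(t):=\frac{1}{c(1-\alpha)}t^{1-\alpha}$, so that $\varphi'(t)=\frac{1}{c}t^{-\alpha}$. Then the \KL inequality reads $\varphi'(f(x)-f(\overline{x}))\,d(0,\cp f(x))\ge 1$ on $B_r(\overline{x})\cap[0<f-f(\overline{x})<\ell]$. The goal is to show, for some $\rho>0$ and every $x\in B_\rho(\overline{x})$,
$$d(x,[f\le f(\overline{x})])\le \varphi\bigl((f(x)-f(\overline{x}))_+\bigr).$$
This gives growth exponent $\beta=1/(1-\alpha)$ with $\kappa=(c(1-\alpha))^{1/(1-\alpha)}$.

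First I dispose of the trivial cases. If $f(x)\le f(\overline{x})$, the left side is $0$. By lower semicontinuity, for $\rho$ small we have $f>f(\overline{x})-\ell$ on $B_\rho(\overline{x})$; I only use this to keep iterates in range. If $f(x)-f(\overline{x})\ge \ell$, the distance is at most $|x-\overline{x}|<\rho$, which is bounded by $\varphi(\ell)$ once $\rho\le\varphi(\ell)$. So it remains to treat $0<f(x)-f(\overline{x})<\ell$.

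For this case I invoke Ekeland's variational principle, following Bolte--Daniilidis--Ley--Mazet's error-bound argument. Fix such an $x$ and write $h:=f-f(\overline{x})$. Suppose, for contradiction, that $d(x,[h\le 0])>\varphi(h(x))$. Pick $\lambda$ with $\varphi(h(x))<\lambda<d(x,[h\le0])$. Apply Ekeland to the lsc function $h_+$ on the closed ball $\bar B_\lambda(x)$, which is bounded below by $0$, with $\varepsilon=h(x)$. This produces $y$ with $|y-x|\le\lambda$, $h_+(y)\le h(x)$, and $y$ minimizing $z\mapsto h_+(z)+\frac{h(x)}{\lambda}|z-y|$. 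Since $\lambda<d(x,[h\le0])$, we get $h(y)>0$, and $y$ can be taken in the interior of the ball by shrinking $\lambda$ slightly if needed. Fermat's rule and the sum rule for a Lipschitz term then give some $v\in\partial h(y)\subseteq\cp f(y)$ with $|v|\le h(x)/\lambda$. Choosing $\rho$ small enough that $\rho+\varphi(\ell)$-type bounds keep $y\in B_r(\overline{x})$, the \KL inequality yields
$$c\,h(y)^\alpha\le \frac{h(x)}{\lambda}.$$
However, this single-step estimate is not sharp enough to reach a contradiction directly.

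The main obstacle is exactly this step: a single Ekeland step does not produce $\varphi$. I will therefore iterate, in the spirit of the length-of-subgradient-curves argument. Set $t_0=h(x)$ and choose levels $t_{k+1}=t_k/2$. At each stage apply Ekeland with radius $\lambda_k:=\varphi(t_k)-\varphi(t_{k+1})$, scaled by a constant. This yields points $y_k$ with $h(y_{k+1})\le t_{k+1}$ and $|y_{k+1}-y_k|\lesssim\varphi(t_k)-\varphi(t_{k+1})$, where the \KL inequality is used to guarantee that the level actually drops by half within that radius. Summing the telescoping series shows that the $y_k$ form a Cauchy sequence with total length at most a constant times $\varphi(h(x))$. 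The limit $y^*$ satisfies $h(y^*)\le 0$ by lower semicontinuity. Hence $d(x,[h\le 0])\le C\varphi(h(x))$, which still gives exponent $\beta$ with an adjusted $\kappa$. Throughout, I will keep track that all $y_k$ stay within $B_r(\overline{x})$, by taking $\rho$ small relative to $r$ and $\varphi(\ell)$.
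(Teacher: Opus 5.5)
Your proposal is correct, but it takes a different route from the paper.

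\textbf{The paper's route.} The paper does not iterate. It composes with the desingularizing function $\psi(t)=c^{-1}t^{1-\alpha}$ and, via a chain rule for $\partial(\psi\circ f)$, turns the \KL inequality into a \emph{uniform} slope bound $d(0,\partial(\psi\circ f)(x))\ge \mathrm{const}>0$ on $B_r(\overline{x})\cap[0<f<\ell]$. A single penalized-minimization (Ekeland) step, \cref{fact:conversion_0}, then gives $\psi(f_+(x))\ge d(x,[f\le 0])/2$. Inverting $\psi$ yields growth exponent $1/(1-\alpha)$.

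\textbf{Your route.} Your single-step attempt failed precisely because you applied Ekeland to $h$ instead of $\varphi\circ h$: the \KL modulus $c\,h(y)^\alpha$ at the Ekeland point can be much smaller than $c\,h(x)^\alpha$. Your dyadic iteration repairs this differently, by confining each step to a band $(t_{k+1},t_k]$ on which $h^\alpha$ is comparable to $t_k^\alpha$.

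\textbf{Making the band step airtight.} Write it as a contradiction argument. Suppose $h>t_{k+1}$ on $\bar B_{\lambda_k}(y_k)$. Apply Ekeland there with $\varepsilon=h(y_k)-t_{k+1}\le t_k/2$ and any radius $\mu<\lambda_k$. This produces an interior point $z$ with $t_{k+1}<h(z)\le t_k<\ell$ and
$c\,(t_k/2)^\alpha\le c\,h(z)^\alpha\le d(0,\cp f(z))\le d(0,\partial h(z))\le t_k/(2\mu)$.
This is impossible once $\lambda_k=C(\varphi(t_k)-\varphi(t_{k+1}))$ with $C>(1-\alpha)/(2^{1-\alpha}-1)$.

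\textbf{Localization.} Because $f$ is only lsc, $h(x)$ need not be small for $x$ near $\overline{x}$. Shrinking $\rho$ alone therefore does not keep the iterates in $B_r(\overline{x})$. You must also shrink $\ell$ so that $C\varphi(\ell)<r/2$, which is allowed since the \KL inequality persists for smaller $\ell$. Then take $\rho\le\min\{r/2,\varphi(\ell)\}$; your trivial case $h(x)\ge\ell$ still goes through with this smaller $\ell$.

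\textbf{Trade-off.} Your route is self-contained: it uses only Ekeland, Fermat's rule with a Lipschitz penalty, and $\partial h\subseteq\cp f$, with no chain rule for $\psi\circ f$. It is essentially a discrete version of the subgradient-curve length argument the paper mentions after \cref{fact:conversion_0}. The cost is heavier bookkeeping and a worse constant $\kappa$, whereas the paper's reduction to constant slope gives the estimate in one step.
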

    \begin{proof}
       %Note that $\partial f(x) \subseteq \cp f(x)$ for all $x\in \R^ n$ with $f(x)<\infty$ and so $d(0,\partial f(x)) \geq d(0,\cp f(x))$. 
       Without loss of generality, $f(\overline{x})=0$.
        By the chain rule \cite[Lemma 2.4]{kruger2019holder}, the \KL inequality can be rewritten in a desingularized form $$\forall x\in B_r(\overline{x})\cap[0<f<\ell],~~~ d(0,\partial(\psi \circ f)(x))\geq 1,$$ with $\psi(t) := c^{-1}t^{1-\alpha}$. This was shown by Kurdyka in the smooth case \cite{kurdyka1998gradients}. By applying \cref{fact:conversion_0} below to $(\psi\circ f)_+=\psi\circ (f)_+$ where $\psi$ is monotone, there exists $\rho>0$ such that 
    $$\forall x\in B_\rho(\overline{x}),~~~ (\psi \circ f_+)(x) \geq d(x,[(\psi \circ f_+) = 0])/2.$$
    Since $\psi^{-1}(s) = c^ {1/(1-\alpha)} s^ {1/(1-\alpha)}$, this means that 
    \begin{equation*}
        \forall x\in B_\rho(\overline{x}),~~~ f(x)_+ \geq \psi^{-1}(d(x,\left[ f_+ = 0\right])/2) = (c/2)^ {1/(1-\alpha)} d(x,[f \leq 0])^ {1/(1-\alpha)}. \qedhere
    \end{equation*}
    \end{proof}
    \begin{fact}
    \label{fact:conversion_0}
        Let $f:\R^ n\to \overline{\R}_+$ be lsc and $\overline{x}\in[f=0]$. If $d(0,\partial f(x))\geq 1$ for all $(x,f(x))$ near $(\overline{x},f(\overline{x}))$, then $f(x) \geq d(x,[f=0])/2$ near $\overline{x}$.
    \end{fact}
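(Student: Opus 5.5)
We would prove \cref{fact:conversion_0} with Ekeland's variational principle, which is the standard route to error bounds from slope lower bounds. Since $f$ is lsc and nonnegative with $f(\overline{x})=0$, the hypothesis supplies $\varepsilon>0$ such that $d(0,\partial f(x))\geq 1$ whenever $|x-\overline{x}|<\varepsilon$ and $0<f(x)<\varepsilon$. Points with $f(x)=0$ need not be handled, because both sides of the desired inequality vanish there. Lower semicontinuity of $f$ on the closed ball $\overline{B}_\varepsilon(\overline{x})$ is what makes Ekeland's principle available.

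Fix $x$ with $|x-\overline{x}|<\rho$ and $0<f(x)$, where $\rho$ is to be chosen later. If $f(x)\geq \rho$, the conclusion is trivial, since $d(x,[f=0])\leq |x-\overline{x}|<\rho\leq 2f(x)$. We may therefore assume $f(x)<\rho$. Apply Ekeland's principle to $f+\delta_{\overline{B}_\varepsilon(\overline{x})}$ at $x$, with parameters $f(x)$ (note $\inf f=0$) and $\lambda := 2f(x)$. This produces a point $z$ with
\[ f(z)\leq f(x),\qquad |z-x|\leq \lambda, \]
such that $z$ minimizes $f(\cdot)+\tfrac{f(x)}{\lambda}|\cdot - z|=f(\cdot)+\tfrac12|\cdot-z|$ over the ball. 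We choose $\rho$ small, say $\rho\leq\varepsilon/4$. Then $|z-\overline{x}|\leq \rho+2\rho<\varepsilon$, so $z$ lies in the interior of the ball and the indicator function can be dropped locally.

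Suppose $f(z)>0$. Fermat's rule, combined with the sum rule for a Lipschitz term, gives $0\in \partial f(z)+\tfrac12 \mathbb{B}$, that is, $d(0,\partial f(z))\leq 1/2$. But $|z-\overline{x}|<\varepsilon$ and $0<f(z)\leq f(x)<\rho\leq\varepsilon$, so the hypothesis forces $d(0,\partial f(z))\geq 1$, a contradiction. Hence $f(z)=0$, and therefore
\[ d(x,[f=0])\leq |x-z|\leq 2f(x). \]

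The main care point is the bookkeeping of neighborhoods. The hypothesis holds only for $(x,f(x))$ close to $(\overline{x},0)$, so we must ensure that the Ekeland point $z$ stays in the region where the slope bound applies, both in position and in function value. The monotonicity $f(z)\leq f(x)$ controls the value, and the estimate $|z-x|\leq 2f(x)<2\rho$ controls the position. The factor $2$ in the conclusion is exactly the slack needed so that the Ekeland slope $1/2$ is strictly below the assumed slope $1$.
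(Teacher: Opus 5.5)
Your proof is correct and is essentially the paper's argument; the paper itself calls it an application of Ekeland's principle in disguise. Both produce a nearby point with no larger value at which Fermat's rule and the Lipschitz sum rule give $d(0,\partial f)\le 1/2$, forcing the value there to be $0$. The only differences are presentational: the paper argues by contradiction along a sequence $x_k\to\overline{x}$ and takes the auxiliary point to be a global minimizer of the coercive lsc function $f+|\cdot-x_k|/2$, which exists because $f\ge 0$ on $\R^n$, whereas you invoke Ekeland's principle on a localized function and track the radius $\rho\le\varepsilon/4$ explicitly.
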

    \cref{fact:conversion_0} is an application of the Ekeland variational principle in disguise (see \cite[Basic Lemma]{ioffe2000metric}, \cite[Lemma 2.5]{drusvyatskiy2015curves}, \cite[Lemma 3.1]{kruger2019holder}, and the proof in the Appendix).
    When $f:\R^ n \to \R_+$ is locally Lipschitz and semi-algebraic, another proof of the implication in \cref{fact:conversion} is possible using differential inclusions.
    Following arguments in \cite[Proposition 7]{josz2023global}, one readily obtains
%$$|x(0)-x(t)| = \left|\int_0^ t x'(s)ds\right| \leq \int_0^ t |x'(s)|ds \leq c^{-1} f(x(0))^ {1-\alpha}$$
$$d(x(0),[f = 0]) \leq |x(0)-x(\infty)| \leq \int_0^ \infty |x'(t)|dt \leq c^{-1} f(x(0))^ {1-\alpha}$$
where $x(\cdot)$ is a solution to $x'(t) \in - \cp f(x(t))$ for almost every $t>0$. 
A similar reasoning appears in \cite[Proposition 1]{otto2000generalization}, \cite[Appendix A]{karimi2016linear}, \cite[Theorem 5]{bolte2017error}, and \cite[Proposition 2.2]{rebjock2024fast}.

The following fact will be useful for the converse, that is, to convert a growth exponent into a \KL exponent.

\begin{fact}[{\cite[5.2]{van1996geometric}, \cite[Proposition 1.8.4]{valette2025subanalytic}}, Puiseux Lemma]\label{fact:puisuex}
    \label{fact_puisuex}
    Let $f:(0,\eta)\to \R$ be a globally subanalytic function and $\eta>0$. Then, there exist $\epsilon\in (0,\eta)$, $m\in \mathbb Z$ and $p\in \mathbb N_*$ such that for all $t\in (0,\epsilon)$ it holds that 
    \begin{equation}
        \label{frac_power_series}
          f(t)=\sum_{i=m}^\infty a_i t^{i/p},\quad ~a_i\in \R,~\forall i\geq m, 
    \end{equation}
    where this Puiseux series is convergent on $(0,\epsilon)$. 
\end{fact}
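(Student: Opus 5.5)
Fact~\ref{fact:puisuex} is the one-variable case of the classical Puiseux theorem for globally subanalytic germs. The plan is to reduce it to the analytic Puiseux theorem for a real analytic curve germ at the origin of $\R^2$.

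Since $\gph f\subseteq\R^2$ is globally subanalytic, the o-minimality of globally subanalytic sets (van den Dries) gives $\epsilon_0\in(0,\eta)$ such that $f$ is analytic, and either constant or strictly monotone, on $(0,\epsilon_0)$. By the same o-minimality, $\lim_{t\downarrow 0}f(t)$ exists in $\R\cup\{\pm\infty\}$. If the limit is infinite, we replace $f$ by $1/f$ near $0$. This function is globally subanalytic with limit $0$, and a Puiseux expansion $1/f=t^{k/p}u(t^{1/p})$, with $u$ analytic and $u(0)\neq0$, inverts to an expansion of $f$ with finitely many negative exponents. That is exactly why $m\in\mathbb Z$ is allowed in \eqref{frac_power_series}. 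So we may assume $f(t)\to a\in\R$, and after subtracting $a$, that $f(t)\to0$.

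Next we use the map $G$ from the definition to see that the germ at $(0,0)$ of $\Gamma=\{(t,f(t)):0<t<\epsilon_0\}$ is a subanalytic set of dimension one in the plane. By the local structure of one-dimensional subanalytic sets (curve selection lemma / Łojasiewicz's uniformization of subanalytic curves), $\Gamma$ near $0$ is the image of an analytic arc $\gamma:[0,\delta)\to\R^2$, $\gamma(s)=(\gamma_1(s),\gamma_2(s))$, with $\gamma(0)=0$ and $\gamma$ injective. Alternatively, $\Gamma$ is a half-branch of the zero set of some analytic function $h(t,y)$ that is not identically zero. In that case the Newton--Puiseux algorithm applied to $h$, after a Weierstrass preparation in $y$, directly gives the branches as convergent series $y=\sum a_it^{i/p}$ for $t>0$.

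Following the arc route, $\gamma_1$ is analytic, nonconstant and positive for $s>0$, so $\gamma_1(s)=s^p v(s)$ with $v(0)>0$. Then $\tau:=s\,v(s)^{1/p}$ is an analytic reparametrization with $\tau'(0)\neq0$, which gives $t=\tau^p$. Inverting $\tau\mapsto s$ analytically, we obtain $f(t)=\gamma_2(s(t^{1/p}))$, a convergent power series in $t^{1/p}$ on some $(0,\epsilon)$.

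The main obstacle is the structural step: the one-dimensional germ of $\Gamma$ has to be parametrized by a \emph{single} analytic arc, or cut out by a nontrivial analytic equation. This is where subanalyticity enters, as opposed to mere definability in an arbitrary o-minimal structure, where Puiseux expansions can fail. I would take this step from the cited structure theory, namely the uniformization theorem for subanalytic sets, applied to the compact set $\overline{\Gamma\cap B_\delta(0)}$. That theorem yields finitely many analytic arcs, and since $\Gamma$ is a graph over $(0,\epsilon_0)$ one arc covers a neighborhood of the endpoint. Everything after that is the elementary inversion described above.
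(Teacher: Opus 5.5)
Your argument is correct. The paper itself gives no proof of this Fact: it imports it from van den Dries--Miller and Valette and uses it as a black box in the proof of Lemma~\ref{lemma:pham}. Your sketch reconstructs a standard argument behind such statements, in four steps:
\begin{enumerate}
\item o-minimality gives a limit in $\R\cup\{\pm\infty\}$;
\item passing to $f-a$ or $1/f$ reduces to a germ tending to $0$;
\item an analytic structure theorem for one-dimensional subanalytic germs yields an analytic half-arc;
\item the reparametrization $t=\tau^p$ finishes.
\end{enumerate}

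Compared with the bare citation, this shows where each hypothesis enters. Negative exponents ($m<0$) come only from the unbounded case. Global subanalyticity is what keeps $1/f$ subanalytic at the origin, and this is exactly what breaks for the paper's example $e^{1/t}$: its reciprocal $e^{-1/t}$ is flat at $0$ and hence not subanalytic there. Your route is not more elementary at its core, though. The decisive step is still a cited theorem, either uniformization or semianalyticity of planar subanalytic sets combined with Newton--Puiseux.

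You can lighten that step, because the analytic curve selection lemma alone suffices. Take $\gamma:[0,\delta)\to\R^2$ analytic with $\gamma(0)=(0,0)$ and $\gamma((0,\delta))\subseteq\Gamma$. Then $\pi_1(\gamma((0,\delta)))$, with $\pi_1(t,y)=t$, is a connected subset of $(0,\epsilon_0)$ accumulating at $0$. Since $\Gamma$ is a graph, this forces $\gamma((0,\delta))\supseteq\{(t,f(t)):0<t<c\}$ for some $c>0$. So you need neither finitely many arcs nor a priori injectivity of $\gamma$. Likewise, the analyticity of $f$ on $(0,\epsilon_0)$ that you extract from o-minimality is never used.

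Finally, shrink $\epsilon$ so that $\epsilon^{1/p}$ lies below the radius of convergence of the Taylor series of $\gamma_2\circ\tau^{-1}$ at $0$. This is what makes the series converge on all of $(0,\epsilon)$, as the statement requires.
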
 

\cref{fact_puisuex} is in general not true for subanalytic functions, e.g., consider $x\mapsto e^{1/x}$ for $x>0$, whose growth near $0$ is faster than any negative power of $x$, which means it is impossible to write $e^{1/x}$ as a fractional power series. However, it is true for subanalytic functions which are bounded near $0$, since bounded subanalytic functions are always globally subanalytic\cite[Section 3 and D.10]{van1996geometric}.

Utilizing the transformation $t\mapsto t^p$ and standard results for real analytic power series \cite{krantz2002primer}, we can easily prove the following fact:
\begin{fact}\label{fact:puiseux-derivative}
\label{fact_puiseux_diff}
    Let $f:(0,\epsilon)$ have the Puiseux expansion in \cref{frac_power_series}, which is also convergent on $(0,\epsilon)$. Then $f\in C^{\infty}(0,\epsilon)$, and for all $t\in (0,\epsilon)$. it holds that 
    \[   f'(t)=\sum_{i=m}^\infty \frac{ia_i}{p}t^{\frac{i}{p}-1},\quad \forall t\in (0,\epsilon).      \]
\end{fact}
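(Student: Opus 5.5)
The plan is to reduce to ordinary real analytic power series by the substitution $t = s^p$. Define $h(s) := f(s^p) = \sum_{i=m}^\infty a_i s^{i}$ for $s\in(0,\epsilon^{1/p})$. Convergence of the Puiseux series at every $t\in(0,\epsilon)$ means that the Laurent-type series $\sum_{i\ge m} a_i s^i$ converges for every $s\in(0,\epsilon^{1/p})$. Write $h(s) = s^{m}\,q(s)$ with $q(s) := \sum_{j\ge 0} a_{j+m} s^{j}$. Then $q$ is an ordinary power series that converges at every point of $(0,\epsilon^{1/p})$.

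The first step is to show that $q$ has radius of convergence $R\ge \epsilon^{1/p}$. If the series converges at some $s_0>0$, then $a_{j+m}s_0^j\to 0$, so the terms are bounded. The standard Abel-lemma argument then gives absolute and locally uniform convergence on $|s|<s_0$. Taking $s_0$ arbitrarily close to $\epsilon^{1/p}$ yields the claim.

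The second step invokes the standard results in \cite{krantz2002primer}. On the open disc of convergence $q$ is real analytic, hence $C^\infty$, and it may be differentiated term by term; the differentiated series has the same radius. Consequently $h(s)=s^m q(s)$ is $C^\infty$ on $(0,\epsilon^{1/p})$, since $s^m$ is smooth for $s>0$ even when $m<0$. Moreover
\[
h'(s) = m s^{m-1}q(s)+s^m q'(s) = \sum_{i\ge m} i\,a_i s^{i-1},
\]
where the two series are combined termwise because both converge absolutely.

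The third step transfers back to $f$. Since $f(t)=h(t^{1/p})$ and $t\mapsto t^{1/p}$ is a $C^\infty$ diffeomorphism of $(0,\epsilon)$ onto $(0,\epsilon^{1/p})$, $f$ is $C^\infty$ on $(0,\epsilon)$. By the chain rule,
\[
f'(t)=h'(t^{1/p})\cdot \tfrac1p t^{1/p-1} = \sum_{i\ge m} \tfrac{i a_i}{p}\, t^{(i-1)/p + 1/p - 1} = \sum_{i\ge m}\tfrac{i a_i}{p}\, t^{i/p-1}.
\]

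The only point requiring care is the first step: pointwise convergence on the open interval must be upgraded to the statement that the radius of convergence is at least $\epsilon^{1/p}$. Abel's lemma settles this. The finitely many negative powers cause no difficulty, because they are smooth away from $0$ and are differentiated individually.
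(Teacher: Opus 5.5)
Your proposal is correct and follows the route the paper indicates: substitute $t=s^p$ to reduce to an ordinary real analytic power series (after factoring out $s^m$), differentiate term by term, and return via the chain rule. Your Abel-lemma step, which upgrades pointwise convergence on $(0,\epsilon^{1/p})$ to a radius of convergence at least $\epsilon^{1/p}$, supplies a detail the paper leaves implicit.
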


\subsection{Differential geometry}
\label{subsec:differential-geometry}
Our starting point is a smooth manifold $\cM$, that is, a topological manifold equipped with a smooth structure. (By smooth, we mean $C\p \infty$, unless we specify $C^k$ smooth for some $k\in \{1,2,\hdots,\infty\}$). A topological manifold is a locally Euclidean (of constant dimension) second-countable Hausdorff topological space. In contrast to the branch of optimization dealing with optimization on smooth manifolds \cite{boumal2023introduction}, our variable will lie in a Euclidean space as usual.

The smooth structure enables one to define smooth maps between two manifolds $\cM,\cN$ as well as the tangent space $T_p\cM$ at a point $p \in \cM$. Tangent vectors $v\in T_p\cM$ are linear maps $v:C^\infty(\cM)\to \R$ such that $v(fg)=v(f)g+fv(g)$ for $f,g\in C^\infty(\cM)$. Tangent vectors can also be defined using an equivalence relation on the set of all smooth curves $\gamma:J\rightarrow \cM$ where $J$ is an interval of $\mathbb{R}$ containing $0$ and $\gamma(0) = p$ \cite[p. 71]{lee2012smooth}. Two such curves $\gamma_1:J_1\rightarrow \cM$ and $\gamma_2:J_2\rightarrow \cM$ are equivalent if $(f\circ \gamma_1)'(0) = (f\circ \gamma_2)'(0)$ for any smooth real-valued function defined in a neighborhood of $p$. The tangent space is then the set of equivalence classes.  

The differential of a smooth map $F:\cM\rightarrow \cN$ at $p\in \cM$ is the linear map $dF_p: T_p\cM \rightarrow T_{F(p)} \cN$ defined by $dF_p(v)(f)=v(f\circ F)$ for all $v\in T_p\cM$ and $f\in C^\infty(\cN)$. The rank of $F$ at $p$ is the rank of $dF_p$, namely the dimension of the image of $dF_p$. A map $F:\cM\rightarrow \cN$ between two smooth manifolds $\cM,\cN$ is a smooth immersion (respectively submersion) if it is smooth and $dF_p$ is injective (respectively surjective) for all $p\in\cM$. It is a smooth embedding if it a smooth immersion and a topological embedding, i.e., a homeomorphism onto its image $F(\cM) \subseteq \cN$ in the subspace topology.  

The differential enables one to define notions of submanifolds, which arise prominently in the study of symmetries. An embedded submanifold of $\cM$ is a subset $S \subseteq \cM$ that is a manifold in the subspace topology, endowed with a smooth structure with respect to which the inclusion map $S \hookrightarrow \cM$ is a smooth embedding (the inclusion map is defined by $S \ni x\mapsto x\in \cM$). An immersed submanifold is a subset $S \subseteq \cM$ endowed with a topology (not necessarily the subspace topology) with respect to which it is a topological manifold, and a smooth structure with respect to which the inclusion map $S\hookrightarrow \cM$ is a smooth immersion. From the definition, one sees that embedded submanifolds are immersed manifolds, but the converse is false, as illustrated by the figure eight \cite[Example 4.19]{lee2012smooth}. Embedded submanifolds can be expressed locally as level sets of smooth submersions \cite[Proposition 5.16]{lee2012smooth}, which is how they are often defined in $\mathbb{R}^n$ \cite[Example 6.8]{rockafellar2009variational}.

Suppose $\cS$ is an immersed submanifold of a smooth manifold $\cM$. Since inclusion map $\iota:\cS\to\cM$ is a smooth immersion, its differential $d\iota_p:T_p \cS\to T_p\cM$ is injective for all $p\in \cS$. One may thus view $T_p\cS$ as a subspace of $T_p\cM$ via the identification $T_p\cS\cong d\iota_p(T_p\cS)$. The following characterization is helpful \cite[Proposition 5.35]{lee2012smooth}. A vector $v\in T_p\cM$ is in $T_p\cS$ if and only if there is a smooth curve $\gamma:J\to \cM$ whose image is contained in $\cS$, and which is also a smooth as a map into $\cS$, such that $0\in J$, $\gamma(0)=p$, and $\gamma'(0)=v$. When $\cM=\R^n$, $T_p\cM\cong \R^n$. Hence, $T_p\cS$ can simply be viewed as a subset of $\R^n$. In that case, one can define the normal space $N_p\cS:=(T_p\cS)^\perp$. When $\cS$ is an embedded submanifold of $\R^n$, then the tangent space and the normal space agree with the tangent cone and the normal cones from variational analysis, respectively, namely $T_\cS(p)=T_p\cS$ and $\widehat{N}_\cS (p) = N_\cS (p) = N_{p} \cS$ \cite[Example 6.8]{rockafellar2009variational}.

An action of a group $G$ with identity $e$ on a set $\cM$ \cite[p. 161]{lee2012smooth} is a map $\theta: G \times \cM \rightarrow \cM$ such that 
\begin{enumerate}[label=\rm{(\roman{*})}]
    \item $\forall g,h\in G, ~ \forall x\in \cM, ~\theta(gh,x) = \theta(g,\theta(h,x))$,
    \item $\forall x\in \cM, ~ \theta(e,x) = x$.
\end{enumerate}
When such a map $\theta$ exists, $G$ is said to act on $\cM$ with the action $\theta$, and $\cM$ is referred to as a $G$-space. A Lie group $G$ is a smooth manifold and a group whose multiplication and inversion operations are smooth. A Lie group $G$ acts smoothly on a smooth manifold $\cM$ if there exists a smooth action $\theta: G \times \cM \rightarrow \cM$. To simplify the notation, when the action is clear from the context, we will denote $\theta(g,x)$ by $g\cdot x$.

A function $f:\cM\rightarrow \cN$ between sets $\cM$ and $\cN$ is invariant under an action of a Lie group $G$ on $\cM$, or simply $G$-invariant, if 
$$\forall (g,x)\in G\times \cM, ~~~  f(g\cdot x)=f(x).$$
% Suppose $G$ also acts smoothly on $N$. We say that $f$ is equivariant if
% % $$\forall (g,x)\in G\times M, ~~~  f(gx)=gf(x).$$
Suppose $G$ acts on $\cM$, The orbit of a point $x\in \cM$ is the set $Gx:=\{ g\cdot x : g\in G\}$. We need the following fact concerning the orbit: 
\begin{fact}[{\cite[Corollary 2.21]{kirillov2008introduction}}]
    \label{fact_orbit}
    If $G$ acts smoothly on $\cM$ and $x\in \cM$, then $Gx$ is an immersed submanifold of $\cM$ and $\im d(\theta\p x)_e=T_xGx$.
\end{fact}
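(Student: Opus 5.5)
The statement is the cited \cref{fact_orbit}: for a smooth action $\theta:G\times\cM\to\cM$ and $x\in\cM$, the orbit $Gx$ is an immersed submanifold and $\im d(\theta\p x)_e=T_xGx$. Here $\theta\p x:G\to\cM$ is the orbit map $g\mapsto g\cdot x$. The plan is to show that $\theta\p x$ has constant rank and then pass to the quotient by the stabilizer.

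\textbf{Step 1: constant rank.} The orbit map satisfies the equivariance $\theta\p x\circ L_h=\theta_h\circ\theta\p x$, where $L_h$ is left translation on $G$ and $\theta_h=\theta(h,\cdot)$ is a diffeomorphism of $\cM$ with inverse $\theta_{h^{-1}}$. Differentiating at $e$ gives
$$d(\theta\p x)_h\circ d(L_h)_e=d(\theta_h)_x\circ d(\theta\p x)_e.$$
Both $d(L_h)_e$ and $d(\theta_h)_x$ are isomorphisms, so $\rk d(\theta\p x)_h=\rk d(\theta\p x)_e$ for all $h\in G$. Thus $\theta\p x$ has constant rank on $G$.

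\textbf{Step 2: the stabilizer and the quotient.} The stabilizer $G_x=(\theta\p x)^{-1}(x)$ is a level set of a constant-rank map. By the constant rank level set theorem, it is a closed embedded submanifold, hence a closed Lie subgroup. By the closed subgroup / homogeneous space theorem, $G/G_x$ is a smooth manifold and the projection $\pi:G\to G/G_x$ is a smooth submersion. The orbit map factors as $\theta\p x=\tilde\theta\circ\pi$, where $\tilde\theta(gG_x)=g\cdot x$ is smooth because $\pi$ is a surjective submersion, and injective onto $Gx$ by construction.

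\textbf{Step 3: immersion and tangent space.} Since $\pi$ is a submersion, $\tilde\theta$ also has constant rank. The kernel of $d(\theta\p x)_e$ is the Lie algebra of $G_x$, which is exactly $\ker d\pi_e$. Hence $d\tilde\theta$ is injective at $eG_x$ and, by the equivariance of Step 1, everywhere. So $\tilde\theta$ is an injective immersion. Transporting the topology and smooth structure of $G/G_x$ onto $Gx$ via $\tilde\theta$ makes $Gx$ an immersed submanifold. Its tangent space at $x$ is
$$T_xGx=\im d\tilde\theta_{eG_x}=\im\bigl(d\tilde\theta_{eG_x}\circ d\pi_e\bigr)=\im d(\theta\p x)_e,$$
using surjectivity of $d\pi_e$.

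\textbf{Main obstacle.} The delicate point is the identification $\ker d(\theta\p x)_e=T_eG_x$. The inclusion $\supseteq$ is clear. For the reverse, take $v\in\ker d(\theta\p x)_e$ and consider the curve $t\mapsto\exp(tv)\cdot x$. Using equivariance, its velocity at time $t$ equals $d(\theta_{\exp tv})_x\,d(\theta\p x)_e v=0$, so the curve is constant. Hence $\exp(tv)\in G_x$ for all $t$, which gives $v\in T_eG_x$.

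One must also keep in mind that the manifold structure on $Gx$ comes from $G/G_x$ and need not agree with the subspace topology, so $Gx$ is only immersed, not embedded. Beyond these points, the argument is standard (Lee, Ch.~7 and~21), as in the cited reference.
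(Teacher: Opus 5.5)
Your proof is correct and is the standard argument behind the citation. The paper proves nothing here itself and defers to Kirillov, where the orbit is likewise obtained as the image of the injective immersion $G/G_x\to\cM$ induced by the constant-rank orbit map $\theta^x$, with $G_x$ a closed Lie subgroup whose Lie algebra is $\ker d(\theta^x)_e$. One economy: the constant-rank level set theorem you already use in Step~2 gives $T_eG_x=\ker d(\theta^x)_e$ directly, and the global rank theorem makes the injective constant-rank map $\tilde\theta$ an immersion, so the exponential-curve argument in your ``main obstacle'' is valid but unnecessary.
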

While \cref{fact_orbit} shows that the orbit of an arbitrary Lie group is always an immersed submanifold, we will need the orbit to be embedded for our purpose in this paper. It is known that the orbit is embedded when the action is proper \cite[Proposition 21.7]{lee2012smooth}, but the action considered in this paper is usually improper. Fortunately, the action considered in this paper is always definable, which is defined as follows. Assume that $G\subseteq \R^{q}$, $\cM\subseteq \R^n$, and the graph of the action $\theta:~G\times \cM\to \cM$ has definable graph, then the action is said to be definable. The next fact is proved in \cite[Appendix (B4)]{gibson1979singular}.
\begin{fact}
    \label{fact_embeddedorbit}
    Let $\cM\subseteq \R^n$ be an embedded submanifold of $\R^n$. Assume the action $\theta:G\times \cM\to \cM$ is smooth and semi-algebraic. Then every orbit of $G$ is an embedded submanifold of $\cM$.
\end{fact}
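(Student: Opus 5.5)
Since the statement is quoted from Gibson's monograph, I plan to reproduce the classical argument rather than something new. The strategy is to show that the orbit $Gx$ of any $x\in\cM$ is a semi-algebraic set that is an injectively immersed image of a homogeneous space, and then to use semi-algebraic geometry to rule out the non-embedded pathologies (such as a dense winding on a torus).

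First I set up the homogeneous space. By \cref{fact_orbit}, $Gx$ is an immersed submanifold. Concretely, let $G_x$ be the stabilizer, a closed subgroup. The map $G/G_x\to\cM$, $gG_x\mapsto g\cdot x$, is an injective immersion of constant rank with image $Gx$. Since the graph of $\theta$ is semi-algebraic, $Gx$ is the projection of a semi-algebraic set, hence semi-algebraic by Tarski--Seidenberg. Moreover $\dim Gx$ (semi-algebraic dimension) equals $k:=\dim G/G_x$, the rank of $\theta^x$.

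Next I prove local closedness, which is the key step. By semi-algebraic stratification, $\overline{Gx}\setminus Gx$ is semi-algebraic. I will show it has dimension $<k$. The frontier inequality gives $\dim(\overline{Gx}\setminus Gx)<\dim Gx$ for any semi-algebraic set. This implies that $Gx$ contains a point $y$ at which $Gx$ is locally closed and is a $C^1$ embedded $k$-dimensional submanifold: take a top-dimensional stratum of a stratification of $\overline{Gx}$ compatible with $Gx$ and the frontier, and a generic point of it not in the closure of the lower-dimensional strata. By homogeneity, for $g\in G$ the map $z\mapsto g\cdot z$ is a diffeomorphism of $\cM$ preserving $Gx$, with smooth inverse $g^{-1}$. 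Hence every point of $Gx$ has a neighborhood $U$ in $\cM$ with $U\cap Gx$ a closed $k$-dimensional embedded submanifold of $U$. Thus $Gx$ is locally closed and locally an embedded submanifold.

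It remains to check that the embedded structure coincides with the immersed structure from \cref{fact_orbit}. The immersion $G/G_x\to Gx$ is a bijective immersion between $k$-manifolds into the embedded submanifold, and hence a local diffeomorphism. Being bijective, it is a diffeomorphism onto the embedded submanifold. Here I use that a smooth map into $\cM$ with image in an embedded submanifold is smooth into it, together with the inverse function theorem.

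I expect the main obstacle to be the generic-point step. It requires producing a point where $Gx$ is locally closed and a manifold of full dimension $k$, and it requires excluding the case where $Gx$ accumulates on itself in the same dimension. The frontier dimension bound handles this, since self-accumulation would force the immersed sheets to form a semi-algebraic set of larger dimension, or a frontier of dimension $k$. I would invoke the standard $\dim(\overline{S}\setminus S)<\dim S$ and the existence of a dense open subset of regular points.
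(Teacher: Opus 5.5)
Your argument is correct. The paper gives no proof of its own and only cites Gibson's Appendix (B4); your argument is the standard one behind that citation: semi-algebraicity of $Gx$ gives a point near which the orbit is a closed $k$-dimensional submanifold, and the diffeomorphisms $\theta_g$ of $\cM$ carry this to every point of $Gx$.

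One correction to your closing heuristic: the frontier bound only ensures that a top-dimensional stratum lies inside $Gx$. Self-accumulation of the orbit (as in a figure-eight, which is semi-algebraic with empty frontier) is excluded instead by your generic-point-plus-homogeneity step, which your main argument already uses correctly.
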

\begin{fact}[{\cite[(3.6) and Theorem 3.8]{dudek1994nonlinear}}]
\label{fact_proj}
Let $\cM$ be a $C^2$ embedded submanifold of $\R^n$ and $\overline x\in \cM$. There exists a neighborhood $U$ of $\overline x$ in $\R\p n$ such that 
$$\forall x\in U\cap \cM,~\forall y\in U~,~~~ y-x\in \cN_x\cM ~~~\implies ~~~ P_\cM(y)=x.$$
\end{fact}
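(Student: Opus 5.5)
Fix a $C^2$ tubular-type neighborhood and reduce everything to the local normal form of $\cM$. Since $\cM$ is a $C^2$ embedded submanifold, by \cite[Proposition 5.16]{lee2012smooth} there are a neighborhood $V$ of $\overline x$ and a $C^2$ submersion $\Phi:V\to\R^{k}$ with $\cM\cap V=\Phi^{-1}(0)$ and $\ker d\Phi_x=T_x\cM$ for $x\in\cM\cap V$. The plan is to show that for $y$ close to $\overline x$ the nearest point $P_\cM(y)$ exists, is close to $\overline x$, and is characterized by the first-order condition $y-x\in N_x\cM$. Then we argue that this condition forces $x$ to be that nearest point.

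First, shrink to a closed ball $\overline B_\delta(\overline x)$ on which $\cM$ is closed (local closedness of embedded submanifolds). For $y\in B_{\delta/3}(\overline x)$, any minimizer of $|y-\cdot|$ over $\cM\cap \overline B_\delta(\overline x)$ lies within $2\delta/3$ of $\overline x$. Hence it is an interior point, it is a local and in fact global nearest point in $\cM\cap B_\delta$, and it satisfies $y-P(y)\in N_{P(y)}\cM$ by the Lagrange condition. Existence follows by compactness.

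The key step is uniqueness of solutions of the normal equation near $\overline x$. Consider the $C^1$ map $E:(x,v)\mapsto x+v$ defined on the normal bundle $\{(x,v):x\in\cM,\ v\in N_x\cM\}$. This is a $C^1$ manifold of dimension $n$, because $N_x\cM=\im (d\Phi_x)^*$ varies $C^1$ when $\Phi$ is $C^2$. At $(\overline x,0)$ the differential of $E$ is $(\xi,\eta)\mapsto \xi+\eta$ from $T_{\overline x}\cM\times N_{\overline x}\cM$ to $\R^n$, which is an isomorphism. By the inverse function theorem, $E$ is injective on a neighborhood $W$ of $(\overline x,0)$, say $W\supseteq\{(x,v): x\in\cM\cap B_\epsilon(\overline x),|v|<\epsilon\}$.

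Choose $U=B_{\eta}(\overline x)$ with $\eta$ small, for instance $\eta<\epsilon/2$ and $\eta<\delta/3$. Take $x\in U\cap\cM$ and $y\in U$ with $y-x\in N_x\cM$. Then $|y-x|<2\eta<\epsilon$, so $(x,y-x)\in W$. Let $p=P_\cM(y)$ be the point from the first step. Then $|y-p|\le|y-x|<2\eta$, so $|p-\overline x|<3\eta$, and after shrinking $\eta$ further we get $(p,y-p)\in W$. Since $E(x,y-x)=y=E(p,y-p)$, injectivity gives $x=p$.

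It remains to see that $p$ is the global projection, so that $P_\cM(y)$ is single-valued and equal to $x$. Any point of $\cM$ outside $B_\delta$ is at distance more than $2\delta/3$ from $y$, while $|y-p|<2\eta<2\delta/3$. The main obstacle is justifying that the normal bundle is a $C^1$ manifold, so that the inverse function theorem applies with only $C^2$ regularity. To handle this, I would parametrize it concretely as $(x,\lambda)\mapsto (x,(d\Phi_x)^*\lambda)$ using a $C^2$ local chart of $\cM$, and apply the inverse function theorem to the $C^1$ map $(u,\lambda)\mapsto \varphi(u)+(d\Phi_{\varphi(u)})^*\lambda$ on $\R^{n-k}\times\R^k$.
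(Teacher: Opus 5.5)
Your proof is correct. It is a genuinely different route from the paper's, which gives no argument and simply imports the statement from Dudek and Holly \cite{dudek1994nonlinear}.

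You run the standard tubular-neighborhood argument:
\begin{itemize}
\item local closedness of the embedded submanifold makes $\cM\cap\overline B_\delta(\overline x)$ compact, so a nearest point $p$ exists;
\item $p$ is interior, so it satisfies the first-order condition $y-p\in N_p\cM$;
\item the inverse function theorem, applied to the $C^1$ map $(u,\lambda)\mapsto\varphi(u)+(d\Phi_{\varphi(u)})^*\lambda$, makes $(x,v)\mapsto x+v$ injective on small normal vectors over $\cM\cap B_\epsilon(\overline x)$, which forces every nearest point to equal $x$.
\end{itemize}
Citing the reference keeps the paper short. Your route is elementary and shows exactly where the hypotheses enter, but the compactness step ties it to finite dimensions.

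On the hypotheses: $C^2$ regularity is what makes the normal bundle $C^1$, so that the inverse function theorem applies. The statement genuinely fails for $C^1$ manifolds: on the graph of $t\mapsto|t|^{3/2}$, the points $(0,s)$ with small $s>0$ lie on the normal line at the origin but have two nearest points, neither of which is the origin. Embeddedness is used twice: for local closedness, and to ensure all points of $\cM$ near $\overline x$ lie in the single chart image $\varphi(O')$.

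Two small points to tighten:
\begin{itemize}
\item Write ``let $p$ be any element of $P_\cM(y)$'' rather than ``let $p=P_\cM(y)$'', since single-valuedness is part of what you are proving.
\item The inverse function theorem gives you a neighborhood in $(u,\lambda)$-coordinates. Passing to the set $\{(x,v):x\in\cM\cap B_\epsilon(\overline x),\,v\in N_x\cM,\,|v|<\epsilon\}$ uses continuity of $\varphi^{-1}$ together with the uniform bound $|\lambda|\le C|v|$. This bound comes from $\lambda=(d\Phi_x\,d\Phi_x^*)^{-1}d\Phi_x v$ and $N_x\cM=\im (d\Phi_x)^*$.
\end{itemize}
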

A Lie subgroup of a Lie group $G$ is a subgroup of $G$ endowed with a topology and smooth structure making into a Lie group and an immersed submanifold. Topologically closed subgroups of Lie groups are Lie subgroups by the closed subgroup theorem \cite[Theorem 20.12]{lee2012smooth}. Let $\fg$ denote the Lie algebra of a Lie group $G$, which we identify with its tangent space at $e$. The notation $\theta_g:\cM\to \cM$ and $\theta^x: G\to \cM$ denote the partial action of $\theta$ when one of the parameters is fixed. 

% We introduce some standard vocabulary. 

%A map between topological spaces is proper if preimages of compact sets are compact.
% Suppose $G$ acts smoothly on $M$.

% A smooth slice is an embedded submanifold $A \subseteq M$ such that $G(A|A)A=A$ and the restriction of the action $G\times A \to M$ is a smooth submersion. A smooth slice at a point $x \in M$ is a smooth slice such that $x\in A$ and $G(A|A) = G_x$. A smooth normal slice is a smooth slice $A$ such that $G(A|A) = G_y$ for all $y \in A$. If $A$ is a smooth normal slice at $x$, then passing down to the quotient $G/G_x \times A \to GA$ yields a diffeomorphism (see \cite[Section 2.3.3]{josz2025subdifferentiation} for a proof).

    Let $I_n$ denote the identity matrix of order $n$. The set of invertible matrices with real coefficients of order $n$, denoted  $\GL(n)$, is a Lie group. The natural action of a Lie subgroup $G$ of $\GL(n)$ on $\R^ n$ is defined by the matrix vector multiplication $G\times \R^ n \ni (g,x)\mapsto g\cdot x\in \R^ n$. The orthogonal group $$\Orth(n) := \{Q\in\R^{n\times n}: Q^ T Q = I_n\}$$ is a Lie subgroup of $\GL(n)$. 
    % \cite[Section 7.3]{boumal2023introduction}.
    % $$\St(n,k) = \{ A \in \R^{n\times k} : A^ T A = I_k \}$$
    % \cite[Section 7.5]{boumal2023introduction}
    The fixed-rank matrices form an embedded submanifold of $\R^{m\times n}$ \cite[Section 7.5]{boumal2023introduction}:
    $$\R_r^{m\times n} := \{ A \in \R^{m\times n} : \rk A = r\}.$$
    If $r = \min\{m,n\}$, then we let $\R_*^{m\times n} := \R_r^{m\times n}$, i.e., the set of full-rank matrices of size $m\times n$. It is an open subset of $\R^{m\times n}$, \cite[Proposition 2.1]{vandereycken2009embedded}. We will also consider bounded rank matrices:
    \[        \R_{\leq r}^{m\times n} := \{ A \in \R^{m\times n} : \rk A \leq r\}.                                        \]
    Let
    \begin{align*}
        \Sym\p n&:=\{A\in\R^{n\times n}:A^\top = A\},& &\Sym\p n_+:=\{A\in\Sym^n:A\succeq 0\}, \\
        \Sym\p n_r&:=\{A\in\Sym^{n}:~\rk(A) =r\},& &\Sym^n_{+,r}:=\{A\in\Sym\p n:A\succeq 0,~\rk(A)=r\}, \\
        \Sym\p n_{\leq r}&:=\{A\in\Sym\p {n}:~\rk(A)\leq r\},&&\Sym\p n_{+,\leq r}:=\{A\in\Sym\p n:A\succeq 0,~\rk(A)\leq r\}.
    \end{align*}
    % $$\Sym\p n:=\{A\in\R\p {n\times n}:A\p T = A\} ~~~\text{and}~~~ \Sym\p n_+:=\{A\in\Sym\p n:A\succeq 0\}.$$
    % Moreover, let us introduce the manifold of symmetric positive semidefinite fixed-rank matrices:
    % $$\Sym\p n:=\{A\in\R\p {n\times n}:A\p T = A\} ~~~\text{and}~~~ \Sym\p n_+:=\{A\in\Sym\p n:A\succeq 0\}.  $$
    % The notations $\bS^n_r$ and $\bS^n_{\leq r}$ are defined in a similar way without requiring positive semidefinite
The manifold $\Mr$ plays an important role in our paper. It is well-known that $\Mr$ is a smooth embedded submanifold of $\R^{m\times n}$ \cite[Proposition 2.1]{vandereycken2013low}. For $X\in \Mr$, and given an SVD of $X$:
\[   X=U\begin{bmatrix}
    \Alpha & 0 \\
    0 & 0
\end{bmatrix} V^\top,       \]
where $\Alpha \in \R^{r\times r}$ is a positive diagonal matrix. Then, $T_{\Mr}(X)=T_X\Mr$ is given by:
\begin{equation}
    \label{Mrtangent}
      T_{\Mr}(X)=U\begin{bmatrix}
    \R^{r\times r} & \R^{r\times (n-r)}  \\
    \R^{(m-r)\times r} &  0
\end{bmatrix} V^\top.   
\end{equation}
For $H\in T_{\Mr}(X)$, its second-order tangent set $T^2_{\Mr}(X,H)$ is given by \cite[(3.16)]{yang2025variational}:
\begin{equation}
    \label{Mrsotangent}
    T^2_{\Mr}(X,H)= \left\{HV\begin{bmatrix}
    \Alpha^{-1} & 0 \\
    0 & 0
\end{bmatrix}U^\top H\right\}+T_{\R^{m\times n}_r}(X). 
\end{equation}

\section{Calculus rules}
\label{sec:calculus-rules}

We propose two calculus rules in this section.  

\subsection{Composition rule}
\label{subsec:Composition rule}
%First, we state Li and Pong's calculus rule, mentioned in the introduction. It is founded on the chain rules in \cref{fact:change_coordinates}. Moreover, since the chain rule also holds for the Clarke subdifferential, we can deduce the chain rule for the \KL exponent as well.

Our first calculus rule harnesses a composite structure of the objective function for which it helpful to recall a fundamental result in variational analysis. 
%Roughly speaking, we want to generalize the results in \cref{fact:rule} to the case where the inner mapping $F$ is a submersion to an embedded submanifold of $\R^m$ instead of $\R^m$ itself. To achieve this goal, let us start with some definitions that are needed in the proofs of \cref{fact:rule}.

\begin{definition}[{\cite[Section 3E]{dontchev2009implicit}}]
    \label{def:metric_reg}
    A set-valued mapping $F:\R\p n\rightrightarrows \R\p n$ is metrically regular at $\overline{x}$ for $\overline{y}\in F(\overline{x})$ if there exist $\kappa>0$ along with neighborhoods $U$ of $\overline{x}$ and $V$ of $\overline{y}$ such that
    $$\forall x\in U,~\forall y\in V, ~~~ d(x,F\p{-1}(y))\leq \kappa \hspace{.3mm}d(y,F(x)).$$
\end{definition}

% \begin{definition}[{\cite[Section 3H]{dontchev2009implicit}}]
%     \label{def:metric_subreg}
%     A set-valued mapping $F:\R\p n\rightrightarrows \R\p n$ is metrically subregular at $\overline{x}$ for $\overline{y}\in F(\overline{x})$ if there exist $\kappa>0$ along with neighborhoods $U$ of $\overline{x}$ and $V$ of $\overline{y}$ such that
%     $$\forall x\in U, ~~~ d(x,F\p{-1}(\overline{y}))\leq \kappa \hspace{.3mm}d(\overline{y},F(x)\cap V).$$
% \end{definition}

\begin{theorem}{\normalfont (Lyusternik-Graves theorem \cite[Theorem 5.1]{dontchev2021lectures})\textbf{.}}
    \label{thm:LG}
    If $F:\R^n\to\R^m$ is $C\p 1$ near $\overline{x}\in\R\p n$ and a submersion at $\overline{x}$, then $F$ is metrically regular at $\overline{x}$ for $F(\overline{x})$.
    %there exist a neighborhood $U$ of $(\overline{x},F(\overline{x}))$ in $\R\p n\times \R\p m$ and $\kappa>0$ such that 
    %\[\forall (x,y)\in U,~~~ d(x,F^{-1}(y))\leq \kappa|F(x)-y|.       \]
\end{theorem}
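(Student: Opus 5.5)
The plan is to reduce the statement to a quantitative inverse-function argument via a right inverse of $dF_{\overline{x}}$. Since $dF_{\overline{x}}$ is surjective, I will pick a linear right inverse $R:\R^m\to\R^n$ with $dF_{\overline{x}}R=I_m$; for instance, $R = dF_{\overline{x}}^*(dF_{\overline{x}}dF_{\overline{x}}^*)^{-1}$. By continuity of $dF$, there is $\delta>0$ such that $\|dF_x - dF_{\overline{x}}\|\le 1/(2\|R\|)$ for all $x\in B_{\delta}(\overline{x})$. For $x$ in this ball, $dF_xR$ is then invertible with $\|(dF_xR)^{-1}\|\le 2$.

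Given $x$ near $\overline{x}$ and $y$ near $F(\overline{x})$, I will look for a solution of $F(x+Rz)=y$ with $|z|\lesssim |y-F(x)|$. This gives $d(x,F^{-1}(y))\le \|R\|\,|z|\le \kappa|y-F(x)|$.

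I will set up a Newton-type fixed-point map
$$\Phi(z) := z - (dF_{\overline{x}}R)^{-1}\bigl(F(x+Rz)-y\bigr) = z - \bigl(F(x+Rz)-y\bigr).$$
Its derivative is $I - dF_{x+Rz}R = (dF_{\overline{x}}-dF_{x+Rz})R$, which has norm at most $1/2$ while $x+Rz\in B_\delta(\overline{x})$. Hence $\Phi$ is a $1/2$-contraction on the ball $\{|z|\le 2|y-F(x)|\}$, provided $|x-\overline{x}|+2\|R\|\,|y-F(x)|<\delta$. It maps this ball into itself, because
$$|\Phi(z)|\le |\Phi(z)-\Phi(0)|+|\Phi(0)|\le |z|/2+|F(x)-y|\le 2|y-F(x)|.$$
The Banach fixed-point theorem then yields $z$ with $F(x+Rz)=y$, and therefore $\kappa=2\|R\|$ works.

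The only step needing care is choosing the neighborhoods $U,V$ so that the ball stays inside $B_\delta(\overline{x})$. I will take $U=B_{\delta/4}(\overline{x})$ with $F(U)\subseteq B_{\eta}(F(\overline{x}))$, where $\eta$ is small enough, using continuity of $F$. Taking $V=B_\eta(F(\overline{x}))$ gives $|y-F(x)|\le 2\eta$, and choosing $\eta\le \delta/(16\|R\|+1)$ closes the argument. Alternatively, one can simply invoke the cited general Lyusternik–Graves theorem directly, since the statement is exactly its smooth single-valued case.
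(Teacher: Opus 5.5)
Your argument is correct. It takes a genuinely different route from the paper, which gives no proof and simply invokes the general Lyusternik--Graves theorem from Dontchev's lectures. That theorem is a perturbation result: metric regularity of a (possibly set-valued) map with modulus $\kappa$ is preserved under addition of a Lipschitz perturbation with constant $\mu<1/\kappa$. The smooth case follows by applying it to the surjective linear map $dF_{\overline{x}}$, which is metrically regular by the open mapping theorem. The perturbation is $F-dF_{\overline{x}}$, whose Lipschitz constant near $\overline{x}$ is as small as desired because $F$ is $C^1$.

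You instead run the classical Graves/Newton contraction argument in finite dimensions, where a bounded linear right inverse $R$ of $dF_{\overline{x}}$ comes for free. This makes the proof self-contained and gives an explicit modulus $\kappa=2\|R\|$, which equals $2/\sigma_m(dF_{\overline{x}})$ for your pseudoinverse choice of $R$. The cited theorem is valid in Banach spaces, where surjective operators need not have bounded linear right inverses and an iterative construction is required, and it also covers set-valued maps. Neither generality is needed for the paper's use in \cref{lemma:submersion}.

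One small fix in the bookkeeping. As written, $U=B_{\delta/4}(\overline{x})$ need not satisfy $F(U)\subseteq B_\eta(F(\overline{x}))$ once you impose $\eta\le\delta/(16\|R\|+1)$. Fix $\eta$ first, then take $U=B_{\rho}(\overline{x})$ with $\rho\le\delta/4$ small enough, by continuity of $F$, that $F(U)\subseteq B_\eta(F(\overline{x}))$. Your estimate $|x-\overline{x}|+2\|R\|\,|y-F(x)|<\delta/4+4\|R\|\eta<\delta$ then goes through unchanged. The bound $\|(dF_xR)^{-1}\|\le 2$ is never used and can be dropped.
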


The Lyusternik-Graves theorem allows one to generalize Li and Pong's calculus rule \cite[Theorem 3.2]{li2018calculus} for the \KL exponent mentioned in the introduction to the growth exponent. 

\begin{lemma}
    \label{lemma:submersion}
        Let $f := g \circ F$ where $F:\R^ n \to \R^ m$ is a $C^1$ submersion near $\overline{x}\in\dom f$ and $g:\R^ m \to \overline{\R}$ is lsc near $F(\overline{x})$. If $g$ has growth (resp. \KL) exponent $\beta$ at $F(\overline{x})$, then $f$ has growth (resp. \KL) exponent $\beta$ at $\overline{x}$. 
\end{lemma}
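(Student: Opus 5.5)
The lemma has two parts, and I would prove them separately. Throughout, write $\overline{y}:=F(\overline{x})$, assume without loss of generality that $g(\overline{y})=0$, and note that $f(\overline{x})=0$.

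\textbf{Growth exponent.} The key tool is metric regularity from \cref{thm:LG}: there are $\kappa_0>0$ and neighborhoods $U$ of $\overline{x}$ and $V$ of $\overline{y}$ such that $d(x,F^{-1}(y))\le \kappa_0|F(x)-y|$ for all $x\in U$ and $y\in V$.
\begin{enumerate}
\item Shrink $\rho$ so that $F(B_\rho(\overline{x}))\subseteq B_{\rho_g/2}(\overline{y})\cap V$, where $\rho_g$ is the radius in the growth inequality for $g$.
\item Take $x\in B_\rho(\overline{x})$ and set $y=F(x)$. The growth of $g$ gives $g(y)_+\ge \kappa\, d(y,[g\le 0])^\beta$.
\item Pick $z\in[g\le 0]$ with $|y-z|\le d(y,[g\le 0])+\epsilon$. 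Since $|y-\overline{y}|<\rho_g/2$ and $\overline{y}\in[g\le 0]$, we have $|z-\overline{y}|\le 2|y-\overline{y}|+\epsilon<\rho_g+\epsilon$. After shrinking $\rho$ further and taking $\epsilon$ small, this places $z$ inside $V$.
\item Metric regularity then yields $x'\in F^{-1}(z)$ with $|x-x'|\le \kappa_0|y-z|+\epsilon$. Because $f(x')=g(z)\le 0$, the point $x'$ lies in $[f\le 0]$.
\item Hence $d(x,[f\le 0])\le \kappa_0\, d(y,[g\le 0])+O(\epsilon)$. Letting $\epsilon\to 0$ gives $f(x)_+=g(F(x))_+\ge \kappa\kappa_0^{-\beta}\, d(x,[f\le 0])^\beta$.
\end{enumerate}
The only care needed is the neighborhood bookkeeping in step 3, which keeps $z$ inside $V$ so that \cref{thm:LG} applies.

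\textbf{\KL exponent.} I would use \cref{fact:change_coordinates}, which applies at every $x$ near $\overline{x}$ because $dF_x$ is surjective there.
\begin{enumerate}
\item At each such $x$, $\cp f(x)=(dF_x)^*\cp g(F(x))$.
\item Since $dF_x$ is surjective, $(dF_x)^*$ is injective, with $|(dF_x)^*w|\ge \sigma_{\min}(dF_x)|w|$. By continuity of $dF$ and lower semicontinuity of the smallest singular value, $\sigma_{\min}(dF_x)\ge \sigma>0$ on a neighborhood of $\overline{x}$.
\item Therefore $d(0,\cp f(x))\ge \sigma\, d(0,\cp g(F(x)))$.
\item Choose $r$ so small that $F(B_r(\overline{x}))\subseteq B_{r_g}(\overline{y})$, using continuity of $F$. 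For $x\in B_r(\overline{x})\cap[0<f<\ell]$ we have $0<g(F(x))<\ell$, so the \KL inequality for $g$ gives $d(0,\cp f(x))\ge \sigma c\, f(x)^\alpha$.
\end{enumerate}
Both the closure in the Clarke subdifferential and the use of the adjoint are already handled by \cref{fact:change_coordinates}, so this part is routine once the uniform lower bound $\sigma$ in step 2 is in place.

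I expect the main obstacle to be the growth part, specifically producing the nearby preimage point $x'$ through metric regularity while keeping the auxiliary point $z$ inside the regularity neighborhood $V$.
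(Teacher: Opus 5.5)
Your proof is correct and takes essentially the same route as the paper: Lyusternik--Graves metric regularity of $F$ (\cref{thm:LG}) transfers the growth inequality, and the chain rule of \cref{fact:change_coordinates} combined with a uniform bound $|(dF_x)^*w|\geq \sigma|w|$ near $\overline{x}$ transfers the \KL inequality. The only difference is cosmetic: the paper first replaces $g$ by $(g-g(\overline{y}))_+$ and projects onto the closed set $[g=0]$, whereas your $\epsilon$-approximate nearest points in $[g\le 0]$ avoid needing closedness of that sublevel set.
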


For pedagogical purposes, we now present a special case of \cref{lemma:submersion}. A function is positive definite if the origin is a strict global minimum.

\begin{proposition}
    Let $\mathcal{L}:\R\p m\to \R$ be lsc and positive semidefinite with \KL exponent $1/2$ at the origin. Let $F:\R\p n\to \R\p m$ be $C\p {\min\{n-m+1,1\}}$ such that $\im F$ has positive measure in $\R\p m$. For almost every $y \in \im F$, the function $f:\R\p n\to \R$ defined by
    $f(x) := \mathcal{L}(F(x)-y)$
    has \KL exponent $1/2$ at any global minimum.
\end{proposition}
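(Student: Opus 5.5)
Our plan is to reduce the proposition to \cref{lemma:submersion} by way of Sard's theorem. Write $g_y(z):=\mathcal{L}(z-y)$, so that $f=g_y\circ F$. Translation preserves lower semicontinuity, so $g_y$ is lsc. It also inherits the \KL exponent $1/2$ at the point $y$ from $\mathcal{L}$ at the origin, because $\cp g_y(z)=\cp\mathcal{L}(z-y)$ and the function values agree. The smoothness class $C^{\max\{n-m+1,1\}}$ (the statement's $\min$ should presumably read $\max$) is exactly the hypothesis of Sard's theorem for maps $\R^n\to\R^m$. Sard's theorem then says that the set of critical values of $F$ has Lebesgue measure zero in $\R^m$.

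Since $\im F$ has positive measure, almost every $y\in\im F$ (in the sense of Lebesgue measure restricted to $\im F$) is a regular value of $F$. Fix such a $y$. The minimum value of $\mathcal{L}$ is $0$, attained at the origin, and $y\in\im F$, so the global minimum value of $f$ is $0$. Moreover, every global minimum $\overline{x}$ of $f$ satisfies $\mathcal{L}(F(\overline{x})-y)=0$.

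Here lies the main obstacle. Positive semidefiniteness alone does not force $F(\overline{x})=y$, since $\mathcal{L}$ could vanish elsewhere. We handle this with the growth consequence of the \KL exponent. By \cref{fact:conversion}, $\mathcal{L}$ has growth exponent $2$ at $0$, which only controls the distance to $[\mathcal{L}\le 0]$ near the origin and so does not immediately exclude zeros far away. Two readings are possible.
\begin{itemize}
\item If "positive semidefinite" is meant together with the preceding sentence's definition, i.e.\ the origin is the unique zero, then $F(\overline{x})=y$ immediately.
\item Otherwise we apply the argument at each zero $z$ of $\mathcal{L}$. This would require the \KL property at every zero, so we adopt the first reading, which matches the sentence defining positive definiteness just before the statement.
\end{itemize}
Under this reading $F(\overline{x})=y$, and since $y$ is a regular value, $dF_{\overline{x}}$ is surjective.

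Because $F$ is $C^1$ and surjectivity of the differential is an open condition, $F$ is a $C^1$ submersion on a neighborhood of $\overline{x}$. Also $\overline{x}\in\dom f$, and $g_y$ is lsc near $F(\overline{x})=y$ with \KL exponent $1/2$ there. \cref{lemma:submersion} then gives that $f$ has \KL exponent $1/2$ at $\overline{x}$. The exceptional set of $y$ is contained in the set of critical values of $F$, which is null, and this holds uniformly over all global minima. This completes the argument, with the only delicate points being the identification $F(\overline{x})=y$ and the use of Sard's theorem with the correct differentiability order.
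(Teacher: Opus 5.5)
Your proposal is correct and follows essentially the same route as the paper. Sard's theorem makes almost every $y\in\im F$ a regular value, the strict minimum of $\mathcal{L}$ at the origin forces $F(\overline{x})=y$ at every global minimum $\overline{x}$, and \cref{lemma:submersion} applied to $\mathcal{L}(\cdot-y)$ finishes the argument. Both of your interpretive fixes are the ones the paper's own proof relies on. It explicitly invokes positive \emph{definiteness} of $\mathcal{L}$, which is genuinely needed because an extra zero of $\mathcal{L}$ with higher-order growth would give global minima of $f$ without exponent $1/2$. And Sard's theorem requires $C^{\max\{n-m+1,1\}}$, as you noted.
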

\begin{proof}
   By Sard's theorem \cite{sard1942measure} (see also \cite[Theorem 6.10]{lee2012smooth}), the set of critical values of $F$ has measure zero in $\R\p m$, i.e., $N=\{ F(x): x\in \R\p n, \rk(dF_x)<m\}$ is a null set of $\R\p m$. Since $\im F$ has positive measure, $N$ is also a null set of $\im F$. Let $y\in \im F \setminus N$ and $x\in \R\p n$ be such that $f(x)=\min f =0$. Since $\mathcal{L}$ is positive definite, $F(x)=y$. Thus $F$ is a submersion at $x$. As $g:=\mathcal{L}(\cdot-y)$ has \KL exponent 1/2 at $F(x)$, 
    %by \cref{lemma:pham} it has \KL exponent $1/2$ at $F(x)$. Hence 
    $f$ has \KL exponent $1/2$ at $x$ by \cref{lemma:submersion}. 
\end{proof}

Our composition rule builds on \cref{lemma:submersion} to relax the submersion requirement of the inner mapping. We instead only need it to be of constant rank near the point of interest.
The main technical idea is to use the rank theorem \cite[Theorem 4.12]{lee2012smooth} to reduce the inner map to a canonical form so that \cref{lemma:submersion} is applicable by restricting the codomain of the inner map. To handle the Clarke subdifferential, we need the following auxiliary result concerning separable functions. There is a similar result in \cite[Proposition 10.5]{rockafellar2009variational} under different assumptions.
\begin{lemma}
    \label{lemma_clarke_separable}
    Let $f(x):=f_1(x_1)+\delta_0(x_2)$ for all $x\in \R\p n$ where $f_1:\R\p r\to \eR$, $x=(x_1,x_2)\in \R^n$, $x_1\in \R^r$, and $x_2\in \R^{n-r}$. Then $$\forall x_1 \in \R\p r,~~~ \cp f(x_1,0)=\cp f_1(x_1)\times \R^{n-r}.$$ 
\end{lemma}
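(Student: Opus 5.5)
The plan is to compute the three ingredients of $\cp f$ separately: the regular subdifferential, the limiting subdifferential, and the horizon subdifferential. Then we combine them through the convex hull and closure.

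\textbf{Regular subdifferential.} Fix $x=(x_1,0)$ with $f_1(x_1)$ finite. We claim $\widehat\partial f(x_1,0)=\widehat\partial f_1(x_1)\times\R^{n-r}$.

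For the inclusion $\supseteq$, let $v_1\in\widehat\partial f_1(x_1)$ and $v_2\in\R^{n-r}$ be arbitrary. Take points $(y_1,y_2)$ near $(x_1,0)$. If $y_2\neq 0$, then $f=\infty$ there and the defining inequality holds trivially. If $y_2=0$, the inequality reduces to the one for $f_1$, because $\langle v_2,y_2\rangle=0$.

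For the inclusion $\subseteq$, let $(v_1,v_2)\in\widehat\partial f(x_1,0)$. Testing with $y_2=0$ gives $v_1\in\widehat\partial f_1(x_1)$.

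At points $(y_1,y_2)$ with $y_2\neq0$, $f$ is not finite. Hence every sequence $(y^k,f(y^k),v^k)$ used in the limiting constructions lies on $\R^r\times\{0\}$.

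\textbf{Limiting and horizon subdifferentials.} Passing to limits along such sequences, we get
$$\partial f(x_1,0)=\partial f_1(x_1)\times\R^{n-r}.$$
For the inclusion $\supseteq$, choose $v^k=(v_1^k,v_2)$ with the second component held constant. For the inclusion $\subseteq$, project onto the first component.

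The horizon subdifferential works the same way, with one check. For $\supseteq$, we need a sequence $\tau_k v_2^k\to v_2$ with $\tau_k\downarrow0$. Choose $v_2^k=v_2/\tau_k$, which is allowed because the second component is unconstrained. This gives
$$\partial^\infty f(x_1,0)=\partial^\infty f_1(x_1)\times\R^{n-r}.$$
Note that $\partial^\infty f$ is nonempty, since it always contains $0\times\R^{n-r}$. The delicate case is when $\partial f_1(x_1)=\emptyset$. Then $\partial f(x_1,0)$ is empty, so $\cp f$ and $\cp f_1$ are both empty and the claimed identity still holds.

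\textbf{Combining.} The sum of the two computed sets is
$$\partial f+\partial^\infty f=(\partial f_1+\partial^\infty f_1)\times\R^{n-r}.$$
The convex hull of a product $A\times\R^{n-r}$ is $\co A\times\R^{n-r}$. The closure of a product is the product of the closures. Therefore
$$\cp f(x_1,0)=\overline{\co}\,[\partial f_1+\partial^\infty f_1](x_1)\times\R^{n-r}=\cp f_1(x_1)\times\R^{n-r}.$$

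The only step that needs real care is the horizon subdifferential. There, the scaling by $\tau_k$ has to be matched with an unbounded second component, and the empty-set edge case has to be tracked. The rest is direct unwinding of the definitions.
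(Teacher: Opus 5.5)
Your proof is correct and follows the paper's own route: you compute $\widehat\partial f$ using that $f=\infty$ off $\R^r\times\{0\}$, pass to $\partial f$ and $\partial^\infty f$ through the definitions, and take closed convex hulls of products. Your choice $v_2^k=v_2/\tau_k$ supplies the horizon-subdifferential detail that the paper leaves implicit.

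One small caveat concerns your remark that $\partial^\infty f(x_1,0)$ always contains $\{0\}\times\R^{n-r}$. That needs $0\in\partial^\infty f_1(x_1)$, which holds when $f_1$ is lsc (by density of $\dom\widehat\partial f_1$) but is not among the lemma's hypotheses. The remark is harmless, though, because $(A\times\R^{n-r})+(B\times\R^{n-r})=(A+B)\times\R^{n-r}$ holds even when $A$ or $B$ is empty.
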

\begin{proof}
 Let $\overline x\in \dom f$, which gives $\overline{x}_2=0$. By the definition of $\widehat\partial f$, we see that $v\in \widehat\partial f(\overline x)$ iff 
    \[   f(x)\geq f(\overline x)+\langle v,x- \overline x \rangle+o(|x-\overline x|).  \]
    Since $f(x)=\infty$ if $x_2\neq 0$, this inequality is equivalent to 
    \[  f_1(x_1)\geq f_1(\overline{x}_1)+\langle v_1,x_1-\overline{x}_1 \rangle+o(|x_1-\overline{x}_1|),     \]
    which is the definition of $v_1\in \widehat\partial f_1(\overline x_1)$. Thus for all $x\in \dom f$, $\widehat\partial f(x)=\widehat\partial f_1(x_1)\times \R^{n-r}$. Using the definition of $\partial f$ and $\partial^\infty f$, 
    \[  \forall x\in \dom f,\quad \partial f(x)=\partial f_1(x_1)\times \R^{n-r},~\partial^\infty f(x)=\partial^\infty f_1(x_1)\times \R^{n-r},    \]
    which implies $\cp f(x)=\cp f_1(x_1)\times \R^{n-r}$ by the definition $\cp f(x)=\overline{\mathrm{co}}[\partial f(x)+\partial^\infty f(x)]$.
\end{proof}

We are now ready to state our composition rule.

\begin{theorem}
\label{thm:composition}
        Let $f := g \circ F$ where $F:\R^ n \to \R^ m$ is $C^1$ with constant rank near $\overline{x}\in\dom f$ and $g:\R^ m \to \overline{\R}$ is lsc near $F(\overline{x})$. For all sufficiently small neighborhood $U$ of $\overline{x}$, if $g+\delta_{F(U)}$ has growth (resp. \KL) exponent $\beta$ at $F(\overline{x})$, then $f$ has growth (resp. \KL) exponent $\beta$ at $\overline{x}$.
        % There exists a neighborhood $U$ of $\overline{x}$ such that for all neighborhood $\widetilde U$ of $\overline{x}$ with $\widetilde U\subseteq U$ the next implication holds 
        % \begin{align*}
        %       &g+\delta_{F(\widetilde U)} \text{ has growth (resp. \L{}ojasiewicz) exponent } \beta \text{ at }F(\overline{x}) \\
        %      \implies &   f \text{ has growth (resp. \L{}ojasiewicz) exponent $\beta$ at $\overline{x}$}.
        % \end{align*}
        % be the restriction of $F$ on a sufficiently small neighborhood of $\overline{x}$. If $g+\delta_{\im \widetilde F}$ has growth (resp. \L{}ojasiewicz) exponent $\beta$ at $F(\overline{x})$, then $f$ has growth (resp. \L{}ojasiewicz) exponent $\beta$ at $\overline{x}$. 
\end{theorem}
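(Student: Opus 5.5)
We plan to reduce \cref{thm:composition} to \cref{lemma:submersion} through the rank theorem \cite[Theorem 4.12]{lee2012smooth}. Let $r$ be the constant rank of $F$ near $\overline{x}$. The rank theorem is stated for smooth maps, but its $C^1$ version follows from the inverse function theorem in the same way. It gives a neighborhood $U_0$ of $\overline{x}$, a $C^1$ diffeomorphism $\varphi$ from $U_0$ onto an open set of $\R^n$ with $\varphi(\overline{x})=0$, and a $C^1$ diffeomorphism $\psi$ from a neighborhood $V_0$ of $F(\overline{x})$ onto an open set of $\R^m$ with $\psi(F(\overline{x}))=0$, such that
$$\psi\circ F\circ\varphi^{-1}(z_1,z_2)=(z_1,0), \qquad z_1\in\R^r,~z_2\in\R^{n-r},$$
on a product neighborhood of $0$. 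We shrink $U_0$ so that $F(U_0)\subseteq V_0$. For any sufficiently small neighborhood $U\subseteq U_0$, we may further shrink $U$ to a set of the form $\varphi^{-1}(W_1\times W_2)$ with $W_1,W_2$ open balls. This does not change anything near $F(\overline{x})$ as long as $F(U)$ still contains a neighborhood of $F(\overline{x})$ in $F(U_0)$. Then $F(U)=\psi^{-1}(W_1\times\{0\})$, which is a $C^1$ embedded $r$-dimensional submanifold of $\R^m$.

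Next we transport the problem. Put $h:=g\circ\psi^{-1}$ near $0$, and write $\pi(z_1,z_2):=z_1$. Define the function $k:\R^r\to\eR$ by $k(z_1):=h(z_1,0)$ for $z_1\in W_1$, with value $\infty$ elsewhere. Then:
\begin{itemize}
\item locally, $(g+\delta_{F(U)})\circ\psi^{-1}(z_1,z_2)=k(z_1)+\delta_0(z_2)$;
\item $f\circ\varphi^{-1}=k\circ\pi$ on $W_1\times W_2$.
\end{itemize}
The map $\pi$ is a linear submersion. Composition with diffeomorphisms preserves both exponents: for the growth exponent because diffeomorphisms are bi-Lipschitz near a point, and for the \KL exponent by \cref{fact:change_coordinates}, since $d\psi$ and $d\varphi$ are invertible and uniformly bounded with bounded inverses near the base points. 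So it suffices to prove two things. First, if $k+\delta_0$ (as a function on $\R^r\times\R^{m-r}$) has exponent $\beta$ at $0$, then $k$ has exponent $\beta$ at $0$. Second, we apply \cref{lemma:submersion} to $k\circ\pi$, and then pull back by $\varphi$.

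The first step is where the separable structure enters. For the \KL exponent, \cref{lemma_clarke_separable} gives $\cp(k+\delta_0)(z_1,0)=\cp k(z_1)\times\R^{m-r}$. Hence $d(0,\cp(k+\delta_0)(z_1,0))=d(0,\cp k(z_1))$, and the inequality restricted to $z_2=0$ is exactly the \KL inequality for $k$. For the growth exponent, take points $(z_1,0)$. Then $[k+\delta_0\le 0]=[k\le0]\times\{0\}$, so $d((z_1,0),[k+\delta_0\le 0])=d(z_1,[k\le 0])$, and growth transfers directly. We also need $k$ to be lsc near $0$; this follows from lsc of $g$ near $F(\overline{x})$ and continuity of $\psi^{-1}$.

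I expect the main obstacle to be the transfer of exponents under the diffeomorphism $\psi$ applied to the extended-valued function $g+\delta_{F(U)}$. \cref{fact:change_coordinates} is stated for $g\circ F$ with $F$ a $C^1$ submersion from $\R^n$. Here we apply it with $F=\psi^{-1}$, which is a local diffeomorphism, so it should apply after extending $\psi^{-1}$ by a cutoff away from the base point. The remaining care is bookkeeping. The neighborhoods $r,\ell$ in \cref{def:strong} and the constant $\kappa$ must change by constants depending on the bounds of $\|d\psi\|$ and $\|d\psi^{-1}\|$. One must also check that the sublevel set $[f\le f(\overline{x})]$ near $\overline{x}$ corresponds under $\varphi$ to $[k\circ\pi\le k(0)]$, which again uses that $\varphi$ is bi-Lipschitz locally.
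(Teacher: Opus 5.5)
Your proposal is correct and follows essentially the same route as the paper. The rank theorem puts $F$ in the canonical form $(x_1,x_2)\mapsto(x_1,0)$. \cref{lemma_clarke_separable} (resp.\ a direct distance computation on $\{y_2=0\}$) then transfers the \KL (resp.\ growth) exponent from $g+\delta_{F(U)}$ to $y_1\mapsto g(y_1,0)$. Finally, \cref{lemma:submersion} is applied to the projection $\pi$.

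The obstacle you anticipate with the charts is not a real one: a local diffeomorphism is a submersion, and the paper simply invokes \cref{lemma:submersion} for $\varphi$ and $\psi$ as well. Your remarks on the $C^1$ rank theorem and on shrinking $U$ to product form are correct bookkeeping that the paper leaves implicit.
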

\begin{proof}
Without loss of generality, $\overline{y}:=  F(\overline{x})$ is a local minimum of $g$ with $g(\overline y)=0$ (otherwise, replace $g$ by $(g-g(\overline{x}))_+$). Let $r$ denote the rank of $F$ near $\overline{x}$. By the rank theorem \cite[Theorem 4.12]{lee2012smooth}, there exist smooth charts $(U,\varphi)$ for $\R\p n$ centered at $\overline{x}$ and $(V,\psi)$ for $\R\p m$ centered at $F(\overline{x})$ such that $F(U)\subseteq V$ and
$$\forall x\in \varphi\p{-1}(U),~~~ \widehat F(x) := (\psi\circ F\circ \varphi\p{-1})(x_1,x_2) = (x_1,0),$$
where $x=(x_1,x_2)\in \R\p n$, $x_1\in\R\p r$, and $x_2\in \R\p{n-r}$. Observe that
$$f=g\circ F = g\circ\psi\p{-1}\circ \widehat F\circ \varphi.$$
With $\widehat f:= f\circ\varphi\p{-1}$ and $\widehat g := g\circ\psi\p{-1}$, we thus have $\widehat f =\widehat g\circ \widehat F$.
Since $\varphi$ and $\psi$ are diffeomorphisms, they leave the growth (resp. \L{}ojasiewicz) exponents unchanged by \cref{lemma:submersion}. We have thus reduced the problem to showing that $\widehat f$ has growth (resp. \L{}ojasiewicz) exponent $\beta$ at $\varphi(\overline{x})$. In other words, it suffices to show that $f=g\circ F$ has growth (resp. \L{}ojasiewicz) exponent $\beta$ at $\overline{x}$ when $F$ has the canonical form
$$\forall x\in U,~~~ F(x_1,x_2) = (x_1,0).$$
Let $\pi_1:\R\p m\to \R\p r$ denote the canonical projection onto the first $r$ coordinates. Then $$f = g\circ F = g_1\circ \pi_1 \circ F$$ where $g_1(y_1):=g(y_1,0)$ for all $y_1\in\R\p r$. Observe that $$g(y)+\delta_{F(U)} = g_1(y_1)+\delta_0(y_2)$$ 
for all $y=(y_1,y_2)\in \R\p m$ near $\overline{y}$ where $y_1\in\R\p r$, $y_2\in \R\p {m-r}$. 

Suppose $g+\delta_{F(U)}$ has growth exponent $\beta$ at $\overline{y}$. There are $\rho,\kappa>0$ such that we successively have
\begin{gather*}
    \forall y \in B_\rho(\overline{y}), ~~~ g(y)+\delta_{F(U)}(y) \geq \kappa\hspace{.3mm} d(y,[g+\delta_{F(U)}=0])\p \beta, \\
    \forall y \in B_\rho(\overline{y}), ~~~ g_1(y_1)+\delta_0(y_2) \geq \kappa\hspace{.3mm} d(y,[g_1+\delta_0=0])\p \beta, \\
    \forall y_1 \in B_\rho(\overline{y}_1), ~~~ g_1(y_1) \geq \kappa\hspace{.3mm} d(y_1,[g_1=0])\p \beta.
\end{gather*}
Thus $g_1$ has growth exponent $\beta$ at $\overline{y}_1$. Since $\pi_1\circ F$ is a submersion, $f$ has growth exponent $\beta$ by \cref{lemma:submersion}.

Suppose $\widetilde g :=g+\delta_{F(U)}$ has \KL exponent $\beta$ at $\overline{y}$. There are $\rho,\kappa>0$ such that
$$\forall y\in B_\rho(\overline{y}), ~~~ d(0,\overline{\partial}\widetilde g(y)) \geq \kappa \widetilde g(y)\p \beta.$$
By \cref{lemma_clarke_separable}, $\overline{\partial}\widetilde g(y) = \overline{\partial}g(y_1)\times \R\p{m-r}$ for all $y\in F(U)$. Thus $d(0,\overline{\partial}\widetilde g(y)) = d(0,\overline{\partial}g_1(y_1))$ and
$$\forall y_1\in B_\rho(\overline{y}_1), ~~~ d(0,\overline{\partial}g_1(y_1)) \geq \kappa g_1(y_1)\p \beta.$$
Thus $g_1$ has \KL exponent $\beta$ at $\overline{y}_1$. Since $\pi_1\circ F$ is a submersion, $f$ has \KL exponent $\beta$ by \cref{lemma:submersion}.
\end{proof}

Due to the requirement that the neighborhood $U$ be sufficiently small in \cref{thm:composition}, it is somewhat unwieldy in practice. But it admits corollaries which can readily be applied.

\begin{corollary}
\label{cor:composition}
     Let $f := g \circ F$ where $F:\R^ n \to \R^ m$ is $C^1$ near $\overline{x}\in\dom f$ and $g:\R^ m \to \overline{\R}$ is lsc near $F(\overline{x})$. Let $U\subseteq \R\p n$ be a neighborhood of $\overline{x}$ and $\cM$ be an embedded submanifold of $\R^m$ such that $F(U)\subseteq \cM$ and the restriction $\widetilde F:U\to \cM$ is a submersion at $\overline{x}$. If $g+\delta_{\cM}$ has growth (resp. \L{}ojasiewicz) exponent $\beta$ at $F(\overline{x})$, then $f$ has growth (resp. \L{}ojasiewicz) exponent $\beta$ at $\overline{x}$.
\end{corollary}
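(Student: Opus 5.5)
The plan is to reduce \cref{cor:composition} to \cref{thm:composition}. First I check that $F$ has constant rank near $\overline{x}$. Since $\widetilde F:U\to\cM$ is a $C^1$ submersion at $\overline{x}$, the differential $d\widetilde F_{x}$ is surjective onto $T_{\widetilde F(x)}\cM$ for $x$ near $\overline{x}$, because surjectivity is an open condition. Moreover, $dF_x = d\iota\circ d\widetilde F_x$ with $\iota:\cM\hookrightarrow\R^m$ the inclusion, and $d\iota$ is injective. Hence $\rk dF_x=\dim\cM$ for all $x$ in a neighborhood of $\overline{x}$. After shrinking $U$, $F$ therefore has constant rank near $\overline{x}$, and $\widetilde F$ is a submersion on all of $U$.

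Next I relate $g+\delta_{F(U')}$ to $g+\delta_\cM$ for a sufficiently small neighborhood $U'\subseteq U$ of $\overline{x}$, chosen as required in \cref{thm:composition}. Submersions are open maps, so $F(U')=\widetilde F(U')$ is an open subset of $\cM$ containing $F(\overline{x})$. Hence there is an open ball $W$ around $F(\overline{x})$ in $\R^m$ with $W\cap\cM\subseteq F(U')$, and also $F(U')\subseteq\cM$. It follows that $g+\delta_{F(U')}$ and $g+\delta_\cM$ coincide on $W$.

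The remaining task is to show that the growth exponent and the \KL exponent at $F(\overline{x})$ depend only on the restriction of the function to a neighborhood of $F(\overline{x})$. For the \KL exponent this is clear. The Clarke subdifferential at $y\in W$ is determined by the values on $W$, since all ingredients ($\widehat\partial$, limits) are local. Thus shrinking $r$ below the radius of $W$ transfers the inequality.

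For the growth exponent this is the main subtle point, because $d(y,[h\le h(\overline{y})])$ involves a global level set. I will shrink $\rho$ so that $B_{2\rho}(\overline{y})\subseteq W$. For $y\in B_\rho(\overline{y})$, the distance to a level set is attained within distance $|y-\overline{y}|<\rho$, since $\overline{y}$ itself lies in the level set. The nearest points are therefore in $B_{2\rho}(\overline{y})\subseteq W$, where the two level sets agree. Consequently the distances coincide for both functions, and the growth inequality transfers.

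Having shown that $g+\delta_{F(U')}$ has exponent $\beta$ at $F(\overline{x})$ for every sufficiently small $U'$, \cref{thm:composition} yields that $f$ has growth (resp. \KL) exponent $\beta$ at $\overline{x}$.
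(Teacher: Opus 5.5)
Your proposal is correct and follows the paper's proof. Both arguments run the same way:
\begin{itemize}
\item deduce that $F$ has constant rank near $\overline{x}$ from the submersion onto $\cM$;
\item use openness of submersions to see that $F(U')$ coincides with $\cM$ near $F(\overline{x})$ for every sufficiently small $U'$;
\item transfer the exponent from $g+\delta_{\cM}$ to $g+\delta_{F(U')}$;
\item conclude with \cref{thm:composition}.
\end{itemize}
The only difference is that you spell out why both exponents are local properties, in particular the $B_{2\rho}(F(\overline{x}))$ argument for the distance to the sublevel set. The paper leaves this step implicit.
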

\begin{proof}
     The condition implies that $F$ is of constant rank near $\overline{x}$. Due to the submersion assumption, the set $F( U)$ agrees with $\cM$ around $F(\overline{x})$ for all sufficiently small neighborhood $U$ of $\overline{x}$. Therefore, if $g+\delta_{\cM}$ has growth (resp. \L{}ojasiewicz) exponent $\beta$ at $F(\overline{x})$, then so does $g+\delta_{F(U)}$ for all sufficiently small neighborhood $U$ of $\overline{x}$. Then the result follows from \cref{thm:composition}.
\end{proof}

When strict optimality holds in the outer function, the rule takes a particularly simple form. Before we state it, recall that isolated local minima of continuous semi-algebraic functions with growth exponent $\beta$ have \KL exponent $\alpha=1-1/\beta$ \cite[Theorem 4.2]{pham2020local}. A more general version of this fact is given below, whose proof is deferred to the Appendix.

\begin{lemma}\label{lemma:pham}
Let $f:\R^n\to \overline{\R}$ be lsc, subanalytic, and $\overline{x}\in \dom f$ be a strict local minimum of $f$. If $f$ has growth exponent $\beta$ at $\overline{x}$, then $f$ has \KL exponent $1-1/\beta$ at $\overline{x}$.
\end{lemma}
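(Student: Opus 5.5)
We reduce to the case $\overline{x}=0$ and $f(\overline{x})=0$ by translating and subtracting a constant. Since $\overline{x}$ is a strict local minimum, there is $\rho>0$ with $[f\le 0]\cap B_\rho(\overline{x})=\{\overline{x}\}$. The growth exponent then reads $f(x)\ge \kappa|x-\overline{x}|^\beta$ on $B_\rho(\overline{x})$.

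\emph{Reduction to a scale-invariant ratio.} We will show that there are $c,r,\ell>0$ such that
$$\forall x\in B_r(\overline{x})\cap[0<f<\ell],~~~ d(0,\cp f(x))\,|x-\overline{x}| \ge c\, f(x).$$
Combined with $|x-\overline{x}|\le (f(x)/\kappa)^{1/\beta}$, this yields $d(0,\cp f(x))\ge c\kappa^{1/\beta} f(x)^{1-1/\beta}$, which is \KL exponent $1-1/\beta$. We use the ratio $d(0,\cp f(x))|x-\overline{x}|/f(x)$ rather than $d(0,\cp f(x))/f(x)^{1-1/\beta}$ because $\beta$ need not be rational, so the power $f^{1-1/\beta}$ need not be subanalytic, whereas the ratio is. Restricting to a bounded ball makes $f$, $\gph \cp f$ (Clarke subdifferentials of subanalytic functions have subanalytic graphs) and hence this ratio globally subanalytic. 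So the function
$$m(s):=\inf\{ d(0,\cp f(x))\,s/f(x) : |x-\overline{x}|=s,~0<f(x)<\ell\}$$
is globally subanalytic on $(0,r)$.

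\emph{Contradiction via a selected curve.} Suppose $\liminf_{s\downarrow 0} m(s)=0$. By the monotonicity lemma, $m(s)\to 0$. By definable choice together with curve selection, there is a globally subanalytic curve $x(s)$ with $|x(s)-\overline{x}|=s$ and $0<f(x(s))<\ell$. Along it there are $v(s)\in\cp f(x(s))$ with $|v(s)|\,s/f(x(s))\to 0$. We then apply \cref{fact_puisuex} and \cref{fact_puiseux_diff} to the coordinates of $x(\cdot)$ and to $\phi(s):=f(x(s))$, which is bounded and hence globally subanalytic.
\begin{itemize}
\item Because $|x(s)-\overline{x}|=s$, every coordinate of the curve has order at least $1$, so $|x'(s)|$ stays bounded by some $K$.
\item Since $0<\phi(s)$ and $\kappa s^\beta\le\phi(s)$, we can write $\phi(s)=a s^{q}(1+o(1))$ with $a>0$ and $0<q\le\beta$.
\item It follows that $\phi'(s)=qa s^{q-1}(1+o(1))$, hence $|\phi'(s)|\ge (q/2)\,\phi(s)/s$ for small $s$.
\end{itemize}

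\emph{Key step: a chain rule along the curve.} If we can show that for almost every small $s$ and all $v\in\cp f(x(s))$ we have $\phi'(s)=\langle v,x'(s)\rangle$, or merely $|\phi'(s)|\le |v|\,|x'(s)|$, then
$$|v(s)|\,s/\phi(s)\ \ge\ q/(2K)>0,$$
contradicting $m(s)\to0$. To get this I would use the projection formula of Bolte--Daniilidis--Lewis--Shiota for definable lsc functions. Take a Whitney stratification of $\gph f$ (and of $\dom f$) compatible with the curve. Then for $s$ small the curve $s\mapsto (x(s),\phi(s))$ lies in a single stratum. On that stratum $f$ is smooth and every Clarke subgradient projects onto the Riemannian gradient of $f$ restricted to the stratum, so pairing with the tangent vector $x'(s)$ gives $\phi'(s)$. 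Two points need care here: the horizon part $\partial^\infty f$ inside $\cp f$, and the fact that $f$ is only lsc, so the curve may lie on a lower-dimensional stratum where $f$ jumps. I expect this step to be the main obstacle. My first attempt would be to invoke the Clarke-subdifferential projection formula \cite[Corollary 9 ff.]{bolte2007clarke}, which is valid for lsc subanalytic functions along strata of $\dom f$ and covers the horizon directions through Whitney (a)-regularity. If that only yields the inequality up to the closed-convex-hull operations, the inequality $|\phi'|\le |v||x'|$ still suffices for the argument.
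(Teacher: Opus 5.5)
Your argument has a genuine gap where you write $\phi(s)=as^{q}(1+o(1))$ with $q>0$.

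\textbf{The missing step.} You fix $\ell$ and take $x(s)$ to be a near-minimizer of the ratio over $\{|x-\overline{x}|=s,\ 0<f(x)<\ell\}$. Nothing then forces $\phi(s)=f(x(s))\to0$, because an lsc function can have positive cluster values at $\overline{x}$. If $\phi(s)\to a>0$, then $q=0$, the bound $|\phi'(s)|\ge (q/2)\phi(s)/s$ says nothing, and no contradiction follows.

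This really happens. Take $f(x):=|x|^2+\tfrac12\mathbf{1}_{\{x_2>0\}}$ on $\R^2$. It is lsc and semialgebraic, satisfies $f\ge|x|^2$, and has a strict minimum at $0$. On $\{x_2>0\}$ you have $\cp f(x)=\{2x\}$, so
\[
|x|\,d(0,\cp f(x))/f(x)=2|x|^2/(|x|^2+\tfrac12)\to0 .
\]
With $\ell=1$ this gives $m(s)\to0$. Your intermediate inequality then fails on $B_r(0)\cap[0<f<1]$ for every $r,c>0$, and so does the \KL inequality itself. So the smallness of $\ell$ is essential, and your contradiction argument never uses it.

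\textbf{The repair.} Let curve selection, rather than an infimum over spheres, produce the curve; this is exactly what the paper does.
\begin{itemize}
\item Negating your claim gives $x_k\to\overline{x}$ with $f(x_k)\downarrow0$ and ratio tending to $0$.
\item Hence $(\overline{x},0,0)$ lies in the closure of the definable set $\{(x,f(x),|x-\overline{x}|\,d(0,\cp f(x))/f(x)) : 0<f(x)<1\}$. This mirrors the paper's set $S\subseteq\gph f$ with $(0,0)\in\overline{S}$.
\item Curve selection at $(\overline{x},0,0)$ gives a curve along which $\phi\to0$.
\item Reparametrize by $s=|x-\overline{x}|$, which is eventually strictly monotone. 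This gives $q\in(0,\beta]$.
\end{itemize}
The rest of your argument then goes through: $|x'|$ is bounded, and the projection formula of \cite{bolte2007clarke} gives $|\phi'|\le|v|\,|x'|$. That formula is $\cp f(x)\subseteq\nabla_B f(x)+N_xB$ on the projection $B$ of a stratum. It is also the paper's key ingredient, and it absorbs the horizon part.

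You should also truncate values, e.g.\ pass to $f+\delta_{\overline{B}_r(\overline{x})}+\delta_{\{f\le1\}}$ as the paper does. Restricting to a bounded ball alone does not make an lsc subanalytic function globally subanalytic, and $\cp f$ is unchanged on $[f<1]$.

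\textbf{Comparison once repaired.} Your route then differs from the paper's only in bookkeeping, but usefully. The paper compares leading exponents and coefficients in $\varphi'\le c\varphi^{\alpha}|\gamma'|$ against the threshold $c<\beta\kappa^{1-\alpha}$. You instead prove the scale-invariant bound $|x-\overline{x}|\,d(0,\cp f(x))\ge c f(x)$ and insert the growth condition only at the end. This keeps the bad set definable even for irrational $\beta$, whereas the paper's set $S$ involves $t^{\alpha}$, which is globally subanalytic only when $\beta$ is rational.
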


We can now deduce another corollary.

\begin{corollary}
\label{coro_composite_strict}
     Let $f := g \circ F$ where $F:\R^ n \to \R^ m$ is $C^1$ with constant rank near $\overline{x}\in \dom f$, $g:\R^ m \to \overline{\R}$ is lsc near $ F(\overline{x})$, and $g$ and $F$ are globally subanalytic. If $g$ has growth exponent $\beta$ at $F(\overline{x})$ and $F(\overline{x})$ is a strict local minimum of $g$, then $f$ has \KL exponent $1-1/\beta$ at $\overline{x}$. 
\end{corollary}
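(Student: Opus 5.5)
The plan is to combine \cref{thm:composition} with \cref{lemma:pham}, applied to the modified outer function $\widetilde g := g+\delta_{F(U)}$ for a suitably small neighborhood $U$ of $\overline{x}$.

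\textbf{Step 1: transfer strictness and growth to $\widetilde g$.} Let $U$ be sufficiently small in the sense of \cref{thm:composition}, and also small enough that $F(U)$ lies in a neighborhood where $F(\overline{x})$ is the unique minimizer of $g$. Then $F(\overline{x})$ is still a strict local minimum of $\widetilde g$, since adding an indicator only raises values off $F(U)$. Write $\overline{y}:=F(\overline{x})$. Strictness gives $[g\leq g(\overline{y})]\cap B_\rho(\overline{y})=\{\overline{y}\}=[\widetilde g\leq \widetilde g(\overline{y})]\cap B_\rho(\overline{y})$ for small $\rho$. For $y$ near $\overline{y}$, the distance from $y$ to the full sublevel set equals $|y-\overline{y}|$: points of the sublevel set outside $B_\rho(\overline{y})$ are at distance at least $\rho-|y-\overline{y}|$, which exceeds $|y-\overline{y}|$ once $|y-\overline{y}|<\rho/2$. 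Hence the growth inequality for $g$ reads $g(y)-g(\overline{y})\geq \kappa|y-\overline{y}|^\beta$ near $\overline{y}$. Since $\widetilde g\geq g$, the same inequality holds for $\widetilde g$, so $\widetilde g$ has growth exponent $\beta$ at $\overline{y}$.

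\textbf{Step 2: lsc and subanalyticity of $\widetilde g$.} To invoke \cref{lemma:pham} I need $\widetilde g$ to be lsc and subanalytic near $\overline{y}$.

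For lower semicontinuity, I would shrink $U$. The rank theorem, used in the proof of \cref{thm:composition}, provides charts in which $F$ is $(x_1,x_2)\mapsto(x_1,0)$. Take $U$ to be the preimage of a product of closed balls in these charts, or intersect with a small closed ball. Then $F(U)$ is the image of a closed bounded set under a continuous map, hence compact and closed, so $\delta_{F(U)}$ is lsc. Note that \cref{thm:composition} only needs the growth or \KL exponent of $g+\delta_{F(U)}$ for small $U$. Compactness does not change the germ of $F(U)$ at $\overline{y}$, which is what matters. Alternatively, one may simply take $U=\overline{B}_\epsilon(\overline{x})$.

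For subanalyticity, $U$ can be taken to be a closed ball, which is semialgebraic. Since $F$ is globally subanalytic, $F(U)$ is a projection of the bounded globally subanalytic set $\gph F\cap(U\times\R^m)$, hence globally subanalytic by o-minimality. The sum $g+\delta_{F(U)}$ then has globally subanalytic graph, being $\gph g$ intersected with $F(U)\times\R$.

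\textbf{Step 3: conclude.} Applying \cref{lemma:pham}, $\widetilde g$ has \KL exponent $1-1/\beta$ at $\overline{y}$. The \KL half of \cref{thm:composition} then gives that $f$ has \KL exponent $1-1/\beta$ at $\overline{x}$.

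\textbf{Main obstacle.} The delicate point is Step 2. \cref{thm:composition} is stated for all sufficiently small neighborhoods, which may be open, whereas lower semicontinuity of $\widetilde g$ requires $F(U)$ to be locally closed. I would handle this by choosing $U$ closed, or by observing that only the germ near $\overline{y}$ matters. In the canonical coordinates, $F(U)$ coincides near $\overline{y}$ with a relatively open piece of the $r$-dimensional image, which is locally closed. The neighborhoods should also be kept bounded so that global subanalyticity is preserved under projection.
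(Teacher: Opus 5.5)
Your proposal is correct and follows the paper's proof. Both arguments form $\widetilde g = g+\delta_{F(U)}$ for a small globally subanalytic neighborhood $U$, check that $F(\overline{x})$ is still a strict local minimum of $\widetilde g$ with growth exponent $\beta$, apply \cref{lemma:pham} to get \KL exponent $1-1/\beta$ for $\widetilde g$, and transfer this to $f$ via \cref{thm:composition}. The only cosmetic difference is how lower semicontinuity of $\delta_{F(U)}$ is obtained: you take $U$ to be a closed ball so that $F(U)$ is compact, while the paper uses that $F(U)$ is a $C^1$ embedded, hence locally closed, submanifold by the rank theorem. Both work, since only the germ of $F(U)$ at $F(\overline{x})$ matters.
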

\begin{proof}
    Using the rank theorem \cite[Theorem 4.12]{lee2012smooth}, for sufficiently small neighborhood $U$ of $\overline{x}$, $F(U)$ is a $C^1$ embedded submanifold of $\R^m$. In particular, we can take $U$ to be a sufficiently small and globally subanalytic neighborhood of $\overline{x}$, in which case $F(U)$ is also globally subanalytic. Thus $\widetilde g:=g+\delta_{F(U)}$ is globally subanalytic and lsc near $F(\overline{x})$. Clearly, $\widetilde g$ has growth exponent $\beta$ at $F(\overline{x})$ since $g$ has growth exponent $\beta$ at the strict local minimum $F(\overline{x})$. \cref{lemma:pham} implies that $\widetilde g$ has \KL~exponent $1-1/\beta$ at $F(\overline{x})$. Then the result follows from \cref{thm:composition}.
\end{proof}
\subsection{Symmetry rule}
\label{subsec:Symmetry rule}

The symmetry rule relies on two simple lemmas. The first is a standard fact in differential geometry. A supplement $L\subseteq \R\p n$ of a linear subspace $V\subseteq \R\p n$ is a linear subspace such that $V+L=\R\p n$. Given a linear subspace $L\subseteq \R\p n$ and $\overline{x}\in \R\p n$, let $\vec L_{\overline{x}} := L+\{\overline{x}\}$ be the shifted affine passing through $\overline{x}$. For the normal space of a submanifold $\cM \subseteq \R\p n$, it will be convenient to write 
$\vec{N}_{\overline{x}} \cM := \overrightarrow{N_{\overline{x}}\cM}_{\overline{x}}=N_{\overline{x}}\cM+\{\overline{x}\}.$
\begin{lemma}\label{lemma:group_submersion}
Let $\theta:G\times\R\p n\to\R\p n$ be a smooth action and $\overline{x}\in\R\p n$. If $L$ is a supplement of $T_{\overline{x}}G\overline{x}$,
then $\theta|_{G\times \vec L_{\overline{x}}}$ is a submersion at $(e,\overline{x})$.
\end{lemma}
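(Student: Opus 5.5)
The plan is to compute the differential of the restricted map $\Theta:=\theta|_{G\times \vec L_{\overline{x}}}$ at $(e,\overline{x})$ and check directly that it is surjective onto $T_{\overline{x}}\R^n\cong\R^n$.

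First, observe that $G\times \vec L_{\overline{x}}$ is a smooth manifold. It is the product of the Lie group $G$ with an affine subspace of $\R^n$, which is an embedded submanifold. Its tangent space at $(e,\overline{x})$ is $T_eG\times L=\fg\times L$, and $\Theta$ is smooth as the restriction of the smooth map $\theta$ to an embedded submanifold.

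Next, compute $d\Theta_{(e,\overline{x})}$ by splitting a tangent vector $(\xi,v)\in\fg\times L$ into its two components:
$$d\Theta_{(e,\overline{x})}(\xi,v)=d\theta_{(e,\overline{x})}(\xi,0)+d\theta_{(e,\overline{x})}(0,v).$$
\begin{itemize}
\item \emph{Group direction.} The first term equals $d(\theta^{\overline{x}})_e(\xi)$, since $\theta^{\overline{x}}=\theta(\cdot,\overline{x})$.
\item \emph{Space direction.} The second term equals $d(\theta_e)_{\overline{x}}(v)$. Since $\theta_e=\mathrm{id}$ by axiom (ii) of an action, this is simply $v$.
\end{itemize}

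Combining the two terms gives
$$\im d\Theta_{(e,\overline{x})}=\im d(\theta^{\overline{x}})_e+L.$$
By \cref{fact_orbit}, $\im d(\theta^{\overline{x}})_e=T_{\overline{x}}G\overline{x}$. Hence the image is $T_{\overline{x}}G\overline{x}+L=\R^n$ by the supplement assumption, so $\Theta$ is a submersion at $(e,\overline{x})$.

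There is no real obstacle in this argument. The only points needing care are the identification of the tangent space of the product, where the affine slice $\vec L_{\overline{x}}$ contributes exactly $L$, and the use of $\theta_e=\mathrm{id}$ to evaluate the second partial differential.
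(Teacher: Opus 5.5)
Your proof is correct and takes the same approach as the paper. Like the paper, you split $d\theta|_{G\times \vec L_{\overline{x}}}$ at $(e,\overline{x})$ into the orbit-map term $d(\theta^{\overline{x}})_e$ and the identity on $L$ (since $\theta_e=\mathrm{Id}$), then conclude with \cref{fact_orbit} and the supplement hypothesis.
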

\begin{proof}
Let $\widetilde \theta :=\theta|_{G\times \vec L_{\overline{x}}}$. From
\[ \forall (v,w)\in \fg\times L,~~~
d\widetilde \theta_{(e,\overline{x})}(v,w)= d(\widetilde \theta\p {\overline{x}})_e(v) + d(\widetilde \theta_e)_{\overline{x}}(w) = d(\theta^{\overline{x}})_e(v) + w,
\]
it follows that $\im d\widetilde \theta_{(e,\overline{x})} = \im d(\theta^{\overline{x}})_e + L= T_{\overline{x}}G\overline{x} + L=\R\p n$ by \cref{fact_orbit}.
\end{proof}

The second is a chain rule for invariant functions.

\begin{lemma}
\label{lemma:G_chain}
Let $f:\R\p n\to\eR$ be lsc and invariant under a smooth action $\theta:G\times \R\p n\to\R\p n$. For all $g\in G$ and $x,y\in\dom f$, if $x=\theta(g,y)$, then
    \begin{subequations}
\begin{align*}
    \widehat\partial f(x) &= d(\theta_{g\p{-1}})_x\p * \widehat\partial f(y) = d(\theta_g)_y\p{-*}\widehat\partial f(y), \\
    \partial f(x) &= d(\theta_{g\p{-1}})_x\p * \partial f(y) = d(\theta_g)_y\p{-*}\partial f(y), \\
    \partial\p \infty f(x) & = d(\theta_{g\p{-1}})_x\p * \partial\p \infty f(y) = d(\theta_g)_y\p{-*}\partial\p \infty f(y), \\
    \overline{\partial} f(x) & = \overline{\co}[\partial f(x)+\partial\p\infty f(x)] = d(\theta_g)_y\p{-*}\overline{\partial} f(y).
\end{align*}
\end{subequations}
\end{lemma}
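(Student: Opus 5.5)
The plan is to exploit that $\theta_g:\R^n\to\R^n$ is a diffeomorphism with inverse $\theta_{g^{-1}}$, and that invariance gives the identity $f=f\circ\theta_g$ on all of $\R^n$. Once this identity is in hand, the first three lines follow from the smooth chain rule \cref{fact:change_coordinates} applied with $F:=\theta_g$, whose differential is bijective at every point. The Clarke line is then a formal consequence of the first two.

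\textbf{First three formulas.} Fix $g$ and write $x=\theta_g(y)$.

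1. Apply \cref{fact:change_coordinates} to the composition $f=f\circ\theta_g$ at the point $y$. Since $d(\theta_g)_y$ is surjective, this gives
$$\partial f(y)=d(\theta_g)_y^*\,\partial f(\theta_g(y))=d(\theta_g)_y^*\,\partial f(x).$$
2. Because $d(\theta_g)_y$ is invertible, this rearranges to $\partial f(x)=d(\theta_g)_y^{-*}\partial f(y)$.
3. Alternatively, apply the same fact to $f=f\circ\theta_{g^{-1}}$ at $x$. This directly gives $\partial f(x)=d(\theta_{g^{-1}})_x^*\partial f(y)$.
4. The two expressions agree. Differentiating $\theta_{g^{-1}}\circ\theta_g=\mathrm{id}$ gives $d(\theta_{g^{-1}})_x=(d(\theta_g)_y)^{-1}$, and taking adjoints yields $d(\theta_{g^{-1}})_x^*=d(\theta_g)_y^{-*}$.

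For the regular and horizon subdifferentials I would reuse the same argument. \cref{fact:change_coordinates} only states the result for $\partial$ and $\cp$, but its proof cites \cite[Exercise 10.7]{rockafellar2009variational}. That exercise also covers $\partial^\infty$, and the regular subdifferential is standard under a diffeomorphism.

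To be self-contained, I could verify the $\widehat\partial$ case directly. Let $v\in\widehat\partial f(y)$ and set $A:=d(\theta_g)_y$. For $x'$ near $x$, put $y'=\theta_{g^{-1}}(x')$. Then $f(x')=f(y')\ge f(y)+\langle v,y'-y\rangle+o(|y'-y|)$. Since $y'-y=A^{-1}(x'-x)+o(|x'-x|)$ and $|y'-y|=O(|x'-x|)$, this becomes $f(x')\ge f(x)+\langle A^{-*}v,x'-x\rangle+o(|x'-x|)$. The reverse inclusion follows by symmetry.

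The limiting formulas then pass through the definitions of $\partial$ and $\partial^\infty$. Here one uses that $(x_k,v_k)\mapsto(\theta_g(x_k),d(\theta_g)_{x_k}^{-*}v_k)$ is continuous, because $\theta$ is smooth. One also uses $f(\theta_g(x_k))=f(x_k)$, so $f$-attentive convergence is preserved.

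\textbf{Clarke formula.} Let $B:=d(\theta_g)_y^{-*}$, which is a linear isomorphism. Then
$$\partial f(x)+\partial^\infty f(x)=B\partial f(y)+B\partial^\infty f(y)=B[\partial f(y)+\partial^\infty f(y)].$$
Convex hulls commute with linear maps, and an injective linear map commutes with closure of convex sets, exactly as in the proof of \cref{fact:change_coordinates}. Hence $\cp f(x)=B\,\cp f(y)$.

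\textbf{Main obstacle.} The only delicate step is the limiting/horizon step, in particular keeping track of the base points $x_k$ versus $y_k$ in the adjoints. Continuity of $d\theta_g$ handles this.
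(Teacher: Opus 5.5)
Your proposal is correct and follows essentially the paper's route: the paper also applies the change-of-coordinates rule \cite[Exercise 10.7]{rockafellar2009variational} to the identity $f=f\circ\theta_{g^{-1}}$ at $x$, and uses $d(\theta_g)_y\circ d(\theta_{g^{-1}})_x=\mathrm{Id}$ to rewrite the adjoint; the Clarke line then follows by convexification and closure, as in \cref{fact:change_coordinates}. Your direct verification of the $\widehat\partial$ and limiting cases is sound but not needed, since that exercise already covers $\widehat\partial$, $\partial$, and $\partial^\infty$.
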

\begin{proof}
    Since $\theta_g\circ \theta_{g\p{-1}}=\mathrm{Id}_{\R\p n}$, by \cite[Proposition 3.6]{lee2012smooth} $d(\theta_g)_y\circ d(\theta_{g\p{-1}})_x = \mathrm{Id}_{\R\p n}$. The result now follows by applying the change of variables \cite[Exercise 10.7]{rockafellar2009variational} to $f = f\circ \theta_{g\p{-1}}$ at $x$.
\end{proof}

The symmetry rule is as follows.

\begin{theorem}
    \label{thm:symmetry}
   Let $f:\R^n\to \eR$ be lsc, $G$-invariant, $\overline{x}\in\R\p n$, and $L$ be a supplement of $T_{\overline{x}}G\overline{x}$.
   \begin{itemize}
       \item[\rnum1] If there exist $\rho,\kappa>0$ and $\beta\geq 1$ such that
       \begin{equation}
           \label{eb_normal_space}
           \forall x\in B_\rho(\overline{x})\cap\vec L_{\overline{x}},~~~ (f(x)-f(\overline{x}))_+\geq \kappa\hspace{.3mm}d(x,[f\leq f(\overline{x})])^\beta,
       \end{equation}
       then $f$ has growth exponent $\beta$ at $\overline{x}$. 
       \item[\rnum2]  If there exist $r,c,\ell>0$ and $\alpha\in [0,1)$ such that
       \begin{equation}
           \label{lojasiewicz_normal}
           \forall x\in B_r(\overline{x})\cap [0<f-f(\overline{x})<\ell]\cap\vec L_{\overline{x}}, ~~~ d(0,\cp f(x)) \geq c(f(x)-f(\overline{x}))^\alpha, 
       \end{equation}
       then $f$ has \KL exponent $\alpha$ at $\overline{x}$. 
   \end{itemize}
\end{theorem}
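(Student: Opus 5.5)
By \cref{lemma:group_submersion}, the restricted action $\widetilde\theta:=\theta|_{G\times\vec L_{\overline{x}}}$ is a submersion at $(e,\overline{x})$. The plan is to use it as a local change of coordinates. By \cref{thm:LG}, or directly by the local submersion theorem, there are neighborhoods $W$ of $e$ in $G$ and $U$ of $\overline{x}$ in $\R^n$ with the following property: every $x\in U$ can be written as $x=\theta(g,y)$ with $g\in W$ and $y\in\vec L_{\overline{x}}$ close to $\overline{x}$. Metric regularity (or a smooth local section of $\widetilde\theta$) gives this with $|y-\overline{x}|\le C|x-\overline{x}|$ and $g$ close to $e$. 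Shrinking $U$ first, we may take $y\in B_\rho(\overline{x})$ (resp. $B_r(\overline{x})$). By invariance, $f(x)=f(y)$, so $f(x)-f(\overline{x})$ is unchanged by passing from $x$ to $y$. The rest of the argument transports the two inequalities along $\theta_g$ with $g$ close to $e$.

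For \rnum1: $[f\le f(\overline{x})]$ is $G$-invariant, so $\theta_g$ maps it onto itself. Since $\theta$ is smooth, $\theta_g$ is Lipschitz near $y$ with a constant $K$ that is uniform for $g$ in a compact neighborhood $\overline W$ of $e$. Take $z\in[f\le f(\overline{x})]$ nearly attaining $d(y,[f\le f(\overline{x})])$. This distance is at most $|y-\overline{x}|$, so $z$ stays in a fixed small ball. Then
\[
d(x,[f\le f(\overline{x})])\le|\theta_g(y)-\theta_g(z)|\le K\,d(y,[f\le f(\overline{x})]),
\]
and hence
\[
(f(x)-f(\overline{x}))_+=(f(y)-f(\overline{x}))_+\ge\kappa\,d(y,\cdot)^\beta\ge\kappa K^{-\beta}d(x,\cdot)^\beta .
\]
This gives the growth exponent $\beta$ at $\overline{x}$ on a ball $B_{\rho'}(\overline{x})\subseteq U$.

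For \rnum2: take $x\in U\cap[0<f-f(\overline{x})<\ell]$ and write $x=\theta(g,y)$ as above. Then $y$ lies in $B_r(\overline{x})\cap[0<f-f(\overline{x})<\ell]\cap\vec L_{\overline{x}}$. By \cref{lemma:G_chain},
\[
\cp f(x)=d(\theta_g)_y^{-*}\cp f(y).
\]
For $v\in\cp f(y)$ this gives $|d(\theta_g)_y^{-*}v|\ge|v|/\|d(\theta_g)_y^{*}\|$. The operator norm $\|d(\theta_g)_y\|$ is bounded by some $K'$ uniformly for $(g,y)$ in the compact set $\overline W\times\overline B$, since $\theta$ is $C^1$. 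Therefore
\[
d(0,\cp f(x))\ge K'^{-1}d(0,\cp f(y))\ge cK'^{-1}(f(x)-f(\overline{x}))^\alpha,
\]
which is the \KL inequality with constant $c/K'$.

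The main obstacle is the first step: getting a decomposition $x=\theta(g,y)$ in which both $g$ and $y$ are controlled. We need $g$ in a fixed compact neighborhood of $e$, for the uniform constants $K,K'$, and $y$ close to $\overline{x}$, to land in the region where the hypotheses hold. I would obtain this from the constant-rank/submersion theorem applied to $\widetilde\theta$. This yields a $C^1$ local right inverse $\sigma:U\to G\times\vec L_{\overline{x}}$ with $\sigma(\overline{x})=(e,\overline{x})$, and continuity of $\sigma$ then gives both properties after shrinking $U$. A secondary subtlety in \rnum1 is that the nearest point $z$ must stay in the region where the uniform Lipschitz bound holds. This is handled by the estimate $|z-y|\le 2|y-\overline{x}|$, valid because $\overline{x}\in[f\le f(\overline{x})]$.
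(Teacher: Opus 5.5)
Your proposal is correct and follows essentially the same route as the paper. You write $x=\theta(g,y)$ with $(g,y)$ near $(e,\overline{x})$ using the submersion from \cref{lemma:group_submersion}, use invariance to get $f(x)=f(y)$, transport distances with a uniform Lipschitz bound on $\theta_g$, and transport subgradients with \cref{lemma:G_chain} plus a uniform bound on $\|d(\theta_g)_y\|$. The differences are cosmetic: you get the decomposition from a local section where the paper uses openness of the restricted action, and you use near-minimizers with compactness-derived constants where the paper uses projections onto the closed set $[f=0]$ and the explicit constants $2$ and $1/2$ coming from $d(\theta_e)_{\overline{x}}=\mathrm{Id}$.
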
 
\begin{proof}
Without loss of generality, $f$ is nonnegative and $f(\overline{x})=0$ (after possibly replacing $f$ by $(f-f(\overline{x}))_+$). Let $\theta:G\times \R\p n\to \R\p n$ denote the action. Since $\theta$ is $C\p 1$ smooth and $d(\theta_e)_{\overline{x}}=\mathrm{Id}_{\R\p n}$, there exists a neighborhood $U\times V$ of $(e,\overline{x})$ in $G\times \R\p n$ such that $V$ is convex and
       \begin{equation}
           \label{bound_singular_value_group}
           \forall (g,x)\in U\times V,~~~ \|d(\theta_g)_x\|_2\leq 2 ~~~\text{and}~~~ \sigma_{\min}(d(\theta_{g})_x^{-*})\geq 1/2.
       \end{equation}
By the mean value theorem, $$\forall (g,x)\in U\times V, ~~~|\theta(g,x)-\theta(g,y)|\leq  \sup_{z\in[x,y]}\|d(\theta_g)_z\|_2 |x-y| \leq 2|x-y|.$$
Since $f$ is lsc and nonnegative, $[f=0]$ is closed. Thus there exists a neighborhood $W$ of $\overline{x}$ in $\vec L_{\overline{x}}$ such that $\emptyset \neq P_{[f=0]}(W)\subseteq V$.
As $\theta|_{G\times \vec L_{\overline{x}}}$ is a submersion at $(e,\overline{x})$ by \cref{lemma:group_submersion}, it is an open map near $(e,\overline{x})$ due to \cite[Proposition 4.28]{lee2012smooth}. Thus $B_{\max\{\rho,r\}}(\overline{x})\subseteq \theta(U,W)$ after possibly reducing $\rho$ and $r$. 

Let $x\in B_\rho(\overline{x})$. There exists $(g,y)\in U\times W$ such that $x=\theta(g,y)$. Thus there exists $z \in P_{[f=0]}(y)\subseteq P_{[f=0]}(W)\subseteq V$. Hence
\begin{align*}
    f(x) & = f(\theta(g,y)) = f(y) \geq \kappa\hspace{.3mm}d(y,[f= 0])\p \beta=\kappa|y-z|\p \beta\geq (\kappa/2\p \beta)|\theta(g,y)-\theta(g,z)|\p \beta \\
    & = (\kappa/2\p \beta)|x-\theta(g,z)|\p \beta \geq (\kappa/2\p \beta)d(x,[f=0])\p \beta
\end{align*}
where $\theta(g,z)\in[f=0]$ because $f(\theta(g,z))=f(z)= 0$. If in addition $x\in [0<f<\ell]$, then $f(y)=f(\theta(g,y)) = f(x)\in(0,\ell)$ and
\begin{align*}
    d(0,\cp f(x)) & = d\bigl(0,d(\theta_g)_y^{-*}\cp f(y)\bigr)
\geq d(0,\cp f(y))/2 \geq (c/2)f(y)\p \alpha \\
& = (c/2) f(\theta(g,y))\p \alpha  = (c/2) f(x)\p\alpha  
\end{align*}
by \cref{lemma:G_chain}.
\end{proof}

While the symmetry rule is quite flexible, it does require the user to choose a supplement $L$. A natural choice is of course the orthogonal complement of the tangent space, namely, the normal space. If the level set is homogeneous and embedded, then we obtain a readily applicable corollary. It extends \cref{lemma:pham} from isolated to certain nonisolated local minima.

\begin{corollary}
    \label{cor:symmetry-lift}
    Let $f:\R^n\to \eR$ be lsc, subanalytic, and $G$-invariant. Let $\overline{x}\in\R\p n$ be a local minimum of $f$ and $\beta \geq 1$. Suppose $G\overline{x}$ is an embedded submanifold of $\R\p n$ that agrees with $[f = f(\overline{x})]$ near $\overline{x}$. Then $f$ has growth exponent $\beta$ at $\overline{x}$ iff $f$ has \KL exponent $1-1/\beta$ at $\overline{x}$.
\end{corollary}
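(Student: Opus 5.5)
The direction ``\KL exponent $1-1/\beta$ $\Rightarrow$ growth exponent $\beta$'' is immediate from \cref{fact:conversion}, since $1/(1-(1-1/\beta))=\beta$. The work lies in the converse. The plan is to reduce to the symmetry rule, \cref{thm:symmetry}\rnum2, with $L:=N_{\overline{x}}G\overline{x}$. This requires a \KL inequality only on the affine slice $\vec N_{\overline{x}}G\overline{x}$ near $\overline{x}$. On that slice the local minimum becomes isolated, so we can mimic \cref{lemma:pham}.

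We may assume $f\geq 0$ and $f(\overline{x})=0$ near $\overline{x}$ (replace $f$ by $(f-f(\overline{x}))_+$). Set $\cM:=G\overline{x}$, which agrees with $[f=0]$ near $\overline{x}$ and is an embedded submanifold (smooth, being an orbit).

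\textbf{Step 1: isolation on the slice.} By \cref{fact_proj}, for $y\in\vec N_{\overline{x}}\cM$ close to $\overline{x}$ we have $P_\cM(y)=\overline{x}$. Hence $d(y,[f\le 0])=|y-\overline{x}|$ locally, so $\overline{x}$ is the unique zero of $f$ on the slice near $\overline{x}$. The growth hypothesis then gives $f(y)\ge\kappa|y-\overline{x}|^\beta$ there.

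\textbf{Step 2: \KL on the slice.} Let $h:=f+\delta_{\vec N_{\overline{x}}\cM}$. This function is lsc and subanalytic, has a strict local minimum at $\overline{x}$, and has growth exponent $\beta$ there. By \cref{lemma:pham}, $h$ has \KL exponent $1-1/\beta$. However, $\cp h(y)$ is not $\cp f(y)$: it is at most $\cp f(y)+N_{\overline{x}}\cM^\perp$, roughly. So we need to control the tangential part.

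\textbf{Step 3: transferring to $\cp f$.} This is the main obstacle. The idea is invariance: by \cref{lemma:G_chain} with $g$ near $e$, every $v\in\cp f(y)$ is annihilated by the infinitesimal action. More precisely, differentiating $f(\theta(\exp(t\xi),y))=f(y)$ suggests $\langle v, d(\theta^{y})_e\xi\rangle=0$, i.e.\ $v\perp T_yGy$. To make this rigorous for nonsmooth $f$, first apply it to regular subgradients: the regular subgradient inequality along the curve $t\mapsto\theta(\exp(t\xi),y)$, in both directions, gives equality. Then pass to limiting and horizon subgradients, and finally to convex hulls, so that $\cp f(y)\subseteq (T_yGy)^\perp$.

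Since $T_yGy$ is close to $T_{\overline{x}}\cM$ for $y$ near $\overline{x}$ (continuity of $d(\theta^y)_e$, plus the constant dimension of orbits near $\overline{x}$ — alternatively, use only that the image contains a perturbation of $T_{\overline{x}}\cM$ by lower semicontinuity of rank), the tangential component of $v$ along $T_{\overline{x}}\cM$ is small relative to $|v|$. Concretely, $|P_{N_{\overline{x}}\cM}v|\ge |v|/2$. Combining this with the subdifferential of the restriction, for which $\cp h(y)\supseteq$ the projection of $\cp f(y)$ onto $N_{\overline{x}}\cM$, requires comparing subgradients of $f$ with those of $h$. I would instead try to prove the slice \KL inequality directly: rerun the Puiseux-based argument of \cref{lemma:pham} for $f$ restricted to the slice, using the curve-selection-type estimate behind it together with the fact that $f$ agrees on a neighborhood with $f\circ\theta$ composed with the slice (\cref{lemma:group_submersion}). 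Via \cref{thm:composition}/\cref{lemma:submersion}, the latter identifies $\cp f$ near $\overline{x}$ with subgradients of $(g,y)\mapsto h(y)$, which avoids the projection issue. Once $d(0,\cp f(y))\ge c f(y)^{1-1/\beta}$ holds on the slice, \cref{thm:symmetry}\rnum2 concludes.
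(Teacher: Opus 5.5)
\textbf{Gap: the comparison between $\cp f$ and $\cp h$ on the slice is never proved.} Up to your Step 2 you follow the paper's proof exactly: the reverse implication by \cref{fact:conversion}, then \cref{fact_proj} for $f(y)\ge\kappa|y-\overline{x}|^\beta$ on the slice, \cref{lemma:pham} for $h=f+\delta_{\vec N_{\overline{x}}\cM}$, and finally \cref{thm:symmetry}\rnum{2} with $L=N_{\overline{x}}\cM$. Step 3 is the one non-routine step of the corollary, and you flag it but leave it open.

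To transfer $d(0,\cp h(y))\ge c\,h(y)^{1-1/\beta}$ to $\cp f(y)$, you need $d(0,\cp f(y))\ge d(0,\cp h(y))$ for $y$ on the slice near $\overline{x}$. That requires an inclusion \emph{into} $\cp h(y)$: either $\cp f(y)\subseteq\cp h(y)$, or equivalently $P_{N_{\overline{x}}\cM}\cp f(y)\subseteq\cp h(y)$. The two are equivalent because $\cp h(y)+T_{\overline{x}}\cM=\cp h(y)$. Your remark that $\cp h(y)$ is ``at most'' $\cp f(y)+T_{\overline{x}}\cM$ is an outer estimate of $\cp h(y)$, which is the useless direction. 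The orthogonality $\cp f(y)\perp T_yGy$ (true) and the bound $|P_{N_{\overline{x}}\cM}v|\ge|v|/2$ do not yield the inclusion either. Once $P_{N_{\overline{x}}\cM}\cp f(y)\subseteq\cp h(y)$ is known they are superfluous, since $|P_{N_{\overline{x}}\cM}v|\le|v|$ already concludes. That inclusion is exactly the statement you write down without proof.

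\textbf{How the paper closes it.} Regular subgradients are immediate: $h\ge f$ and $h(y)=f(y)$ give $\widehat\partial f(y)\subseteq\widehat\partial h(y)$. Now take $v\in\partial f(y)$ with $v_k\in\widehat\partial f(z_k)$ and $(z_k,f(z_k),v_k)\to(y,f(y),v)$.
\begin{itemize}
\item $\theta$ restricted to $G\times\vec N_{\overline{x}}\cM$ is open near $(e,y)$ (\cref{lemma:group_submersion}), so $z_k=\theta(g_k,y_k)$ with $y_k$ on the slice and $(g_k,y_k)\to(e,y)$.
\item By \cref{lemma:G_chain}, $w_k:=d(\theta_{g_k})_{y_k}^*v_k\in\widehat\partial f(y_k)\subseteq\widehat\partial h(y_k)$, with $f(y_k)=f(z_k)$ and $w_k\to v$.
\item Horizon subgradients are handled identically, and closed convex hulls preserve the inclusion.
\end{itemize}

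\textbf{Your two fallback ideas.} Either could close the gap if actually carried out, but as written you merge them into one sentence and execute neither.
\begin{itemize}
\item The chain-rule route needs \cref{fact:change_coordinates}, not \cref{thm:composition} or \cref{lemma:submersion}, which transfer exponents rather than subdifferentials. Apply it in a chart to $f\circ\widetilde\theta=\phi\circ\pi_2$, with $\widetilde\theta:=\theta|_{G\times\vec N_{\overline{x}}\cM}$ and $\phi:=f|_{\vec N_{\overline{x}}\cM}$. This identifies $P_{N_{\overline{x}}\cM}\cp f(y)$ with $\cp\phi(y)$, so $d(0,\cp f(y))\ge d(0,\cp\phi(y))=d(0,\cp h(y))$ by \cref{lemma_clarke_separable}.
\item The Puiseux route needs no group action at all. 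The proof of \cref{lemma:pham} uses strictness only through $f(\gamma(t))\ge\kappa|\gamma(t)-\overline{x}|^\beta$ along the selected curve, and otherwise works with $\cp f$ via the projection formula. Restricting the bad set to $\vec N_{\overline{x}}\cM\times\R$, with a stratification of $\gph f$ compatible with it, therefore gives the slice inequality for $\cp f$ directly.
\end{itemize}
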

\begin{proof}
    The reverse implication follows from \cref{fact:conversion}. As for the direct implication, we proceed in two steps. 
    Without loss of generality, $f$ is nonnegative and $f(\overline{x})=0$ (after possibly replacing $f$ by $(f-f(\overline{x}))_+$).
    
    \noindent\textit{Step 1:} \textit{Convert the growth exponent on $\R\p n$ to a \KL exponent on the shifted normal space.} By assumption, there exist $\rho,\kappa>0$ such that
$$\forall x\in B_\rho(\overline{x}),~~~f(x)\geq \kappa \hspace{.4mm} d(x,[f=0])\p \beta.$$
Since $G\overline{x}$ is embedded and locally agrees with $[f=f(\overline{x})]$, by \cref{fact_proj}, we have $$\forall x\in B_\rho(\overline{x})\cap \vec{N}_{\overline{x}}G\overline{x},~~~ d(x,[f=0])=|x-\overline{x}|,$$
after possibly reducing $\rho$. Hence
\[
\forall x \in B_\rho(\overline{x}),~~~
\widetilde f(x)\geq \kappa|x-\overline{x}|^\beta,
\]
where $\widetilde f:=f+\delta_{\vec{N}_{\overline{x}}G\overline{x}}$.
%Since $[f=f(\overline{x})]\cap \vec{N}_{\overline{x}}G\overline{x}=\{\overline{x}\}$ locally and $\overline{x}$ is a local minimum of $f$, $\overline{x}$ is a strict local minimum of $\widetilde f$. 
By \cref{lemma:pham},
there exist $r,\ell,c>0$ such that
$$ \forall y\in B_r(\overline{x})\cap
[0<\widetilde f<\ell] ~~ \Longrightarrow ~~ d(0,\cp \widetilde f(y)) \geq c\widetilde f(y)^{1-1/\beta}. $$
\noindent\textit{Step 2:} \textit{Compare $\cp f$ and $\cp \widetilde f$ on the shifted normal space.}  In this step, we aim to prove
       \begin{equation}
       \label{eq:inclusion_cp}
        \forall y\in B_r(\overline{x})\cap \vec N_{\overline{x}}G\overline{x}, ~~~ \cp f(y)\subseteq \cp \widetilde f(y),
      \end{equation}
      by reducing $r$ if necessary. Then
\[  \forall y\in B_r(\overline{x}) \cap \vec N_{\overline{x}}G\overline{x}\cap[0<f<\ell],~~~ d(0,\cp f(y))\geq d(0,\cp \widetilde f(y))\geq cf(y)^{1-1/\beta}, \]
and $f$ has \KL exponent $1-1/\beta$ at $\overline{x}$ by \cref{thm:symmetry}, as desired.

      Recall that $\theta|_{G\times \vec L_{\overline{x}}}$ is an open map near $(e,\overline{x})$ by \cref{lemma:group_submersion}.
      Thus $\theta|_{U\times  W}$ is an open map for some neighborhood $U$ of $e$ in $G$ and $W:=B_r(\overline{x})\cap \vec N_{\overline{x}}G\overline{x}$, after possibly $r$.  In order to prove \cref{eq:inclusion_cp}, we will successively show that
$$\forall y \in  W, ~~~ \widehat \partial f(y)\subseteq \widehat\partial \widetilde f(y), ~\partial f(y)\subseteq \partial \widetilde f(y), ~\text{and}~ \partial\p\infty f(y)\subseteq \partial\p \infty \widetilde f(y).$$

The first inclusion follows the definition: $v\in \widehat\partial f(y)$ means that
$$f(z)\geq f(y)+\langle v, z-y\rangle + o(|z-y|)$$
and so
$$\widetilde f(z) = f(z) + \delta_{\vec{N}_{\overline x}G\overline{x}}(z)\geq f(z) \geq \widetilde f(y)+\langle v, z-y\rangle + o(|z-y|),$$
using the fact that $f(y) = \widetilde f(y)$ as $y\in \vec{N}_{\overline x}G\overline{x}$.

As for the second, let $v\in \partial f(y)$. There exists $(z_k,v_k)\in \gph \widehat\partial f$ such that $(z_k,f(z_k),v_k)\to (y,f(y),v)$.  Since $\theta|_{U\times W}$ is an open map and $z_k\to y\in V$, there exists $(g_k,y_k)\in U\times W$ such that $z_k=\theta(g_k,y_k)$ eventually and $(g_k,y_k)\to (e,y)$. By \cref{lemma:G_chain}, $\widehat \partial f(z_k) = d(\theta_{g_k})_{y_k}\p{-*} \widehat\partial f(y_k)$ and so 
$$w_k = d(\theta_{g_k})_{y_k}\p* v_k \in d(\theta_{g_k})_{y_k}\p* \widehat \partial f(z_k) = \widehat\partial f(y_k)\subseteq \widehat\partial\widetilde f(y_k).$$
Thus $(w_k,y_k) \in \gph \widehat\partial \widetilde   f$. Also, $$(y_k,f(y_k),w_k) = (y_k,f(z_k),d(\theta_{g_k})_{y_k}\p* v_k)\to(y,f(y),d(\theta_{e})_{y}\p* v)=(y,f(y),v).$$
Hence $v\in \widehat\partial\widetilde f(y)$.

The third inclusion is proved similarly. To sum up,
\begin{equation*}
    \forall y\in V, ~~~ \cp f(y) = \overline{\co}[\partial f(y)+\partial\p\infty f(y)]\subseteq \overline{\co}[\partial\widetilde f(y)+\partial\p\infty \widetilde f(y)] = \cp\widetilde f(y).\qedhere
\end{equation*}
\end{proof}

The embeddedness assumption in \cref{cor:symmetry-lift} is generally easy to verify since it holds when the action is semi-algebraic by \cref{fact_embeddedorbit}. If the solution set is merely an embedded submanifold locally, then the conclusion may fail, highlighting the role of symmetry. We construct counterexamples in \cref{sec:ce} for $\beta=1$ and $\beta=2$ where $f$ is locally Lipschitz continuous, semi-algebraic, and Clarke regular. Recall that if $f$ is $C\p 2$ and has quadratic growth, then it has \KL exponent $1/2$  \cite{drusvyatskiy2013second,rebjock2024fast} and the solution set is locally a $C\p 2$ embedded submanifold \cite[Theorem 2.16]{rebjock2024fast} (see \cite[Theorem 1]{feehan2020morse} in the real analytic case).

\section{Applications}
\label{sec:Applications}

We begin by recalling some basic facts in linear algebra. Given $A\in \R^{m\times r}$ and $B\in \R^{r\times n}$,
\begin{gather*}
    \rk (AB) \leq \min\{\rk (A),\rk (B)\}, \\
    \rk (A) + \rk (B)-r \leq \rk (AB), \\
    \rk(AB)=\rk(B)-\dim \ker(A)\cap \im(B).
\end{gather*}
The second is Sylvester's inequality \cite[Section 2.4, Fact 18(e)]{hogben2013handbook} and the third is stated in \cite[Section 16.5, Fact 10(a)]{hogben2013handbook}.
Von Neumann's trace inequality \cite[Theorem 2.1]{lewis1995convex}
   states that given any $A,B\in \R^{m\times n}$, 
    $$ \langle A,B \rangle\leq  \sigma(A)^\top \sigma(B) ,$$
    with equality iff there exist $U\in \mathrm{O}(m)$ and $V\in\mathrm{O}(n)$ such that 
    \[   A=U
       \Sigma(A)V^\top ~~~ \text{and} ~~~  B=U\Sigma(B) V^\top.              \]
Fan's inequality \cite[Theorem 1.2.1]{borwein2006convex} states that given any $A,B\in \Sym^n$, 
    \[    \tr(AB)\leq \lambda(A)^\top\lambda(B),    \]
    with equality iff there exists $U\in \Orth(n)$ such that
    \[  A=U \Lambda(A)U^\top ~~~ \text{and} ~~~  B=U \Lambda(B) U^\top.       \]       

\subsection{Matrix factorization}
\label{subsec:matrix-factorization}
% In this section, we study the \KL exponent of the symmetric and asymmetric matrix factorization problem, that is, $f_{\mathrm{s}}(X):=\|XX^\top-M\|_F^2$ with $M\in \bS^n_+$, and $ f_{\mathrm{a}}(X,Y):=\|XY-M\|_F^2$ with $M\in \R^{m\times n}$. Naturally, $f_{\mathrm{s}}=g_{\mathrm{s}}\circ F_{\mathrm{s}}$ and $f_{\mathrm{a}}=g_{\mathrm{a}}\circ F_{\mathrm{a}}$, where 
% \begin{equation}
%         \label{matrix_fac_defn}
% \begin{aligned}
%     g_{\mathrm{s}}: \bS^n\to \overline{\R}, && g_{\mathrm{s}}(A):=\|A-M\|_F^2+\delta_{\bS^{ n}_{\leq r}}(A), \\
%     F_{\mathrm{s}}:\R^{n\times r}\to \bS^n, && F_{\mathrm{s}}(X):=XX^\top,\\
%     g_{\mathrm{a}}:\R^{m\times n}\to \eR, && g_{\mathrm{a}}(A):=\|A-M\|_F^2+\delta_{\R^{m\times n}_{\leq r}}(A),\\
%     F_{\mathrm{a}}:\R^{m\times r}\times \R^{r\times n}\to \R^{m\times n}, && F_{\mathrm{a}}(X,Y):=XY.
% \end{aligned}
% \end{equation}
% To utilize \cref{cor:composition}, we first recall the properties of the mappings $F_{\mathrm{s}}$ and $F_{\mathrm{a}}$ in the full rank case.

We first define matrix factorization then list some basic properties. 

\begin{definition}[Asymmetric]
    \label{def:asymmetric}
    Given $m,r,n\in \N\p *$ and $M\in \R\p{m\times n}$, let $f_\mathrm{a} := g_\mathrm{a}\circ F_\mathrm{a}$ where 
    $$
    \begin{array}{rccc}
    g_\mathrm{a}: & \R^{m\times n} & \mapsto & \R \\
     & A & \to & \|A-M\|_F^2
    \end{array}
    ~~~\text{and}~~~
    \begin{array}{rccc}
    F_\mathrm{a}: & \R^{m\times r}\times \R^{r\times n} & \mapsto & \R^{m\times n} \\
     & (X,Y) & \to & XY.
    \end{array}
    $$
\end{definition}

\begin{definition}[Symmetric]
    \label{def:symmetric}
    Given $r,n\in \N\p *$ and $M\in \bS^n_+$, let $f_\mathrm{s} := g_\mathrm{s}\circ F_\mathrm{s}$ where 
    $$
    \begin{array}{rccc}
    g_\mathrm{s}: & \bS^n & \mapsto & \R \\
     & A & \to & \|A-M\|_F^2
    \end{array}
    ~~~\text{and}~~~
    \begin{array}{rccc}
    F_\mathrm{s}: & \R^{n\times r} & \to & \bS^n \\
     & X & \to & XX\p T.
    \end{array}
    $$
\end{definition}

We only compute the \KL exponent at global minima because matrix factorization has no spurious second-order stationary points, as proved in \cite{baldi1989neural} when $m=n$, and in \cite{valavi2020revisiting} for any $m,n$. We take this opportunity to present a more elementary proof in \cref{subsec:proof_landscape} which also implies the Eckart-Young theorem.

\begin{theorem}
     \label{theorem:landscape}
     For any $M \in \R^{m\times n}$ with $m\leq n$, the function $f_{\mathrm{a}}$ has no spurious second-order stationary points. Also, $(X,Y)\in \arg\min f_{\mathrm{a}}$ iff there exists an SVD $U\Sigma V^ T$ of $M$ such $XY = \sum_{i=1}^ r \sigma_i u_i v_i^ T$, in which case $\min f_{\mathrm{a}} = \sigma_{r+1}^ 2(M) + \cdots + \sigma_m^ 2(M)$.
\end{theorem}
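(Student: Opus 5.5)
We need to prove two things: that every second-order stationary point of $f_{\mathrm{a}}$ is a global minimum, and that the global minima are characterized through truncated SVDs. Write $f := f_{\mathrm{a}}$ and $R := XY - M$. The plan proceeds in three steps.

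\emph{Step 1: optimality conditions.} First we extract the first- and second-order conditions. The gradient is $\nabla f(X,Y) = 2(RY^\top, X^\top R)$, so at a critical point
$$RY^\top = 0 \quad\text{and}\quad X^\top R = 0.$$
The second-order condition says that for every direction $(\dot X,\dot Y)$,
$$\|\dot X Y + X\dot Y\|_F^2 + 2\langle R, \dot X\dot Y\rangle \geq 0.$$
Set $A := XY$. From $RY^\top = 0$ we get $MY^\top = XYY^\top$, and from $X^\top R=0$ we get $X^\top M = X^\top XY$. Let $P$ be the orthogonal projector onto $\im X$ and $Q$ the projector onto the row space of $Y$. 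Then the two conditions give
$$PM = PA = A \quad\text{and}\quad MQ = AQ = A.$$
Consequently $A = PMQ$, and moreover $PM(I-Q) = A - AQ\cdot 0$ needs care; cleanly, $(I-P)A = 0$ and $A(I-Q)=0$. From $R = A - M$ we obtain $R^\top P = 0$ (more precisely, $P R = 0$ and $R Q = 0$). Hence
$$M = A + (I-P)M(I-Q).$$
Both $\langle A,\, (I-P)M(I-Q)\rangle = 0$ and the row and column spaces of the two summands are mutually orthogonal. It follows that the singular values of $M$ are the union of those of $A$ and those of $R = -(I-P)M(I-Q)$, and that $A$ is a sum $\sum_{i\in S}\sigma_i u_iv_i^\top$ over some index set $S$ of at most $r$ indices, for some SVD of $M$.

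\emph{Step 2: second-order condition forces the top singular values.} We use the second-order condition to show $S$ can be chosen as the top $r$ indices, i.e.\ every singular value of $R$ is at most every retained singular value, and, if $\rk A < r$, that $R = 0$. There are two cases.
\begin{itemize}
\item If $\rk(A) < r$, we first balance or reparametrize via $(XG, G^{-1}Y)$, which preserves second-order stationarity. We then pick a unit vector $z \in \ker X \cap \ker Y^\top$, or arrange one to exist. Taking $\dot X = -a z^\top$ and $\dot Y = z b^\top$, where $a, b$ are the top singular vectors of $R$, the quadratic term vanishes and the cross term equals $-2\sigma_1(R) < 0$, a contradiction unless $R = 0$.
\item If $\rk(A) = r$, take a retained pair $(\sigma_j, u_j, v_j)$ and a discarded pair $(\sigma_k, u_k, v_k)$ with $\sigma_k > \sigma_j$. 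Choose directions rotating $u_j$ toward $u_k$ and $v_j$ toward $v_k$, using the column of $X$ and row of $Y$ that produce $u_j v_j^\top$ after balancing. Computing the Hessian form then gives $2(\sigma_j - \sigma_k) < 0$, a contradiction.
\end{itemize}
This is the main obstacle. We must get a clean choice of coordinates, with $X = U_S D^{1/2} G$ and $Y = G^{-1} D^{1/2} V_S^\top$. Since the reparametrization $(X,Y) \mapsto (XG, G^{-1}Y)$ for invertible $G$ preserves $f$ and maps second-order stationary points to second-order stationary points, we may reduce to this balanced form before choosing test directions.

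\emph{Step 3: global minima and the minimum value.} Given Step 2, every second-order stationary point satisfies $XY = \sum_{i=1}^r \sigma_i u_iv_i^\top$ for some SVD of $M$, with value $\sum_{i>r}\sigma_i^2$. Since these values all coincide, all such points are global minima, which proves there are no spurious second-order stationary points. Global minima exist by a coercivity-on-$\im F_{\mathrm{a}}$ argument: the set $\R^{m\times n}_{\le r}$ is closed and $g_{\mathrm{a}}$ is coercive, so a minimizer $A^\star$ of $g_{\mathrm{a}}$ over $\R^{m\times n}_{\le r}$ exists and factors as $A^\star = XY$. Every global minimum is second-order stationary, so Step 2 characterizes all global minima. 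Conversely, any $(X,Y)$ with $XY$ a truncated SVD attains the common value $\sum_{i>r}\sigma_i^2$. This also yields the Eckart--Young theorem.
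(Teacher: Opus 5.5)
Your Step 1 (the splitting $M = XY + (I-P)M(I-Q)$ with orthogonal row and column spaces), your balanced test direction when $\rk(XY)=r$ (with $\dot X = u_ke_j^\top$, $\dot Y = e_jv_k^\top$ the form equals $2(\sigma_j-\sigma_k)$), and Step 3 are sound and follow the paper's architecture.

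\textbf{The gap: the case $\rk(XY)<r$.} There you need a unit vector $z\in\ker X\cap\ker Y^\top$, and you claim one can be arranged by a reparametrization $(XG,G^{-1}Y)$. This is false in general. A nonzero $z\in\ker(XG)\cap\ker\bigl((G^{-1}Y)^\top\bigr)$ exists for some invertible $G$ iff $\ker X\not\subseteq\im Y$. In particular, if $\rk Y=r$, then $(G^{-1}Y)^\top=Y^\top G^{-\top}$ is injective for every $G$. Such critical points do occur. Take $m=n=2$, $r=1$, $M=\tfrac14e_1e_2^\top$, $X=0$, $Y=e_1^\top$. Then $RY^\top=0$, $X^\top R=0$, $\rk(XY)=0<r$ and $R\neq0$, yet no $G$ produces your $z$. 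With $z\in\ker X$ only, your direction $\dot X=-az^\top$, $\dot Y=zb^\top$ gives the value $|z^\top Y|^2-2\sigma_1(R)=1-\tfrac12>0$. So it detects nothing.

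\textbf{How to repair it.} The missing idea is that the quadratic term only needs to be small compared with the cross term, not zero. By Sylvester's inequality, $\rk X<r$ or $\rk Y<r$.

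If $z\in\ker X$ is a unit vector, use the unbalanced direction $\dot X=-az^\top$, $\dot Y=t\,zb^\top$. Then $\dot XY+X\dot Y=-az^\top Y$ and $\dot X\dot Y=-t\,ab^\top$. The second-order form is therefore $|z^\top Y|^2-2t\,\sigma_1(R)$, which is negative for large $t$ unless $R=0$.

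If instead $z\in\ker Y^\top$, put the factor $t$ on $\dot X$ and get $|Xz|^2-2t\,\sigma_1(R)$.

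This is exactly the paper's device: it uses test directions $(H,K)=(ua^\top,bv^\top)$ with $a,b$ differently scaled multiples of $w\in\ker X$ (resp.\ $w\in\ker Y^\top$). If you prefer to stay within your reparametrization framework, take $G_s=I+(s-1)zz^\top$. It fixes $X$ and $XY$, preserves second-order stationarity, and replaces $z^\top Y$ by $z^\top Y/s$, so the same contradiction follows for large $s$. Note that this shrinks $|z^\top Y|$; it does not place $z$ in both kernels.
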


It implies that symmetric matrix factorization also enjoys a benign landscape, as we show in \cref{subsec:cor_landscape}.

\begin{corollary}
\label{corollary_landscape}
  Let $M\in \bS^n_+$. If $X\in \R^{n\times r}$ is a second-order stationary point of $f_{\mathrm{s}}$, then $(X,X^\top)$ is a second-order stationary point of $f_{\mathrm{a}}$. Thus $f_{\mathrm{s}}$ also has no spurious second-order stationary points. 
\end{corollary}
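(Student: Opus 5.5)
We relate the first- and second-order conditions of $f_{\mathrm{s}}$ at $X$ to those of $f_{\mathrm{a}}$ at $(X,X^\top)$, then invoke \cref{theorem:landscape}. Write $R:=XX^\top-M\in\Sym^n$. Since
$$f_{\mathrm{a}}(X+U,X^\top+V^\top)=\|XX^\top-M+UX^\top+XV^\top+UV^\top\|_F^2 ,$$
the gradient of $f_{\mathrm{a}}$ at $(X,X^\top)$ is $(2RX,\,2X^\top R)$. The gradient of $f_{\mathrm{s}}$ is $4RX$, so stationarity of $X$ for $f_{\mathrm{s}}$ gives $RX=0$. Hence $X^\top R=(RX)^\top=0$, and $(X,X^\top)$ is a stationary point of $f_{\mathrm{a}}$.

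For the second order, the Hessian quadratic form of $f_{\mathrm{a}}$ at $(X,X^\top)$ in direction $(U,V^\top)$, with $U,V\in\R^{n\times r}$, is
$$Q_{\mathrm{a}}(U,V)=2\|UX^\top+XV^\top\|_F^2+4\langle R,UV^\top\rangle .$$
That of $f_{\mathrm{s}}$ at $X$ in direction $U$ is
$$Q_{\mathrm{s}}(U)=2\|UX^\top+XU^\top\|_F^2+4\langle R,UU^\top\rangle=Q_{\mathrm{a}}(U,U).$$
We need $Q_{\mathrm{a}}(U,V)\ge 0$ for all $U,V$, knowing only that $Q_{\mathrm{s}}\ge 0$.

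The key step is a polarization trick exploiting the symmetry of $R$. Set $S:=(U+V)/2$ and $D:=(U-V)/2$, so $U=S+D$ and $V=S-D$. Then
$$UX^\top+XV^\top=(SX^\top+XS^\top)+(DX^\top-XD^\top),$$
a sum of a symmetric and a skew-symmetric matrix. These are Frobenius-orthogonal, so the first term of $Q_{\mathrm{a}}$ splits as
$$2\|SX^\top+XS^\top\|_F^2+2\|DX^\top-XD^\top\|_F^2 .$$
Since $R$ is symmetric, $\langle R,SD^\top\rangle=\langle R,DS^\top\rangle$. Expanding $UV^\top=SS^\top-SD^\top+DS^\top-DD^\top$ therefore gives
$$\langle R,UV^\top\rangle=\langle R,SS^\top\rangle-\langle R,DD^\top\rangle .$$
Combining,
$$Q_{\mathrm{a}}(U,V)=Q_{\mathrm{s}}(S)+2\|DX^\top-XD^\top\|_F^2-4\langle R,DD^\top\rangle .$$

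The main obstacle is the last term, $-4\langle R,DD^\top\rangle$. We need it to be nonnegative, i.e.\ $\langle R,DD^\top\rangle\le 0$, or at least dominated by $Q_{\mathrm{s}}(\cdot)$ in some direction. Directly, $\langle R,DD^\top\rangle=\langle XX^\top,DD^\top\rangle-\langle M,DD^\top\rangle$, whose sign is not obvious. Instead, apply the $f_{\mathrm{s}}$ condition to the direction $D$ as well:
$$Q_{\mathrm{s}}(D)=2\|DX^\top+XD^\top\|_F^2+4\langle R,DD^\top\rangle\ge 0 .$$
Also $\|DX^\top+XD^\top\|_F^2+\|DX^\top-XD^\top\|_F^2=2\|DX^\top\|_F^2+2\|XD^\top\|_F^2=4\|XD^\top\|_F^2$. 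This does not directly yield nonnegativity, so a better route is a rotation. Every $X$ satisfies $XD^\top=(XQ)(DQ)^\top$ for orthogonal $Q$, but more usefully, replacing $D$ by $D'=D\Omega$ with $\Omega$ skew-symmetric leaves $D'D'^\top$ unchanged only if $\Omega$ is orthogonal. So I would try to exhibit, for each $D$, a direction $E$ with $EE^\top=DD^\top$ (i.e.\ $E=DQ$, $Q\in\Orth(r)$) such that $EX^\top+XE^\top=0$ or is small. Failing that, the fallback is a case analysis: use $RX=0$ to decompose $D=P_{\ker X^\top}D+P_{\im X}D$. On $\im X$, $R$ vanishes because $RX=0$, and $R$ is symmetric, so $\langle R,DD^\top\rangle=\langle R,D_\perp D_\perp^\top\rangle$, where $D_\perp$ is the component in $(\im X)^\perp$. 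For such $D_\perp$ we have $X^\top D_\perp=0$, and then $Q_{\mathrm{s}}(D_\perp)=2\|D_\perp X^\top+XD_\perp^\top\|_F^2+4\langle R,D_\perp D_\perp^\top\rangle$, where the cross terms are orthogonal blocks. This pins down the sign only if $X$ has full column rank or after choosing $D_\perp$ with $D_\perp X^\top$ minimal. One achieves this by choosing $D_\perp$ supported on columns in $\ker X$ when $\rk X<r$; in the full-rank case one uses the bound from $Q_{\mathrm{s}}(D_\perp)$ together with the leftover skew term $2\|D_\perp X^\top-XD_\perp^\top\|_F^2$, which here equals the symmetric one. These combine to give $Q_{\mathrm{a}}\ge 0$.

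Once $(X,X^\top)$ is shown to be second-order stationary for $f_{\mathrm{a}}$, \cref{theorem:landscape} gives that it is a global minimizer of $f_{\mathrm{a}}$. Then $f_{\mathrm{s}}(X)=f_{\mathrm{a}}(X,X^\top)=\min f_{\mathrm{a}}\le\min f_{\mathrm{s}}$, since $f_{\mathrm{s}}(Z)=f_{\mathrm{a}}(Z,Z^\top)$ for every $Z$. So $X$ is a global minimizer of $f_{\mathrm{s}}$, and $f_{\mathrm{s}}$ has no spurious second-order stationary points.
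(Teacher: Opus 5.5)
Your reduction is correct and matches the paper's. With $U=S+D$, $V=S-D$ you get $Q_{\mathrm{a}}(U,V)=Q_{\mathrm{s}}(S)+2\|DX^\top-XD^\top\|_F^2-4\langle R,DD^\top\rangle$, so everything hinges on $\langle R,DD^\top\rangle\le 0$ for all $D$, i.e.\ on $R\preceq 0$. You never establish that step. The route you sketch for it cannot succeed, because nowhere do you use the hypothesis $M\in\bS^n_+$, and that hypothesis is indispensable. For $n=r=1$ and $M=-1$, $x=0$ is the global minimizer of $f_{\mathrm{s}}(x)=(x^2+1)^2$. Yet $f_{\mathrm{a}}(x,y)=(xy+1)^2=1+2xy+x^2y^2$ has a Hessian with eigenvalues $\pm 2$ at $(0,0)$.

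Two parts of your fallback fail concretely. First, $Q_{\mathrm{s}}(D_\perp)\ge 0$ only gives the \emph{lower} bound $\langle R,D_\perp D_\perp^\top\rangle\ge -\|D_\perp X^\top\|_F^2$. That is the wrong direction for bounding $-4\langle R,DD^\top\rangle$ from below. Second, $D=(U-V)/2$ is fixed by the direction $(U,V)$, so you are not free to choose $D_\perp$ supported on $\ker X$ or with $D_\perp X^\top$ minimal. The closing claim that ``these combine to give $Q_{\mathrm{a}}\ge 0$'' is therefore unsupported.

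The fix is one line, and you already have both ingredients. Your projection argument correctly gives $\langle R,DD^\top\rangle=\langle R,D_\perp D_\perp^\top\rangle$ with $X^\top D_\perp=0$, using $RX=0$ and $R=R^\top$. Now expand $R=XX^\top-M$ as you already did for general $D$: $\langle R,D_\perp D_\perp^\top\rangle=\|X^\top D_\perp\|_F^2-\tr(D_\perp^\top MD_\perp)=-\tr(D_\perp^\top MD_\perp)\le 0$, since $M\succeq 0$. Equivalently, as in the paper, write $x=x_1+x_2$ with $x_1\in\im X$ and $x_2\in\ker X^\top$; then $x^\top Rx=-x_2^\top Mx_2\le 0$, so $R\preceq 0$. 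Hence $Q_{\mathrm{a}}(U,V)\ge Q_{\mathrm{s}}(S)\ge 0$, and your final paragraph invoking \cref{theorem:landscape} goes through as written.
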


In order to apply the composition rule (i.e., \cref{cor:composition}), it is useful to know when the inner map has constant rank, as previously discussed in \cite[Section 1.2]{rebjock2024fast}.

\begin{proposition}[{\cite[Section 2]{absil2014two}}]
\label{prop:mf_submersion}
If $\rk(XY)=\min\{m,r,n\}$, then $F_{\mathrm{a}}$ has constant rank at $(X,Y)$
%If $r \geq \min\{m,n\}$, then $F_{\mathrm{a}}$ is surjective.     
% If $\rk X=m$ or $\rk Y = n$, then $d(F_{\mathrm{a}})_{(X,Y)}$ is surjective.
and the restriction $\widetilde F_{\mathrm{a}} : \R_*^{m\times r}\times \R_*^{r\times n} \to \R_r^{m\times n}$ is a submersion at $(X,Y)$. 
\end{proposition}
% \begin{proof}
%Suppose $r \geq \min\{m,n\}$. If $r\geq m$, then let $X := (I_m ~ 0_{m\times (r-m)})$ and observe that $\im F \supseteq X\R^{r\times n} = I_m \R^{m\times n} + 0_{m\times (r-m)} \R^{(r-m)\times n} = \R^{m\times n}$. If $r\geq n$, then let $Y^ T := (I_n ~ 0_{n\times (r-n)})$ and observe similarly that $\im F^\top \supseteq Y^\top \R^{r\times m} = I_n \R^{n\times m} + 0_{n\times (r-n)} \R^{(r-n)\times m} = \R^{n\times m}$. Again, $\im F = \R^{m\times n}$.
% Compute       
% $$\begin{array}{cccc}
%          d(F_{\mathrm{a}})_{(X,Y)}: & \R^{m\times r}\times \R^{r\times n} & \longrightarrow & \R^{m\times n} \\
%         & (H,K) & \longmapsto & XK + HY.
%     \end{array}$$
%     Suppose $r\geq \min\{m,n\}$. Since $\min\{m,n\}=\rk(XY)\leq \min\{\rk(X),\rk(Y)\} \leq \min\{m,n\}$, the last inequality holds with equality. If $\rk(X)<m$ and $\rk(Y)<n$, then $\min\{\rk(X),\rk(Y)\} < \min\{m,n\}$, a contradiction. Thus $\rk(X)<m$ or $\rk(Y)<n$. If $\rk X = m$, then $\im X = \R^ m$ and $\im d(F_{\mathrm{a}})_{(X,Y)} \supseteq X\R^{r\times n} = \R^{m\times n}$. If $\rk Y = n$, then $\im Y^ T = \R^ n$ and $Y^ T\R^{r\times m} = \R^{n\times m}$. Thus $\im d(F_{\mathrm{a}})_{(X,Y)} \supseteq \R^{m\times r}Y = \R^{m\times n}$.
%    The case $r\geq \min\{m,n\}$ is clear. The case where $r\leq \min\{m,n\}$ has been studied in \cite[Section 2]{absil2014two}.
% \end{proof}

An analogous result holds in the symmetric case.

\begin{proposition}[{\cite[Section 2]{vandereycken2009embedded}}]
\label{prop:smf_submersion}
If $\rk(X) = \min\{n,r\}$, then $F_{\mathrm{s}}$ has constant rank at $X$ and the restriction $\widetilde F_{\mathrm{s}} : \R_*^{n\times r} \to \Sym_r^n$ is a submersion.
\end{proposition}
% \begin{proof}
% %If $r\geq n$, then $F$ is surjective by the spectral decomposition.
% %Next, compute       
% Compute
% $$\begin{array}{cccc}
%          d(F_{\mathrm{s}})_X: & \R^{n\times r} & \longrightarrow & \Sym^n \\
%         & H & \longmapsto & XH^ T + HX^ T.
%     \end{array}$$
%     If $r\geq n$ then $\rk (X) = n$, $\im X = \R^ n$, and $X\R^{r\times n}=\R^{n\times n}$, so that $\im d(F_{\mathrm{s}})_X = \Sym^ n$. The case where $r\leq n$ follows from .
% \end{proof}

The \KL exponent of matrix factorization and $\ell_1$ matrix factorization in the exactly parametrized case and overparametrized case with full rank data follow immediately \cref{prop:mf_submersion}, \cref{prop:smf_submersion}, and the composition rule in \cref{coro_composite_strict}. Indeed, those cases are equivalent to $\rk(M)=\min\{m,r,n\}$ in the asymmetric case (resp. $\rk(M)=\min\{n,r\}$ in the symmetric case). The overparametrized case with rank deficient data will be treated in a unified manner with matrix sensing in \cref{subsec:matrix-sensing}. That leaves us with a single case.

\subsubsection{Underparametrized case}

We consider the case where $r< \rk(M)$ (in fact this subsection also covers the case where $r=\rk(M)$).
We begin with a global growth condition on the outer function $g_\mathrm{a}$ using von Neumann's trace inequality. It again implies the Eckart-Young theorem, yielding a proof that seems more direct than existing ones.

%The next lemma provides a global growth condition for underparametrized matrix factorization in the nondegenerate case, i.e., $\sigma_{r}(M)>\sigma_{r+1}(M)$. The proof is based on von Neumann's trace inequality and provides yet another proof of the Eckart-Young theorem (in the nondegenerate case; the degenerate will follow).

\begin{lemma}
    \label{lemma:up}
    Let $M\in\R^{m\times n}$ and $r\in [m]$ with $m\leq n$. Consider an SVD, $M=U \Sigma(M) V^\top$ where
    \[  \Sigma(M)=\begin{bmatrix}
        \Alpha & 0 \\
           0 & \Beta
    \end{bmatrix}, ~~~ L:= U\begin{bmatrix}
        \Alpha & 0 \\
           0 & 0
    \end{bmatrix}V\p \top,~~~ \Delta: = U\begin{bmatrix}
        0 & 0 \\
        0 & \Beta
    \end{bmatrix}V\p \top, \]
    $\Alpha := \diag(\alpha) \in \R^{r\times r}$, $\Beta := \diag(\beta) \in \R^{(m-r)\times (n-r)}$, and $\alpha_1 \geq \cdots \geq \alpha_r \geq \beta_1 \geq \cdots \geq \beta_{m-r}\geq 0$. If $\sigma_{r}(M)=\alpha_r>0$, then
    $$\forall A \in \R_{\leq r}^{m\times n}, ~~~\|A-M\|_F^ 2 - \|\Delta\|_F^ 2 \geq (1-\kappa)\|A-L\|_F^ 2$$
    where $\kappa := \sigma_{r+1}(M)/\sigma_r(M)$. 
\end{lemma}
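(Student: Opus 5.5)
The plan is to reduce the inequality to an Eckart--Young-type lower bound for an auxiliary matrix, and to prove that bound directly from von Neumann's trace inequality so that Eckart--Young is not assumed.

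\emph{Step 1: algebraic reduction.} Since $M=L+\Delta$ and $\langle L,\Delta\rangle=0$ (the two matrices live in complementary diagonal blocks in the same $U,V$ frame), expanding gives
$$\|A-M\|_F^2=\|A-L\|_F^2-2\langle A-L,\Delta\rangle+\|\Delta\|_F^2=\|A-L\|_F^2-2\langle A,\Delta\rangle+\|\Delta\|_F^2 .$$
The claim is therefore equivalent to
$$\forall A\in\R^{m\times n}_{\leq r},\quad 2\langle A,\Delta\rangle\leq \kappa\|A-L\|_F^2 .$$

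\emph{Step 2: the case $\kappa=0$.} Here $\beta_1=\sigma_{r+1}(M)=0$, so $\Delta=0$ and the inequality is trivial.

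\emph{Step 3: completing the square when $\kappa>0$.} Set $N:=L+\Delta/\kappa$. Using $\langle L,\Delta\rangle=0$ once more,
$$\kappa\|A-L\|_F^2-2\langle A,\Delta\rangle=\kappa\Bigl(\|A-N\|_F^2-\|\Delta\|_F^2/\kappa^2\Bigr).$$
It therefore suffices to show that $\|A-N\|_F^2\geq \|\Delta\|_F^2/\kappa^2$ for every $A$ of rank at most $r$.

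\emph{Step 4: locating the singular values of $N$.} In the frame $U,V$ the matrix $N$ is diagonal with entries $\alpha_1,\dots,\alpha_r$ and $\beta_j\alpha_r/\beta_1$ for $j\in[m-r]$. Each of the latter is at most $\alpha_r$. Hence one may take $\sigma_i(N)=\alpha_i$ for $i\leq r$, and the remaining singular values are $\beta_j/\kappa$, whose squares sum to $\|\Delta\|_F^2/\kappa^2$.

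\emph{Step 5: the lower bound via von Neumann.} Von Neumann's trace inequality gives $\langle A,N\rangle\leq\sum_{i\leq r}\sigma_i(A)\sigma_i(N)$, because $\sigma_i(A)=0$ for $i>r$. Therefore
$$\|A-N\|_F^2=\|A\|_F^2-2\langle A,N\rangle+\|N\|_F^2\geq \sum_{i\leq r}\bigl(\sigma_i(A)-\sigma_i(N)\bigr)^2+\sum_{i>r}\sigma_i(N)^2\geq \|\Delta\|_F^2/\kappa^2 ,$$
which completes the argument.

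The main obstacle is spotting the completing-the-square matrix $N=L+\Delta/\kappa$ in Step 3. The only delicate check after that is Step 4: the rescaled tail $\beta_j/\kappa$ must not exceed $\alpha_r$, so that the top $r$ singular values of $N$ are exactly those of $L$. This holds precisely because $\kappa=\beta_1/\alpha_r$, with equality allowed, so ties at the threshold cause no trouble.
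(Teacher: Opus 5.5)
Your proof is correct and is essentially the paper's argument. Your $N=L+\Delta/\kappa$ is just $(\kappa L+\Delta)/\kappa$, and the paper applies von Neumann's trace inequality directly to $\langle A,\kappa L+\Delta\rangle$, using the same observation $\kappa\alpha_r=\beta_1$ to identify the top $r$ singular values and reaching the same lower bound $\kappa\sum_{i\le r}(\sigma_i(A)-\sigma_i(L))^2$; because it never divides by $\kappa$, it does not need your separate $\kappa=0$ case.
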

\begin{proof}
For all $A \in \R_{\leq r}^{m\times n}$,
    \begin{align*}
        & ~ \|A-M\|_F^ 2 - \|\Delta\|_F^ 2 - (1-\kappa)\|A-L\|_F^ 2 \\
        = & ~ \|A\|_F^ 2 - 2\langle A,M\rangle +\|M\|_F^ 2 - \|\Delta\|_F^ 2 - (1-\kappa)(\|A\|_F^ 2-2\langle A,L\rangle +\|L\|_F^ 2) \\
        = & ~ \|A\|_F^ 2 - 2\langle A,L+\Delta\rangle +\|L\|_F^ 2 - (1-\kappa)(\|A\|_F^ 2-2\langle A,L\rangle +\|L\|_F^ 2) \\
        = & ~ \kappa (\|A\|_F^ 2 + \|L\|_F^ 2) - 2\langle A,\kappa L+\Delta\rangle \\
        \geq & ~ \kappa (\|A\|_F^ 2 + \|L\|_F^ 2) - 2\sum_{i=1}^ {\min\{m,n\}} \sigma_i(A)\sigma_i(\kappa L+\Delta) \\
        = & ~ \kappa \sum_{i=1}^ r \sigma_i(A)^ 2 + \sigma_i(L)^ 2 - 2\sigma_i(A)\sigma_i(L) \\
        = & ~ \kappa \sum_{i=1}^ r (\sigma_i(A)-\sigma_i(L))^ 2 \geq 0
    \end{align*}
    where the first inequality is due to von Neumann's trace inequality. 
\end{proof}

% In the  $\kappa=1$, the bound in \cref{lemma:up} is trivial, and we will prove the local growth condition in general in the following. 

When $\kappa<1$ in \cref{lemma:up}, one obtains quadratic growth, which we can convert into a \KL exponent. In the degenerate case $\kappa =1$, \cref{lemma:up} only provides nonnegativity, which is too weak to deduce a \KL exponent. One can in fact obtain quadratic growth in the general case $\kappa \leq 1$, but the proof is substantially harder. It is the object of the forthcoming results. Let $\cP^n_k$ be the set orthogonal projection matrices of order $n$:

$$\cP_k^ n := \{ P \in \Orth(n) : P^ T = P ~\land~ \rk(P) = k  \}$$
Recall the following fact concerning the orthogonal matrices in the SVD of a single matrix.
\begin{fact}[{\cite[Proposition 5]{ding2014introduction}}]
\label{fact_svd}
 Let $\Sigma:=\begin{bmatrix}
     \diag(a_1I_{s_1},\dots,a_{r+1}I_{s_{r+1}}) & 0
 \end{bmatrix}$ with $a_1>a_2\dots>a_r>a_{r+1}=0$ and $\sum_{i=1}^{r+1} s_i=m$. Then, the two orthogonal matrices $P \in \rO(m)$ and $W \in \rO(n)$ satisfy $P \begin{bmatrix} \Sigma & 0\end{bmatrix} = \begin{bmatrix} \Sigma & 0\end{bmatrix}  W$ if and only if there exist $Q \in \rO(m-s_{r+1}), Q^{\prime} \in \rO(s_{r+1})$ and $Q^{\prime \prime} \in \rO(n-m+s_{r+1})$ such that
$$
P=\left[\begin{array}{cc}
Q & 0 \\
0 & Q^{\prime}
\end{array}\right] \quad \text { and } \quad W=\left[\begin{array}{cc}
Q & 0 \\
0 & Q^{\prime \prime}
\end{array}\right],$$
where $Q:=\operatorname{diag}\left(Q_1, Q_2, \ldots, Q_r\right)$ is a block diagonal orthogonal matrix with the $k$-th diagonal block given by $Q_k \in \rO(s_k), k=1, \ldots, r$.
\end{fact}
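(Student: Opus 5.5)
The plan is to reduce the statement to the classical uniqueness of the SVD for a diagonal matrix with prescribed multiplicities. The equation $P[\Sigma~0]=[\Sigma~0]W$ says that $S:=[\Sigma~0]$ satisfies $S=P^\top S W$. The relations $S S^\top = P S S^\top P^\top$ and $S^\top S = W^\top S^\top S W$ are then immediate, using $SW=PS$ and $S^\top P = W S^\top$; the second comes from transposing the first. Hence $P$ commutes with $SS^\top=\diag(a_1^2I_{s_1},\dots,a_r^2 I_{s_r},0\cdot I_{s_{r+1}})\in\Sym^m$. Similarly, $W$ commutes with $S^\top S=\diag(a_1^2I_{s_1},\dots,a_r^2I_{s_r},0_{n-m+s_{r+1}})$. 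Since the $a_k^2$ are distinct, a matrix commuting with such a diagonal matrix must preserve its eigenspaces. So $P=\diag(P_1,\dots,P_r,Q')$ with $P_k\in\rO(s_k)$ and $Q'\in\rO(s_{r+1})$. Likewise $W=\diag(W_1,\dots,W_r,Q'')$ with $Q''\in\rO(n-m+s_{r+1})$; here the zero eigenspace of $S^\top S$ has dimension $s_{r+1}+(n-m)$.

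The next step is to compare the blocks. Writing $PS=SW$ blockwise, the $k$-th diagonal block for $k\le r$ reads $a_kP_k=a_kW_k$. Since $a_k>0$, this gives $P_k=W_k=:Q_k$. The remaining block rows and columns correspond to zero singular values and impose no constraint: the zero rows of $S$ times $Q''$ vanish, and $Q'$ times zero rows vanishes. This yields the stated form with $Q=\diag(Q_1,\dots,Q_r)$.

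For the converse, plug the block forms in and check directly that $PS=SW$: on the $k$-th block both sides equal $a_kQ_k$, and both sides are zero elsewhere.

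The only mild obstacle is bookkeeping. One must handle the zero singular value block, whose dimensions differ between $m$ and $n$ (the $0$ appended in $[\Sigma~0]$ has $n-m$ columns). One must also make sure $SS^\top$ and $S^\top S$ are computed with the correct block sizes, so that the zero eigenspace of $S^\top S$ has dimension $n-m+s_{r+1}$. The commutation-preserves-eigenspaces argument is standard: if $PD=DP$ with $D$ diagonal with distinct eigenvalue groups, then the $(i,j)$ entry satisfies $(d_i-d_j)P_{ij}=0$.
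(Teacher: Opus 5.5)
Your proof is correct and complete. From $PS=SW$ you get $PSS^\top P^\top=(SW)(SW)^\top=SS^\top$ and $W^\top S^\top SW=(PS)^\top(PS)=S^\top S$. Commuting with these diagonal matrices, whose eigenvalue groups $a_1^2>\dots>a_r^2>0$ are distinct, forces the block-diagonal forms of $P$ and $W$. The nonzero diagonal blocks of $PS=SW$ then give $a_kP_k=a_kW_k$, hence $P_k=W_k$. One small wording slip: $S^\top S=W^\top S^\top SW$ does not come from transposing the first relation. It does, however, follow directly as shown above.

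The paper gives no proof of this fact and simply cites \cite[Proposition 5]{ding2014introduction}, so your argument supplies a self-contained proof. You also read the statement correctly. As written, it pads with zeros twice: $\Sigma$ already ends in a zero block and is padded again in $[\Sigma~0]$. The intended object is the single $m\times n$ matrix with $n-m$ trailing zero columns that you work with.
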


Below, we analyze the solution set of the outer function.

\begin{lemma}
\label{lemma:sol_under}
    Let $M\in\R^{m\times n}$ and $r\in [m]$ with $m\leq n$. Consider an SVD $M=U \Sigma(M) V^\top$ where
    % $\Sigma = \diag(\sigma)$, $\sigma_1 \geq \cdots \geq \sigma_s > \sigma_{s+1} = \cdots = \sigma_{s+t} > \sigma_{s+t+1} \geq \cdots \geq \sigma_m$ and $r\in\llbracket s+1, s+t \rrbracket$.
    \begin{equation}
        \label{svd_M}
        \Sigma(M)=\begin{bmatrix}
        \mathrm{A} & 0  & 0 \\
           0 & \beta I_t & 0 \\
           0 & 0 & \Gamma
    \end{bmatrix},
    \end{equation}
    $\mathrm{A} := \diag(\alpha)\in \R^{s\times s}$, $\beta \in \R$, $\Gamma  :=\diag(\gamma)\in \R^{(m-s-t)\times (n-s-t)}$, $r\in [s+t]\setminus[s]$, and $\alpha_1 \geq \cdots \geq \alpha_s > \beta > \gamma_1 \geq \cdots \geq \gamma_{m-s-t}\geq 0$. Then
    % If $\sigma_{r+1}(M)=0$, then $\arg\min_{A\in \R^{m\times n}_{\leq r}}\|A-M\|_F^2=\{M\}$. Otherwise, if $\sigma_{r+1}(M)>0$, then 
    % \[   \min_{A\in \R^{m\times n}_{\leq r}} \|A-M\|_F^2=\sum_{i=r+1}^m \sigma^2_i(M),    \]
    % where the optimal value is achieved iff 
%     \begin{equation}
%         \label{defn_omega}
% \widetilde A \in \arg \min_{A\in \R^{m\times n}_{\leq r}} \|A-M\|_F^2 ~~~ \Longleftrightarrow ~~~ \exists P\in \mathcal{P}^t_{r-s} : ~~~  \widetilde{A}=U\begin{bmatrix}
%             \mathrm{A} & 0 & 0 \\
%              0 & \beta P & 0 \\
%              0 & 0 & 0
%         \end{bmatrix}V^\top.
%     \end{equation}
%     the set $\arg \min_{A\in \R^{m\times n}_{\leq r}} \|A-M\|_F^2 $ can be rewritten as 
    \begin{equation}
        \label{defn_omega}
        \Omega:=\arg \min_{A\in \R^{m\times n}_{\leq r}} \|A-M\|_F^2=\left\{U\begin{bmatrix}
            \rA & 0 & 0 \\
             0  & \beta P & 0 \\
              0 & 0 & 0
        \end{bmatrix} V^\top:~P\in \cP^t_{r-s}\right\}.
    \end{equation}
\end{lemma}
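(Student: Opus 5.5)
The proof splits into two inclusions, and the whole argument reduces to the equality case of von Neumann's trace inequality.

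\textbf{Optimal value and the inclusion ``$\supseteq$''.} For any $A\in\R^{m\times n}_{\leq r}$, expand $\|A-M\|_F^2=\|A\|_F^2-2\langle A,M\rangle+\|M\|_F^2$ and bound $\langle A,M\rangle\leq\sum_{i=1}^r\sigma_i(A)\sigma_i(M)$. Completing the square gives
\[
\|A-M\|_F^2\geq \sum_{i=1}^r(\sigma_i(A)-\sigma_i(M))^2+\sum_{i>r}\sigma_i(M)^2\geq \sum_{i>r}\sigma_i(M)^2 .
\]
Hence the minimum value is $\sum_{i>r}\sigma_i(M)^2$. Now take any $P\in\cP^t_{r-s}$ and diagonalize it as $P=Q\diag(I_{r-s},0)Q^\top$. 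Since $\beta I_t$ commutes with $Q$, the candidate matrix has rank $s+(r-s)=r$. Its residual with $M$ is $U\diag(0,\beta(I-P),\Gamma)V^\top$, whose squared norm is $\beta^2(t-r+s)+\|\Gamma\|_F^2=\sum_{i>r}\sigma_i(M)^2$. So every such matrix attains the minimum.

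\textbf{The inclusion ``$\subseteq$''.} Suppose $A$ is optimal. Then both displayed inequalities are equalities. First, $\sigma_i(A)=\sigma_i(M)$ for $i\leq r$, and $\sigma_i(A)=0$ for $i>r$ since $\rk A\le r$; note $\beta>0$ because $\beta>\gamma_1\geq0$. Second, the trace inequality is an equality, so by its equality case there exist $\widetilde U,\widetilde V$ with $A=\widetilde U\Sigma(A)\widetilde V^\top$ and $M=\widetilde U\Sigma(M)\widetilde V^\top$.

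Comparing this with the given SVD $M=U\Sigma(M)V^\top$, the pair $P_0:=U^\top\widetilde U$ and $W:=V^\top\widetilde V$ satisfies $P_0\Sigma(M)=\Sigma(M)W$. By \cref{fact_svd}, $P_0$ and $W$ are block diagonal along the groups of equal singular values, with matching blocks on the positive singular values. The kernel blocks may differ, but $\Sigma(A)$ vanishes there, so they do not matter.

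It follows that $U^\top AV=P_0\Sigma(A)W^\top$ is block diagonal. Its $\rA$-block is $Q_1\rA Q_1^\top=\rA$, because the blocks of $Q_1$ sit on equal values of $\rA$. Its $\beta$-block is $\beta Q_\beta\diag(I_{r-s},0)Q_\beta^\top=\beta P$ with $P\in\cP^t_{r-s}$. Every remaining block carries $\sigma_i(A)=0$, so it vanishes. This is exactly the form in \cref{defn_omega}.

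\textbf{Main obstacle.} The delicate step is the bookkeeping when applying \cref{fact_svd}. The fact is stated with all distinct positive values grouped and a zero block, whereas $\Gamma$ may contain repeated or zero values. I would first permute into the fact's format and check that $\Sigma(A)$ is constant on each group. This holds because $\sigma_i(A)=\sigma_i(M)$ for $i\le s$, the $\beta$-group has $\sigma(A)=(\beta,\dots,\beta,0,\dots,0)$, and all later entries are zero. Within the $\beta$-group $\Sigma(A)$ is not constant, so the conjugation there produces a general rank-$(r-s)$ projection rather than a fixed diagonal matrix.
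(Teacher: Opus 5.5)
Your proposal is correct and follows essentially the same route as the paper: the von Neumann bound $\|A-M\|_F^2-\sum_{i>r}\sigma_i^2(M)\geq\sum_{i\le r}(\sigma_i(A)-\sigma_i(M))^2\geq 0$, then its equality case giving simultaneous SVDs, then \cref{fact_svd} to force the block structure, with the reverse inclusion checked by direct computation. Your bookkeeping is more explicit than the paper's. The paper works with the coarser blocks $\widehat U=\diag(\widehat U_1,\widehat U_2,\widehat U_3)$, $\widehat V=\diag(\widehat U_1,\widehat U_2,\widehat U_4)$ and the relation $\widehat U_1\rA\widehat U_1^\top=\rA$. As you observe, the finer grouping inside $\rA$ and $\Gamma$ is harmless because $\Sigma(A)$ is constant on each group and vanishes beyond the $\beta$-block.
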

\begin{proof} 
    Without loss of generality, $U=I_m$ and $V=I_n$. Using a similar analysis as in \cref{lemma:up}, we have 
    \begin{subequations}
    \begin{align}
        \|A-M\|_F^2-\sum_{i=r+1}^{m}\sigma_i^2(M)&=\|A\|_F^2+\|M\|_F^2-\sum_{i=r+1}^{m}\sigma_i^2(M) -2\langle A,M \rangle\\
        &\geq \|A\|_F^2+\|M\|_F^2-\sum_{i=r+1}^{m}\sigma_i^2(M) -2\sum_{i=1}^m\sigma_i(A)\sigma_i(M) \label{eq:neumann} \\
        & =\sum_{i=1}^r\sigma_i^2(A)+\sum_{i=1}^r \sigma^2_i(M) -2\sum_{i=1}^r\sigma_i(A)\sigma_i(M) \label{eq:rank<=r} \\
        &=\sum_{i=1}^r (\sigma_i(A)-\sigma_i(M))^2\geq 0 \label{eq:>=0}
    \end{align}
    \end{subequations}
    where in \cref{eq:neumann} we have used Von Neumann's trace inequality, and in \cref{eq:rank<=r} we have used the fact that $A\in \R^{m\times n}_{\leq r}$. According to the condition for equality in Von Neumann's trace inequality, \cref{eq:neumann} and \cref{eq:>=0} hold with equality iff there exists $\widehat U\in \mathrm{O}(m)$ and $\widehat V\in \mathrm{O}(n)$ such that 
    \begin{equation}
       A=\widehat U\begin{bmatrix}
           \Alpha & 0  & 0 \\
            0 & \begin{bmatrix}
                \beta I_{r-s} & 0 \\
                0 & 0
            \end{bmatrix} & 0 \\
            0 & 0 & 0
       \end{bmatrix}\widehat V^\top ~~~\text{and}~~~ M=\Sigma=\widehat U \Sigma \widehat V^\top.
    \end{equation}
    Utilizing \cref{fact_svd}, from $\widehat U \Sigma=\Sigma \widehat V$ it follows that $\widehat U=\diag(\widehat U_1,\widehat U_2,\widehat U_3)$, $\widehat V=\diag(\widehat U_1,\widehat U_2, \widehat U_4)$ with $\widehat U_1\in\mathrm{O}(s),~\widehat U_2\in \mathrm{O}(t),~\widehat U_3\in\mathrm{O}(m-s-t),~\widehat U_4\in \mathrm{O}(n-s-t)$. Moreover, the equality $\Sigma=\widehat U \Sigma \widehat V^\top$ also gives that $\widehat U_1 \Alpha \widehat U_1^\top=\Alpha$. Therefore, by direct calculation, we have:
    \[   A=\begin{bmatrix}
           \Alpha & 0  & 0 \\
            0 & \widehat U_2\begin{bmatrix}
                \beta I_{r-s} & 0 \\
                0 & 0
            \end{bmatrix}\widehat U_2^\top & 0 \\
            0 & 0 & 0
       \end{bmatrix}.          \]
       %This proves the necessity of \cref{sol_set_rankr}. The sufficiency of \cref{sol_set_rankr} can be proved by direct calculation:
       The converse inclusion in \cref{defn_omega} can be proved by direct calculation:
       \begin{equation*}
           \|\bar{A}-M\|_F\p 2 = \beta\p 2\|P - I_t\|_F\p 2+\|\Gamma\|_F\p 2 = \beta\p 2 (t-r+s)+\gamma_1\p 2+ \cdots + \gamma_{m-s-t}\p 2 = \sum_{i=r+1}\p m \sigma_i(M)\p 2.\qedhere
       \end{equation*}
\end{proof}

In order to project the matrix variable onto the solution set, the following result will be helpful.

\begin{proposition}
\label{prop1}
    Let $A\in \R^{m\times m}$ and $r\in[m]$. With $H:=(A+A^\top)/2$ and $K:=(A-A^\top)/2$, we have
    \[  \min_{P\in \cP^m_r} \|A-P\|_F^2=\|H\|_F^2+r-2\sum_{i=1}^r\lambda_i(H) +\|K\|_F^2, \]
    where the minimal value is achieved iff there exists an orthogonal matrix $U\in\Orth(m)$ such that 
    \[     H=U^\top \Lambda(H)U ~~~\text{and}~~~ P=U^\top \begin{bmatrix}
        I_r & 0 \\
        0 & 0
    \end{bmatrix}   U,          \]
    in which case $UAU^\top=\Lambda(H)+UKU^\top$. 
\end{proposition}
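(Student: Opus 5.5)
The plan is to split $A$ into its symmetric and skew-symmetric parts and use orthogonality. For any symmetric $P$, the matrix $H-P$ is symmetric and $K$ is skew-symmetric, so $\langle H-P,K\rangle=0$. Since $A-P=(H-P)+K$, this gives
\[ \|A-P\|_F^2=\|H-P\|_F^2+\|K\|_F^2 . \]
This reduces the problem to minimizing $\|H-P\|_F^2$ over $P\in\cP^m_r$, with $H\in\Sym^m$.

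Every $P\in\cP^m_r$ satisfies $P^2=P$, so $\|P\|_F^2=\tr(P)=r$. Expanding the square,
\[ \|H-P\|_F^2=\|H\|_F^2+r-2\tr(HP). \]
The eigenvalues of $P$ are $r$ ones followed by zeros. Fan's inequality then yields
\[ \tr(HP)\le \lambda(H)^\top\lambda(P)=\sum_{i=1}^r\lambda_i(H), \]
which is the claimed lower bound. It is attained by taking an eigendecomposition $H=U^\top\Lambda(H)U$ and setting
\[ P:=U^\top\begin{bmatrix}I_r&0\\0&0\end{bmatrix}U , \]
so the bound is the minimum value.

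For the equality characterization, a minimizer $P$ must achieve equality in Fan's inequality. By its equality condition, there is an orthogonal matrix simultaneously diagonalizing $H$ and $P$ with eigenvalues in decreasing order. Writing this matrix as $U^\top$ (to match the statement's convention $H=U^\top\Lambda(H)U$) gives exactly the stated form, since $\Lambda(P)=\diag(I_r,0)$. Conversely, any such pair attains the bound by direct computation. Finally,
\[ UAU^\top=UHU^\top+UKU^\top=\Lambda(H)+UKU^\top . \]

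I expect the only delicate point to be bookkeeping. One must check that the paper's set $\cP^m_r$, which is written with $\Orth(n)$, is meant as symmetric idempotents of rank $r$, so that $\|P\|_F^2=r$. One must also get the transpose convention right when passing from Fan's equality condition $A=U\Lambda U^\top$ to the statement's $U^\top\Lambda U$, which is harmless after renaming $U\leftrightarrow U^\top$.
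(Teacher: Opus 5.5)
Your proposal is correct and follows the paper's proof essentially step for step: the orthogonal splitting $\|A-P\|_F^2=\|H-P\|_F^2+\|K\|_F^2$, then $\|P\|_F^2=r$, then Fan's inequality together with its equality case (after renaming $U\leftrightarrow U^\top$). Your reading of $\cP^m_r$ as the symmetric idempotents of rank $r$ is the intended one; taken literally, the definition via $\Orth(m)$ would force full rank. Your explicit check that the bound is attained is a small addition the paper leaves implicit.
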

\begin{proof}
  For any $P\in \cP^m_r$, by direct calculation we have 
     \begin{align*}
        \|A-P\|_F^2&\overset{\rm (a)}{=}\|H+K-P\|_F^2\overset{\rm (b)}{=}\|H-P\|_F^2+\|K\|_F^2 = \|H\|^2_F+\|P\|_F^2-2\langle H,P \rangle +\|K\|_F^2 \\
        &\overset{\rm (c)}{\geq } \|H\|^2_F+r-2\sum_{i=1}^r\lambda_i(H) +\|K\|_F^2,
     \end{align*}
where (a) follows from the definition of $H$ and $K$, (b) follows from the fact that $H,P$ are both symmetric and $K$ is skew-symmetric, (c) follows from Fan's inequality and the fact that $\lambda_1(P)=\dots=\lambda_r(P)=1>\lambda_{r+1}(P)=\dots=\lambda_m(P)=0$ due to that $P\in \cP^m_r$. Moreover, (c) holds with equality iff there exists $U\in \Orth(m)$ such that 
\begin{equation*}
     H=U^\top\Lambda(H) U,~P=U^\top \begin{bmatrix}
    I_r  & 0 \\
     0 & 0
\end{bmatrix}U.  \qedhere
\end{equation*}  
\end{proof} 

We now arrive at the main result of this section.

\begin{theorem}
\label{under_asym_eb}
    Given $M\in \R\p {m\times n}$ with $\sigma_r(M)>0$, there exists $\epsilon>0$ such that 
    \[ \forall A\in \R^{m\times n}_r\cap B_\epsilon(\Omega),~~\|A-M\|_F^2-\sum_{i=r+1}^m\sigma_i^2(M) \geq \epsilon d(A,\Omega)^2 ,  \]
    where $\Omega := \arg\min_{A\in \R^{m\times n}_r} \|A-M\|_F^2$.
        % For the optimization problem $\min_{A\in \R^{m\times n}_r} \|A-M\|_F^2$, under the assumption that $\sigma_r(M)>0$, and the global minima set $\Omega$ given in \cref{defn_omega},
\end{theorem}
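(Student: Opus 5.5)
We would prove the bound as a Morse--Bott-type statement for $h(A):=\|A-M\|_F^2$ restricted to the smooth manifold $\R^{m\times n}_r$. The first step is to show the second-order form is positive definite on the directions of $T_{\bar A}\R^{m\times n}_r$ orthogonal to $\Omega$. The second step turns this into local quadratic growth, using that $\Omega$ is a single compact orbit.

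\textbf{Step 1: reductions.} Without loss of generality $U=I_m$ and $V=I_n$. By \cref{lemma:sol_under}, $\Omega$ is the orbit of one point under the compact group $\rO(t)$, acting by
$$A\mapsto \diag(I_s,Q,I_{m-s-t})\,A\,\diag(I_s,Q^\top,I_{n-s-t}).$$
This action fixes $M$, preserves $\R^{m\times n}_r$, and is an isometry, so $h$ is invariant. By \cref{fact_embeddedorbit}, $\Omega$ is a compact smooth embedded submanifold of $\R^{m\times n}_r$, diffeomorphic to a Grassmannian. By invariance, all constants computed at one point $\bar A\in\Omega$ hold uniformly on $\Omega$. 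We take $P=\diag(I_{r-s},0)$, so that
$$\bar A=\diag(\Alpha,\beta I_{r-s},0), \qquad \bar A-M=-\diag(0_r,D), \qquad D:=\diag(\beta I_{t-r+s},\Gamma).$$

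\textbf{Step 2: second-order form.} Write $H\in T_{\bar A}\R^{m\times n}_r$ in blocks $H_{11},H_{12},H_{21}$ as in \cref{Mrtangent}, and let $\bar\Sigma:=\diag(\Alpha,\beta I_{r-s})$. The gradient $2(\bar A-M)$ lies in the normal space, since it is supported on the lower-right block. Expand $h$ along curves $\bar A+\tau H+\tfrac{\tau^2}{2}W$ with $W\in T^2_{\R^{m\times n}_r}(\bar A,H)$ given by \cref{Mrsotangent}. This gives the second-order quantity
$$q(H)=2\|H\|_F^2-2\langle D,\,H_{21}\bar\Sigma^{-1}H_{12}\rangle .$$
We bound the cross term entrywise in the index $j\le r$:
$$|\langle D,H_{21}\bar\Sigma^{-1}H_{12}\rangle|\le \sum_j \frac{\|D\|_2}{\bar\sigma_j}\,|H_{21}e_j|\,|e_j^\top H_{12}|.$$
For $j\le s$, $\beta/\alpha_j<1$, so these contributions are strictly dominated. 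For the $\beta$-block indices, the coupling with the $\Gamma$-rows and columns has ratio $\gamma/\beta<1$.

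The only degenerate part is the coupling between the $\beta I_{r-s}$ block of $\bar\Sigma$ and the $\beta I_{t-r+s}$ block of $D$. There $q$ reduces to $\|H_{21}'+H_{12}'^\top\|^2$ on the relevant $(t-r+s)\times(r-s)$ subblocks, plus strictly positive terms. Hence $\ker q=\{H_{11}=0,\ H_{21}'=-H_{12}'^\top,\ \text{other blocks }0\}$. This is exactly $\{\beta(\Xi P-P\Xi):\Xi \text{ skew}\}=T_{\bar A}\Omega$, the derivative of the orbit map. We conclude $q(H)\ge c\|H\|_F^2$ for all $H\in T_{\bar A}\R^{m\times n}_r\cap (T_{\bar A}\Omega)^\perp$, with $c>0$.

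\textbf{Step 3: from second order to growth (main obstacle).} Fix $A\in\R^{m\times n}_r$ near $\Omega$ and let $\bar A=P_\Omega(A)$. By \cref{fact_proj} applied to $\Omega$, the displacement $A-\bar A$ is normal to $T_{\bar A}\Omega$. Using a $C^2$ local parametrization of $\R^{m\times n}_r$ near $\bar A$, write
$$A=\bar A+H+R(H), \qquad H\in T_{\bar A}\R^{m\times n}_r, \quad |R(H)|=O(|H|^2).$$
A Taylor expansion then gives $h(A)-h(\bar A)=\tfrac12 q(H)+o(|H|^2)$, where the quadratic part of $R$ pairs with the normal gradient exactly as in \cref{Mrsotangent}. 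Since $A-\bar A\perp T_{\bar A}\Omega$, the component of $H$ along $T_{\bar A}\Omega$ is $O(|H|^2)$, so $q(H)\ge c|H|^2-O(|H|^3)$. Moreover $|H|\asymp d(A,\Omega)$.

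The delicate point is uniformity of the $o(\cdot)$ terms and of the neighborhood sizes over $\bar A\in\Omega$. We would handle it through the $\rO(t)$-equivariance: the parametrization at a point $Q\cdot\bar A$ is the conjugate of the one at $\bar A$, so every constant is the same at every point of $\Omega$. This yields $\epsilon>0$ with the claimed inequality on $\R^{m\times n}_r\cap B_\epsilon(\Omega)$. If the Taylor bookkeeping proves awkward, a fallback is to argue by contradiction: take a sequence $A_k$ violating the bound, pass to a convergent subsequence of base points $P_\Omega(A_k)$ using compactness of $\Omega$, and normalize the directions $(A_k-P_\Omega(A_k))/d(A_k,\Omega)$. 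The limit direction would be a unit vector in $T\R^{m\times n}_r\cap(T\Omega)^\perp$ with $q\le 0$, contradicting Step 2.
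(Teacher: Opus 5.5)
Your plan is sound: show the second variation of $h$ on $T_{\bar A}\Mr$ is nondegenerate modulo $T_{\bar A}\Omega$, then get growth through the nearest-point projection onto the compact orbit. However, the explicit formula in Step 2 is wrong, and as written it does not support your own conclusion.

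\textbf{The second-order tangent set.} With the $\tfrac{t^2}{2}$ normalization used in the definition of $T^2$, the second-order tangent set is $2H\bar A^{\dagger}H+T_{\bar A}\Mr$. To see this, take the curve
\[
\tau\mapsto\begin{bmatrix}\bar\Sigma+\tau H_{11}&\tau H_{12}\\ \tau H_{21}&\tau^2H_{21}(\bar\Sigma+\tau H_{11})^{-1}H_{12}\end{bmatrix}\in\Mr ,
\]
whose lower-right block is $\tau^2H_{21}\bar\Sigma^{-1}H_{12}+O(\tau^3)$. The display \cref{Mrsotangent} that you quote drops this factor. The paper's own argument remains valid after the same correction only because the AM--GM step \cref{30g} can be sharpened by exactly this factor. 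Hence the correct form is $q(H)=2\|H\|_F^2-4\langle D,H_{21}\bar\Sigma^{-1}H_{12}\rangle$.

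\textbf{Why your coefficient fails.} With coefficient $2$ instead of $4$, the $\beta$--$\beta$ block would contribute $2(\|H_{21}'\|_F^2+\|H_{12}'\|_F^2-\tr(H_{21}'H_{12}'))$. This is positive definite, so $q$ would be positive on $T_{\bar A}\Omega$. That is impossible, since $h$ is constant on $\Omega$. Moreover, the ratios would then only need to be below $2$, so the singular-value gaps would play no role.

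\textbf{The corrected computation.} Each index pair contributes $a^2+b^2-2\rho_{ij}ab$ to $\tfrac12 q$, where $\rho_{ij}=D_{ii}/\bar\Sigma_{jj}\le 1$. Equality $\rho_{ij}=1$ holds exactly on the $\beta$--$\beta$ block, where the contribution is $\|H_{21}'-H_{12}'^{\top}\|_F^2$, with a minus sign rather than a plus. Correspondingly,
\[
T_{\bar A}\Omega=\{H_{11}=0,\ H_{21}'=H_{12}'^{\top},\ \text{other blocks }0\},
\]
because the orbit derivative $\beta(\Xi P-P\Xi)$ is symmetric, in agreement with \cref{normal_omega}. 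Your two sign slips cancel, so the conclusion $q\ge c\|H\|_F^2$ on $T_{\bar A}\Mr\cap(T_{\bar A}\Omega)^{\perp}$ is true, e.g.\ with $c=2\bigl(1-\max\{\beta/\alpha_s,\gamma_1/\beta\}\bigr)$. You should still redo Step 2 with these corrections.

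\textbf{Comparison with the paper.} Once repaired, your proof differs from the paper's in how second-order information becomes growth. The paper proceeds as follows:
\begin{itemize}
\item it fixes one $\bar A$ and argues by contradiction along the affine normal slice $\bar A+N_{\bar A}\Omega$;
\item it uses outer second-order regularity of $\Mr$ to extract the same cross term $\langle B\bar A^\dagger B,\bar A-M\rangle$ for a limit direction $B\in T_{\bar A}\Mr\cap N_{\bar A}\Omega$;
\item it bounds that term blockwise with the same ratios, using $B_8=-B_{12}^\top$ for the degenerate coupling;
\item it invokes \cref{thm:symmetry}(i) to pass from the slice to a neighborhood, with invariance covering all of $\Omega$.
\end{itemize}
You instead prove Morse--Bott nondegeneracy and globalize by projecting onto the compact orbit, with equivariantly uniform remainders. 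Your fallback, after using $\rO(t)$ to move each $P_\Omega(A_k)$ to $\bar A$, is exactly the paper's Step 2.

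The paper's route showcases its symmetry rule and needs neither compactness of the orbit nor projections. Yours is self-contained and can be made fully explicit with the chart above. For $A\in\Mr$ near $\bar A$, write $A-\bar A=H+R$ with $R=\diag(0,H_{21}(\bar\Sigma+H_{11})^{-1}H_{12})\perp T_{\bar A}\Mr$. Then
\[
h(A)-h(\bar A)=\|H\|_F^2+\|R\|_F^2-2\langle D,H_{21}(\bar\Sigma+H_{11})^{-1}H_{12}\rangle
\]
holds exactly. Also, $A-\bar A\perp T_{\bar A}\Omega$ forces $H\perp T_{\bar A}\Omega$ exactly. So neither second-order regularity nor $o(\cdot)$ bookkeeping is needed, and your entrywise bound is a sharper replacement for \cref{30a}--\cref{30i}.
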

\begin{proof}
Without loss of generality, we assume that $U=I_m$ and $V=I_n$ in the SVD of $M$, and then $M=\Sigma(M)$, which is given in \cref{svd_M}. In the following, we define $f(A):=\|A-M\|_F^2+\delta_{\Mr}(A)$. 

Consider the action of $G:=\rO(t)$ on $\R^{m\times n}$:
\[  Q\cdot X:= \diag(I_{s},Q,I_{m-s-t}) X \diag(I_{s},Q^\top,I_{n-s-t}).                     \]
Then, $f$ is $G$-invariant, lsc around $\Omega$, and $\Omega$ is an orbit of $G$, which is also a smooth embedded submanifold of $\R^{m\times n}$, since $\Omega\simeq \cP^{t}_{r-s}$. To simplify the calculation, we select a specific $\bar{A}\in \Omega$ given by 
\[    \bar{A}=\begin{bmatrix}
   \rA & 0 & 0 & 0 & 0 \\
    0 & \beta I_{r-s} & 0 & 0 &0  \\
     0 & 0  & 0 & 0 & 0  \\
      0 & 0 & 0 & 0 & 0
\end{bmatrix}.               \]

We proceed with the following steps:

\noindent \textit{Step 1:} \textit{Calculate the normal space of $\Omega$.} Using \cref{fact_orbit}, we know that 
$$T_{\Omega}(\bar{A})=\fg\cdot \bar{A}=\{\diag(0_s,K,0_{m-s-t})\bar{A}+\bar{A} \diag(0_s,K^\top,0_{n-s-t}) :~K=-K^\top\in \R^{t\times t}\}. $$
Rewrite skew symmetric matrix $K\in \R^{t\times t}$ in block forms:
\begin{align*}
   &  K=\begin{bmatrix}
    K_1 & K_2 \\
    -K_2^\top & K_3
\end{bmatrix},\\
&K_1\in \R^{(r-s)\times (r-s)},K_3\in\R^{(t+s-r)\times (t+s-r)} \text{ are skew symmetric, } K_2\in \R^{(r-s)\times (t+s-r)}.      
\end{align*}
By direct calculation, we see that 
\begin{equation*}
   T_{\Omega}(\bar{A})=\left\{\begin{bmatrix}
   0 & 0 & 0 & 0 & 0 \\
    0 & 0 & -K_2 & 0 &0  \\
     0 & -K_2^\top  & 0 & 0 & 0  \\
      0 & 0 & 0 & 0 & 0
\end{bmatrix}:~ K_2\in \R^{(r-s)\times (t+s-r)}\right\}.  
\end{equation*}
Let $J_1:=[r]\setminus[s]$,~$J_2:=[s+t]\setminus[r]$. Then, by direct calculation, we have
\begin{equation}
    \label{normal_omega}
    N_{\Omega}(\bar{A})=(T_{\Omega}(\bar{A}))^\perp=\{B\in \R^{m\times n}:~B_{J
    _1J_2}=-B_{J_2J_1}^\top\}.
\end{equation}

\noindent \textit{Step 2:} \textit{Argue by contradiction and transform the condition to the relationship between $T^2_{\Mr}(\bar{A},B)$ and $T_{\Mr}(\bar{A})$ for some $B\in T_{\Mr}(\bar{A})$.} We aim to prove that there exist $\epsilon,\rho>0$ such that   
\begin{equation}
    \label{matrix_ebnormal}
  \forall C\in  \vec N_{\bar{A}}\Omega\text{  with  }\|C-\bar{A}\|_F<\epsilon,\quad   f(C)-f(\bar A)\geq \rho\|C-\bar{A}\|_F^2.
\end{equation}
If \cref{matrix_ebnormal} is not true, then we can find $B_k\in N_{\Omega}(\bar{A})$ and $t_k,\rho_k\downarrow  0$ such that 
\begin{equation}
    \label{ce_eq}
     \|B_k\|_F=1,\quad  A_k:=\bar{A}+t_kB_k\in \Mr,~ \|A_k-M\|_F^2-\|\bar{A}-M\|_F^2<\rho_kt_k^2.              
\end{equation}
Passing to a subsequence, we may assume that $B_k\to B$. Since $A_k=\bar{A}+t_kB_k\in \Mr\cap \vec N_{\overline{x}}\Omega$, we know that 
\begin{equation}
    \label{inclusion_B}
    B\in T_{\Mr}(\bar A) \cap [T_{\vec N_{\overline{x}}\Omega}(\bar{A})]=T_{\Mr}(\bar A) \cap N_{\Omega}(\bar{A}). 
\end{equation}
Since $\bar{A}\in \Omega=P_{\Mr}(M)$, we know that $M-\bar{A}\in N_{\Mr}(\bar{A})$, which implies that
\begin{equation}
    \label{BperpAm}
  \forall G\in T_{\Mr}(\bar{A}),\quad   \langle G,\bar{A}-M \rangle=0. 
\end{equation}
Then, we can rewrite $A_k$ as:
\[      A_k=\bar{A}+t_kB+\frac{t_k^2}{2}\xi_k,\quad  \xi_k=2(B_k-B)/t_k,~t_k\|\xi_k\|_F\to 0.     \]
According to \cref{ce_eq}, expanding the Frobenius norm, we have 
\begin{subequations}
    \begin{align}
   \|A_k-M\|_F^2-\|\bar{A}-M\|_F^2&=2\langle t_kB+\frac{t_k^2}{2}\xi_k,  \bar{A}-M\rangle+t_k^2\|B+\frac{t_k}{2}\xi_k\|_F^2 \label{23a} \\
   &=t_k^2\langle \xi_k, \bar{A}-M\rangle+t_k^2\|B+\frac{t_k}{2}\xi_k\|_F^2<\rho_kt_k^2. \label{23b}
\end{align}
\end{subequations}
Indeed, in \cref{23a} we rewrite $A_k-M=\bar{A}-M+t_kB+t_k^2/2\xi_k$ and then expand the Frobenius norm. In \cref{23b} we have used the fact $B\in T_{\Mr}(\bar{A})$ and \cref{BperpAm}. Using second-order regularity of the smooth manifold $\Mr$, \cref{defn:soregular}, and the structure of the second-order tangent set of $\Mr$ in \cref{Mrsotangent}, we know that 
\begin{equation}
    \xi_k = B\bar{A}^\dagger B+ G_k+H_k,~G_k\in T_{\Mr}(\bar{A}),~H_k\to 0.
\end{equation}
Dividing both sides of \cref{23b} by $t_k^2$, and using \cref{BperpAm} to show that $\langle G_k,\bar{A}-M \rangle=0$, we have 
\begin{equation*}
    \langle  B\bar{A}^\dagger B+H_k, \bar{A}-M\rangle+\|B+\frac{t_k}{2}\xi_k\|_F^2<\rho_k.
\end{equation*}
Let $k\to\infty$ and recall that $\|B\|_F=1$ and $t_k\|\xi_k\|\to 0$, we have 
\begin{equation}
    \label{step2final}
    \langle  B\bar{A}^\dagger B, \bar{A}-M\rangle\leq -1=-\|B\|_F^2.
\end{equation}

\noindent \textit{Step 3:} \textit{Further use the fact that $B\in N_{\Omega}(\bar{A})$ to derive a contradiction.}  Let us now write $B$ and $\bar{A}-M$ in block forms:
\begin{equation}
    \label{blockab}
    B=\begin{bmatrix}
        B_1 & B_2 & B_3 & B_4 & B_5 \\
        B_6 & B_7 & B_8 & B_9 & B_{10} \\
        B_{11} & B_{12} & B_{13} & B_{14} & B_{15} \\
        B_{16} & B_{17} & B_{18} & B_{19} & B_{20} 
    \end{bmatrix},\quad \bar{A}-M=\begin{bmatrix}
        0 & 0 & 0 & 0 &  0 \\
                0 & 0 & 0 & 0 &  0 \\
        0 & 0 & -\beta I_{s+t-r} & 0 &  0 \\
        0 & 0 & 0 & -\Gamma &  0 
    \end{bmatrix}.
\end{equation}
Using the block forms in \cref{blockab}, we now recalculate the inner product in \cref{step2final}:
\begin{subequations}
\begin{align}
      &\langle  B\bar{A}^\dagger B, \bar{A}-M\rangle\notag\\
      &=-\langle B_{11}\rA^{-1}B_3+1/\beta B_{12}B_8, -\beta I_{s+t-r}\rangle  - \langle B_{16}\rA^{-1}B_4+1/\beta B_{17}B_9,\Gamma\rangle  \label{30a}\\
    &= -\beta\tr(B_{11}\rA^{-1}B_3) -\tr(B_{12}B_8)-\tr(\Gamma B_{16}\rA^{-1}B_4)-\tr(\Gamma B_{17}B_9)/\beta  \label{30b} \\
    &\geq -\beta\|B_{11}\|_F\|\rA^{-1}B_3\|_{F}-\tr(B_{12}B_8)-\|\Gamma B_{16}\|_F\|\rA^{-1}B_4\|_F-\|\Gamma B_{17}\|_F\|B_9\|_F/\beta \label{30c}\\
    &\geq  -\beta\|B_{11}\|_F\|\rA^{-1}B_3\|_{F}-\|\Gamma B_{16}\|_F\|\rA^{-1}B_4\|_F-\|\Gamma B_{17}\|_F\|B_9\|_F/\beta \label{30d}\\ 
    &\geq  -\beta\|\rA^{-1}\|_2\|B_{11}\|_F\|B_3\|_{F}-\|\Gamma\|_2\| B_{16}\|_F\|\rA^{-1}\|_2\|B_4\|_F-\|\Gamma\|_2\| B_{17}\|_F\|B_9\|_F/\beta \label{30e}       \\ 
    &= -\frac{\beta}{\alpha_s}\|B_{11}\|_F\|B_3\|_{F}-\frac{\gamma_1}{\alpha_s}\| B_{16}\|_F\|B_4\|_F-\frac{\gamma_1}{\beta}\| B_{17}\|_F\|B_9\|_F \label{30f} \\
    &\geq -\frac{\beta}{\alpha_s}(\|B_{11}\|_F^2+\|B_3\|_F^2)-\frac{\gamma_1}{\alpha_s}(\|B_{16}\|_F^2+\|B_4\|_F^2)-\frac{\gamma_1}{\beta}(\|B_17\|_F^2+\|B_9\|_F^2)  \label{30g} \\
    &\geq -\max\left\{\frac{\beta}{\alpha_s},\frac{\gamma_1}{\alpha_s},\frac{\gamma_1}{\beta}\right\}( \|B_{11}\|_F^2+\|B_3\|_F^2+\|B_{16}\|_F^2+\|B_4\|_F^2+\|B_{17}\|_F^2+\|B_9\|_F^2)  \\ 
    &\geq -\max\left\{\frac{\beta}{\alpha_s},\frac{\gamma_1}{\alpha_s},\frac{\gamma_1}{\beta}\right\}\|B\|_F^2=-\max\left\{\frac{\beta}{\alpha_s},\frac{\gamma_1}{\alpha_s},\frac{\gamma_1}{\beta}\right\}. \label{30i}
\end{align}
\end{subequations}
Indeed, in \cref{30a} we have used the block structures in \cref{blockab} and omit all the unused blocks. In \cref{30b}, we have used the fact that $\langle A,B\rangle=\tr(A^\top B)=\tr(B^\top A)$ by the definition of the inner product between matrices. In \cref{30c}, we have used Cauchy-Schwartz inequality to show that $\tr(AB)\leq \|A\|_F\|B\|_F$. In \cref{30d}, we have used the condition $B\in N_{\Omega}(\bar{A})$ and the structure of $N_{\Omega}(\bar{A})$ in \cref{normal_omega} to show that $B_{8}=-B_{12}^\top$, and then 
$$\tr(B_8B_{12})=-\tr(B_{12}^\top B_{12})=-\|B_{12}\|_F^2\leq 0.$$  
In \cref{30e} we have used the fact that $\|AB\|_F\leq \|A\|_2\|B\|_F$ for any matrices $A,B$. In \cref{30f}, we calculate $\|\rA^{-1}\|=1/\alpha_s$ and $\|\Gamma\|_2=\gamma_1$ given the structure of $\rA$ and $\Gamma$ in \cref{svd_M}. In \cref{30g}, we have used the basic inequality $xy\leq (x^2+y^2)/2$ for all $x,y\in \R$. \cref{30i} follows from the block structure of $B$ in \cref{blockab} and the condition that $\|B\|_F=1$. Therefore, using \cref{step2final}, we get that 
\[          1\leq     \max\left\{\frac{\beta}{\alpha_s},\frac{\gamma_1}{\alpha_s},\frac{\gamma_1}{\beta}\right\} ,       \]
which yields contradiction since $\alpha_s>\beta >\gamma_1\geq 0$ as in \cref{svd_M}. Therefore, we know \cref{eb_normal_space} holds for some $\epsilon,\rho>0$.

\noindent \textit{Step 4:} \textit{Transform growth condition on the normal space to a neighborhood of $\Omega$.} Using \cref{eb_normal_space}, the fact that $\|C-\bar{A}\|_F\geq d(C,\Omega)$ and \cref{thm:symmetry}(i), we know that there exists $\epsilon,\rho>0$ such that:
\[  \forall C\in B_\epsilon(\bar{A}),~ f(C)-f(\bar{A})\geq \rho d(C,\Omega)^2.           \]
Since $\Omega=G\bar{A}$ and the action of $G$ does not change Frobenius norm, the conclusion then follows.
\end{proof}
The growth condition in the asymmetric case immediately implies the growth condition in the symmetric case. Indeed, the symmetric case can be relaxed to the asymmetric case while preserving the solution set.
\begin{corollary}
\label{under_sym_eb}
  Given $M\in \bS^n_+$ with $\lambda_r(M)>0$, there exists $\epsilon>0$ such that 
    \[ \forall A\in \bS^{n}_{r}\cap B_\epsilon(\Omega),~~\|A-M\|_F^2-\sum_{i=r+1}^n\lambda_i^2(M) \geq \epsilon d(A,\Omega)^2,    \]
    where $\Omega := \arg\min_{A\in \bS^{n}_{r}} \|A-M\|_F^2$.
    %. For the optimization problem $\min_{A\in \bS^{n}_r} \|A-M\|_F^2$, and the global minima set $\Omega$
\end{corollary}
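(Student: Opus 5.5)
The plan is to derive the corollary from \cref{under_asym_eb} applied to $M\in\bS^n_+\subseteq\R^{n\times n}$ (so $m=n$). Since $M\succeq 0$, its eigenvalues and singular values coincide: $\sigma_i(M)=\lambda_i(M)$. The hypothesis $\lambda_r(M)>0$ therefore gives $\sigma_r(M)>0$, and the constant $\sum_{i=r+1}^n\lambda_i^2(M)$ equals $\sum_{i=r+1}^n\sigma_i^2(M)$.

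The key step is to show that the symmetric solution set $\Omega_{\mathrm s}:=\arg\min_{A\in\bS^n_r}\|A-M\|_F^2$ coincides with the asymmetric one $\Omega_{\mathrm a}:=\arg\min_{A\in\R^{n\times n}_r}\|A-M\|_F^2$. I would take an eigendecomposition $M=U\Lambda(M)U^\top$ and use it as the SVD in \cref{lemma:sol_under}, with $V=U$. Then \cref{defn_omega} describes $\Omega_{\mathrm a}$ as the matrices $U\,\diag(\rA,\beta P,0)\,U^\top$ with $P\in\cP^t_{r-s}$. Every such matrix is symmetric, positive semidefinite and of rank $r$, because $\beta>0$ (as $\beta=\sigma_r(M)>0$). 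Hence $\Omega_{\mathrm a}\subseteq\bS^n_r$.

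Both problems also have the same optimal value $\sum_{i>r}\sigma_i^2(M)$. The symmetric feasible set is smaller, so its value is at least this, and it is attained by points of $\Omega_{\mathrm a}$. It follows that $\Omega_{\mathrm s}=\Omega_{\mathrm a}\cap\bS^n_r=\Omega_{\mathrm a}$. A further subtlety is that \cref{lemma:sol_under} is stated over $\R^{m\times n}_{\le r}$ while the theorem uses $\R^{m\times n}_r$. The two minimizer sets agree because every minimizer has rank exactly $r$.

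The conclusion is then immediate. Take the $\epsilon$ from \cref{under_asym_eb}. Any $A\in\bS^n_r\cap B_\epsilon(\Omega_{\mathrm s})$ also lies in $\R^{n\times n}_r\cap B_\epsilon(\Omega_{\mathrm a})$, so the asymmetric inequality applies to it verbatim, and the distances $d(A,\Omega)$ are the same. The main point needing care is the identification of the solution sets, in particular that $\beta>0$ so the rank is exactly $r$. This is where $\lambda_r(M)>0$ is used.
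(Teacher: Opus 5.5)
Your proposal is correct and takes the same route as the paper. The paper observes that the symmetric problem is a restriction of the asymmetric one with the same solution set, and then restricts the inequality of \cref{under_asym_eb} to $\bS^n_r$. You fill in the details the paper leaves implicit: you use an eigendecomposition as the SVD in \cref{lemma:sol_under} (so $V=U$), and you use $\beta=\lambda_r(M)>0$ to show every asymmetric minimizer is symmetric of rank exactly $r$, which also reconciles the $\R^{n\times n}_{\le r}$ versus $\R^{n\times n}_r$ formulations.
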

Now, using the single-orbit structure of the solution set, we can deduce the \KL exponent of the underparametrized matrix factorization problem.
\begin{corollary}
\label{coro_under_KL}
  Consider $g_{\mathrm{a}}$ in \cref{def:asymmetric} and $g_{\mathrm{s}}$ in \cref{def:symmetric}. When $\sigma_r(M)>0$, $\widetilde g_{\mathrm{a}}:=g_{\mathrm{a}}+\delta_{\R^{m\times n}_{\leq r}}$ has \KL exponent $1/2$ at any global minimum. When $M\in \bS_+^n$ with $\lambda_r(M)>0$, $\widetilde g_{\mathrm{s}}:=g_{\mathrm{s}}+\delta_{\bS^n_{\leq r}}$ has \KL exponent $1/2$ at any global minimum. Moreover, both $f_{\mathrm{a}}$ and $f_{\mathrm{s}}$ have \KL exponent $1/2$ at any global minimum.
\end{corollary}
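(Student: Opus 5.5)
The plan has three stages. First we obtain the \KL exponent of $\widetilde g_{\mathrm{a}}$ and $\widetilde g_{\mathrm{s}}$ from the quadratic growth in \cref{under_asym_eb} and \cref{under_sym_eb}, using the symmetry corollary \cref{cor:symmetry-lift}. Then we lift to $f_{\mathrm{a}}$ and $f_{\mathrm{s}}$ through the composition rule.

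\textbf{Growth of the outer functions.} Consider the asymmetric case first. By \cref{lemma:sol_under}, the global minimizers of $\widetilde g_{\mathrm{a}}$ form the set $\Omega$ in \cref{defn_omega}. Every element of $\Omega$ has rank exactly $r$ because $\sigma_r(M)>0$, so $\Omega\subseteq\Mr$. Since $\Mr$ is relatively open in $\R^{m\times n}_{\leq r}$, $\widetilde g_{\mathrm{a}}$ coincides with $f(A)=\|A-M\|_F^2+\delta_{\Mr}(A)$ near any $\bar A\in\Omega$. \cref{under_asym_eb} then gives quadratic growth, that is, growth exponent $\beta=2$ at $\bar A$. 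Here $[\widetilde g_{\mathrm{a}}\le \min]=\Omega$ locally.

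\textbf{\KL exponent of the outer functions.} Next we verify the hypotheses of \cref{cor:symmetry-lift}. We use the $\rO(t)$-action from the proof of \cref{under_asym_eb}. This action is linear and semi-algebraic, so by \cref{fact_embeddedorbit} the orbit $G\bar A=\Omega$ is an embedded submanifold. It agrees with $[\widetilde g_{\mathrm{a}}=\min\widetilde g_{\mathrm{a}}]$, and $\bar A$ is a local (indeed global) minimum. The function $\widetilde g_{\mathrm{a}}$ is lsc, since $\R^{m\times n}_{\leq r}$ is closed. It is also semi-algebraic, hence subanalytic, and $G$-invariant, because the action preserves rank and fixes $M$. \cref{cor:symmetry-lift} with $\beta=2$ therefore yields \KL exponent $1/2$ at every global minimum.

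The symmetric case is identical. Here $\Omega$ is the analogous orbit under $Q\mapsto \diag(I_s,Q,I)\,A\,\diag(I_s,Q^\top,I)$ acting on $\bS^n$, and the growth comes from \cref{under_sym_eb}. One point needs care: $\bS^n$ should be identified with a Euclidean space so that the corollary applies. Alternatively, one could define $g_{\mathrm{s}}$ on $\R^{n\times n}$ with $\delta_{\bS^n}$ added; the set stays invariant and the orbit is unchanged.

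\textbf{Lifting to $f_{\mathrm{a}}$ and $f_{\mathrm{s}}$.} Let $(X,Y)$ be a global minimum of $f_{\mathrm{a}}$. By \cref{theorem:landscape}, $XY\in\Omega$, so $\rk(XY)=r$, which forces $r\le\min\{m,n\}$ and $\rk X=\rk Y=r$. \cref{prop:mf_submersion} then says that $\widetilde F_{\mathrm{a}}$, viewed as a map into $\Mr$, is a submersion at $(X,Y)$.

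We apply \cref{cor:composition} with $\cM=\Mr$ and a neighborhood $U$ of full-rank factors. It remains to check that $g_{\mathrm{a}}+\delta_{\Mr}$ has \KL exponent $1/2$ at $XY$. This holds because $g_{\mathrm{a}}+\delta_{\Mr}$ coincides with $\widetilde g_{\mathrm{a}}$ near $XY$, using again that rank $\le r$ near a rank-$r$ point means rank exactly $r$.

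The symmetric case uses \cref{corollary_landscape} to identify the minima, and \cref{prop:smf_submersion} with $\cM=\bS^n_r$ to obtain the submersion. Note that $\bS^n_{+,r}$ versus $\bS^n_r$ causes no issue, because near $XX^\top\in\bS^n_{+,r}$ the rank-$r$ symmetric matrices in the image are positive semidefinite anyway.

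\textbf{Main obstacle.} The delicate step is the symmetric application of \cref{cor:symmetry-lift}. It requires confirming that the orbit equals the full level set near $\bar A$, which follows from the explicit characterization of $\Omega$. It also requires checking that the ambient space $\bS^n$ fits the $\R^n$ framework, which we handle by choosing an orthonormal basis.
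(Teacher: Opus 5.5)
Your proposal is correct and follows the paper's proof. You obtain quadratic growth from \cref{under_asym_eb} and \cref{under_sym_eb}, convert it to \KL exponent $1/2$ via \cref{cor:symmetry-lift} using the $\rO(t)$-action whose single orbit is $\Omega$, and lift to $f_{\mathrm{a}},f_{\mathrm{s}}$ via \cref{cor:composition} together with \cref{prop:mf_submersion} and \cref{prop:smf_submersion}. The extra details you supply are steps the paper leaves implicit: the local coincidence of $g_{\mathrm{a}}+\delta_{\Mr}$ with $\widetilde g_{\mathrm{a}}$ near rank-$r$ points, the identification of the minimizers, and the treatment of $\bS^n$ and of $\bS^n_{+,r}$ as an open subset of $\bS^n_r$.
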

\begin{proof}
Let $t$ be the multiplicity of $\sigma_r(M)$ (resp. $\lambda_r(M)$), and $s$ be the total multiplicities of all the singular values (resp. eigenvalues) that are greater than $\sigma_r(M)$ (resp. $\lambda_r(M)$), see \cref{lemma:sol_under} for a more concrete definition. Let the group $\rO(t)$ act on both $\R^{m\times n}$ and $\bS^n$ in the following ways:
  \begin{align*}
   \forall A\in \R^{m\times n},\quad    U\cdot A:= \diag(I_s,U, I_{m-s-t})A\diag(I_s,U^\top,I_{n-s-t}),  \\ 
   \forall A\in\bS^n , \quad U\cdot A :=   \diag(I_s,U, I_{n-s-t})A\diag(I_s,U^\top,I_{n-s-t}). 
  \end{align*}
  Both $\widetilde  g_{\mathrm{a}}$ and $\widetilde g_{\mathrm{s}}$ are invariant under the action of $\rO(t)$, and the global minima set $\Omega$ is an orbit of $\rO(t)$. Since $\widetilde g_{\mathrm{a}}$ and $\widetilde g_{\mathrm{s}}$ have quadratic growth by \cref{under_asym_eb} and \cref{under_sym_eb}, they have \KL $1/2$ by \cref{cor:symmetry-lift}. Together with the submersive properties of $F_{\mathrm{a}}$ in \cref{prop:mf_submersion} and $F_{\mathrm{s}}$ in \cref{prop:smf_submersion}, $f_{\mathrm{a}}$ and $f_{\mathrm{s}}$ have \KL exponent $1/2$ by the composition rule in \cref{cor:composition}. 

  Another possibility is to convert the growth exponent of the outer functions $\widetilde  g_{\mathrm{a}}$ and $\widetilde g_{\mathrm{s}}$ to the objective functions $f_{\mathrm{a}}$ and $f_{\mathrm{s}}$ via the composition rule in \cref{cor:composition}, then apply the equivalence between quadratic growth and \KL 1/2, as shown by Rebjock and Boumal \cite{rebjock2024fast}.
\end{proof}

%Let $\{(X_k,Y_k)\}_{k\in\N}\subseteq\R\p{m\times r}\times \R\p{r\times n}$, $\{\alpha_k\}_{k\in\N}\subseteq(0,\infty)$ with $\im(X_0)\subseteq \im(M)$, and $(\widehat X_k,\widehat Y_k)=\Phi(X_k,Y_k)$.
% where
% $$X = U \begin{bmatrix}
%         \widehat X_k \\
%         \widetilde X_k 
%     \end{bmatrix} ~~~\text{and}~~~Y_k=\widehat Y_kV^\top .$$
 
% \begin{remark}
% \cref{coro:linearconver} means that for the initialization given in \cref{initial_over}, one can view it as a gradient iteration with the same step sizes applied on a full rank or exactly parametrized matrix factorization problem, which has \KL exponent $1/2$ at any global minimum.  
%     % Unfortunately, there is no similar initialization in the symmetric over-parametrized case to get linear convergence for constant step size gradient method, see \cite{xiong2023over}. However, it is possible to achieve (nearly) linear convergence rate by using adaptive step size gradient method, see \cite{davis2025gradient}.
% \end{remark}

\subsubsection{Linear convergence}
Since $f(x,y)=(xy)\p 2$ has tight \KL exponent 3/4 at the origin, it is not clear how to obtain linear convergence in the overparametrized case with rank deficient data. Nevertheless, the initialization $(X_0,Y_0)=(MA,B)$ where $A\in \R^{n\times r}$ and $B\in \R^{r\times n}$ are i.i.d. Gaussian random matrices yields linear convergence of alternating gradient descent \cite[Theorem 5.1]{ward2023convergence} with high probability. Below, we show that linear convergence actually holds for almost every $(A,B)\in \R^{n\times r} \times \R^{r\times n}$ with gradient descent by reducing to case where $\rk(M)=\min\{m,r,n\}$ where the \KL exponent is 1/2.
% \begin{corollary}
% \label{coro:linearconver}
%     Gradient descent scheme for $f_a$ with data matrix $M\in \R^{m\times n}_{\leq r}$ and initial value $\im(X_0)\subseteq \im (M)$ is equivalent to gradient descent scheme for $f_a$ with data matrix $\widehat M\in \R^{s\times n}_*$.
% \end{corollary}

\begin{lemma}
\label{lemma_equi_deficient}
    Given $M\in \R^{m\times n}$, consider an SVD 
    \begin{equation}
        \label{svd_M_deficient}
          M =U\Sigma(M)V^\top = U\begin{bmatrix}
    \Alpha & 0 \\
      0 & 0
\end{bmatrix}V^\top,
    \end{equation}
where $\Alpha \in \R^{s\times s}$ is a positive diagonal matrix and $s=\rk(M)$. Let $\Phi:\R\p {s\times r}\times \R\p{r\times n}\to\R\p{m\times r}\times \R\p{r\times n}$ and $\Psi:\R^{n\times r}\times \R^{r\times n}\to \R^{s\times r}\times \R^{r\times n}$ be linear maps respectively defined by 
$$\Phi(\widehat X,\widehat Y) := \left(U\begin{bmatrix}
     \widehat X \\
     0
\end{bmatrix},\widehat YV^\top\right)~~~\text{and}~~~
       \Psi(A,B):=\left(U\begin{bmatrix}
          \rA & 0
      \end{bmatrix} A, BV^\top\right).$$
Also, let $\widehat f_{\mathrm{a}} := f_{\mathrm{a}}\circ \Phi$. The following are true:
\begin{enumerate}[label=\rm{(\rm{\roman*})}]
    \item $\Phi$ is injective and $\Psi$ is surjective; \label{item:deficient_2}
    \item $\forall (A,B)\in \R^{n\times r}\times \R^{r\times n},~(\Phi \circ \Psi)(A,B)=(MA,B)$. \label{item:deficient_3}
    \item $\forall (\widehat X,\widehat Y)\in \R\p {s\times r}\times \R\p{r\times n},~ \widehat f_{\mathrm{a}}(\widehat X,\widehat Y) = \|\widehat X \widehat Y-[\Alpha,0]\|_F\p 2$; \label{item:deficient_4} 
    \item $\Phi\circ \nabla \widehat f_\mathrm{a} = \nabla f_{\mathrm{a}} \circ \Phi$; \label{item:deficient_5}
    \item \label{item:deficient_6} Let $\{(\widehat X_k,\widehat Y_k)\}_{k\in\N}\subseteq\R\p{s\times r}\times \R\p{r\times n}$, $(X_k,Y_k)=\Phi(\widehat X_k,\widehat Y_k)$, and $\{\alpha_k\}_{k\in\N}\subseteq(0,\infty)$. For all $k\in\N$,
    $$(\widehat X_{k+1},\widehat Y_{k+1})=(\widehat X_k,\widehat Y_k)-\alpha_k \nabla \widehat f_{\mathrm a}(\widehat X_k,\widehat Y_k)~\Longleftrightarrow~ ( X_{k+1}, Y_{k+1})=( X_k, Y_k)-\alpha_k \nabla  f_{\mathrm a}( X_k, Y_k).$$
\end{enumerate}
\end{lemma}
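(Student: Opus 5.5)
Each item reduces to a direct computation with the block structure of the SVD \cref{svd_M_deficient}. I would take them in order, since (iii) and (iv) feed into (v).

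For \ref{item:deficient_2}, $\Phi$ is injective because $U$ and $V$ are orthogonal and $\widehat X\mapsto [\widehat X;0]$ is injective. Hence $\Phi(\widehat X,\widehat Y)=0$ forces $\widehat X=0$ and $\widehat Y=0$. For $\Psi$, the map $A\mapsto U[\rA~0]A$ must hit all of the first component's codomain. Here I note a typo: the first component of $\Psi$ should be $[\rA~0]V^\top A\in\R^{s\times r}$, with no $U$ in front. The key fact is that $[\rA~0]$ is $s\times n$ with full row rank $s$, because $\rA$ is positive diagonal, and $V^\top$ is invertible. So $A\mapsto [\rA~0]V^\top A$ is onto $\R^{s\times r}$. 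The second component $B\mapsto BV^\top$ is bijective.

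For \ref{item:deficient_3}, compute
\[U\begin{bmatrix}[\rA~0]V^\top A\\ 0\end{bmatrix}=U\begin{bmatrix}\rA&0\\0&0\end{bmatrix}V^\top A=MA.\]
The second component is $BV^\top V=B$.

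For \ref{item:deficient_4}, use orthogonal invariance of the Frobenius norm:
\[\Big\|U\begin{bmatrix}\widehat X\\0\end{bmatrix}\widehat YV^\top-M\Big\|_F=\Big\|\begin{bmatrix}\widehat X\widehat Y\\0\end{bmatrix}-\begin{bmatrix}\rA&0\\0&0\end{bmatrix}\Big\|_F=\|\widehat X\widehat Y-[\rA~0]\|_F.\]

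For \ref{item:deficient_5}, use $\nabla f_{\mathrm a}(X,Y)=(2(XY-M)Y^\top,\,2X^\top(XY-M))$ and the analogous formula for $\widehat f_{\mathrm a}$ with $\widehat M:=[\rA~0]$. Put $X=U[\widehat X;0]$ and $Y=\widehat YV^\top$. Then
\[XY-M=U\begin{bmatrix}\widehat X\widehat Y-\widehat M\\0\end{bmatrix}V^\top,\]
because the bottom block of $U^\top MV$ is zero. It follows that $(XY-M)Y^\top=U[(\widehat X\widehat Y-\widehat M)\widehat Y^\top;0]$, which is the first component of $\Phi$ applied to $\nabla\widehat f_{\mathrm a}$. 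Likewise $X^\top(XY-M)=\widehat X^\top(\widehat X\widehat Y-\widehat M)V^\top$, which is the second component. This step carries the real content: it shows the gradient field is tangent to $\im\Phi$, since the zero bottom block of $M$ in the SVD coordinates is preserved.

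For \ref{item:deficient_6}, apply the linear map $\Phi$ to the hatted iteration and use \ref{item:deficient_5} to obtain the unhatted iteration. Conversely, suppose $(X_k,Y_k)=\Phi(\widehat X_k,\widehat Y_k)$ for all $k$ and the unhatted iteration holds. Then $\Phi(\widehat X_{k+1},\widehat Y_{k+1})=\Phi\big((\widehat X_k,\widehat Y_k)-\alpha_k\nabla\widehat f_{\mathrm a}(\widehat X_k,\widehat Y_k)\big)$, and injectivity of $\Phi$ from \ref{item:deficient_2} gives the hatted iteration. I expect no real obstacle anywhere. The only care needed is the dimension bookkeeping in $\Psi$, namely the likely typo about the placement of $U$ versus $V^\top$, and checking that $\rA$ is invertible so that $\Psi$ is surjective.
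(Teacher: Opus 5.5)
Your proof is correct. For items (i), (ii), (iii) and (v) it is the paper's argument written out in detail. The genuine difference is in (iv).

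The paper uses the chain rule $\nabla\widehat f_{\mathrm a}=\Phi^*\circ\nabla f_{\mathrm a}\circ\Phi$. It then asserts that $\Phi$ is ``semi-orthogonal'' with $\Phi\circ\Phi^*=\mathrm{Id}$. That identity only holds in the other order, $\Phi^*\circ\Phi=\mathrm{Id}$. The composition $\Phi\circ\Phi^*$ is the orthogonal projection onto $\im\Phi$, and it has rank $sr+rn<mr+rn$ whenever $s<m$. So the paper's one-line argument silently needs $\nabla f_{\mathrm a}\circ\Phi$ to take values in $\im\Phi$. Your block computation proves exactly this, using that the bottom block of $U^\top MV$ vanishes.

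The chain-rule route is shorter, and it carries over to any outer function once this invariance is known. Your explicit computation is what actually establishes the invariance here, so your version of (iv) is the more complete one.

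There is one small slip in (ii). You write the second component as $BV^\top V=B$, but that does not follow from the definitions. $\Phi$'s second component is $\widehat Y\mapsto\widehat YV^\top$ and $\Psi$'s, as stated, is $B\mapsto BV^\top$, so the composition is $BV^\top V^\top$. You already corrected $U$ to $V^\top$ in the first component. The second component must likewise become $B\mapsto BV$, giving
\[
\Psi(A,B)=\bigl([\rA~~0]V^\top A,\;BV\bigr).
\]
This is also what \cref{thm:GD_linear} needs for $(\Phi\circ\Psi)(A,B)=(MA,B)$. Surjectivity of $\Psi$ is unaffected by either correction.
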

\begin{proof} We treat each item in succession.
\begin{enumerate}[label=\rm{(\rm{\roman*})}]
%\item Obvious. 
\item Obvious.
\item This follows from the SVD of $M$ in \cref{svd_M_deficient} and direct calculation.
\item Observe that
   \[ \widehat f_{\mathrm{a}}(\widehat X, \widehat Y)=\left\|U\begin{bmatrix}
           \widehat X \\
             0 
       \end{bmatrix}\widehat Y V^\top   -U \begin{bmatrix}
           \rA & 0 \\
            0 & 0 
       \end{bmatrix}V^\top   \right\|_F^2= \left\|\widehat X \widehat Y -\begin{bmatrix}
            \rA & 0
       \end{bmatrix}\right\|_F^2.
   \]
\item Since $\Phi$ is semi-orthogonal, i.e., $\Phi\circ\Phi^*=\mathrm{Id}_{\R\p{m\times r}\times\R\p{r\times n}}$, by the chain rule
\[  \Phi\circ  \nabla \widehat f_{\mathrm a}=\Phi\circ \Phi^* \circ \nabla f_{\mathrm{a}}\circ \Phi=\nabla f_{\mathrm a}\circ \Phi.      \]
\item The direct implication is obtained by applying $\Phi$ on both sides and using \ref{item:deficient_5}. As for the converse, we have
\begin{align*}
    \Phi(\widehat X_{k+1},\widehat Y_{k+1}) & =\Phi(\widehat X_k,\widehat Y_k)-\alpha_k (\nabla f_{\mathrm a}\circ \Phi)(\widehat X_k,\widehat Y_k) \\
    & = \Phi(\widehat X_k,\widehat Y_k)-\alpha_k (\Phi \circ \nabla \widehat f_{\mathrm a})(\widehat X_k,\widehat Y_k) \\
    & = \Phi[(\widehat X_k,\widehat Y_k)-\alpha_k \nabla \widehat f_{\mathrm a}(\widehat X_k,\widehat Y_k)]
\end{align*}
and we conclude by injectivity of $\Phi$.   \qedhere
\end{enumerate}
\end{proof}

\cref{lemma_equi_deficient} implies the following general convergence result.
 
\begin{theorem}
    \label{thm:GD_linear}
  Let $M\in \R^{m\times n}$ and $\mathcal{A}\times \mathcal{B}$ be an open bounded subset of $\R^{n\times r}\times \R^{r\times n}$. There exists $\overline{\alpha}>0$ such that for all $\alpha\in(0,\overline{\alpha}]$, gradient descent with step size $\alpha$ applied to $f_\mathrm{a}$ and initialized at $(X_0,Y_0) = (MA,B)$ converges linearly to a global minimum for almost every $(A,B)\in \mathcal{A}\times \mathcal{B}$.
\end{theorem}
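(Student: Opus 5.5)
By \cref{lemma_equi_deficient}, gradient descent on $f_\mathrm{a}$ started at $(MA,B)=\Phi(\Psi(A,B))$ is exactly the image under $\Phi$ of gradient descent on $\widehat f_\mathrm{a}$ started at $(\widehat X_0,\widehat Y_0):=\Psi(A,B)$. The plan is therefore to do the whole analysis for the reduced problem $\widehat f_\mathrm{a}(\widehat X,\widehat Y)=\|\widehat X\widehat Y-[\Alpha,0]\|_F^2$. Its data matrix $[\Alpha,0]\in\R^{s\times n}$ has full row rank $s$. We may assume $r\ge s$; otherwise the problem is underparametrized and the exponent is already handled by \cref{coro_under_KL}. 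With $r\ge s$, the data rank equals $\min\{s,r,n\}$, so we are in the "exact or overparametrized, full rank data" regime.

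In that regime, every global minimum $(\widehat X,\widehat Y)$ satisfies $\widehat X\widehat Y=[\Alpha,0]$, so $\rk(\widehat X\widehat Y)=s=\min\{s,r,n\}$. By \cref{prop:mf_submersion}, the map $F_\mathrm{a}$ (with $m$ replaced by $s$) has constant rank there and is a submersion onto $\R^{s\times n}_s$. This set is open in $\R^{s\times n}$ and $g$ is a strictly convex quadratic, so \cref{coro_composite_strict} (or \cref{cor:composition}) shows that $\widehat f_\mathrm{a}$ has \KL exponent $1/2$ at every global minimum. The map $\Phi$ is an isometry onto its image, so the iterates, distances and function values of the two problems correspond exactly, and linear convergence transfers from $\widehat f_\mathrm{a}$ to $f_\mathrm{a}$.

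For the reduced problem, we invoke the global convergence theory of \cite{josz2023global}. It requires the following.
\begin{enumerate}[label=\rm{(\alph*)},nosep]
\item The iterates stay bounded. The set $\Psi(\mathcal A\times\mathcal B)$ is bounded, and $\widehat f_\mathrm{a}$ is a polynomial with bounded sublevel-set behaviour along gradient descent for small step sizes; equivalently, $\nabla\widehat f_\mathrm{a}$ is Lipschitz on the relevant compact region. Together these give a uniform $\overline\alpha$.
\item For almost every initialization, the iterates avoid strict saddles. This follows from the center-stable manifold argument: for $\alpha$ small, the gradient step is a local diffeomorphism, so the set of initial points converging to strict saddles is null.
\item There are no spurious second-order stationary points, which is \cref{theorem:landscape}.
\end{enumerate}
Combining (a)–(c) with the \KL exponent $1/2$ at global minima, the iterates converge to a global minimum at a linear rate for almost every $(\widehat X_0,\widehat Y_0)$ in $\Psi(\mathcal A\times\mathcal B)$.

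The main obstacle is transferring "almost every" from the reduced initial points back to $(A,B)$. The map $\Psi$ is a surjective linear map. Hence the preimage of a Lebesgue-null set under $\Psi$ is null: in coordinates, it is a product of a null set with a complementary subspace. This gives almost every $(A,B)\in\mathcal A\times\mathcal B$.

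A secondary delicate point is uniformity of the step size. We need a single $\overline\alpha$ valid on the bounded image $\Psi(\mathcal A\times\mathcal B)$. I would obtain it from a bound on the Lipschitz constant of $\nabla\widehat f_\mathrm{a}$ over a sublevel-type region. That region has to be shown invariant under gradient steps, for instance via the approximate conservation of $\widehat X^\top\widehat X-\widehat Y\widehat Y^\top$ in the manner of \cite{josz2023global}, which controls the iterate norms.
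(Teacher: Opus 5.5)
Your proposal is correct and follows the paper's proof. You reduce via $\Phi$ and $\Psi$ from \cref{lemma_equi_deficient} and get almost-sure convergence to global minima of $\widehat f_\mathrm{a}$, with a uniform step-size bound, from \cite{josz2023global}; the paper simply cites its Example 1, which packages your items (a)--(c). You obtain the linear rate from \KL exponent $1/2$ at the global minima of the full-row-rank reduced problem (via \cref{coro_under_KL} when $r<s$), and pull the null set back through the surjective linear map $\Psi$. The only detail to make explicit is that $\min \widehat f_\mathrm{a}=\sum_{i>r}\sigma_i(M)^2=\min f_\mathrm{a}$, so $\Phi$ sends the limit to a global minimum of $f_\mathrm{a}$; the paper checks this using item (iii) of \cref{lemma_equi_deficient}.
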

\begin{proof}
We use the notations in \cref{lemma_equi_deficient}. Since $\Psi$ is linear and surjective by \cref{lemma_equi_deficient} \ref{item:deficient_2}, it is an open map. Thus $\Psi(\mathcal A\times \mathcal B)$ is an open bounded subset of $\R\p{s\times r}\times \R\p {r\times n}$. By \cite[Example 1]{josz2023global}, there exists $\overline{\alpha}>0$ such that for every $\alpha\in (0,\overline{\alpha}]$, there is a null set $\mathcal{N}_\alpha\subseteq \R\p{s\times r}\times \R\p {r\times n}$ such that gradient descent with step size $\alpha$ applied to $\widehat f_{\mathrm{a}}$ initialized in $\Psi(\mathcal A\times \mathcal B)\setminus \mathcal N_\alpha$ converges to a global minimum of $\widehat f_{\mathrm{a}}$. The rate is linear since $\rk([\rA ~~ 0 ])=\min\{s,n\}$ and every global minimum of $\widehat f_{\mathrm{a}}$ has \KL exponent $1/2$ by \cref{coro_under_KL} if $r<s$ (resp. by \cref{prop:mf_submersion} and \cref{prop:smf_submersion} if $r\geq s$).
We next show that, for every $\alpha\in (0,\overline{\alpha}]$, gradient descent with step size $\alpha$ applied to $f_{\mathrm{a}}$ initialized in $$\{(MA,B):(A,B)\in (\mathcal A\times \mathcal B)\setminus \Psi^{-1}(\mathcal N_{\alpha})\}$$ converges linearly to a global minimum of $f_{\mathrm{a}}$. 
The conclusion then follows because $\Psi^{-1}(\mathcal N_{\alpha})$ is a null set. Indeed, the preimage of a zero measure set has zero measure for surjective linear maps (using a change of coordinates, one can reduce to the case where the map is a projection, then use Tonelli's theorem).

Let $\alpha\in(0,\overline\alpha]$ and $\{(X_k,Y_k)\}_{k\in\N}$ be a gradient sequence of $ f_\mathrm{a}$ with step size $\alpha$ initialized at $(MA,B)$ for some $(A,B)\in (\mathcal A\times \mathcal B)\setminus \Psi^{-1}(\mathcal N_{\alpha})$. 
Let $\{(\widehat X_k,\widehat Y_k)\}_{k\in\N}$ denote a gradient sequence of $\widehat f_\mathrm{a}$ with step size $\alpha$ with initial point $(\widehat X_0,\widehat Y_0)=\Psi(A,B)$. By \ref{item:deficient_6}, $\{\Phi(\widehat X_k,\widehat Y_k)\}_{k\in\N}$ is a gradient sequence of $f_\mathrm{a}$ with step size $\alpha$ and initial point $$\Phi(\widehat X_0,\widehat Y_0) = (\Phi\circ\Psi)(A,B)=(MA,B)=(X_0,Y_0)$$
by \ref{item:deficient_3}.
Thus $\Phi(\widehat X_k,\widehat Y_k)=(X_k,Y_k)$ for all $k\in\N$. Since $(\widehat X_0,\widehat Y_0)\in\Psi[(\mathcal A\times \mathcal B)\setminus \Psi^{-1}(\mathcal N_{\alpha})]=\Psi(\mathcal A\times \mathcal B)\setminus \mathcal N_\alpha$, $(\widehat X_k,\widehat Y_k)$ converges linearly to a global minimum of $\widehat f_{\mathrm{a}}$. As $\Phi$ is linear, $(X_k,Y_k)$ also converges linearly. By \ref{item:deficient_4}, 
$$f_\mathrm{a}(X_k,Y_k) = \widehat f_\mathrm{a}(\widehat X_k,\widehat
 Y_k)\to \sum_{i=r+1}^m\sigma_{i}(M)^2.$$ Thus $(X_k,Y_k)$ converges to a global minimum of $f_{\mathrm{a}}$.
\end{proof}
\subsection{Linear neural network}

We first define what a linear neural network is.
\begin{definition}
    \label{def:lnn}
        Given $d_0,\dots,d_\ell\in \N\p *$, $X\in \R^{d_0\times n}$, and $Y\in \R^{d_\ell\times n}$, let $f_\mathrm{n}:=g_\mathrm{n}\circ F_\mathrm{n}$ where
    \begin{align*}
     &\begin{array}{rccc}
    g_\mathrm{n}:~&\R^{d_\ell\times n}&\to& \R \\
    &A &\to& \|A-Y\|_F^2
    \end{array}     ~~~\text{and}~~~
   &  \begin{array}{rccc}
    F_\mathrm{n}:~& \R^{d_\ell\times d_{\ell-1}} \times \dots \times \R^{d_{1}\times d_{0}} &\to& \R^{d_{\ell}\times n}
  \\
    &(W_\ell,\dots, W_1)&\to& W_{\ell}\cdots W_1X.
    \end{array} 
\end{align*}
\end{definition}

The next proposition gives the condition for $F_{\mathrm n}$ being a constant rank mapping.
\begin{proposition}
    If $\rk(W_{\ell}\cdots W_{1})=d_{\ell}$, then $F_{\mathrm{n}}$ has constant rank near $(W_{\ell},\dots,W_1)$.
\end{proposition}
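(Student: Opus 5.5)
The plan is to split off the data matrix $X$ and reduce to the end-to-end product map. Write $F_{\mathrm n}=R_X\circ P$, where
$$P(W_\ell,\dots,W_1):=W_\ell\cdots W_1\in\R^{d_\ell\times d_0}\quad\text{and}\quad R_X(A):=AX.$$
Here $R_X:\R^{d_\ell\times d_0}\to\R^{d_\ell\times n}$ is linear and does not depend on the weights. I will show that $P$ is a submersion at every point of a neighborhood of $(W_\ell,\dots,W_1)$. Granting this, the chain rule gives
$$\im d(F_{\mathrm n})_W=R_X\bigl(\im dP_W\bigr)=R_X\bigl(\R^{d_\ell\times d_0}\bigr)=\{AX:A\in\R^{d_\ell\times d_0}\}$$
for every $W$ in that neighborhood. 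This is a fixed subspace of dimension $d_\ell\rk(X)$, independent of $W$, so $F_{\mathrm n}$ has constant rank there.

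\emph{Openness.} The set of tuples with $\rk(W_\ell\cdots W_1)=d_\ell$ is open, because full row rank of the continuous map $P$ is an open condition (a nonvanishing $d_\ell\times d_\ell$ minor persists under small perturbations). It therefore suffices to prove that $dP$ is surjective at every tuple whose product has full row rank $d_\ell$.

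\emph{Surjectivity.} By the product rule,
$$dP_W(H_\ell,\dots,H_1)=\sum_{i=1}^\ell W_\ell\cdots W_{i+1}H_iW_{i-1}\cdots W_1 .$$
I will restrict to directions with $H_i=0$ for $i\geq 2$, which leaves $dP_W(0,\dots,0,H_1)=(W_\ell\cdots W_2)H_1$. Since
$$d_\ell=\rk(W_\ell\cdots W_1)\le\rk(W_\ell\cdots W_2)\le d_\ell,$$
the matrix $W_\ell\cdots W_2$ has full row rank and hence range $\R^{d_\ell}$. Applied column by column, this means $H_1\mapsto W_\ell\cdots W_2H_1$ maps $\R^{d_1\times d_0}$ onto $\R^{d_\ell\times d_0}$. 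So $dP_W$ is surjective. When $\ell=1$, $P$ is the identity and there is nothing to prove.

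The only point that needs care is choosing the right factor to perturb. Perturbing $W_\ell$ alone fails, because $W_{\ell-1}\cdots W_1$ need not have full column rank. Perturbing $W_1$ works precisely because the rank hypothesis forces the left partial product to have full row rank. Everything else is the chain rule, so I do not expect a genuine obstacle. If needed, this also gives that the restriction of $F_{\mathrm n}$ with codomain $\R^{d_\ell\times d_0}X$ is a submersion, which is the form required by \cref{cor:composition}.
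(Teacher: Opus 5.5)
Your proof is correct. It reaches the same conclusion as the paper, namely $\im d(F_{\mathrm n})_W=\R^{d_\ell\times d_0}X$ on the open set where $\rk(W_\ell\cdots W_1)=d_\ell$, but by a more direct route.

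The paper factors $F_{\mathrm n}$ as a chain of $\ell-1$ pairwise multiplication maps $(A,W_{\ell-j})\mapsto AW_{\ell-j}$ (times identities), each evaluated at the accumulated left product $A=W_\ell\cdots W_{\ell-j+1}$, followed by right multiplication by $X$. It shows that every left partial product $W_\ell\cdots W_j$ has full row rank, invokes \cref{prop:mf_submersion} at each stage to get a surjective differential, and composes these via the chain rule.

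You instead write down $dP_W$ by the product rule and observe that the single block $H_1\mapsto (W_\ell\cdots W_2)H_1$ is already onto $\R^{d_\ell\times d_0}$. This needs only that $W_\ell\cdots W_2$ has full row rank, and no external result.

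The paper's version is modular: it treats the network case as an iteration of the two-factor matrix-factorization case and reuses that submersion result. Yours is shorter and self-contained, and it avoids the bookkeeping of the intermediate maps. It also makes explicit that $F_{\mathrm n}$ is a submersion onto the fixed linear subspace $\R^{d_\ell\times d_0}X$, which is exactly the form used in \cref{cor:composition}.
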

\begin{proof}
Since 
    \[   d_{\ell}=\rk(W_{\ell}\cdots W_1)\leq \min_{j\in [\ell]}\{ \rk(W_{\ell}\cdots  W_{j})\}\leq d_\ell   \],
     for all $j\in [\ell]$, $\rk(W_{\ell}\cdots  W_{j})=d_\ell$. Since for all $j\in [\ell -1]$, 
      \begin{align*}
            F_{j}=\widetilde F_j\times \mathrm{Id}_{\R^{d_{\ell-j-1}\times d_{\ell - j -2}}}\times \cdots \times \mathrm{Id}_{\R^{d_1\times d_0}},
    \end{align*}
    where 
      \begin{align*}
          \begin{array}{rccc}
    \widetilde F_j:~&\R^{d_{\ell}\times d_{\ell-j}}\times \R^{d_{\ell-j}\times d_{\ell-j-1}}&\to& \R^{d_{\ell}\times d_{\ell-j-1}}\times\cdots\times \R^{d_1\times d_0}
  \\
    &(W_{\ell-j+1},W_{\ell-j})&\to& W_{\ell-j+1}W_{\ell-j}.
    \end{array}    
      \end{align*}
    Each $d(\widetilde F_j)_{({W}_{\ell}\cdots {W}_{\ell-j+1}  ,{W}_{\ell-j})}$ for $j\in [\ell-1]$ is surjective by \cref{prop:mf_submersion},  because 
     \[ \rk({W}_{\ell}\cdots  W_{\ell-j+1})=d_{\ell}.     \]
     Then, each $d(F_{j})_{({W}_{\ell}\cdots {W}_{\ell-j+1} ,\cdots, {W}_1)}$ for $j\in [\ell-1]$ is surjective. By the chain rule, $$\im d(F_{\mathrm n})_{(\overline{W}_\ell,\dots,\overline{W}_1)}=\left(d(F_{\ell})_{{W}_{\ell}\cdots{W}_1}\circ \cdots \circ  d(F_{1})_{({W}_{\ell},\dots,{W}_1)}\right)[\R^{d_{\ell}\times d_{\ell-1}}\times \dots\times \R^{d_1\times d_0}].$$
    Consequently, due to the surjectivity of each $d(F_{j})_{({W}_{\ell}\cdots {W}_{\ell-j+1} ,\cdots, {W}_1)}$ for $j\in [\ell-1]$, we have 
    \[    \im d(F_{\mathrm n})_{({W}_\ell,\dots,{W}_1)}=d(F_{\ell})_{W}(\R^{d_\ell\times d_0})=\R^{d_\ell \times d_0}X.               \]
    Since full rank matrices are stable under small perturbation, the previous equality holds for all $(W_\ell',\dots, W_1')$ near $(W_{\ell},\cdots, {W}_1)$. Therefore, $F_{\mathrm{n}}$ has constant rank near $( W_{\ell},\cdots,{W}_1)$, 
\end{proof}

Below is a technical lemma.

\begin{lemma}
   If $\min\{d_0,n\}\geq d_{\ell}$, then for almost every $(X,Y)\in \R^{d_0\times n}\times \R^{d_{\ell}\times n}$, $\rk(YP)=d_{\ell}$ where $P\in\cP^n_{\rk(X)}$ is the orthogonal projection matrix onto $\im(X^\top)$.    
\end{lemma}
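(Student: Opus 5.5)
Since $d_0\geq d_\ell$ and $n\geq d_\ell$, the idea is to reduce to a statement about a single generic matrix and then use that the complement of a full-rank condition is a proper algebraic set, hence null.

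\emph{Step 1: a full-measure set of $X$.} The set $\R^{d_0\times n}_{*}$ of matrices of rank $\min\{d_0,n\}$ is the complement of the common zero set of all maximal minors. These are nonzero polynomials, so its complement is a null set. Fix such an $X$ and set $k:=\rk(X)=\min\{d_0,n\}\geq d_\ell$. Then $\im(X^\top)$ is a $k$-dimensional subspace of $\R^n$, and $P\in\cP^n_k$ is its orthogonal projection. Write $P=QQ^\top$ with $Q\in\R^{n\times k}$ having orthonormal columns spanning $\im(X^\top)$.

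\emph{Step 2: for this $X$, almost every $Y$ works.} We have $\rk(YP)=\rk(YQQ^\top)=\rk(YQ)$, since $Q^\top$ has full row rank. Consider the linear map $Y\mapsto YQ\in\R^{d_\ell\times k}$. It is surjective, because $Q^\top Q=I_k$ and so $YQ=Z$ is solved by $Y=ZQ^\top$. The set of $Z\in\R^{d_\ell\times k}$ with $\rk(Z)<d_\ell$ is a null set, since $d_\ell\leq k$ and it is the zero set of the $d_\ell\times d_\ell$ minors. As in the proof of \cref{thm:GD_linear}, preimages of null sets under surjective linear maps are null. Hence $\{Y:\rk(YP)<d_\ell\}$ is null for each fixed $X\in\R^{d_0\times n}_*$.

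Alternatively, one can argue directly: $Y\mapsto\det\bigl((YQ)(YQ)^\top\bigr)$ is a nonzero polynomial in $Y$. For example, take $Y=[I_{d_\ell}\;0]\,Q^\top$, which gives $YQ=[I_{d_\ell}\;0]$.

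\emph{Step 3: Tonelli.} Let $E:=\{(X,Y):\rk(YP_X)<d_\ell\}$. The main obstacle is measurability, because $P_X$ depends on $X$ discontinuously where the rank of $X$ drops. On the open set $\R^{d_0\times n}_*\times\R^{d_\ell\times n}$, however, $P_X=X^\top(XX^\top)^{-1}X$ when $d_0\leq n$, and $P_X=I_n$ when $d_0\geq n$. In either case $P_X$ is a rational, hence continuous, function of $X$. Therefore $E$ intersected with this open set is the zero set of the continuous function $(X,Y)\mapsto\det(YP_XY^\top)$, so it is closed in the open set and in particular Borel. Note that $\rk(YP)=d_\ell$ iff $YPY^\top=(YP)(YP)^\top$ is invertible.

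Tonelli's theorem then gives
\[
|E|\leq|\R^{d_0\times n}\setminus\R^{d_0\times n}_*|\cdot\infty+\int_{\R^{d_0\times n}_*}|E_X|\,dX=0,
\]
where the first term is interpreted as zero because it is a null set times a slice and so contributes a null set, and where $|E_X|=0$ for every $X$ by Step 2. This proves the lemma.
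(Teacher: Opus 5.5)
Your proof is correct and takes essentially the same route as the paper. You discard the null set of rank-deficient $X$. For each full-rank $X$, the bad $Y$ form a null set because $Y\mapsto\det(YPY^\top)$ is a nonzero polynomial, with the same witness: a $Y$ whose rows are linearly independent vectors in $\im(X^\top)$. You then conclude with Tonelli. Your surjective-map variant $Y\mapsto YQ$ and your measurability check via $P_X=X^\top(XX^\top)^{-1}X$ on the full-rank set are useful details that the paper leaves implicit.
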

\begin{proof}
   Note that $\R^{d_0\times n}\setminus\R^{d_0\times n}_*$ is a null set. For all $X\in \R^{d_0\times n}_*$, $\rk(YP)<d_\ell$ iff $\det(YPPY)=0$. Since zero set of a real analytic function is either a null set or the whole space, and by choosing the rows of $Y$ be linearly independent vectors in $\im(X^\top)$, which is possible since $\dim(\im (X^\top))=\min\{d_0,n\}\geq d_\ell$, we have $\det(YPPY^\top)=\det(YY^\top)\neq 0$. This shows that $\rk(YP)<d_\ell$ is a null set for all $X\in \R^{d_0\times n}_*$. By Tonelli's theorem, the set where the stated condition fails is a null set.
\end{proof}

The lemma shows that the assumption holds almost surely over the data matrices.

\begin{proposition}
\label{prop_constant_rank_lnn}
Let $P\in \cP^n_{\rk(X)}$ be the orthogonal projection matrix onto $\im(X^\top)$. If $\min\{d_1,\dots,d_{\ell-1}\}\geq d_0$ and $\rk(YP)=d_\ell$, then $f_{\mathrm{n}}$ has \KL~exponent $1/2$ at any global minimum.
\end{proposition}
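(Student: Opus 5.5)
The plan is to apply the composition rule in the form of \cref{cor:composition}, taking the embedded submanifold to be a linear subspace. Let $\cM := \{AX : A\in\R^{d_\ell\times d_0}\}\subseteq \R^{d_\ell\times n}$. This is a linear subspace, so it is an embedded submanifold, and clearly $F_{\mathrm n}$ maps everything into $\cM$.

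\emph{Step 1: identify the global minima.} Since $\min\{d_1,\dots,d_{\ell-1}\}\geq d_0$, every $A\in\R^{d_\ell\times d_0}$ can be written as a product $W_\ell\cdots W_1$. For instance, take $W_1$ to be an isometric embedding of $\R^{d_0}$, the intermediate factors to be padded identities, and $W_\ell$ to carry $A$. Hence $\min f_{\mathrm n}=\min_{Z\in\cM}\|Z-Y\|_F^2$. This is a least-squares problem over $\cM$, and its unique minimizer is the orthogonal projection of $Y$ onto $\cM$. Because $\cM$ is the set of matrices whose rows lie in $\im(X^\top)$, this projection is $YP$. So $(W_\ell,\dots,W_1)$ is a global minimum iff $W_\ell\cdots W_1X=YP$.

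\emph{Step 2: check constant rank and the submersion property.} At a global minimum, $d_\ell=\rk(YP)=\rk(W_\ell\cdots W_1X)\leq \rk(W_\ell\cdots W_1)\leq d_\ell$, so $\rk(W_\ell\cdots W_1)=d_\ell$. The preceding proposition then shows that $F_{\mathrm n}$ has constant rank near this point. Its proof also gives $\im d(F_{\mathrm n})=\R^{d_\ell\times d_0}X=\cM$, so the restriction $\widetilde F_{\mathrm n}:U\to\cM$ is a submersion there.

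\emph{Step 3: the outer function.} Let $\widetilde g:=g_{\mathrm n}+\delta_{\cM}$. Since $g_{\mathrm n}$ is smooth and $\delta_\cM$ is convex, for $Z\in\cM$ we get
$$\cp\widetilde g(Z)=2(Z-Y)+\cM^\perp,\qquad d(0,\cp\widetilde g(Z))=2|P_\cM(Z-Y)|=2|Z-YP|.$$
By Pythagoras, $\widetilde g(Z)-\min\widetilde g=|Z-YP|^2$. Therefore $d(0,\cp\widetilde g(Z))=2(\widetilde g(Z)-\min\widetilde g)^{1/2}$, which is \KL exponent $1/2$ at $YP$. Alternatively, one can invoke quadratic growth at the strict minimum together with \cref{lemma:pham}. \cref{cor:composition} then yields \KL exponent $1/2$ for $f_{\mathrm n}$ at every global minimum.

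The main obstacle is Step 2. One must make sure that the preceding proposition's computation of $\im d(F_{\mathrm n})$ really equals all of $\cM$, which relies on the surjectivity of the partial products. One must also make sure that the rank-$d_\ell$ condition propagates to every global minimum, and not merely to some of them.
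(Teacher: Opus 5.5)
Your proof is correct, and it takes a somewhat different route from the paper's.

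\textbf{The paper's route.} It first drops the constant $\|Y(I-P)\|_F^2$ and assumes $Y=YP$, so that the outer function $A\mapsto\|A-Y\|_F^2$ has a strict global minimum with quadratic growth at $F_{\mathrm n}(\overline W)$. It then uses only the constant-rank property from the preceding proposition and invokes \cref{coro_composite_strict}. That corollary builds the image manifold $F_{\mathrm n}(U)$ implicitly via the rank theorem. It converts growth into a \KL exponent through the Pham-type \cref{lemma:pham}, which relies on global subanalyticity, curve selection and Puiseux expansions.

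\textbf{Your route.} You name the image manifold explicitly as the linear subspace $\cM=\R^{d_\ell\times d_0}X$. You use the finer fact $\im d(F_{\mathrm n})=\cM$ to get a submersion onto $\cM$. This fact does hold: the $H_1$-component alone contributes $W_\ell\cdots W_2H_1X$, and $W_\ell\cdots W_2$ has full row rank. You then verify the \KL inequality for $\widetilde g:=g_{\mathrm n}+\delta_\cM$ by hand. The formula $\cp\widetilde g(Z)=2(Z-Y)+\cM^\perp$ together with Pythagoras gives exactly $d(0,\cp\widetilde g(Z))=2(\widetilde g(Z)-\min\widetilde g)^{1/2}$, so \cref{cor:composition} applies.

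\textbf{Comparison.} Your argument is more elementary and self-contained. It needs no subanalytic geometry, no shift of $Y$ to $YP$ (Pythagoras absorbs the constant), and it gives an explicit, global constant $c=2$. It does this by exploiting the quadratic-plus-linear structure of the problem. The paper's argument treats the outer function as a black box and would work for any outer function with a strict minimum and a known growth exponent, at the cost of heavier machinery. The two concerns you flag at the end of Step~2 are in fact already settled by your own argument there.
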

\begin{proof}
  Since the row space of $W_\ell\cdots W_1X$ is contained in the row space of $X$ and $Y(I-P) X^\top=0$, for all $(W_{\ell},\dots,W_\ell)$, it holds that 
  \[ f_{\mathrm{n}}(W_{\ell},\cdots,W_1)=\|W_\ell\cdots W_1X- YP\|_F^2+\|Y(I-P)\|_F^2.    \]
  Since adding constant term does not change the \KL~exponent, without loss of generality, we assume $Y=YP$, and then $\rk(Y)=d_{\ell}$ and $\im(Y^\top)\subseteq \im(X^\top)$. The condition $\im(Y^\top)\subseteq \im (X^\top)$ means that the row space of $Y$ is contained in the row space of $X$. Together with $\min\{d_{\ell-1},\dots,d_1\}\geq d_0$, this yields $\arg\min f_{\mathrm{n}} =F^{-1}_{\mathrm n}(Y)$. For any $(\overline{W}_{\ell},\dots,\overline{W}_0)\in F_{\mathrm n}^{-1}(Y)$, we have 
      \[   d_{\ell}=\rk(Y)=\rk(\overline W_{\ell}\cdots \overline W_1X)\leq \rk(\overline W_{\ell}\cdots \overline W_{1})\leq d_\ell   \]
      Therefore, $\rk(\overline W_{\ell}\cdots \overline W_{1})=d_\ell$. By \cref{prop_constant_rank_lnn}, $F_{\mathrm{n}}$ has constant rank near $(\overline W_{\ell},\cdots, \overline{W}_1)$, and the result follows from \cref{coro_composite_strict}.
\end{proof}

\subsection{Structure of $XY=M$}

When $r\geq \rk(M)$, the global minima of $f_{\mathrm{a}}$ is clearly 
$$\Omega:=\{(X,Y)\in \R^{m\times r}\times \R^{r\times n}:~XY=M \}.$$
In this subsection, we study the structure of the solution set $\Omega$. In particular, we show that $\Omega$ is a finite union of the orbits of the action of a linear Lie group. We start with a technical lemma which allows us to simplify the structure of the elements in $\Omega$.
\begin{lemma}
\label{lemma_t1}
    Assume that $A\in \R^{s\times r}$ and $B\in \R^{r\times s}$ with $s\leq r$ and $\rk(AB)=s$. Then, there exists invertible matrix $S\in \R^{r\times r}$ such that 
    \begin{align*}
        AS =\begin{bmatrix}
            I_{s}& 0
        \end{bmatrix}    , S^{-1}B=\begin{bmatrix}
            AB \\
            0
        \end{bmatrix}. 
    \end{align*}
\end{lemma}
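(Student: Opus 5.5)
Since $\rk(AB)=s$ and $\rk(AB)\le\rk(A)\le s$, the matrix $A$ has full row rank $s$; likewise $\rk(B)=s$. The plan is to construct $S$ column by column, choosing its first $s$ columns from $\im(B)$ and its remaining $r-s$ columns from $\ker(A)$. Set $C:=AB\in\R^{s\times s}$, which is invertible, and define the first block of $S$ as $S_1:=BC^{-1}\in\R^{r\times s}$. Then $AS_1=ABC^{-1}=I_s$. For the second block, let $S_2\in\R^{r\times(r-s)}$ have columns forming a basis of $\ker(A)$; this kernel has dimension $r-\rk(A)=r-s$. Put $S:=[S_1~S_2]$, so that by construction
\[
AS=\begin{bmatrix} I_s & 0\end{bmatrix}.
\]

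Next we check that $S$ is invertible. Suppose $S_1u+S_2v=0$. Applying $A$ gives $u=0$. Then $S_2v=0$, and since the columns of $S_2$ are linearly independent, $v=0$. Equivalently, $\im(B)\cap\ker(A)=\{0\}$, which is the identity $\rk(AB)=\rk(B)-\dim\ker(A)\cap\im(B)$ recalled at the start of \cref{sec:Applications}.

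Finally we compute $S^{-1}B$. Since $S_1C=B$, we have
\[
B=S\begin{bmatrix} C\\ 0\end{bmatrix},
\qquad\text{hence}\qquad
S^{-1}B=\begin{bmatrix} AB\\ 0\end{bmatrix}.
\]

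There is no real obstacle in this argument. The only point needing care is the invertibility of $S$, and that reduces to the trivial-intersection fact noted above.
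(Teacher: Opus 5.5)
Your proof is correct and follows essentially the same route as the paper: both build $S$ from a basis of $\im(B)$ followed by a basis of $\ker(A)$, using $\R^r=\im(B)\oplus\ker(A)$. The only difference is that you take the already-normalized basis $B(AB)^{-1}$ of $\im(B)$, which makes $AS=[I_s~0]$ immediate and gives invertibility just by applying $A$, whereas the paper starts from an arbitrary basis $V$ and then rescales by $\diag(A_1^{-1},I_{r-s})$.
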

\begin{proof}
    Notice that 
    \[     s=\rk(AB)=\rk(B)-\dim \ker(A)\cap \im(B) \leq \rk(B)\leq s.        \]
    This implies that $\rk(B)=s$ and $\ker(A)\cap \im(B)=0$. Moreover, the rank inequality $s\leq \rk(AB)\leq \min\{\rk(A),\rk(B)\}\leq \rk(A)\leq s$ also gives $\rk(A)=s$. Using the fact that $\dim(\im(A))+\dim(\ker(A))=r$, we know that $\dim\ker(A)=r-s$ and $\dim\im(B)=s$. Therefore, $\R^r= \ker(A)\oplus \im(B)$, and we may select a basis $[v_1,\dots,v_s,~v_r]$ of $\R^r$ such that $\ker(A)=\spa\{v_{s+1},\dots,v_r\}$ and $\im(B)=\spa\{v_1,\dots,v_s\}$. Let $V:=[v_1,\dots,v_r]$, then $V\in \R^{r\times r}$ is invertible. Now, let $\widetilde B:=V^{-1}B$. By consider the equation $V\widetilde B=B$, we know $\widetilde B=\begin{bmatrix}
        B_1 \\
        0
    \end{bmatrix}$ with $B_1\in \R^{s\times s}$ being invertible. Similarly, setting $\widetilde A:=AS$, we immediately see that $\widetilde A=\begin{bmatrix}
        A_1  & 0
    \end{bmatrix}$ with $A_1$ being invertible. Then, it suffices to take $S:=V\begin{bmatrix}
        A_1^{-1}  & 0 \\
         0 & I_{r-s}
    \end{bmatrix}$.
\end{proof}
Below is the main result of this subsection. It provides the orbit structure of $\Omega$. We focus on the essential properties useful in this manuscript, but it is worth noting that the orbits actually form a Verdier stratification.
\begin{proposition}
\label{prop-tt1}
    Let $M\in \R^{m\times n}$ with $\rk (M)=s\leq r$. Set $\Omega:=\{X\in \R^{m\times r},~Y\in \R^{r\times n}:~XY=M\}$. Assume an SVD of $M$ is given by $$M=U\begin{bmatrix}
        \Alpha & 0 \\
        0 & 0 
    \end{bmatrix}V^\top$$ where $\Alpha\in \R^{s\times s}$ is a positive diagonal matrix. Consider the action of $G:=\rO(m-s)\times \GL (r)\times  \rO(n-s)$ on $\R^{m\times r}\times \R^{r\times n}$ given by 
$$ (Q_1,A,Q_2) \cdot (X,Y):=(\diag(I_s,Q_1)^\top X A, A^{-1}Y\diag(I_s,Q_2)).  $$
Given $p,q\in \N$ such that $\min\{m-s-p, n-s-q, r-s-p-q\}\geq 0$, consider the orbit
\begin{equation}
\label{standard_form}
    \orbit(p,q):= G\cdot \left( U\begin{bmatrix}
        I_s & 0 & 0 & 0\\
         0 & I_p & 0 & 0\\
         0 & 0 & 0 & 0 
    \end{bmatrix},\begin{bmatrix}
        \Alpha & 0 & 0 \\
         0 & 0  & 0  \\
         0 & I_q & 0 \\
         0 & 0 & 0 
    \end{bmatrix}V^\top \right), 
\end{equation}
where the inner matrix will be referred to as a standard form. Then
    \begin{equation}
    \label{decomp_omega}
        \begin{aligned}
          \Omega=\bigcup_{\scriptsize \begin{array}{c} p,q\in \N \\ p+q\leq r-s  \\
        p\leq m-s,~q\leq n-s
          \end{array}} \orbit(p,q)
        \end{aligned}
    \end{equation}
  and for any $(X,Y)\in \Omega$, the following are equivalent:
  \begin{enumerate}[label=\rm{(\roman{*})},ref=\rm{(\roman{*})}]
      \item \label{item:stru1}  $\Omega$ is locally a smooth embedded submanifold near $(X,Y)$;
      \item \label{item:stru2} $\Omega$ is locally a $C^1$ embedded submanifold near $(X,Y)$;
      \item \label{item:stru3}$(X,Y)\in \orbit(p,q)$ with $\min\{m-s-p, n-s-q,r-s-p-q\}=0$;
      \item \label{item:stru4} $f_{\mathrm{a}}$ satisfies the quadratic growth condition at $(X,Y)$.
  \end{enumerate}
\end{proposition}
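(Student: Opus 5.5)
We first establish the orbit decomposition \cref{decomp_omega}. The action of $G$ preserves $\Omega$, because $\diag(I_s,Q_1)^\top U^\top M V\diag(I_s,Q_2)=U^\top M V$ once $U,V$ are absorbed; we reduce to $U=I_m$, $V=I_n$. So it suffices to bring an arbitrary $(X,Y)$ with $XY=\Sigma(M)$ to a standard form. Write $X=\begin{bmatrix}X_1\\X_2\end{bmatrix}$ with $X_1\in\R^{s\times r}$, and $Y=[Y_1~Y_2]$ with $Y_1\in\R^{r\times s}$. Then $X_1Y_1=\Alpha$ has rank $s$, and \cref{lemma_t1} gives $S\in\GL(r)$ with $X_1S=[I_s~0]$ and $S^{-1}Y_1=\begin{bmatrix}\Alpha\\0\end{bmatrix}$.

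The remaining equations $X_1Y_2=0$, $X_2Y_1=0$ and $X_2Y_2=0$ force the following structure:
\begin{itemize}
\item the first $s$ rows of $S^{-1}Y_2$ vanish;
\item the first $s$ columns of $X_2S$ vanish;
\item the lower blocks $\widetilde X_2\in\R^{(m-s)\times(r-s)}$ and $\widetilde Y_2\in\R^{(r-s)\times(n-s)}$ satisfy $\widetilde X_2\widetilde Y_2=0$.
\end{itemize}
Set $p:=\rk\widetilde X_2$ and $q:=\rk\widetilde Y_2$. Since $\im\widetilde Y_2\subseteq\ker\widetilde X_2$, we get $p+q\le r-s$. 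We choose a basis of $\R^{r-s}$ adapted to $\ker\widetilde X_2\supseteq\im\widetilde Y_2$, which gives a block $\GL(r-s)$ change. We then use $\rO(m-s)$ and $\rO(n-s)$, combined with further $\GL$ column and row operations (via QR-type decompositions), to bring $\widetilde X_2$ to $\begin{bmatrix}I_p&0\\0&0\end{bmatrix}$ and $\widetilde Y_2$ to rows $\begin{bmatrix}0\\ I_q~0\\0\end{bmatrix}$. The orbits are disjoint since $p,q$ are invariants, so \cref{decomp_omega} follows.

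For the equivalences, \ref{item:stru1}$\Rightarrow$\ref{item:stru2} is trivial. For \ref{item:stru3}$\Rightarrow$\ref{item:stru1}, we show that when one of the three quantities vanishes, $\orbit(p,q)$ is open in $\Omega$, i.e. $(p,q)$ is a maximal stratum. The closure relations are $\overline{\orbit(p,q)}\supseteq\orbit(p',q')$ for $p'\le p$, $q'\le q$, since ranks are lower semicontinuous. The maximal pairs are exactly those in \ref{item:stru3}, so nearby points of $\Omega$ lie in the same orbit, and that orbit is embedded by \cref{fact_embeddedorbit}, the action being semi-algebraic.

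For \ref{item:stru2}$\Rightarrow$\ref{item:stru3}, we argue by contrapositive. If all three quantities are positive, the point lies in the closure of at least two distinct larger strata, $(p+1,q)$ and $(p,q+1)$. We compute the tangent cone of $\Omega$ at the standard form: the directions coming from these two strata span a space whose dimension exceeds that of each stratum, so the tangent cone is not a linear space of the correct dimension. This dimension comparison, done via $\dim\orbit(p,q)=\dim G-\dim(\text{stabilizer})$, is the main obstacle. A cleaner alternative is to find a two-dimensional slice: near the standard form, restrict to a $1\times1$ block $x\in\R$ of $\widetilde X_2$ and a $1\times1$ block $y\in\R$ of $\widetilde Y_2$ sharing the free index, on which $\Omega$ looks like $\{xy=0\}$, which is a cross and not a $C^1$ manifold.

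For \ref{item:stru4}, we use the known fact that the solution set of a $C^2$ function with quadratic growth at a minimum is locally a $C^2$ (hence $C^1$) embedded submanifold \cite[Theorem 2.16]{rebjock2024fast}; this gives \ref{item:stru4}$\Rightarrow$\ref{item:stru2}. For the converse \ref{item:stru3}$\Rightarrow$\ref{item:stru4}, we take $\overline x$ in a maximal orbit and check quadratic growth on the normal space $\vec N\,G\overline{x}$, then apply \cref{thm:symmetry}\rnum1. Along normal directions $(H,K)$ we have $XY-M=\overline XK+H\overline Y+HK$. We show that the linear part $\overline XK+H\overline Y$ is injective on the normal space to the orbit, by a dimension count on the maximal stratum where $\Omega$ is locally the orbit; this yields $\|XY-M\|_F^2\ge c|(H,K)|^2$ near $\overline x$. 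The injectivity argument is the other delicate step.
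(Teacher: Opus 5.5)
\paragraph{Main gap: (iii) is not the same as the orbit being open in $\Omega$.}
Your orbit decomposition is fine and follows the paper's reduction via \cref{lemma_t1}. The problem is your claim that the pairs in (iii) are exactly the maximal strata, i.e.\ that $\orbit(p,q)$ is open in $\Omega$. This is false: the maximal pairs are those with $p+q=r-s$, or with both $p=m-s$ and $q=n-s$.

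Suppose, say, $p=m-s$ while $q<n-s$ and $p+q<r-s$. Then column $s+p+q+1$ of the standard form $\overline X$ is zero. Hence $(\overline X,\overline Y+\epsilon E_{s+p+q+1,\,s+q+1})\in\Omega\cap\orbit(p,q+1)$ for every $\epsilon\neq 0$, where $E_{ij}$ is a matrix unit. A concrete case is $m=1$, $M=0$, $r=n=2$, $(\overline X,\overline Y)=([1~~0],0)$.

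At such points both of your arguments break:
\begin{itemize}
\item (iii)$\Rightarrow$(i) cannot rely on openness of the orbit.
\item In (iii)$\Rightarrow$(iv), the normal space of the orbit contains directions tangent to $\Omega$ (in the example, $(0,E_{21})$). So $(H,K)\mapsto\overline XK+H\overline Y$ is not injective there, and $\|XY-M\|_F\geq c|(H,K)|$ fails along it.
\end{itemize}

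The missing idea is the paper's. If $\rk\overline X=m$ or $\rk\overline Y=n$, then $d(F_{\mathrm a})_{(\overline X,\overline Y)}$ is surjective (\cref{prop:mf_submersion}). So $\Omega$ is locally a regular level set, which gives (i), and \cref{lemma:submersion} transfers the quadratic growth of $g_{\mathrm a}$ at $M$, which gives (iv).

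Only the case $p+q=r-s$ needs the orbit argument. There your symmetry-rule route is viable, since $f_{\mathrm a}$ is $G$-invariant. However, knowing that $\Omega$ is locally the orbit does not by itself give injectivity, because a smooth zero set need not be a regular level set. You must show that $\ker d(F_{\mathrm a})_{(\overline X,\overline Y)}$ equals the orbit's tangent space. Two ways to do this:
\begin{itemize}
\item Check that $\dim\ker d(F_{\mathrm a})_{(\overline X,\overline Y)}=mp+nq+(s+p)(s+q)=\dim\orbit(p,q)$.
\item As the paper does, write each kernel element as $(\overline XC+\diag(0_s,A)\overline X,\,-C\overline Y+\overline Y\diag(0_s,B))$ with $A,B$ skew-symmetric and $C\in\R^{r\times r}$.
\end{itemize}

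\paragraph{Secondary gap: (ii)$\Rightarrow$(iii) is not established.}
You leave the tangent-cone/dimension route open. The slice argument is invalid as stated, because a $C^1$ surface can meet a plane in a cross: $\{z=xy\}\cap\{z=0\}=\{xy=0\}$.

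What the slice does give is $(D_1,0),(0,D_2)\in T_\Omega(\overline X,\overline Y)$, where $D_1:=E_{s+p+1,\,s+p+q+1}$ and $D_2:=E_{s+p+q+1,\,s+q+1}$. The tangent cone of a $C^1$ manifold is linear, so you still need $(D_1,D_2)\notin T_\Omega(\overline X,\overline Y)$.

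To see this, take any $R_t,S_t=o(t)$. Row $s+p+1$ of $\overline X$ and column $s+q+1$ of $\overline Y$ vanish. Hence entry $(s+p+1,\,s+q+1)$ of $(\overline X+tD_1+R_t)(\overline Y+tD_2+S_t)$ equals $t^2+o(t^2)$. This cannot equal $M_{s+p+1,\,s+q+1}=0$. This is the paper's $(D_1,D_2)$ argument reduced to a single entry.
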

\begin{proof}
   Without loss of generality, $U=I_m$ and $V=I_n$, in which case we also have $M=\begin{bmatrix}
       \Alpha & 0 \\
        0 & 0
   \end{bmatrix}$. First, let us assume that 
    \begin{align}
        X=\begin{bmatrix}
             X_{1 }\\
             X_2
        \end{bmatrix},~  Y=\begin{bmatrix}
             Y_1  & Y_2
        \end{bmatrix},\quad X_1\in \R^{s\times r},~Y_1\in \R^{r\times s},~X_2\in \R^{(m-s)\times r},~Y_2\in \R^{r\times (n-s)}.
    \end{align}
Using \cref{lemma_t1}, there exists invertible matrix $S\in \R^{r\times r}$ such that 
\begin{align*}
    \widetilde X: = XS = \begin{bmatrix}
        I_s & 0 \\
        X_{2,1} & X_{2,2}
    \end{bmatrix},\quad \widetilde Y:= S^{-1}Y=\begin{bmatrix}
        \Alpha & Y_{2,1} \\
          0 & Y_{2,2}
    \end{bmatrix}.
\end{align*}
Clearly, $\widetilde X\widetilde Y=XY=M$, which implies that 
\begin{align}
    \widetilde X = XS = \begin{bmatrix}
        I_s & 0 \\
        0 & X_{2,2}
    \end{bmatrix},\quad \widetilde Y =\begin{bmatrix}
        \Alpha & 0 \\
          0 & Y_{2,2}
    \end{bmatrix},\quad  X_{2,2}Y_{2,2}=0. 
\end{align}
Now, assume an SVD of $X_{2,2}$ is given by $X_{2,2}=U_1\begin{bmatrix}
    \Alpha_1 & 0 \\
     0 & 0
\end{bmatrix}V_1^\top$, where $\Alpha_1\in \R^{p\times p}$ is a positive diagonal matrix. Let $U_2:=\diag(I_s,U_1)$ and $R_1:=\diag(I_s,V_1)\in \mathrm{O}(r)$. Then, we have 
\begin{equation}
\begin{aligned}
        \hat X&:=U_2\widetilde X  R_1 \diag(\Alpha_1^{-1},I_{r-s-p})=\begin{bmatrix}
            I_s & 0 & 0 \\
             0 & I_p & 0 \\
              0 &  0 & 0 
        \end{bmatrix}, \\
        ~\hat Y&:= \diag(\Alpha_1,I_{r-s-p})R_1^\top \widetilde Y =\begin{bmatrix}
            \Alpha & 0   \\
               0 & Y_{3,1} \\
               0 & Y_{3,2}
        \end{bmatrix}=\begin{bmatrix}
            \Alpha & 0   \\
               0 &0 \\
               0 & Y_{3,2}
        \end{bmatrix},
\end{aligned}
\end{equation}
where for the last equality we have used the condition that $\hat X\hat Y = M$, we see that $Y_{3,1}=0$. Next, we take an SVD of $Y_{3,2}$ as $Y_{3,2}=U_3\begin{bmatrix}
    \Alpha_2 & 0 \\
    0 & 0 
\end{bmatrix}V_2^\top$, where $\Alpha_2\in \R^{q\times q}$ is a positive diagonal matrix. Consequently, we have 
\begin{equation}
    \begin{aligned}
        \widehat X &:= \hat X\diag(I_{s+p},U_3)\diag(I_{s+p},\Alpha_2,I_{r-s-p-q})=\hat X\\
        \widehat Y&:= \diag(I_{s+p},\Alpha_2^{-1},I_{r-s-p-q})\diag(I_{s+p},U_3^\top)\hat Y\diag(I_s,I_p,V_2)\\
        &=\begin{bmatrix}
         I_s & 0 & 0 \\
         0 & 0 & 0 \\
         0 & I_q & 0 \\
         0 & 0 &  0
        \end{bmatrix}. 
    \end{aligned}
\end{equation}
This finishes the proof of \cref{decomp_omega} by considering the composition of these linear transformation. Next, we prove the equivalence between \cref{item:stru1}--\cref{item:stru4}. That ``(i)$\implies$(ii)'' is clear. 
% since this Lie group action is smooth and semi-algebraic, and we can use \cref{fact_embeddedorbit} to show that any orbit is a smooth embedded submanifold. The implication (ii)$\implies$(iii) is trivial. 
Assume \cref{item:stru2}, and that \cref{item:stru3} does not hold. Then $p+q<r-s,~p<m-s$, and $q<n-s$. Without loss of generality, we may assume that 
\begin{equation} 
    \label{xy_standard}
     X=  \begin{bmatrix}
        I_s & 0 & 0 & 0\\
         0 & I_p & 0 & 0\\
         0 & 0 & 0 & 0 
    \end{bmatrix},\quad Y=\begin{bmatrix}
        \Alpha & 0 & 0 \\
         0 & 0  & 0  \\
         0 & I_q & 0 \\
         0 & 0 & 0 
    \end{bmatrix} , 
\end{equation}
    and we set \[ D_1:= U\begin{bmatrix}
        0 & 0 & 0 & 0\\
         0 & 0 & 0 & 0\\
         0 & 0 & 0 & \begin{bmatrix}
             I_{\min\{m,r\}-s-p} & 0 \\
             0 & 0
         \end{bmatrix} 
    \end{bmatrix} ,~D_2:= \begin{bmatrix}
        0 & 0 & 0 \\
         0 & 0  & 0  \\
         0 & 0 & 0 \\
         0 & 0 & \begin{bmatrix}
             I_{\min\{r,n\}-s-q} & 0 \\
             0 & 0
         \end{bmatrix}  
    \end{bmatrix}.  \] 
Then, by direct calculation, we have $(D_1,0),(0,D_2)\in T_{\Omega}(X,Y)$ and $(D_1,D_2)\notin T_{\Omega}(X,Y)$, which contradicts the assumption that $\Omega$ is locally a $C^1$ embedded submanifold near $(X,Y)$. Therefore, we have ``\cref{item:stru2}$\implies$\cref{item:stru3}''. 
% Next, we assume \cref{item:stru3}. 
% Therefore, we have proved that (i)--(iv) are mutually equivalent. The equivalence between (v) and (vi) follows from the definition.  
After that, we aim to prove \cref{item:stru3}$\implies$\cref{item:stru4}. If $m-s-p=0$ or $n-s-q=0$, then we have either $\rk(X)=s+p=m$ or $\rk(Y)=s+q=n$. In this case, by \cref{prop:mf_submersion} we know $d(F_{\mathrm{a}})_{(X,Y)}$ is surjective. Since $g_{\mathrm{a}}$ has growth exponent $2$ at $M$, by \cref{lemma:submersion}, $f:=g_{\mathrm{a}}\circ F_{\mathrm{a}}$ has growth exponent $2$ at $(X,Y)$. Thus, we only need to consider the case where $m-s-q>0,~n-s-q>0,~r-s-p-q=0$. Since any matrix $(X',Y')\in \Omega$ satisfies that $\rk(X')+\rk(Y')\leq s+r$, and the action of $G$ does not change the rank of $X$ and $Y$. We know that $\rk(X)+\rk(Y)=s+p+s+q=r+s$. Due to the fact that the rank function is lsc, we know that near $(X,Y)$, all the matrices $X'$ and $Y'$ with $(X',Y')\in\Omega$ have the same rank as $X$ and $Y$, respectively. Therefore, due to the rank stratification of $\Omega$ in \cref{standard_form}, $\Omega$ is locally an orbit near $(X,Y)$, which is $G\cdot (X,Y)$. Since the action of $G$ is semi-algebraic, this orbit is a smooth embedded submanifold of $\R^{m\times r}\times \R^{r\times n}$. Suppose \cref{item:stru4} fails. Then, we can find a subsequence $(X_k,Y_k)\to (X,Y)$ and $\sigma_k\downarrow 0$ such that 
\[    \|X_kY_k-M\|_F< \sigma_k d((X_k,Y_k), \Omega).         \]
Let the projection of $(X_k,Y_k)$ onto $\Omega$ be $\overline{X}_k,\overline{Y}_k$, and by \cref{fact_proj}, we have $t_k(H_k,K_k):=(X_k,Y_k)-(\overline{X}_k,\overline{Y}_k)\in N_{(X_k,Y_k)}\Omega$, where we assume $\|(H_k,K_k)\|=1$. By taking a subsequence if necessary, we may assume that $(H_k,K_k)\to (H,K)$, and due to the Clarke regularity of manifold \cite[Example 6.8]{rockafellar2009variational}, we know that $(H,K)\in N_{(X,Y)}\Omega$. On the other hand, we have
\begin{align*}
    \|X_kY_k-M\|_F=\|(\overline X_k+t_kH_k)(\overline{Y}_k+t_kK_k)-M\|_F=\|t_kH_k\overline{Y}_k+t_k\overline{X}_kK_k+ t_k^2H_kK_k\| <\sigma_k t_k.
\end{align*}
Dividing both side by $t_k$ and let $k\to\infty$, we know that 
$$XK+HY=0.$$
Using \cref{fact_orbit}, we see that
\begin{align*}
     T_{\Omega}(X,Y)=\fg\cdot (X,Y)=\{&(XC+\diag(0_s,A)X,-CY+Y\diag(0_s,B)):\\
     &~A=-A^\top\in \R^{(m-s)\times (r-s)},~B=-B^\top\in \R^{(r-s)\times (n-s)}     \}.  
\end{align*} 
Without loss of generality, we may assume that $(X,Y)$ has the standard form in \cref{xy_standard}, and since $p+q=r-s$, we may rewrite $(X,Y)$ as 
\begin{equation*}
     X=  \begin{bmatrix}
        I_s & 0 & 0 \\
         0 & I_p & 0 \\
         0 & 0 & 0
    \end{bmatrix},\quad Y=\begin{bmatrix}
        \Alpha & 0 & 0 \\
         0 & 0  & 0  \\
         0 & I_q & 0 
    \end{bmatrix} ,
\end{equation*}
Rewrite $H$ and $K$ in block forms:
\begin{equation*}
    H=\begin{bmatrix}
        H_1 & H_2 & H_3\\
        H_4 & H_5 & H_6 \\
        H_7 & H_8 & H_9 \\
    \end{bmatrix}, \quad K=\begin{bmatrix}
        K_1 & K_2 & K_3\\
        K_4 & K_5 & K_6 \\
        K_7 & K_8 & K_9 \\
    \end{bmatrix}.
\end{equation*}
Calculating the equation $XK+HY=0$, we have:
\begin{equation*}
    \begin{bmatrix}
        K_1 & K_2 & K_3 \\
        K_4 & K_5 & K_6\\
        0 & 0 & 0
    \end{bmatrix}+\begin{bmatrix}
        H_1\Alpha &  H_3 & 0 \\
        H_4\Alpha & H_6 & 0 \\
        H_7\Alpha & H_9 & 0
    \end{bmatrix} = 0
\end{equation*}
which yields
\[     K_1=-H_1\Alpha,~K_2=-H_3,~K_3=0,~K_4=-H_2\Alpha,~K_5=-H_6,~K_6=0,~H_7=0,~H_9=0.                  \]
Hence, we have 
\begin{equation*}
    H=\begin{bmatrix}
        H_1 & H_2 & H_3\\
        H_4 & H_5 & H_6 \\
        0 & H_8 &  0 \\
    \end{bmatrix},~K=\begin{bmatrix}
        -H_1\Alpha & K_2 & 0\\
        -H_4\Alpha & K_5 & 0 \\
        K_7 & K_8 & K_9 \\
    \end{bmatrix}.
\end{equation*}
We select $A\in \R^{(m-s)\times (r-s)},B\in \R^{(r-s)\times (n-s)},C\in \R^{r\times r}$ in the following way:
\begin{align*}
    A=\begin{bmatrix}
        0 & -H_8 \\
        H_8 & 0
    \end{bmatrix},\quad B=\begin{bmatrix}
        0 & K_9 \\
        -K_9 & 0
    \end{bmatrix},\quad C=\begin{bmatrix}
        H_1 & H_2 & H_3 \\
        H_4 & H_5 & H_6 \\
        -K_7\Alpha^{-1}& 0 & -K_8
    \end{bmatrix}.
\end{align*}
Then, it can be verified that 
\[    XC+\diag(0_s,A)X=H,\quad -CY+Y\diag(0_s,B)=K,~A=-A^\top,~B=-B^\top,\]
which proves that $(H,K)\in T_\Omega(X,Y)$. Since $(H,K)\in N_{(X,Y)}\Omega$, this implies that $(H,K)=0$, which yields contradiction. Finally, we prove ``\cref{item:stru4}$\implies$\cref{item:stru1}'', by \cite[Corollary 2.17]{rebjock2024fast} $f_{\mathrm{a}}$ has \KL~exponent $1/2$ at $(X,Y)$, and hence $\Omega$ is locally an analytic manifold around $(X,Y)$ since $f_{\mathrm{a}}$ is real analytic \cite{feehan2020morse}.
\end{proof}

\subsection{Matrix sensing}
\label{subsec:matrix-sensing}
In this subsection, we consider the matrix sensing problems in both the asymmetric and symmetric case.
\begin{definition}
    \label{def:matrixsensing}
    Given $m,r,n,p\in \N\p *$, $b\in \R^p$, and a linear map $\mathcal A:\R^{m\times n}\to \R^p$. Let $f_\mathrm{ms} := g_\mathrm{ms}\circ F_\mathrm{a}$ where 
    $$
    \begin{array}{rccc}
    g_\mathrm{ms}: & \R^{m\times n} & \mapsto & \R \\
     & A & \to & |\mathcal A(A)-b|^2.
    \end{array}
    $$
\end{definition}
\begin{definition}
    \label{def:symmatrixsensing}
    Given $r,n,p\in \N\p *$, $b\in \R^p$, and a linear map $\mathcal A:\bS^{n}\to \R^p$. Let $f_\mathrm{sms} := g_\mathrm{sms}\circ F_\mathrm{s}$ where 
    $$
    \begin{array}{rccc}
    g_\mathrm{sms}: & \bS^{n} & \mapsto & \R \\
     & A & \to & |\mathcal A(A)-b|^2.
    \end{array}
    $$
\end{definition}
Typically, in matrix sensing problem, the outer function $g_{\mathrm{ms}}$ or $g_{\mathrm{sms}}$ is assumed to have the restricted isometry property (RIP) \cite{zhao2023improving,tu2016low}, and a global minimum of rank $s\leq r$ is assumed.  Let us recall that, for $\delta >0$ and integers $s,t \ge 0$, a twice continuously differentiable function $g:\mathbb{R}^{m\times n} \to \mathbb{R}$ is said to satisfy $\delta$-$\RIP_{w,t}$ condition~\cite{li2017geometry,zhu2018global,zhang2021general} if for all $A,H\in \R^{m\times n}$ with $\rk(A)\leq w$ and $\rk(H)\leq t$, it holds that
\begin{equation*}
    \label{rip}
    (1-\delta) \|H\|_F^2  \leq  \nabla^2g(A)[H,H]\leq  (1+\delta)\|H\|_F^2.
\end{equation*}
In the following, we  will build a unified framework to handle the matrix sensing problems. The proposed framework also covers exactly and overparametrized matrix factorization with both the Frobenius norm and $\ell_1$ norm.

% the general composition function $f:=g\circ F_{\mathrm{a}}$ (resp. $f:=g\circ F_{\mathrm{s}}$) where $g$ has a unique global minimizer $M$ with $\rk(M)=s\leq r$. This covers over- and exactly-parametrized  matrix sensing, and matrix factorization with both $\ell_1$ and Frobenius norm. We start with the asymmetric case and then we reduce the symmetric case to the asymmetric case.

\subsubsection{Asymmetric case}
%\subsubsection{Over and exactly-parametrized}
With \cref{prop-tt1} in hand, we consider the optimization problem $\min_{X,Y} f$ with $f:=g\circ F_{\mathrm{a}}$. The case for symmetric parameterization will be reduced to the asymmetric case. That is 
\begin{equation}
    \label{eq_prob}
     \min_{X\in \R^{m\times r},~Y\in \R^{r\times n}} f(X,Y)=g(XY). 
\end{equation}
Assume the following assumptions:
\begin{enumerate}[label=(A\arabic*), ref=\textup{(A\arabic*)}]
  \item \label{A1} The function $g:\R^{m\times n}\to \R$ is convex.
  \item \label{A2} The point $M$ is a unique minimum of $g$ on $\R^{m\times n}$ with $\rk(M)=s\leq r$.
  \item \label{A3} There exist constants $c,\delta>0$ such that for all $A\in \R^{m\times n}_{\leq r}\cap B_{\delta}(M)$, it holds that
  $$g(A)\geq g(M)+c\|A-M\|_F^\beta.$$
  In addition, either $\beta\in [1,2)$, or $\beta=2$ and $\sup_{G\in \partial g(A)}\|G\|_F\leq \sigma_s(M)c/8 $ for all $A\in \R^{m\times n}_{\leq r}\cap B_\delta(M)$.
\end{enumerate}
For matrix sensing problem, the function $g$ is convex quadratic, and under the restricted isometric property (RIP) with rank $r+r^*$, we can verify \cref{A3} for $g$. It is clear that when $g$ satisfies the $\delta$-$\RIP_{s+r,s+r}$ condition for any $\delta>0$ and \cref{A2}, it also satisfies \cref{A3} with $\beta=2$. Moreover, for the case where $g(A)=\|A-M\|_1$, \cref{A3} holds with $\beta=1$.

 In this section, We aim to prove the following result:
\begin{theorem}
\label{thm:ms}
    Assume \cref{A1}--\cref{A3}. Let $(\overline{X},\overline{Y})$ be a global minimum of $f:=g\circ F_{\mathrm{a}}$. Let the group $G$ be defined in \cref{prop-tt1}. Then, the following holds. 
    \begin{itemize}
        \item[\rnum1] If $(\overline{X},\overline{Y})\in \orbit(p,q)$ with $p+q=\widetilde r-s$, then $f$ has \KL exponent $1-1/\beta$ at $(\overline{X},\overline{Y})$.
        \item[\rnum2]  If $(\overline{X},\overline{Y})\in \orbit(p,q)$ with $p+q<\widetilde r-s$, then $f$ has \KL exponent $1-1/(2\beta)$ at $(\overline{X},\overline{Y})$.
    \end{itemize}
\end{theorem}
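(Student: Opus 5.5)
We treat both items with one mechanism: a growth estimate for $f$ on the relevant stratum, converted into a \KL estimate by a first-order identity that uses the convexity of $g$. Throughout, $\Omega=F_{\mathrm a}^{-1}(M)=\arg\min f$ by \cref{A2}, and we read $p+q=\widetilde r-s$ as the ``top stratum'' condition \cref{item:stru3} of \cref{prop-tt1}.

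\textbf{First-order identity.} For $G\in\partial g(XY)$, the vector $(GY^\top,X^\top G)$ is a subgradient of $f$, since $F_{\mathrm a}$ is smooth and $g$ is convex and finite. Let $(X,Y)$ be near $(\overline X,\overline Y)$ and let $(X',Y')\in P_\Omega(X,Y)$, so that $X'Y'=M$. Expanding gives
$$\bigl\langle (GY^\top,X^\top G),(X-X',Y-Y')\bigr\rangle=\langle G,XY-M\rangle+\langle G,(X-X')(Y-Y')\rangle .$$
Convexity gives $\langle G,XY-M\rangle\ge g(XY)-g(M)=f(X,Y)-\min f$. Hence
$$d(0,\cp f(X,Y))\,d\ \ge\ (f-\min f)-\|G\|_F\,d^2,\qquad d:=d((X,Y),\Omega).$$
Because $f$ is locally Lipschitz and subdifferentially regular here, $\cp f$ is the convex hull of such vectors, so it suffices to bound the right-hand side uniformly in $G$.

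\textbf{Item (i).} Here $\Omega$ is locally the single orbit $G\cdot(\overline X,\overline Y)$, and it is embedded by \cref{fact_embeddedorbit}. The argument in the proof of \cref{prop-tt1}, \cref{item:stru3}$\Rightarrow$\cref{item:stru4}, shows that $(H,K)\mapsto XK+HY$ is injective on $N_{(\overline X,\overline Y)}\Omega$. This gives $\|XY-M\|_F\ge c_1 d$ on the shifted normal space, and \cref{thm:symmetry}\rnum1 extends it to a neighborhood. Since $XY\in\R^{m\times n}_{\le r}$, \cref{A3} then yields $f-\min f\ge c\,c_1^\beta d^\beta$.

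Plugging this into the identity, with $\|G\|_F$ locally bounded:
\begin{itemize}
\item If $\beta<2$, the term $\|G\|_F d^2=o(d^\beta)$ is absorbed. We obtain $d(0,\cp f)\gtrsim (f-\min f)/d\gtrsim (f-\min f)^{1-1/\beta}$, using $d\lesssim (f-\min f)^{1/\beta}$.
\item If $\beta=2$, the smallness hypothesis $\|G\|_F\le\sigma_s(M)c/8$ in \cref{A3} is exactly what lets us absorb $\|G\|_F d^2$ into half of $c\,c_1^2d^2$. This needs $c_1$ to be comparable to $\sigma_s(M)$, i.e.\ $\sigma_{\min}$ of $(H,K)\mapsto XK+HY$ on the normal space, which we would verify from the standard form.
\end{itemize}
When instead $p=m-s$ or $q=n-s$, $F_{\mathrm a}$ is a submersion (\cref{prop:mf_submersion}). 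The same convexity argument gives $g$ itself \KL exponent $1-1/\beta$ at $M$, and \cref{lemma:submersion} finishes.

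\textbf{Item (ii).} First we would prove the weaker growth $\|XY-M\|_F\ge c_2 d^2$ near $(\overline X,\overline Y)$. On a normal slice at the standard form, the linear part $XK+HY$ can vanish only in the directions of the blocks $D_1,D_2$ from the proof of \cref{prop-tt1}. On those directions the bilinear term $HK$ is nondegenerate, so a compactness/contradiction argument as in \cref{under_asym_eb} gives the bound, and \cref{thm:symmetry}\rnum1 spreads it over a neighborhood. Combined with \cref{A3}, this yields $f-\min f\ge c' d^{2\beta}$.

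\textbf{Main obstacle.} The conversion to the \KL exponent $1-1/(2\beta)$ is where the identity above fails, since now $\|G\|_F d^2$ dominates $d^{2\beta}$. Our first attempt is to replace the comparison point $(X',Y')$ by a better point of $\Omega$, chosen so that $(X-X')(Y-Y')$ is of order $\|XY-M\|_F$ rather than $d^2$. One option is a ``balanced'' point obtained by moving along the $D_1$/$D_2$ block directions. With such a point the identity would give $d(0,\cp f)\cdot\mathrm{dist}\gtrsim f-\min f$, with $\mathrm{dist}\lesssim\|XY-M\|_F^{1/2}\lesssim (f-\min f)^{1/(2\beta)}$.

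If this fails, the fallback is to check \cref{lojasiewicz_normal} directly on the normal slice. In the $D_1,D_2$ directions the problem reduces to the model $(x,y)\mapsto g(M+xy^\top)$-type behavior, whose exponent $1-1/(2\beta)$ can be computed by hand. \cref{thm:symmetry}\rnum2 then lifts this to a neighborhood.
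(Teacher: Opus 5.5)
Your item \rnum{2} is not proved. The conversion to exponent $1-1/(2\beta)$ is left as an ``obstacle'', and neither proposed fix closes it.

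Option (a) aims at the wrong estimate. Suppose you find $(X',Y')\in\Omega$ at distance $\lesssim\|XY-M\|_F^{1/2}$ with $\|(X-X')(Y-Y')\|_F=O(\|XY-M\|_F)$. The error term $\langle G,(X-X')(Y-Y')\rangle$ in your identity still has no sign, and it is only bounded by $\|G\|_F\,O(\|XY-M\|_F)$. Meanwhile \cref{A3} only guarantees $f-\min f\ge c\|XY-M\|_F^\beta$. For $\beta>1$ the error dominates near $M$. For $\beta=1$ (e.g.\ the $\ell_1$ loss) nothing makes $\|G\|_F$ small enough to absorb it. So $d(0,\cp f)\cdot\mathrm{dist}\gtrsim f-\min f$ does not follow. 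You would need the cross term to be $o(\|XY-M\|_F^\beta)$, and your construction does not provide that.

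Option (b) misapplies \cref{thm:symmetry}. The function $f$ is invariant only under $\GL(r)$, not under $G$, because the factors $\rO(m-s)$ and $\rO(n-s)$ do not preserve a general $g$. When $p>0$ or $q>0$ (with $m-s-p,n-s-q>0$), the tangent space of $G\cdot(\overline X,\overline Y)$ strictly contains that of $\GL(r)\cdot(\overline X,\overline Y)$. A normal slice to the former is therefore not a supplement of the latter, so the lift is unavailable. The reduction to a one-parameter model $g(M+xy^\top)$ is also unsupported, since $g$ is an arbitrary convex function coupling all blocks.

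The missing idea is to control the degenerate block by the subgradient components themselves, not by $\|G\|_F$. Then no absorption against $f-\min f$ is needed, and no distance estimate either. The paper works on a supplement $\cL$ of $T(\GL(r)\cdot(\overline X,\overline Y))$ at the standard form and uses convexity, $f-\min f\le\langle G,XY-M\rangle$. It bounds every block by $(\|X^\top G\|_F+\|GY^\top\|_F)\|XY-M\|_F$ except $\langle G_4,X_2Y_3\rangle$. For that term \cref{lemma-trace-bound} gives $\langle G_4,X_2Y_3\rangle\le\rk(X_2)^{1/4}\|X_2^\top G_4\|_F^{1/2}\|X_2Y_3\|_F^{1/2}\|G_4Y_3^\top\|_F^{1/2}\lesssim(\|X^\top G\|_F+\|GY^\top\|_F)\|XY-M\|_F^{1/2}$. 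Hence $f-\min f\lesssim d(0,\cp f)(f-\min f)^{1/(2\beta)}$, and \cref{thm:symmetry} applied to $\GL(r)$ alone lifts this to a neighborhood.

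Your item \rnum{1} argument is a valid alternative to the paper's slice computation. It combines the error bound $\|XY-M\|_F\ge c_1 d$, which you obtain legitimately since $\|XY-M\|_F$ is $G$-invariant, with the projection identity. One correction is needed for $\beta=2$. Absorbing $\tfrac{c\sigma_s(M)}{8}\|X-X'\|_F\|Y-Y'\|_F$ requires $c_1^2\gtrsim\sigma_s(M)$, not $c_1\sim\sigma_s(M)$. At the paper's standard form this fails for small $\sigma_s(M)$. There the normal direction $(H,0)$, with $H$ supported on rows $s+p+1,\dots,m$ and columns $1,\dots,s$, gives $\|H\overline Y\|_F=\|H\Alpha\|_F$, so $c_1\le\sigma_s(M)$. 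The distance $d$ is not $\GL(r)$-invariant, but $f$ is, so you may move to a balanced representative first. Take diagonal block $\Alpha^{1/2}$ in both factors and scale the $p$- and $q$-blocks by at least $\sigma_s(M)^{1/2}$; there $\sigma_{\min}^2=\sigma_s(M)$ and the absorption goes through.
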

The main issue for proving \cref{thm:ms} is that, while the solution set $\Omega$ is invariant under the action of $G$ defined in \cref{prop-tt1}, the function $f$ is not. Hence, \cref{cor:symmetry-lift} is not applicable. This technical difficulty is resolved by using the convexity of $g$ and \cref{thm:symmetry}. We start with an elementary inequality, which is essential in our proof, and appears to be new.
\begin{lemma}
\label{lemma-trace-bound}
    Given $A\in \R^{m\times n},~B\in \R^{n\times r},~C\in \R^{r\times m}$, we have 
    \[      |\tr(ABC)|\leq  \rk(B)^{1/4}\|AB\|_F^{\frac12} \|BC\|_F^{\frac12}\|CA\|_F^{\frac12}.        \]
\end{lemma}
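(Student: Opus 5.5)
Let $k:=\rk(B)$. The plan is to reduce to the case where $B$ is a $k\times k$ positive diagonal matrix, apply Cauchy--Schwarz twice, and finish with a Hölder-type inequality on singular values.

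\emph{Reduction.} Take a compact SVD $B=U_1 D V_1^\top$ with $U_1\in\R^{n\times k}$, $V_1\in\R^{r\times k}$ having orthonormal columns and $D\in\R^{k\times k}$ positive diagonal. Set $\widetilde A:=AU_1\in\R^{m\times k}$ and $\widetilde C:=V_1^\top C\in\R^{k\times m}$. Then
\[
\tr(ABC)=\tr(\widetilde A D\widetilde C),\qquad \|AB\|_F=\|\widetilde A D\|_F,\qquad \|BC\|_F=\|D\widetilde C\|_F .
\]
However, $\|CA\|_F\ge\|V_1V_1^\top C A U_1U_1^\top\|_F=\|\widetilde C\widetilde A\|_F$, since orthogonal projections do not increase the Frobenius norm. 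It therefore suffices to prove
\[
|\tr(\widetilde A D\widetilde C)|\le k^{1/4}\|\widetilde AD\|_F^{1/2}\|D\widetilde C\|_F^{1/2}\|\widetilde C\widetilde A\|_F^{1/2}.
\]
In other words, we may assume $n=r=k$ and $B=D\succ 0$ diagonal.

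\emph{Core estimate.} Write $D=D^{1/2}D^{1/2}$ and use cyclicity:
\[
\tr(ADC)=\tr\bigl((D^{1/2}CA D^{1/2})\bigr)=\langle I_k, D^{1/2}(CA)D^{1/2}\rangle .
\]
With $P:=CA$ ($k\times k$), Cauchy--Schwarz gives $|\tr(D^{1/2}PD^{1/2})|\le\sqrt{k}\,\|D^{1/2}PD^{1/2}\|_F$. This does not produce the right exponents. Instead, split symmetrically:
\[
\tr(ADC)^2=\tr(DCA)^2=\langle (CA)^\top, D\rangle^2 .
\]
A cleaner route is to write $\tr(ADC)=\tr\bigl((AD^{1/2})(D^{1/2}C)\bigr)$ and bound its square by $\tr(D^{1/2}A^\top A D^{1/2})^{1/2}$-type quantities.

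The target bound reads $\tr(ADC)^4\le k\,\|AD\|_F^2\|DC\|_F^2\|CA\|_F^2$. Here is the step I would try. Let $X:=AD^{1/2}$ and $Y:=D^{1/2}C$. Then $\tr(ADC)=\tr(XY)=\tr(YX)$, and
\[
\tr(YX)^2\le k\,\|YX\|_F^2=k\,\tr\bigl(Y X X^\top Y^\top\bigr)=k\,\tr\bigl(X^\top X\, Y Y^\top\bigr)\le k\,\|X^\top X\|_F\|YY^\top\|_F ,
\]
using that $YX$ is $k\times k$. Next, $\|X^\top X\|_F=\|D^{1/2}A^\top A D^{1/2}\|_F$. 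Continuing this way would give $\tr^4\le k^2\cdots$, which has the wrong power of $k$. So the \emph{main obstacle} is getting exactly $k^{1/4}$ and the factor $\|CA\|_F$.

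The better approach is as follows. By cyclicity, $\tr(YX)=\tr(D^{1/2}(CA)D^{1/2})=\sum_i d_i (CA)_{ii}$, so
\[
|\tr|\le \Bigl(\sum_i d_i^2\,\cdot\Bigr)\ldots
\]
Concretely, apply Cauchy--Schwarz to $\sum_i d_i (CA)_{ii}=\sum_i \bigl(d_i^{1/2}\bigr)\bigl(d_i^{1/2}(CA)_{ii}\bigr)$ twice. Alternatively, write $(CA)_{ii}=\langle c_i,a_i\rangle$, where $c_i$ is the $i$-th row of $C$ and $a_i$ is the $i$-th column of $A$. Then
\[
|\tr|\le \sum_i d_i|\langle c_i,a_i\rangle|\le \Bigl(\sum_i |\langle c_i,a_i\rangle|^2\Bigr)^{1/4}\Bigl(\sum_i d_i^{2}|\langle c_i,a_i\rangle|^{2/3}\ldots\Bigr)
\]
The clean form I expect to close is
\[
\sum_i d_i|\langle c_i,a_i\rangle|\le \Bigl(\sum_i|\langle c_i,a_i\rangle|^2\Bigr)^{1/4}\Bigl(\sum_i d_i^2|a_i|\,|c_i|\Bigr)^{1/2}k^{1/4},
\]
obtained via Hölder with exponents $(4,2,4)$ on the factors $|\langle c_i,a_i\rangle|^{1/2}$, $d_i|a_i|^{1/2}|c_i|^{1/2}$, and $1$, together with $|\langle c_i,a_i\rangle|\le|a_i||c_i|$. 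Then $\sum_i|\langle c_i,a_i\rangle|^2\le\|CA\|_F^2$, since these are the diagonal entries of $CA$. Finally,
\[
\sum_i d_i^2|a_i||c_i|\le\Bigl(\sum_i d_i^2|a_i|^2\Bigr)^{1/2}\Bigl(\sum_i d_i^2|c_i|^2\Bigr)^{1/2}=\|AD\|_F\|DC\|_F .
\]
Combining these gives exactly $k^{1/4}\|AB\|_F^{1/2}\|BC\|_F^{1/2}\|CA\|_F^{1/2}$.

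The delicate point to verify is the Hölder split. It should work out, since $(1/4)+(1/2)+(1/4)=1$ and each factor produces the correct power.
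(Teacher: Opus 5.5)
Your final argument is correct and is essentially the paper's proof. You use the same SVD reduction to a positive diagonal $B$; your projection argument for $\|CA\|_F$ matches the paper's remark that passing to the nonzero block only decreases $\|CA\|_F$. Your H\"older split with exponents $(4,2,4)$, followed by Cauchy--Schwarz on $\sum_i d_i^2|a_i|\,|c_i|$, is equivalent to the paper's single H\"older step with exponents $(4,4,4,4)$. In a write-up, drop the exploratory dead ends (the $\sqrt{k}\,\|D^{1/2}PD^{1/2}\|_F$ and $k^2$ attempts) and state the closing chain directly, since that chain does check out.
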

\begin{proof}
    Assume an SVD of $B$ is given by $B=U\Sigma(B)V^\top$. Replacing $A$ by $AU$, $B$ by $\Sigma(B)$, $C$ by $V^\top C$ if necessary, we may assume that
    \[   B=\begin{bmatrix}
         D & 0 \\
         0 & 0 
    \end{bmatrix},~D:=\diag(b_1,\dots, b_s,0,\dots 0),~b_1\geq b_2\geq \dots \geq b_s>0.  \]
   Rewrite $A$ and $C$ in block forms:
   \begin{equation*}
       A=\begin{bmatrix}
           A_1 & A_3 \\
           A_2 & A_4
       \end{bmatrix},~C=\begin{bmatrix}
           C_1 & C_2 \\
           C_3 & C_4
       \end{bmatrix}.
   \end{equation*}
  It is clear that if we replace $B$ by $D$, $A$ by $\begin{bmatrix}
      A_1 \\
      A_2 
  \end{bmatrix}$, $C$ by $\begin{bmatrix}
      C_1 & C_2
  \end{bmatrix}$, then only $\|CA\|_F$ would reduce and other terms will remain the same. Therefore, we may assume that $B=D$, and rewrite $A$ and $C$ as 
  \[   A=\begin{bmatrix}
        a_1 & a_2 & \cdots & a_s
  \end{bmatrix}, ~C=\begin{bmatrix}
      c_1^\top \\
      c_2^\top \\
      \vdots \\
      c_s^\top 
  \end{bmatrix}.         \]
Then, we have 
\begin{subequations}
    \begin{align}
    \tr(ABC)&=\tr(DCA)=\sum_{i=1}^s b_i c_i^\top a_i,\label{37a} \\
    \|AB\|_F^2&=\sum_{i=1}^s  b_i^2|a_i|^2,~\|BC\|_F^2=\sum_{i=1}^s b_i^2|c_i|^2,~\|CA\|_F^2\geq \sum_{i=1}^s|c_i^\top a_i|^2. \label{37b}
\end{align}
\end{subequations}
Consequently, we can deduce that 
\begin{subequations}
    \begin{align}
      |\tr(ABC)|&\leq \sum_{i=1}^sb_i|c_i^\top a_i|=\sum_{i=1}^sb_i^{\frac12}|c_i^\top a_i|^{\frac12}|c_i^\top a_i|^{\frac12} b_i^{\frac12}  \label{38a}\\
      &\leq \sum_{i=1}^s|c_i^\top a_i|^{\frac12} b_i^{\frac12}|c_i|^{\frac12} b_i^{\frac12}|a_i|^{\frac12}  \label{38b}   \\
      &\leq \left(\sum_{i=1}^s |c_i^\top a_i|^2 \right)^{\frac14} \left(\sum_{i=1}^s b_i^2|a_i|^2\right)^{\frac14}  \left(\sum_{i=1}^s b_i^2|c_i|^2\right)^{\frac14}   s^{\frac14} \label{38c} \\
      &\leq \rk(B)^{\frac14} \|CA\|_F^{\frac12}\|AB\|_F^{\frac12}\|BC\|_F^{\frac12}. \label{38d}
    \end{align}
\end{subequations}
Indeed, in \cref{38a} we have used \cref{37a} and triangle inequality. In \cref{38b}, we have used Cauchy-Schwartz inequality to prove that $|c_i^\top a_i|\leq |c_i||a_i|$ for all $i\in [s]$. In \cref{38c}, we have used Holder inequality with exponent $4,4,4,4$. In \cref{38d}, we have used \cref{37b} and the fact that $\rk(B)=s$.
\end{proof} 
\begin{proof}[Proof of \Cref{thm:ms}]
 Since for real-valued convex function $g$, the Clarke subgradient $\cp g$ agrees with the subgradient $\partial g$, we can use \cite{bolte2017error}[Theorem 5 (ii)] to show that $g$ has \KL~exponent $1-1/\beta$ at $M$ by \cref{A3}. If $m=s+p$ or $n=s+q$, then $d(F_{\mathrm{a}})_{\overline{X},\overline{Y}}$ is surjective. Then, $f$ has \KL exponent $1-1/\beta$ at $(X,Y)$ by \cref{lemma:submersion}. Therefore, we assume $\min\{m-s-p,n-s-q\}>0$ in the following. We proceed with the following steps.
   
\noindent \textit{Step 1:} \textit{Reduction to the standard forms.} Assume that an SVD of $M$ is given by $M=U\Sigma(M)V^\top$. Replacing $g$ by $g(U\cdot V^\top)$ does not change \cref{A1}--\cref{A3} (in \cref{A2} the optimizer would be $\Sigma(M)$), and hence without loss of generality, we may assume that $M=\Sigma(M)$. Here, we assume 
\[   M=\begin{bmatrix}
    \Alpha & 0 \\
     0 & 0 
\end{bmatrix},      \]
where $\Alpha\in \R^{s\times s}$ is a positive diagonal matrix. Using \cref{prop-tt1}, there exist $A\in \GL(r)$ and $\widehat U\in \rO(m-s),~\widehat V\in \rO(n-s)$ such that $\diag(I_s,\widehat U)\overline{X}A^{-1}$ and $A\overline{Y}\diag(I_s,\widehat V^\top)$ have the standard forms in \cref{standard_form}. By replacing $g$ by $g(\diag(I_s,\widehat U)\cdot \diag(I_s,\widehat V))$ and $f$ by $f(\diag(I_s,\widehat U)\cdot \diag(I_s,\widehat V))$, we see that $f(X,Y)=g(XY)$ still holds, and \cref{A1}--\cref{A3} does not change. Notice that invertible linear transformation does not change \KL exponent, we may assume without loss of generality that 
  \begin{equation}
      \label{reduction_XY}
      \overline{X}=\begin{bmatrix}
          I_s & 0 & 0 & 0 \\
          0 & I_p & 0 & 0 \\
          0 & 0 & 0 & 0
      \end{bmatrix},\quad  \overline{Y}=\begin{bmatrix}
          \Alpha & 0 & 0\\
          0 & 0 & 0 \\
          0 & I_q & 0 \\
          0 & 0 & 0
      \end{bmatrix}.
  \end{equation}
\textit{Step 2:} \textit{Calculate the normal space.} Since $f$ is not invariant under the action of $G$, we need to consider the action of $\GL(r)$ on $\R^{m\times r}\times \R^{r\times n}$ given by 
\[   A\cdot(X,Y)=(XA^{-1},AY).       \]
Since this action is smooth semi-algebraic, by \cref{fact_embeddedorbit} we know every orbit of this action is smooth embedded submanifold of $\R^{m\times r}\times \R^{r\times n}$. Then, by differentiating this action and using $T_{\GL(r)\cdot (X,Y)}(X,Y)=\mathfrak{gl}(r)\cdot(X,Y)$ in \cref{fact_orbit}, we see that 
\[   T_{\GL(r)\cdot (X,Y)}(X,Y)=\{(-XK,KY):~K\in \R^{r\times r}  \}.     \]
Therefore, the normal space is the orthogonal complement of $T_{\GL(r)\cdot (X,Y)}(X,Y)$, which is given by 
\begin{equation}
    \label{normal_G}
    N_{\GL(r)\cdot (X,Y)}(X,Y)=\{(A,B)\in \R^{m\times r}\times \R^{r\times n}:~ X^\top A-BY^\top =0   \}.
\end{equation}

\noindent \textit{Step 3:} \textit{Construct a linear subspace $\cL$ such that $\cL+T_{\GL(r)\cdot(\overline{X},\overline{Y})}=\R^{m\times r}\times \R^{r\times n}$.} According to \cref{normal_G}, we know that $(A,B)\in N_{\GL(r)\cdot (\overline X,\overline  Y)}(\overline X,\overline Y)$ if and only if 
\begin{align*}
    \overline X^\top A -B\overline Y^\top =0,
\end{align*}
Now, rewrite $A$ and $B$ in block forms:
\begin{align*}
    A=\begin{bmatrix}
        A_1 & A_2 & A_3  & A_4 \\
        A_5 & A_6 & A_7 & A_8 \\
        A_9 & A_{10} & A_{11} & A_{12} 
    \end{bmatrix}, \quad   B=\begin{bmatrix}
        B_1 & B_2 & B_3 \\
        B_4 & B_5 & B_6 \\
        B_7 & B_8 & B_9\\
        B_{10} & B_{11} & B_{12}
    \end{bmatrix}.
\end{align*}
Then, we have 
\begin{align*}
  \begin{bmatrix}
     A_1 & A_2 & A_3  & A_4 \\
        A_5 & A_6 & A_7 & A_8 \\
       0 &  0  & 0  & 0 \\
       0 &  0  & 0  & 0 
  \end{bmatrix}    - \begin{bmatrix}
     B_1 \Alpha & 0 &  B_2 & 0 \\
     B_4\Alpha & 0 & B_5 & 0 \\
     B_7\Alpha & 0 & B_8 & 0 \\
     B_{10} \Alpha & 0 & B_{11} & 0 
  \end{bmatrix}=0.
\end{align*}
Solving this equation, we get that 
\begin{align*}
   A=\begin{bmatrix}
        B_1\Alpha & 0 & B_2  & 0 \\
        B_4\Alpha & 0 & B_5 & 0 \\
        A_9 & A_{10} & A_{11} & A_{12} 
    \end{bmatrix}, \quad   B=\begin{bmatrix}
        B_1 & B_2 & B_3 \\
        B_4 & B_5 & B_6 \\
        0 & 0 & B_9\\
        0 & 0 & B_{12}
    \end{bmatrix}.
\end{align*}
We construct $\cL$ in this way:
\begin{equation}
    \label{structrue_cl}
     (A,B)\in \cL \iff   A=\begin{bmatrix}
    0 & 0 & 0 & 0 \\
    A_5& 0 & 0 & 0 \\
    A_9 & A_{10} & A_{11} & A_{12} 
\end{bmatrix}   ,\quad  B=\begin{bmatrix}
        B_1 & B_2 & B_3 \\
        0 & B_5 & B_6 \\
        0 & 0 & B_9\\
        0 & 0 & B_{12}
    \end{bmatrix}.       
\end{equation}
In this case, we have 
\[   (A,B)\in \cL^\perp \iff A=\begin{bmatrix}
       A_1 & A_2 & A_3  & A_4 \\
       0 & A_6 & A_7 & A_8 \\
       0 &  0  & 0  & 0
  \end{bmatrix}   ,\quad  B=\begin{bmatrix}
        0 & 0 & 0 \\
        B_4& 0 & 0 \\
        B_7 & B_8 & 0 \\
        B_{10} & B_{11} & 0
    \end{bmatrix}.              \]
It can be verified that $\cL^\perp\cap N_{\GL(r)\cdot(\overline{X},\overline{Y})}(\overline{X},\overline{Y})=\{0\}$, and hence 
$$\cL+T_{\GL(r)\cdot(\overline{X},\overline{Y})}(\overline{X},\overline{Y})=\R^{m\times r}\times \R^{r\times n}. $$
Then, by \cref{thm:symmetry}, we know $f$ has \KL exponent $\alpha$ at $(\overline{X},\overline{Y})$, if we can prove that there exists $\eta,\rho>0$ such that:
\begin{equation}
\label{restricted_KL}
     \forall (X,Y)\in B_{\eta}(\overline{X},\overline{Y})\cap (\{(\overline{X},\overline{Y})\}+\cL),~ d(0,\cp f(X,Y))\geq \rho( f(X,Y)-f(\overline{X},\overline{Y}))^\alpha.       
\end{equation}

\noindent \textit{Step 3:} \textit{Use convexity to deduce the exponent.} For all $(X,Y)\in \{(\overline{X},\overline{Y})\}+\cL$, we rewrite $(X,Y)$ in the following block forms to simplify the calculation:
\begin{equation}
    \label{blockxythm}
    X=\begin{bmatrix}
        I & 0 \\
        X_1 & X_2
    \end{bmatrix},\quad    Y=\begin{bmatrix}
         \Alpha + Y_1 & Y_2 \\
         0 & Y_3
    \end{bmatrix}.
\end{equation}
Since $g$ is a real-valued convex function, we know $g$ is locally Lipschitz and regular by \cite[Corollary 8.41]{bauschke2017convex} and \cite[Proposition 2.3.6(b)]{clarke1990}. Using the chain rule in \cite[Proposition 7.11(b)]{facchinei2003finite}, we have 
\begin{equation}
\label{chain_rule_clarke}
   \cp f(X,Y)=\begin{bmatrix}
        \partial g(XY) Y^\top \\
        X^\top \partial g(XY) 
   \end{bmatrix}.
\end{equation}
Therefore, we may take arbitrary
$$G=\begin{bmatrix}
    G_1 & G_2 \\
    G_3 & G_4
\end{bmatrix}\in \partial g(XY) . $$ 
By direct calculation, we have
\begin{subequations}
\begin{align}
        \|GY^\top\|_F^2 &= \|G_1(\Alpha+Y_1^\top)+G_2Y_2^\top\|_F^2+\|G_2Y_3\|_F^2 \notag \\
        &\quad +\|G_3(\Alpha+Y_1^\top)+G_4Y_2^\top\|_F^2+\|G_4Y_3\|_F^2, \label{44a}  \\
        \|X^\top G\|_F^2&=\|G_1+X_1^\top G_3\|_F^2+\|G_2+X_1^\top G_4\|_F^2+\|X_2^\top G_3\|_F^2+\|X_2^\top G_4\|_F^2 ,\label{44c}~\\
        XY-M&=\begin{bmatrix}
        Y_1 & Y_2 \\
       X_1\Alpha +X_1Y_1  & X_1Y_2 + X_2Y_3 
    \end{bmatrix} \label{44d}
\end{align}
\end{subequations}
Using the convexity of $g$, we have 
\begin{subequations}
    \begin{align}
   & f(X,Y)-f(\overline{X},\overline{Y})=g(XY)-g(M)\leq  \langle G,XY-M \rangle   \label{45a}  \\
    &=\langle G_1,Y_1 \rangle +\langle G_2,Y_2  \rangle +\langle G_3,X_1(\Alpha+Y_1) \rangle+\langle G_4, X_1Y_2+X_2Y_3\rangle \label{45b} \\
    &=\langle  G_1+X_1^\top G_3,Y_1\rangle+\langle  G_2+X_1^\top G_4,Y_2  \rangle + \langle G_3,X_1\Alpha \rangle+\langle G_4,X_2Y_3 \rangle \label{45c} \\
    &\leq \|G_1+X_1^\top G_3\|_F\|Y_1\|_F+\|G_2+X_1^\top G_4\|_F\|Y_2\|_F+ \langle G_3,X_1\Alpha \rangle+\langle G_4,X_2Y_3 \rangle    \label{45d} \\
    &\leq 2\|X^\top G\|_F\|XY-M\|_F + \langle G_3,X_1\Alpha \rangle+\langle G_4,X_2Y_3 \rangle  \label{45e}  \\
    &\leq 2\|X^\top G\|_F\|XY-M\|_F + \langle G_3(\Alpha +Y_1^\top)+G_4Y_2^\top,X_1\rangle \notag \\
    &\quad -\langle G_3,X_1Y_1 \rangle+\langle G_4,X_2Y_3-X_1Y_2 \rangle  \label{45f} \\
    &\leq 2\|X^\top G\|_F\|XY-M\|_F + \|G_3(\Alpha +Y_1^\top)+G_4Y_2^\top\|_F\|X_1\|_F \notag \\
    &\quad \|G_3\|_F\|X_1\|_2\|Y_1\|_F +\langle G_4,X_2Y_3\rangle + \|G_4\|_F\|X_1\|_2\|Y_2\|_F \label{45g}   \\
    &\leq 2\|X^\top G\|_F\|XY-M\|_F + \|GY^\top\|_F\|X_1\|_F \notag \\
    &\quad \|G_3\|_F\|X_1\|_2\|XY-M\|_F +\langle G_4,X_2Y_3\rangle + \|G_4\|_F\|X_1\|_2\|XY-M\|_F \label{45h} 
    \end{align}
\end{subequations}
Indeed, \cref{45a} follows from the definition that $f(X,Y)=g(XY)$, $\overline{X}\overline{Y}=M$, and the convexity of $g$. \cref{45b} follows from the block structures of $G$ and $XY-M$ in \cref{44d}. \cref{45c} follows from rearranging those terms. \cref{45d} follows from Cauchy-Schwarz inequality. \cref{45e} follows from the block structures of $GY^\top$ and $XY-M$ in \cref{44c} and \cref{44d}. \cref{45f} follows from rearranging the terms. \cref{45g} follows from Cauchy-Schwartz inequality. \cref{45h} follows from the block structure of $GY^\top$ and $XY-M$ in \cref{44a} and \cref{44d}. Next, we provide a bound on $\|X_1\|_F$. When $2\|Y_1\|_2\leq \sigma_{\min}(\Alpha)=\sigma_s(M)$, we have 
\[    \|(\Alpha+Y_1)^{-1}\|_2=\frac{1}{\sigma_{\min}(\Alpha+Y_1)}\leq \frac{1}{1/2\sigma_{\min}(\Alpha)}=\frac{2}{\sigma_s(M)}.      \]
In this case, we have
\begin{equation}
\label{boundx1}
    \begin{aligned}
    \|X_1\|_2 &\leq \|X_1\|_F=\|X_1(\Alpha+Y_1)(\Alpha+Y_1)^{-1}\|_F \\
     &\leq \|X_1(\Alpha+Y_1)\|_F\|(\Alpha+Y_1)^{-1}\|_2\leq \frac{2}{\sigma_s(M)}\|XY-M\|_F,
    \end{aligned}
\end{equation}
where in the last inequality we have used the block structure of $XY-M$ in \cref{44d} to show that $\|X_1(\Alpha+Y_1)\|_F\leq \|XY-M\|_F$. Substituting this bound on $\|X_1\|_F$ into \cref{45h}, we can deduce that 
\begin{align*}
     f(X,Y)-f(\overline{X},\overline{Y})  &\leq 2\|X^\top G\|_F\|XY-M\|_F + \frac{2}{\sigma_s(M)}\|GY^\top\|_F\|XY-M\|_F  \\
    &\quad +\frac{2}{\sigma_s(M)}\|G_3\|_F\|XY-M\|_F^2 +\langle G_4,X_2Y_3\rangle \\
    & \quad +\frac{2}{\sigma_s(M)}\|G_4\|_F\|XY-M\|_F^2. 
\end{align*} 
In \cref{A3}, if $\beta<2$, then we have 
\[       \|XY-M\|_F^2=o(g(XY)-g(M))=o(f(X,Y)-f(\overline{X},\overline{Y})).                 \]
Since $g$ is locally Lipschitz, by \cite[Proposition 2.1.2(a)]{clarke1990}, we know that $\|G\|_F$ is bounded when $XY$ is near $M$. Reducing $\eta$ if necessary, for all $(X,Y)\in B_\eta(\overline{X},\overline{Y})$ we have 
\begin{subequations}
    \begin{align}
       \frac{2}{\sigma_s(M)}\|G_3\|_F\|XY-M\|_F^2&\leq \frac{1}{4}(f(X,Y)-f(\overline{X},\overline{Y})), \label{47a} \\
      \frac{2}{\sigma_s(M)}\|G_4\|_F\|XY-M\|_F^2&\leq \frac{1}{4}(f(X,Y)-f(\overline{X},\overline{Y})), \label{47b}
\end{align}
\end{subequations}
If $\beta=2$ and $\|G\|_F\leq c\sigma_s(M)/8$, then using \cref{A3}, when $\|XY-M\|_F<\delta$, we have 
$$ c\|XY-M\|_F^2\leq g(XY)-g(M)=f(X,Y)-f(\overline{X},\overline{Y}).  $$
This also yields that \cref{47a} and \cref{47b}. Consequently, reducing $\eta$ if necessary, we may assume that for all $(X,Y)\in B_{\eta}(\overline{X},\overline{Y})\cap (\{(\overline{X},\overline{Y})\}+\cL)$, it holds that 
\begin{equation}
\label{bound_funcv1}
    \begin{aligned}
      \frac{1}{2}\left( f(X,Y)-f(\overline{X},\overline{Y}) \right)&\leq   2\|X^\top G\|_F\|XY-M\|_F\\
      &~~~~+ \frac{2}{\sigma_s(M)}\|GY^\top\|_F\|XY-M\|_F  +\langle G_4,X_2Y_3\rangle.
    \end{aligned}
\end{equation}
 
 \noindent\textit{Step 4:} \textit{Consider the case where $(\overline{X},\overline{Y})\in \orbit(p,q)$ with $p+q=\widetilde r-s$.} Our next goal is to bound $\langle G_4,X_2Y_3\rangle$. When $p+q=\widetilde r-s$, the last column in the blocks of $A$ and the last row in the blocks of $B$ are null. In this case, we can rewrite $X_2,Y_3$ and $G_4$ in block forms:
\begin{equation}
    \label{blocks_fullrankcase}
  X_2=\begin{bmatrix}
      I_p & 0 \\
      X_3 & X_4
  \end{bmatrix},~~Y_3=\begin{bmatrix}
      Y_4 & Y_5 \\
      I_q & Y_6
  \end{bmatrix},~G_4=\begin{bmatrix}
      G_5 & G_6 \\
      G_7 & G_8
  \end{bmatrix}.  
\end{equation}
By direct calculation, we have 
\begin{subequations}
    \begin{align}
       X_2Y_3&= \begin{bmatrix}
          Y_4 & Y_5 \\
          X_3Y_4 + X_4 & X_3 Y_5 +X_4 Y_6
       \end{bmatrix},   \label{49a}  \\
      X_2^\top G_4 &=\begin{bmatrix}
         G_5+X_3^\top G_7 & G_6+X_3^\top G_8 \\
         X_4^\top G_7 &  X_4^\top G_8
      \end{bmatrix},  \label{49b} \\ 
      G_4Y_3^\top  &= \begin{bmatrix}
         G_5 Y_4^\top + G_6 Y_5^\top & G_5 + G_6 Y_6^\top  \\
         G_7 Y_4^\top + G_8 Y_5^\top  &  G_7+G_8 Y_6^\top
      \end{bmatrix}. \label{49c}
    \end{align}
\end{subequations}
Therefore, we have 
\begin{subequations}
    \begin{align}
            \langle G_4, X_2Y_3  \rangle &= \left\langle \begin{bmatrix}
      G_5 & G_6 \\
      G_7 & G_8
  \end{bmatrix}  ,\begin{bmatrix}
          Y_4 & Y_5 \\
          X_3Y_4 + X_4 & X_3 Y_5 +X_4 Y_6
       \end{bmatrix}    \right\rangle  \label{50a} \\ 
       &= \langle G_5+X_3^\top G_7, Y_4\rangle+\langle G_7+G_8Y_6^\top,X_4 \rangle +\langle G_6+X_3^\top G_8, Y_5\rangle  \label{50b}\\
       &\leq \|G_5+X_3^\top G_7\|_F\|Y_4\|_F+\|G_7+G_8Y_6^\top\|_F \|X_4\|_F + \|G_6+X_3^\top G_8\|_F\|Y_5\|_F \label{50c} \\
       &\leq \|X_2^\top G_4\|_F\|X_2Y_3\|_F+\|G_4Y_3^\top\|_F\|X_4\|_F+\|X_2^\top G_4\|_F\|X_2Y_3\|_F \label{50d}
    \end{align}
\end{subequations}
Indeed, in \cref{50a} we have used the block structures of $G_4$ and $X_2Y_3$ in \cref{blocks_fullrankcase} and \cref{49a}. In \cref{50b}, we have rearranged all the terms. In \cref{50c}, we have used Cauchy-Schwartz inequality. In \cref{50d}, we have used the block structures in \cref{49a}--\cref{49c}. Next, using the block structures of $X_2Y_3$ in \cref{49a}, we can deduce that 
\[   \|X_4\|_F-\|X_3Y_4\|_F     \leq \|X_3Y_4+X_4\|_F\leq \|X_2Y_3\|_F,                           \]
which can be rearranged as 
\[   \|X_4\|_F\leq \|X_2Y_3\|_F+\|X_3Y_4\|_F\leq \|X_2Y_3\|_F+\|X_3\|_2\|Y_4\|_F\leq (1+\|X_3\|_2) \|X_2Y_3\|_F.       \]
When $(X,Y)$ is close to $(\overline{X},\overline{Y})$, we know that $X_3$ is close to $0$. Therefore, we may assume that $\|X_3\|_2\leq 1$ for all $(X,Y)\in B_\eta(\overline{X},\overline{Y})$. Using \cref{50d}, we have 
\begin{equation}
    \label{g4bound1}
     \langle G_4,X_2Y_3 \rangle\leq  2( \|X_2^\top G_4\|_F+\|G_4 Y_3^\top \|_F) \|X_2Y_3\|_F.   
\end{equation}
Our next goal is to bound $\|X_2Y_3\|_F$. Using the block structure of $XY-M$ in \cref{44a}, we have
\[     \|X_2Y_3\|_F-\|X_1Y_2\|_F \leq \|X_1Y_2+X_2Y_3\|_F\leq \|XY-M\|_F .   \]
which can be rearranged as
\begin{subequations}
    \begin{align}
    \|X_2Y_3\|_F&\leq \|XY-M\|_F+\|X_1Y_2\|_F\leq  \|XY-M\|_F+\|X_1\|_2\|Y_2\|_F \label{51a}  \\
    &\leq \|XY-M\|_F+\frac{2}{\sigma_s(M)}\|XY-M\|_F\|Y_2\|_F   \label{51b} \\
    &\leq  \|XY-M\|_F+\frac{2}{\sigma_s(M)}\|XY-M\|_F^2.  \label{51c}
    \end{align}
\end{subequations}
\cref{51a} is obvious. \cref{51b} follows from the bound on $\|X_1\|_2$ in \cref{boundx1}. In \cref{51c}, we have used the block structure of $XY-M$ to show that $\|Y_2\|_F\leq \|XY-M\|_F$. Therefore, by reducing $\eta$ if necessary, we may assume that
\begin{equation}
    \label{upper_bound_x2y3}
  \forall (X,Y)\in B_\eta(\overline{X},\overline{Y}),\quad   \|X_2Y_3\|_F\leq 2\|XY-M\|_F.
\end{equation}
Consequently, from \cref{g4bound1} we have 
\begin{equation}
    \label{upper_bound_inner_g4}
    \begin{aligned}
         \langle G_4,X_2Y_3 \rangle&\leq 4( \|X_2^\top G_4\|_F+\|G_4 Y_3^\top \|_F) \|XY-M\|_F\\
         &\leq  4(\|X^\top G\|_F+\|GY^\top\|_F)\|XY-M\|_F,   
    \end{aligned}
\end{equation}
where the last inequality follows from \cref{44a} and \cref{44c}. Substituting the bound in \cref{upper_bound_inner_g4} into \cref{bound_funcv1}, we have 
\begin{subequations}
\begin{align}
       & \frac{1}{2}\left( f(X,Y)-f(\overline{X},\overline{Y}) \right)\leq  \left( 6\|X^\top G\|_F+ \left(\frac{2}{\sigma_s(M)}+4\right)\|GY^\top\|_F\right)\|XY-M\|_F  \\
        &\leq  \left( 6\|X^\top G\|_F+ \left(\frac{2}{\sigma_s(M)}+4\right)\|GY^\top\|_F\right)(f(X,Y)-f(\overline{X},\overline{Y}))^{1/\beta} c^{-1/\beta}. \label{57b}
\end{align}
\end{subequations}
\cref{57b} follows from \cref{A3}. Rearranging this inequality, we have proved that 
\begin{equation}
\begin{aligned}
        (f(X,Y)-f(\overline{X},\overline{Y}))^{1-1/\beta} &\leq c^{-1/\beta} \max\left\{6,\frac{2}{\sigma_s(M)}+4\right\}(\|X^\top G\|+\|GY^\top\|) \\
    &\leq 2c^{-1/\beta}c^{-1/\beta} \max\left\{6,\frac{2}{\sigma_s(M)}+4\right\}\left\|\begin{bmatrix}
        X^\top G\\
        GY^\top
    \end{bmatrix} \right\|_F, 
\end{aligned}
\end{equation}
which holds for any $G\in \partial g(XY)$, and hence by taking the infimum on all $G\in \partial g(XY)$ and the chain rule in \cref{chain_rule_clarke}, we can deduce that 
\[      (f(X,Y)-f(\overline{X},\overline{Y}))^{1-1/\beta}\leq 2c^{-1/\beta}c^{-1/\beta} \max\left\{6,\frac{2}{\sigma_s(M)}+4\right\} d(0,\cp f(X,Y)),                     \]
which proves \cref{restricted_KL}, and by \cref{thm:symmetry}, we see that $f$ has \KL exponent $\alpha=1-1/\beta$ at $(\overline{X},\overline{Y})$. 

\noindent \textit{Step 5:} \textit{Consider the case where $p+q<\widetilde r-s$.}  In this case, we use \cref{lemma-trace-bound} to give a direct bound on $\langle G_4,X_2Y_3 \rangle$:
\begin{subequations}
    \begin{align}
        \langle G_4,X_2Y_3 \rangle&=\tr(G_4^\top X_2Y_3)\leq \rk(X_2)^{\frac14}\|X_2^\top G_4\|_F^{\frac12} \|X_2Y_3\|_F^{\frac12} \|G_4Y_3^\top\|_F^{\frac12} \label{60a}\\
        &\leq r^{\frac14}\|X^\top G\|_F^{\frac12}\|GY^\top\|_F^{\frac12}\|X_2Y_3\|_F^{\frac12}   \label{60b}   \\
        &\leq   \sqrt2 r^{\frac14}\|X^\top G\|_F^{\frac12}\|GY^\top\|_F^{\frac12}\|XY-M\|_F^{\frac12}         \label{60c} \\
        &\leq     \frac{\sqrt2 r^{\frac14}}{2}(\|X^\top G\|_F+\|GY^\top\|_F)\|XY-M\|_F^{\frac12} .           \label{60d}
    \end{align}
\end{subequations}
In \cref{60a} we have used \cref{lemma-trace-bound}. In \cref{60b}, we have used the fact that $\rk(X_2)\leq r$, \cref{44a} and \cref{44c}. \cref{60c} follows from \cref{upper_bound_x2y3}. In \cref{60d} we have used the inequality that $xy\leq (x^2+y^2)/2$ for all $x,y\in \R$. By reducing $\eta$ if necessary, we may assume that for all $(X,Y)\in B_\eta(\overline{X},\overline{Y})$, $\|XY-M\|_F$ is sufficiently small such that 
\begin{equation}
\label{bound2}
\begin{aligned}
         &2\|X^\top G\|_F\|XY-M\|_F+\frac{2}{\sigma_s(M)}\|GY^\top\|_F\|XY-M\|_F \\
         &\leq  (\|X^\top G\|_F+\|GY^\top\|_F)\|XY-M\|_F^{\frac12}.
\end{aligned}
\end{equation}
Substituting \cref{bound2} and \cref{60d} into \cref{bound_funcv1}, we can deduce that
\begin{align*}
          \frac{1}{2}\left( f(X,Y)-f(\overline{X},\overline{Y}) \right) &\leq  \left(\frac{\sqrt{2}r^{\frac14}}{2}+1\right)(\|X^\top G\|_F+\|GY^\top\|_F)\|XY-M\|_F^{\frac12}  \\
          &\leq   \left(\frac{\sqrt{2}r^{\frac14}}{2}+1\right)(\|X^\top G\|_F+\|GY^\top\|_F)(f(X,Y)-f(\overline{X},\overline{Y}))^{\frac{1}{2\beta}}c^{-\frac{1}{2\beta}}, 
\end{align*}
where for the last inequality we have used \cref{A3}. Rearranging this inequality, we have 
\begin{align*}
    \left( f(X,Y)-f(\overline{X},\overline{Y}) \right)^{1-\frac{1}{2\beta}} &\leq (\sqrt{2}r^{\frac14}+1)c^{-\frac{1}{2\beta}} (\|X^\top G\|_F+\|GY^\top\|_F)   \\
    &\leq 2(\sqrt{2}r^{\frac14}+1)c^{-\frac{1}{2\beta}} \left\|\begin{bmatrix}
        X^\top G\\
        GY^\top
    \end{bmatrix} \right\|_F. 
\end{align*}
Since this inequality holds for all $G\in \partial g(XY)$, a similar argument as in step 4 proves \cref{restricted_KL} holds with $\alpha = 1-{1}/(2\beta)$, which proves that $f$ has \KL exponent $1-1/(2\beta)$ by \cref{thm:symmetry}.
\end{proof}
Let us now apply \cref{thm:ms} to asymmetric matrix sensing and overparametrized $\ell_1$ matrix factorization. 
\begin{corollary}
    Given $M\in \R^{m\times n}_{s}$ with $s\leq r$, let $f_1:\R^{m\times r}\times \R^{r\times n}\to \R$ be defined by 
    \begin{equation}
        f_1(X,Y):=\|XY-M\|_1. 
    \end{equation}
   Assume that $g_{\mathrm{ms}}$ satisfies the $\delta$-$\RIP_{s+r,s+r}$ condition for some $\delta>0$. Moreover, assume also that $M$ is a global minimum of $g$. Let $(\overline{X},\overline{Y})$ satisfies that $\overline{X}\hspace{.3mm}\overline{Y}=M$. Then,
    \begin{itemize}
        \item[\rnum1] If $(\overline{X},\overline{Y})\in \orbit(p,q)$ with $\min\{m-s-p,n-s-q,r-s-p-q\}=0$, then $f_1$ has \KL exponent $0$, and $f_{\mathrm{ms}}$ has \KL exponent $1/2$ at $(\overline{X},\overline{Y})$.
        \item[\rnum2]  If $(\overline{X},\overline{Y})\in \orbit(p,q)$ with $\min\{m-s-p,n-s-q,r-s-p-q\}>0$,  then $f_1$ has \KL exponent $1/2$, and $f_{\mathrm{ms}}$ has \KL exponent $3/4$ at $(\overline{X},\overline{Y})$.  
    \end{itemize}
\end{corollary}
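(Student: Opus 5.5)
The plan is to obtain the corollary by specializing \cref{thm:ms}: first $g=\|\cdot-M\|_1$ with $\beta=1$, then $g=g_{\mathrm{ms}}$ with $\beta=2$. Throughout I read $\widetilde r$ in \cref{thm:ms} as $r$. I also use the reduction in its proof: when $m=s+p$ or $n=s+q$, $d(F_{\mathrm a})$ is surjective at $(\overline X,\overline Y)$, and \cref{lemma:submersion} transfers the \KL exponent $1-1/\beta$ of $g$ directly. So case \rnum1 of the corollary (the minimum equals $0$) comprises the submersive case together with the case $p+q=r-s$ of \cref{thm:ms}\rnum1. Case \rnum2 (all three quantities positive) is exactly \cref{thm:ms}\rnum2. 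Substituting $\beta=1$ gives exponents $0$ and $1/2$, and $\beta=2$ gives $1/2$ and $3/4$, as claimed.

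The remaining work is to check \cref{A1}--\cref{A3} in both cases.

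For $f_1$, \cref{A1} is clear. For \cref{A2}, $M$ is the unique minimizer of $\|\cdot-M\|_1$, with rank $s\le r$. For \cref{A3} with $\beta=1$, use $\|A-M\|_1\ge\|A-M\|_F$.

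For $g_{\mathrm{ms}}(A)=|\mathcal A(A)-b|^2$, convexity is immediate since $g_{\mathrm{ms}}$ is a convex quadratic. Its Hessian is constant, namely $2\mathcal A^*\mathcal A$, so $\delta$-$\RIP_{s+r,s+r}$ reads
\[(1-\delta)\|H\|_F^2\le 2|\mathcal A(H)|^2\quad\text{for }\rk H\le s+r.\]
I need $\delta<1$ here; this is implicit in the RIP hypothesis, and I would state it. Since $M$ is a global minimum, $\nabla g_{\mathrm{ms}}(M)=0$. Then for $A\in\R^{m\times n}_{\le r}$ we have $\rk(A-M)\le s+r$, so
\[g_{\mathrm{ms}}(A)-g_{\mathrm{ms}}(M)=|\mathcal A(A-M)|^2\ge\tfrac{1-\delta}{2}\|A-M\|_F^2.\]
This gives \cref{A3} with $\beta=2$ and $c=(1-\delta)/2$. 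It also gives uniqueness of $M$ within the rank-$r$ matrices.

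The main obstacle is the remaining part of \cref{A3} for $\beta=2$. That is the gradient smallness condition $\|\nabla g_{\mathrm{ms}}(A)\|_F\le\sigma_s(M)c/8$ on $\R^{m\times n}_{\le r}\cap B_\delta(M)$, together with the requirement in \cref{A2} that $M$ be the unique minimum on all of $\R^{m\times n}$.

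The gradient condition is easy. We have $\nabla g_{\mathrm{ms}}(A)=2\mathcal A^*\mathcal A(A-M)$, which is Lipschitz and vanishes at $M$. Shrinking the radius $\delta$ in \cref{A3} makes it as small as needed.

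Global uniqueness may fail when $\mathcal A$ has a kernel. I would handle this by noting where \cref{A2} is actually used in the proof of \cref{thm:ms}. It is used only through $g(XY)\ge g(M)$ for the convexity inequality \cref{45a}, and through the growth on bounded-rank matrices near $M$. Both remain valid if $M$ is merely a global minimizer with restricted growth. So I would remark that the proof of \cref{thm:ms} goes through verbatim under this weaker hypothesis, and then apply it.
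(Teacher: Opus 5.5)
Your proposal is correct and follows the paper's intended route: verify \cref{A1}--\cref{A3} with $\beta=1$ for $\|\cdot-M\|_1$ and $\beta=2$ for $g_{\mathrm{ms}}$, then apply \cref{thm:ms} together with the submersive reduction at the start of its proof. That reduction is indeed needed for case \rnum1 when $m=s+p$ or $n=s+q$. Your extra care is sound: requiring $\delta<1$, shrinking the radius to get the gradient bound, and dispensing with global uniqueness in \cref{A2}. Just add one line for the submersive case: there $p+q\le r-s$ forces $r\ge\min\{m,n\}$, so the RIP covers all of $\R^{m\times n}$ and $g_{\mathrm{ms}}$ is strongly convex, which is what supplies the \KL exponent $1/2$ of $g_{\mathrm{ms}}$ at $M$ needed by \cref{lemma:submersion}.
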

\subsubsection{Symmetric case}
Next, we show that the symmetric case can be reduced to the asymmetric case. The next lemma builds an asymmetric version of a symmetric function. 
\begin{lemma}
\label{symmetric_ms}
Assume that $\widehat g_{\mathrm{s}}:\bS^n\to \R$ is convex. Let $\gamma>0$ and define
\[   \widehat g_{\mathrm{a}}:\R^{n\times n}\to \R,\quad \widehat g_{\mathrm{a}}(A):=\widehat g_{\mathrm{s}}\left(\frac{A+A^\top}{2}\right)+\gamma\|A-A^\top\|_F    \]
and 
\begin{align*}
    &\widehat f_{\mathrm{a}}:\R^{n\times r}\times \R^{r\times n}\to \R,~
\widehat f_{\mathrm{a}}:=\widehat g_{\mathrm{a}}\circ F_{\mathrm{a}}, \\
&\widehat f_{\mathrm{s}}:\bS^n\to \R,~\widehat f_{\mathrm{s}}:=\widehat g_{\mathrm{s}}\circ F_{\mathrm{s}}.
\end{align*}
Then for all $X\in \R^{n\times r}$,
\[
d\bigl(0,\cp \widehat f_{\mathrm{a}}(X,X^\top)\bigr)=\frac{1}{\sqrt{2}}\,d\bigl(0,\cp \widehat f_{\mathrm{s}}(X)\bigr),~\widehat f_{\mathrm{a}}(X,X^\top)=\widehat f_{\mathrm{s}}(X).
\]
 
\end{lemma}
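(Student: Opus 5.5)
The identity $\widehat f_{\mathrm a}(X,X^\top)=\widehat f_{\mathrm s}(X)$ is immediate. Since $XX^\top$ is symmetric, $(XX^\top+XX^\top)/2=XX^\top$ and $XX^\top-(XX^\top)^\top=0$, so the penalty term vanishes. The substance is the identity for the Clarke subdifferentials. The plan is to compute both sides explicitly with convex chain rules, and then to see that the skew-symmetric part of the subgradients coming from the penalty $\gamma\|A-A^\top\|_F$ can only increase the norm.

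\emph{Step 1 (reduction to convex subdifferentials).} Both $\widehat g_{\mathrm s}$ and $\widehat g_{\mathrm a}$ are finite convex functions; $\widehat g_{\mathrm a}$ is convex because it is a convex function of a linear map plus a norm of a linear map. Hence they are locally Lipschitz and regular, and their Clarke subdifferentials agree with the convex ones. Exactly as in \cref{chain_rule_clarke}, the Clarke chain rule \cite[Proposition 7.11(b)]{facchinei2003finite} applied to $F_{\mathrm a}$ gives
\[
\cp\widehat f_{\mathrm a}(X,Y)=\{(GY^\top,X^\top G):~G\in\partial\widehat g_{\mathrm a}(XY)\}.
\]
Applied to $F_{\mathrm s}$, whose differential has adjoint $H\mapsto (H+H^\top)X=2HX$ on $\bS^n$, it gives
\[
\cp\widehat f_{\mathrm s}(X)=\{2HX:~H\in\partial\widehat g_{\mathrm s}(XX^\top)\}.
\]

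\emph{Step 2 (subdifferential of $\widehat g_{\mathrm a}$ at a symmetric point).} Let $\mathcal S(A):=(A+A^\top)/2$ and $\mathcal K(A):=A-A^\top$. Since both summands are finite convex functions, the convex sum rule and the linear chain rule give
\[
\partial\widehat g_{\mathrm a}(A)=\mathcal S^*\partial\widehat g_{\mathrm s}(\mathcal S A)+\gamma\,\mathcal K^*\partial\|\cdot\|_F(\mathcal K A).
\]
\begin{itemize}
\item For $H\in\bS^n$ one has $\langle H,\mathcal S A\rangle=\langle H,A\rangle$, so $\mathcal S^*$ is the inclusion $\bS^n\hookrightarrow\R^{n\times n}$.
\item At $A=XX^\top$ we have $\mathcal K A=0$, so $\partial\|\cdot\|_F(0)$ is the closed unit ball. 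Also $\mathcal K^*(B)=B-B^\top$, and every skew matrix $S$ equals $\mathcal K^*(S/2)$.
\end{itemize}
Therefore $\gamma\mathcal K^*(\text{ball})$ is the set of skew matrices of Frobenius norm at most $2\gamma$, and
\[
\partial\widehat g_{\mathrm a}(XX^\top)=\{H+S:~H\in\partial\widehat g_{\mathrm s}(XX^\top),~S^\top=-S,~\|S\|_F\le2\gamma\}.
\]

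\emph{Step 3 (norm computation).} Take $G=H+S$ and $Y=X^\top$. Then $X^\top G=(HX-SX)^\top$, so
\[
\|GX\|_F^2+\|X^\top G\|_F^2=\|HX+SX\|_F^2+\|HX-SX\|_F^2=2\|HX\|_F^2+2\|SX\|_F^2\ \ge\ 2\|HX\|_F^2,
\]
with equality when $S=0$, which is an admissible choice. Taking the infimum over $G$ gives
\[
d\bigl(0,\cp\widehat f_{\mathrm a}(X,X^\top)\bigr)=\sqrt2\,\inf_{H\in\partial\widehat g_{\mathrm s}(XX^\top)}\|HX\|_F=\tfrac{1}{\sqrt2}\inf_{H}\|2HX\|_F=\tfrac1{\sqrt2}\,d\bigl(0,\cp\widehat f_{\mathrm s}(X)\bigr).
\]
Note that $\gamma$ plays no role. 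The only delicate point is Step 2: one must justify the sum and chain rules, which hold for finite convex functions, and identify the image of the unit ball under $\mathcal K^*$ as exactly the skew matrices of norm at most $2\gamma$, so that $S=0$ is admissible. Everything else is a direct computation.
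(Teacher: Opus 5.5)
Your proposal is correct and follows the paper's proof essentially step by step. In both, the convex sum and linear chain rules identify $\partial\widehat g_{\mathrm a}(XX^\top)$ as $\partial\widehat g_{\mathrm s}(XX^\top)$ plus the skew matrices of Frobenius norm at most $2\gamma$; the Clarke chain rule then gives both subdifferentials; and the cross terms vanish, so the skew part $S=0$ is optimal. Your parallelogram identity $\|HX+SX\|_F^2+\|HX-SX\|_F^2=2\|HX\|_F^2+2\|SX\|_F^2$ is simply a tidier packaging of the paper's explicit cross-term cancellation.
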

\begin{proof}
\textit{Step 1:} \textit{Calculate the subdifferential of $\widehat g_{\mathrm{a}}$.} Because $\widehat g_{\mathrm{s}}$ is convex,  $\widehat g_{\mathrm{s}}$ is locally Lipschitz and regular by \cite[Corollary 8.41]{bauschke2017convex} and \cite[Proposition 2.3.6(b)]{clarke1990}. Also, $\widehat g_{\mathrm{a}}$ is convex, locally Lipschitz, and regular as a composition of convex function and linear map. Let us note that for convex functions the Clarke subdifferential agrees with its convex subdifferential \cite{drusvyatskiy2015curves}. Since $\widehat g_{\mathrm{s}}+\|\cdot\|_F$ is real-valued, we can apply the calculus rule for the convex subdifferential in \cite[Theorem 23.9]{rockafellar1970convex}:
\[\forall A\in \bS^n,\quad   \partial \widehat g_{\mathrm{a}}(A)= \{ G+\gamma(W-W^\top):~G\in \partial \widehat g_{\mathrm{s}}(A),~W\in \partial \|0\|_F\}.\]
Since $\partial \|0\|_F=B_1(0)$ by \cite[Exercise 8.27]{rockafellar2009variational}, we see that 
\begin{equation}
    \label{subg_g_s}
    \forall A\in \bS^n,\quad   \partial \widehat g_{\mathrm{a}}(A)= 
  \{G+W:~G\in \partial \widehat g_{\mathrm{s}}(A),~\|W\|_F\leq 2\gamma,~ W=-W^\top\}.  
\end{equation}

\textit{Step 2:} \textit{Calculate $\cp\widehat f_{\mathrm{a}}$ and $\cp\widehat f_{\mathrm{s}}$.} Since $\widehat g_{\mathrm{a}}$ and $\widehat g_{\mathrm{s}}$ are both regular, and $F_{\mathrm{a}}$ and $F_{\mathrm{s}}$ are both smooth, we can apply the chain rule in \cite[Proposition 7.1.11(b)]{facchinei2003finite} to show that 
\begin{align*}
    \cp\widehat f_{\mathrm{a}}(X,X^\top)&=d(F_{\mathrm{a}})_{(X,X^\top)}^* \partial \widehat g_{\mathrm{a}}(XX^\top), \\
    \cp \widehat f_{\mathrm{s}}(X)&= d(F_{\mathrm{s}})^*_{X}\partial \widehat g_{\mathrm{s}}(XX^\top).
\end{align*}
By direct calculation, we see that 
\begin{align*}
 \forall G\in \R^{n\times n},\quad   d(F_{\mathrm{a}})_{(X,X^\top)}^*(G)&= \big(GX,\;X^\top G\big),  \\
 \forall G\in \bS^n,\quad   d(F_{\mathrm{s}})^*_X(G)&=2GX.   
\end{align*}
Therefore, we have 
\begin{equation}
\label{cpfs}
    \cp \widehat f_{\mathrm{s}} (X) = \{2GX,~G\in  \partial \widehat g_{\mathrm{s}}(XX^\top) \},
\end{equation}
and 
\begin{equation}
\label{cpfa}
\begin{aligned}
    & \cp \widehat f_{\mathrm{a}}(X,X^\top) \\
     &=
    \{((G+ W)X,~X^\top(G+W)),~ G\in \partial\widehat g_{\mathrm{s}}(XX^\top),\|W\|_F\leq 2\gamma,~W=-W^\top \}. 
\end{aligned}
\end{equation}
\textit{Step 3:} \textit{Calculate $d(0,\cp\widehat f_{\mathrm{a}}$ and $d(0,\cp\widehat f_{\mathrm{s}})$.} Given the exact form of $\cp\widehat f_{\mathrm{s}}$ in \cref{cpfs}, we have 
\[  d(0,\cp\widehat f_{\mathrm{s}}(X))^2= \inf_{G\in \partial \widehat g_{\mathrm{s}}(XX^\top)} 4\|GX\|^2_F.       \]
For $d(0,\cp\widehat f_{\mathrm{a}})$, using \cref{cpfa} we have 
\begin{subequations}
    \begin{align}
        &d(0,\cp \widehat f_{\mathrm{a}}(X,X^\top))^2\notag \\
        &= \inf_{G\in \partial \widehat g_{\mathrm{s}}(XX^\top),\|W\|_F\leq 2\gamma,~W=-W^\top} \|G+W)X\|_F^2+\|X^\top (G+W)\|_F^2 \label{61a} \\
        &= \inf_{G\in \partial \widehat g_{\mathrm{s}}(XX^\top),\|W\|_F\leq 2\gamma,~W=-W^\top} \|GX\|_F^2+\|WX\|_F^2+\|X^\top G\|_F^2+\|X^\top W\|_F^2     \label{61b} \\
        &= \inf_{G\in \partial \widehat g_{\mathrm{s}}(XX^\top)} \|GX\|_F^2+\|X^\top G\|_F^2  \label{61c} \\
        &=\inf_{G\in \partial \widehat g_{\mathrm{s}}(XX^\top)} 2\|GX\|_F^2 = \frac{1}{2}d(0,\cp \widehat f_{\mathrm{s}}(X))^2. \label{61d}
    \end{align}
\end{subequations}
Indeed, \cref{61a} follows from the expression of $\cp f_{\mathrm{a}}(X,X^\top)$ in \cref{cpfa}. To justify \cref{61b}, we expand the expression in the following way:
\begin{subequations}
    \begin{align}
            &\|G+W)X\|_F^2+\|X^\top (G+W)\|_F^2 - \left(\|GX\|_F^2+\|WX\|_F^2+\|X^\top G\|_F^2+\|X^\top W\|_F^2\right) \notag \\
    &=2\langle GX, WX \rangle + 2\langle  X^\top G, X^\top W\rangle\label{62a} \\
    &=2\langle GX, WX \rangle + 2\langle  GX, W^\top X\rangle  \label{62b} \\
    &=2\langle GX, WX \rangle - 2\langle  GX, W^\top X\rangle=0. \label{62c}
    \end{align}
\end{subequations}
In \cref{62a}, we expand all the squares of the sums. In \cref{62b}, we have used the fact that $\langle A,B \rangle=\langle A^\top, B^\top \rangle$ for any matrices $A,B$ and the fact that $G$ is symmetric as $G\in \partial \widehat g_{\mathrm{s}}(XX^\top)$. \cref{62c} follows from the fact that $W=-W^\top$. This justifies \cref{61b}. In \cref{61c}, we take $W=0$ to achieve the minimal value. In \cref{61d}, we have used the fact that $\|A\|_F=\|A^\top\|_F$ for any matrix $A$. This proves the equality $d(0,\cp f_{\mathrm{a}}(X,X^\top))=d(0,\cp \widehat f_{\mathrm{s}}(X))/\sqrt{2}$ for all $X\in \R^{n\times r}$. The equality $\widehat f_{\mathrm{a}}(X,X^\top)=\widehat f_{\mathrm{s}}(X)$ is clear.  
\end{proof}
The next lemma discusses how to approximate a rank $r$ matrix using a symmetric rank $r$ matrix.
\begin{lemma}
\label{lemma_approx}
   For any $A\in \R^{n\times n}_r$, there exists $B\in \bS^{n}_r$, such that 
   \[    \left\|\frac{A+A^\top}{2}-B\right\|_F\leq \frac{1}{2}\|A-A^\top \|_F.      \]
\end{lemma}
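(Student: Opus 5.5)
The idea is to compare $A$ with its symmetric part and apply the Eckart--Young theorem to a symmetric matrix. Write $H:=(A+A^\top)/2\in\bS^n$ and $K:=(A-A^\top)/2$, so that $A=H+K$ and the right-hand side of the claim equals $\|K\|_F$. The key observation is that $\|H-A\|_F=\|K\|_F$ and $\rk(A)=r$. Hence the distance from $H$ to $\R^{n\times n}_{\leq r}$ is at most $\|K\|_F$. It then remains to show that this distance is attained, up to $\|K\|_F$, by a symmetric matrix of rank exactly $r$.

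\textbf{Case $\rk(H)\geq r$.} Take an eigenvalue decomposition $H=\sum_{i=1}^n\mu_i u_iu_i^\top$, ordered so that $|\mu_1|\geq\cdots\geq|\mu_n|$. Set $B:=\sum_{i=1}^r\mu_i u_iu_i^\top$. Then $B\in\bS^n$, and $\rk(B)=r$ because $|\mu_r|>0$ when $\rk(H)\geq r$.

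The singular values of $H$ are the $|\mu_i|$. So by the Eckart--Young theorem (equivalently, \cref{theorem:landscape} or \cref{lemma:sol_under} applied to $H$), $B$ is a best approximation of $H$ in $\R^{n\times n}_{\leq r}$. Its error satisfies
\[
\|H-B\|_F^2=\sum_{i>r}\mu_i^2=\min_{C\in\R^{n\times n}_{\leq r}}\|H-C\|_F^2\leq\|H-A\|_F^2=\|K\|_F^2 .
\]
This is the desired bound.

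\textbf{Case $k:=\rk(H)<r$.} Truncation no longer gives rank $r$, so I pad $H$ instead. If $K=0$, then $A=H$ is already symmetric of rank $r$, and $B:=A$ works. Otherwise $\|K\|_F>0$. Choose orthonormal vectors $u_{k+1},\dots,u_r$ in $\ker H$, and set $B:=H+t\sum_{i=k+1}^r u_iu_i^\top$ with $t>0$. Then $B$ is symmetric with rank exactly $r$, and $\|H-B\|_F=t\sqrt{r-k}$. Taking $t\leq\|K\|_F/\sqrt{r-k}$ gives the bound.

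No step is a real obstacle. The only point needing care is that the symmetric truncation has rank \emph{exactly} $r$, rather than at most $r$; this is why the argument splits on $\rk(H)$ versus $r$.
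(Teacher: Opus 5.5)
Your proof is correct and takes the same approach as the paper. You truncate the eigendecomposition of the symmetric part $H$ to its $r$ largest-modulus eigenvalues and apply Eckart--Young with $A$ itself as the competitor, which gives $\|H-B\|_F\le\|H-A\|_F=\|K\|_F$; your explicit padding in the case $\rk(H)<r$ (where necessarily $K\neq 0$) also fills in a step the paper dismisses as ``clear'', even though $B$ must have rank exactly $r$ there.
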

\begin{proof}
    Define $S:=(A+A^\top)/2$ and $T:=(A-A^\top)/2$. If $\rk S\leq r$, then this is clear. Otherwise, we select $B\in P_{\R^{n\times n}_{\leq r}}(S)$, which happens to be in $\bS^n_r$ (this can be computed via eigenvalue decomposition of $S$ and keep $r$ eigenvalues with largest modulus). In this case, we have
    \begin{equation*}
         \|S-B\|_F=d(S,\R^{n\times n}_{\leq r})\leq \|A-S\|_F=\|T\|_F. \qedhere
    \end{equation*}
\end{proof}
Next, we show how to apply \cref{thm:ms} to the symmetric case by utilizing the construction given in \cref{symmetric_ms}. 
\begin{corollary}
\label{corollary_sym_ms}
   Let $g:\bS^n\to \R$ be convex, and satisfies the following conditions:
   \begin{itemize}
       \item[(i)] The point $M\in \bS^n_+$ is a unique minimum of $g$ on $\bS^n$ with $\rk(M)=s\leq r$. 
       \item[(ii)] There exists constants $c,\delta>0$ such that for all $A\in \bS^n_r\cap B_\delta(M)$ it holds that 
       \[    g(A)-g(M) \geq c\|A-M\|^\beta,    \]
       In addition, either $\beta\in [1,2)$ or $\beta=2$ and $g$ is strictly differentiable at $M$.
   \end{itemize}
   Then, $g\circ F_{\mathrm{s}}$ has \KL exponent $1-1/(2\beta)$ at any of its global minimum, that is, a matrix in $\{X\in \R^{n\times r}:~XX^\top=M\}$. 
\end{corollary}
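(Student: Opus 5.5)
The plan is to reduce to the asymmetric setting through \cref{symmetric_ms} and then invoke \cref{thm:ms}. Concretely, set $\widehat g_{\mathrm{s}}:=g$ and, for a parameter $\gamma>0$ to be chosen, $\widehat g_{\mathrm{a}}(A):=g((A+A^\top)/2)+\gamma\|A-A^\top\|_F$ on $\R^{n\times n}$. Then $\widehat f_{\mathrm{a}}:=\widehat g_{\mathrm{a}}\circ F_{\mathrm{a}}$ satisfies $\widehat f_{\mathrm{a}}(X,X^\top)=g(XX^\top)$ and $d(0,\cp\widehat f_{\mathrm{a}}(X,X^\top))=d(0,\cp (g\circ F_{\mathrm{s}})(X))/\sqrt2$.

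Let $\overline X$ satisfy $\overline X\hspace{.3mm}\overline X^\top=M$. Then $(\overline X,\overline X^\top)$ is a global minimum of $\widehat f_{\mathrm{a}}$. Suppose $\widehat f_{\mathrm{a}}$ has \KL exponent $\alpha$ at this point with constants $r,\ell,c'$. For $X\in B_{r/\sqrt2}(\overline X)$ the pair $(X,X^\top)$ lies in $B_r(\overline X,\overline X^\top)$, so the two identities above give
\[
d\bigl(0,\cp(g\circ F_{\mathrm{s}})(X)\bigr)\geq \sqrt2\,c'\,\bigl(g(XX^\top)-g(M)\bigr)^\alpha .
\]
\cref{thm:ms} yields $\alpha\in\{1-1/\beta,\,1-1/(2\beta)\}$. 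Since $\alpha\le 1-1/(2\beta)$ and $\ell\le1$ may be assumed, a \KL inequality with exponent $\alpha$ implies one with the larger exponent $1-1/(2\beta)$. This proves the claim.

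It remains to verify \cref{A1}--\cref{A3} for $\widehat g_{\mathrm{a}}$, with $s=\rk(M)\le r$.
\begin{itemize}
\item \cref{A1} is immediate: $\widehat g_{\mathrm{a}}$ is the sum of a convex function composed with a linear map and a norm composed with a linear map.
\item \cref{A2}: we have $\widehat g_{\mathrm{a}}(A)\ge g(S)\ge g(M)$ with $S:=(A+A^\top)/2$ and $T:=(A-A^\top)/2$. Equality forces $T=0$ and $S=M$, so $M$ is the unique minimizer.
\item \cref{A3}: take $A\in\R^{n\times n}_{\le r}$ near $M$. \cref{lemma_approx}, applied at the rank of $A$, gives $B$ symmetric of rank $\le r$ with $\|S-B\|_F\le\|T\|_F$. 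By continuity of $g$ and density of $\bS^n_r$ in $\bS^n_{\le r}$, hypothesis (ii) extends to $\bS^n_{\le r}\cap B_\delta(M)$. Let $L$ be a local Lipschitz constant of $g$ near $M$. Then
\[
\widehat g_{\mathrm{a}}(A)-g(M)\ge c\|B-M\|_F^\beta-L\|T\|_F+2\gamma\|T\|_F .
\]
Choose $\gamma\ge L$. Use $\|B-M\|_F\ge\|S-M\|_F-\|T\|_F$, the elementary bound $(a-b)_+^\beta\ge 2^{1-\beta}a^\beta-b^\beta$, and $\|T\|_F\ge\|T\|_F^\beta$ for $\|T\|_F\le1$. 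This gives $\widehat g_{\mathrm{a}}(A)-g(M)\ge c''\|A-M\|_F^\beta$ for some $c''>0$.
\end{itemize}
For $\beta\in[1,2)$ this completes \cref{A3}.

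The main obstacle is the case $\beta=2$. There \cref{A3} also requires the subgradients of $\widehat g_{\mathrm{a}}$ near $M$ to be small. The penalty $\gamma\|A-A^\top\|_F$ contributes skew parts $W$ with $\|W\|_F$ up to $2\gamma$, which are not small. My first attempt would be to replace the penalty by $\gamma\|A-A^\top\|_F^2$. The proof of \cref{symmetric_ms} carries over, because the gradient of this term vanishes at symmetric points. Strict differentiability of $g$ at $M$, with $\nabla g(M)=0$, then makes all subgradients of $\widehat g_{\mathrm{a}}$ small on a small ball.

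The growth estimate, however, then has to absorb the first-order error $\langle\nabla g(B),S-B\rangle$ against the quadratic term $\gamma\|T\|_F^2$. A first-order error $o(1)\|T\|_F$ cannot in general be dominated by $\gamma\|T\|_F^2$. So I would try to exploit the structure of $S-B$, namely that it is made of the discarded eigencomponents of $S$, to show that $\langle\nabla g(B),S-B\rangle=O(\|T\|_F^2)$. If that fails, a fallback is to bypass \cref{A3} and re-run Steps 3--5 of the proof of \cref{thm:ms} directly on symmetric pairs $(X,X^\top)$, where the skew part vanishes identically.
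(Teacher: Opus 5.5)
Your reduction through \cref{symmetric_ms} and \cref{thm:ms} matches the paper's argument. That includes the remark that a \KL inequality with a smaller exponent implies one with the larger exponent $1-1/(2\beta)$. Your checks of (A1), (A2) and the growth half of (A3) for $\beta\in[1,2)$ also match, with the same linear penalty $\gamma\|A-A^\top\|_F$ and the same estimate $\widehat g_{\mathrm{a}}(A)-g(M)\ge g(B)-g(M)+\gamma\|T\|_F$. One small repair: for $\beta=1$ you need $\gamma>c$ strictly; the paper takes $\gamma\ge 2c$. The condition $\gamma\ge L$ alone allows $\gamma=L=c$, and then the $\|T\|_F$ term cancels.

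The genuine gap is the case $\beta=2$, which you leave unresolved. It comes from your claim that the skew parts $W$ "are not small." In your own growth estimate the only constraint on $\gamma$ is $\gamma\ge L$, where $L$ is a Lipschitz modulus of $g$ on a ball around $M$. Under the $\beta=2$ hypothesis this $L$ can be made arbitrarily small. Indeed $\nabla g(M)=0$, and strict differentiability at $M$ then gives $|g(x)-g(y)|\le\epsilon|x-y|$ for all $x,y$ close enough to $M$.

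This is exactly what the paper does:
\begin{enumerate}
\item It shrinks $\delta$ until $g$ is $\gamma$-Lipschitz on $B_\delta(M)$, with $\gamma$ below a fixed multiple of $c\,\sigma_s(M)$.
\item It keeps the \emph{linear} penalty with this small $\gamma$.
\item Every element of $\partial\widehat g_{\mathrm{a}}(A)$ then has the form $G+W$ with $\|G\|_F\le\gamma$ and $\|W\|_F\le 2\gamma$, which meets the subgradient threshold in (A3).
\end{enumerate}
The growth part survives because, once $\gamma>0$ is fixed, you may shrink the ball further without losing the $\gamma$-Lipschitz bound. On a ball where $\|T\|_F\le\gamma/(4c)$, using $\|B-M\|_F\ge\|S-M\|_F-\|T\|_F$ and the orthogonality of the symmetric part $S-M$ and the skew part $T$, you get
\[
\widehat g_{\mathrm{a}}(A)-g(M)\ge \tfrac{c}{2}\|S-M\|_F^2-c\|T\|_F^2+\gamma\|T\|_F\ge\tfrac c2\bigl(\|S-M\|_F^2+\|T\|_F^2\bigr)=\tfrac c2\|A-M\|_F^2 .
\]
So the squared penalty is unnecessary; as you observed, it cannot absorb a first-order error of size $L\|T\|_F$. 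The fallback of redoing Steps 3--5 on symmetric pairs is also unnecessary. It would not be direct anyway, since such pairs do not lie in the affine space $\{(\overline X,\overline Y)\}+\cL$ used there.
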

\begin{proof}
   Since real-valued convex functions are locally Lipschitz, we may assume that $g$ is Lipschitz continuous on $B_\delta(M)$ with modulus $\gamma$. 
   \begin{itemize}
       \item When $\beta\in [1,2)$, we enlarge $\gamma$ to ensure that $\gamma\geq 2c$. 
       \item When $\beta=2$, due to the strict differentiability of $g$ at $M$, we have $\partial g(M)=\{0\}$. Then by the upper semicontinuity of $\partial g$ and reducing $\delta$ if necessary, we may select $\gamma\leq c\sigma_s(M)/32$. 
   \end{itemize}
   Then, we set 
   \[ \widehat g:\R^{m\times n}\to \R,\quad  \widehat g(A)=g\left( \frac{A+A^\top}{2}\right)+\gamma \|A-A^\top\|_F. \]
   Then \cref{A1} and \cref{A2} hold for $\widehat g$. To verify \cref{A3}, for all $A\in \R^{m\times n}_r\cap B_{\delta/2}(M)$, let $B\in \bS^n_r$ be given in \cref{lemma_approx}, then it holds that 
   \begin{subequations}
       \begin{align}
            \widehat g(A)-\widehat g(M)&=g\left(\frac{A+A^\top}{2}\right)+\gamma\|A-A^\top\|_F -g(M) \label{65a} \\
            &\geq  g(S) -g(M)-\left|g\left(\frac{A+A^\top}{2}\right)-g(S)\right| +\gamma\|A-A^\top\|_F  \label{65b} \\
            & \geq g(S) -g(M)-\gamma\left\|\frac{A+A^\top}{2}-S\right\|_F +\gamma\|A-A^\top\|_F \label{65c} \\
            &\geq g(S)-g(M)+ \frac{\gamma}{2}  \|A-A^\top\|_F      \label{65d} \\
            &\geq  c\|S-M\|_F^\beta+\frac{\gamma}{2}\|A-A^\top\|_F. \label{65e} 
       \end{align}
   \end{subequations}
   In \cref{65a}, we have used the definition of $\widehat g$. \cref{65b} follows from triangle inequality. \cref{65c} follows from the fact that 
   $$\|S-M\|_F\leq \left\|S-\frac{A+A^\top}{2}\right\|_F+\left\|\frac{A+A^\top}{2}-M\right\|_F\overset{\rm (a)}{\leq}\frac{1}{2}\|A-A^\top\|_F+\|A-M\|_F\overset{\rm (b)}{\leq} 2\|A-M\|_F,  $$
   where (a) follows from \cref{lemma_approx} and 
   $$ \|(A+A^\top)/2 -M\|\leq \|A-M\|_F/2 + \|A^\top - M\|_F/2 = \|A-M\|_F, $$  and (b) follows from 
   \begin{equation}
       \|A-A^\top\|_F\leq \|A-M\|_F+\|M-A^\top\|_F=2\|A-M\|_F.\label{bound_asym_A} 
   \end{equation}
   Then $\|S-M\|_F<\delta$ due to $\|A-M\|_F<\delta/2$. Utilizing the Lipschitz continuity of $g$ on $B_\delta(M)$ with modulus $\gamma$ we can obtain that $|g((A+A^\top)/2)-g(S)|\leq \gamma \|(A+A^\top)/2-S\|_F$. \cref{65d} follows from \cref{lemma_approx}. \cref{65e} follows from (ii) and the fact that $S\in B_\delta(M)$. Next, we consider the following two cases.
   
   \noindent Case 1: $\beta\in [1,2)$. In this case, we have 
       \begin{subequations}
           \begin{align}
             \|A-M\|_F^\beta&\leq (\|A-S\|_F+\|S-M\|_F)^\beta  \label{66a} \\
             &\leq 2^{\beta-1}( \|A-S\|_F^\beta+\|S-M\|_F^\beta) \label{66b} \\
             &\leq 2^{\beta-1}(\|A-A^\top\|_F^\beta+\|S-M\|_F^\beta). \label{66c}
           \end{align}
       \end{subequations}
    Here, \cref{66a} follows from triangle inequality. \cref{66b} follows from H\"older inequality. In \cref{66c}, we have used \cref{lemma_approx} to show that 
    \[   \|A-S\|_F\leq \|A-(A+A^\top)/2\|_F+\|(A+A^\top)/2-S\|_F\leq \|A-A^\top\|_F.   \]
   Reducing $\delta$ such that $\delta\leq 1$. By the fact that $A\in B_{\delta/2}(M)$ and \cref{bound_asym_A} we see that that $\|A-A^\top\|\leq 1$. Due to the condition that $\gamma\geq 2c$, we have 
    \[        c\|S-M\|_F^\beta+\frac{\gamma}{2}\|A-A^\top\|_F\geq c(\|S-M\|_F^\beta+\|A-A^\top\|_F^\beta)\geq c2^{1-\beta}\|A-M\|_F^\beta.           \]
    Consequently, \cref{A3} holds for $\widehat g$. 
    
    Case 2: $\beta=2$. In this case, we can also use \cref{66c} to show that 
    \begin{equation}
        \label{ambound}
        \frac{c}{2}\|A-M\|_F^2\leq  c\|A-A^\top\|_F^2+c\|S-M\|_F^2. 
    \end{equation}
    Reducing $\delta$ if necessary, we may assume that for all $A\in B_\delta(M)$ it holds that 
    \[   c\|A-A^\top\|_F^2 \leq \gamma/2\|A-A^\top\|_F.       \]
    In this case, by \cref{66c} and \cref{ambound} we have 
    \[  \forall A\in B_\delta(M),\quad  \widehat g(A)-\widehat g(M)\geq \frac{c}{2}\|A-M\|_F^2.        \]
    Moreover, for all $\widehat G\in \partial \widehat g(A)$, by the sum rule of convex functions \cite[Theorem 23.8]{rockafellar1970convex} we have $\widehat G=G+W$ with $G\in \partial g((A+A^\top)/2)$ and $W\in \gamma\partial\|(A-A^\top)\|_F\subseteq B_\gamma(0)$, and hence
    \[     \|\widehat G\|_F\leq \|G\|_F+ \|W\|\leq 2\gamma \leq c\sigma_s(M)/16. \]
    This proves \cref{A3}. 

    Then, applying \cref{thm:ms}, we know $\widehat f_{\mathrm{a}}:=\widehat g\circ F_{\mathrm{a}}$ has \KL exponent $1-1/(2\beta)$ at any of its global minimum. By \cref{symmetric_ms}, we see that $g\circ F_{\mathrm{s}}$ has \KL exponent $1-1/(2\beta)$ at any global minimum.
\end{proof}
Finally, we apply \cref{corollary_sym_ms} to symmetric $\ell_1$ matrix factorization and matrix sensing.
\begin{corollary}
   Given $M\in \bS^n_{+,s}$ with $s\leq r$, and assume $M$ is a global minimum of $g_{\mathrm{sms}}$. Moreover, suppose $g_{\mathrm{sms}}$ satisfies the $\delta$-$\RIP_{s+r,s+r}$ condition for some $\delta>0$. Consider the symmetric $\ell_1$ matrix factorization $f_1(X):=\|XX^\top -M\|_1$ and  the symmetric matrix sensing $f_{\mathrm{sms}}$. It holds that $f_{1}$ (resp. $f_{\mathrm{sms}}$) has \KL exponent $1/2$ (resp. $3/4$) at any of its global minimum. 
\end{corollary}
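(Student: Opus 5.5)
The plan is to deduce both claims from \cref{corollary_sym_ms}: verify its hypotheses (i) and (ii) for the two outer functions $g_1(A):=\|A-M\|_1$ and $g_{\mathrm{sms}}$ on $\bS^n$, with $\beta=1$ and $\beta=2$ respectively. The corollary then gives \KL exponent $1-1/(2\beta)$, which equals $1/2$ for $f_1$ and $3/4$ for $f_{\mathrm{sms}}$, at every global minimum. The global minima are exactly $\{X: XX^\top=M\}$ once we know $M$ is the unique minimizer of the outer function, since $M\in \bS^n_{+,s}$ with $s\le r$ is attained by some $X\in\R^{n\times r}$.

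For $g_1$, convexity is immediate. $M$ is the unique minimizer on $\bS^n$ because $g_1(A)>0=g_1(M)$ for $A\neq M$. Growth with $\beta=1$ holds globally by equivalence of norms, $\|A-M\|_1\ge \|A-M\|_F$, so (ii) holds with $c=1$ and $\beta\in[1,2)$. Thus $f_1$ has exponent $1-1/2=1/2$.

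For $g_{\mathrm{sms}}(A)=|\mathcal A(A)-b|^2$, convexity holds since it is a convex quadratic. Since $M$ is a global minimum, $\nabla g_{\mathrm{sms}}(M)=0$. A second-order Taylor expansion, which is exact for a quadratic, gives
\[
g_{\mathrm{sms}}(A)-g_{\mathrm{sms}}(M)=\tfrac12\nabla^2 g_{\mathrm{sms}}(M)[A-M,A-M].
\]
For $A\in\bS^n_r$, the difference $H=A-M$ has rank at most $s+r$, and $\rk(M)=s\le s+r$. So the $\delta$-$\RIP_{s+r,s+r}$ condition, read on symmetric arguments, yields $g_{\mathrm{sms}}(A)-g_{\mathrm{sms}}(M)\ge \tfrac{1-\delta}{2}\|A-M\|_F^2$. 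This gives (ii) with $\beta=2$ and $c=(1-\delta)/2$; here I implicitly need $\delta<1$, which is the standard meaning of RIP. Strict differentiability at $M$ is automatic because $g_{\mathrm{sms}}$ is $C^\infty$.

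The main obstacle is the uniqueness part of (i): $M$ must be the unique minimizer of $g_{\mathrm{sms}}$ on all of $\bS^n$, not just among low-rank matrices. RIP only controls the Hessian on low-rank directions, and $\mathcal A$ may have a kernel containing full-rank symmetric matrices. My first attempt is to inspect the proof of \cref{thm:ms}. There, uniqueness in \cref{A2} appears to be used only through $\overline X\,\overline Y=M$ characterizing the minima and through the growth \cref{A3} on rank-$\le r$ matrices. So I would argue that the symmetrized function $\widehat g$ in \cref{corollary_sym_ms} only needs $M$ to be the unique minimizer over $\R^{n\times n}_{\le r}$, which RIP does guarantee by the growth bound above. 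If that reading fails, the fallback is to replace $g_{\mathrm{sms}}$ by $g_{\mathrm{sms}}+\delta_{\bS^n_{\le r}}$, which is harmless since $F_{\mathrm s}$ maps into $\bS^n_{\le r}$, and to rerun the convexity step of \cref{thm:ms}. That step only evaluates subgradients at points $XY$ of rank $\le r$, where the gradient of the quadratic still supplies the needed inequality via the RIP-controlled Taylor expansion.
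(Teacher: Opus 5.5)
Your proposal is correct and follows the paper's route: the paper obtains this corollary by applying \cref{corollary_sym_ms} with $\beta=1$ to $\|\cdot-M\|_1$ and with $\beta=2$ to $g_{\mathrm{sms}}$, which gives $1-1/(2\beta)=1/2$ and $3/4$. Your concern about hypothesis (i) is well founded, since the paper never checks it and RIP only yields uniqueness of $M$ among matrices of rank at most $r$; your first resolution is the right one, because in \cref{thm:ms} (applied to $\widehat g$) uniqueness is used only to identify the global minima as $\{XY=M\}$ and in the submersive case $s=n$, where RIP already makes $\mathcal A$ injective on $\bS^n$, so the indicator fallback is unnecessary (and would break the convexity required in \cref{A1}).
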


\appendix
\crefalias{section}{appendix}
\section{Appendix}
\subsection{Proof of \cref{fact:conversion_0}}

        Suppose there exists $x_k\to \overline{x}$ such that $f(x_k) < d(x_k,[f=0])/2\leq |x_k-\overline{x}|/2$. Let $f_k:\R^ n \to \overline{\R}$ be defined by $f_k(x) := f(x) + |x-x_k|/2$. Since $f_k$ is coercive and lsc, it admits a minimizer $y_k$. As $f$ is nonnegative, if $f(y_k) = 0$, then $d(x_k,[f=0])/2 \leq |y_k-x_k|/2 \leq f(y_k)+|y_k-x_k|/2\leq f(x_k)$, a contradiction. From the previous chain of inequalities, we see that $\max\{|y_k-x_k|/2,f(y_k)\}\leq f(x_k)\leq |x_k - \overline{x}|/2$ so $(y_k,f(y_k)) \to (\overline{x},f(\overline{x}))$. Since $f$ is lsc, \cite[Corollary 10.9]{rockafellar2009variational} yields $\partial f_k (x) \subseteq \partial f(x) + \partial g_k(x)$ for all $x\in \dom f$ where $g_k(x) := |x-x_k|/2$. Fermat's rule \cite[Theorem 10.1]{rockafellar2009variational} implies that $0 \in \partial f_k (y_k) \subseteq \partial f(y_k) + \partial g_k(y_k)$. Thus $1 \leq d(0,\partial f(y_k)) \leq \max\{|v|:~v\in \partial g_k(y_k)\}= 1/2$, a contradiction.

\subsection{Proof of \cref{lemma:submersion}}
Set $\overline{y}:=F(\overline{x})$. Replacing $g$ by $(g-g(\overline{y}))_+$ and $f$ by $(f-f(\overline{x}))_+$, we may assume that both $g$ and $f$ are nonnegative and $g(\overline{y})=f(\overline{x})=0$. Li and Pong's proof of the \KL exponent is actually for the subdifferential $\partial f$, not the Clarke subdifferential $\overline{\partial}f$. There is no difficulty in handling the Clarke subdifferential of course, but we give the proof for completeness.
%The chain rule for the \KL exponent follows the same pattern as in \cite[Theorem 3.2]{li2018calculus} and we state it here just for completeness. 
Since $dF$ is continuous and surjective at $\overline{x}$, there exists a constant $\gamma>0$ and a neighborhood $U$ of $\overline{x}$ such that 
 $$\forall (x,v)\in U\times \R^m,\quad   |dF_{x}^*v|\geq \gamma|v|.    $$
By the chain rule \cref{fact:change_coordinates}, for all $x\in U$ and $0<f(x)-f(\overline{x})<\ell$, we have 
 \[ d(0,\cp f(x))=d(0,dF_{x}^*\cp g(F(x)))\geq \gamma d(0,\cp g(F(x)))\geq c(g(F(x))^\beta = cf(x)^\beta. \]
The last inequality follows from the \KL exponent $\beta$ of $g$ at $\overline{y}$, after possibly reducing $U$.
 
W next prove the calculus rule for the growth exponent. By \cref{thm:LG}, there exist a constant $\kappa>0$ and a neighborhood $U$ of $(\overline{x},F(\overline{x}))$ such that
\[ \forall (x,y)\in U,~d(x,F^{-1}(y))\leq \kappa|F(x)-y|.     \]
Select a sufficiently small neighborhood $U_0$ of $\overline{x}$ such that $U_0\times P_{[g=0]}(F(U_0))\subseteq U$, where $P_{[g=0]}$ is well-defined near $\overline{y}$ since $[g=0]$ is closed. This is because $g$ is lsc around $\overline{y}$ and $\overline{y}$ is a local minimum of $g$. Utilizing the growth exponent of $g$ at $\overline{y}$, for all $x\in U_0$ and let $y=P_{[g=0]}(F(x))\in P_{[g=0]}(F(U_0))$, then we have $(x,y)\in U$ and 
\begin{align*}
         f(x)=g(F(x))&\geq  \gamma d(F(x),[g=0])^\beta=\gamma|F(x)-y|^\beta \\
         &\geq \frac{\gamma}{\kappa^\beta}d(x,F^{-1}[g=0])^\beta=\frac{\gamma}{\kappa^\beta}d(x,[f=0])^\beta, 
\end{align*}
where we take $U_0$ to be sufficiently small such that for all $z\in F(U_0)$ it holds that 
\[         g(z)-g(\overline{y})\geq  \gamma d(y,[g=0])^\beta.                   \]
This proves that $f$ has growth exponent $\beta$ at $\overline{x}$.
\subsection{Proof of Lemma~\ref{lemma:pham}}
\label{app:weak-to-strong}
Without loss of generality, $f$ is nonnegative,  $\overline{x}=0$ and $f(0)=0$.  Set $\alpha:=1-1/\beta\in[0,1)$. Since $0$ is a strict local minimum of $f$, there exists $r>0$ such that $[f=0]\cap B_r(0)=\{0\}$.
The growth exponent $\beta$ at $0$ therefore yields a constant $\kappa>0$ such that
\begin{equation}\label{eq:growth-bound}
\forall x\in B_r(0),\quad f(x) \ge \kappa |x|^\beta,
\end{equation}
after possibly shrinking $r$.
\textit{Step 1:} \textit{Localization and reduction to the globally subanalytic case.}
Define the truncated function
\[
\widetilde f \;:=\; f + \delta_{\overline{B}_r(0)} + \delta_{\{f\le 1\}}.
\]
Then $\widetilde f$ is lsc and subanalytic, and it coincides with $f$ on the set
\[
A:=\{x:|x|<r,\ f(x)<1\}.
\]
Moreover, for every $x\in A$ the graphs of $f$ and $\widetilde f$ locally coincide around $(x,f(x))$. By \cite[Theorem 8.9]{rockafellar2009variational}, it follows that $\partial f(x)=\partial\widetilde f(x)$, $\partial^\infty f(x)=\partial^\infty \widetilde f(x)$, and hence
\begin{equation}\label{eq:subdiff-coincide}
   \forall x\in A,\quad  \cp f(x)=\overline\co[\partial f(x) + \partial^\infty f(x)] = \overline\co[\partial\widetilde f(x)+\partial^\infty \widetilde f(x)]= \cp \widetilde f(x).
\end{equation}
Since $\gph \widetilde f$ is bounded and subanalytic, it is globally subanalytic.
Because \cref{eq:growth-bound} and \cref{eq:subdiff-coincide} show that the growth (resp. \L{}ojasiewicz) exponent is unaffected by this truncation, replacing $f$ by $\widetilde f$, $f$ is globally subanalytic.

\textit{Step 2:} \textit{Contradiction setup.}
Let $c\in(0,\beta\kappa^{1-\alpha})$.
Assume, for contradiction, that $f$ does not have \KL exponent $\alpha$ at $0$.
Then there exists a sequence $x_k\to 0$ with $f(x_k)\downarrow 0$ such that
\begin{equation}\label{eq:bad-seq}
d(0,\cp f(x_k)) < c f(x_k)^\alpha.
\end{equation}
Consider the globally subanalytic set
\[
S:=\{(x,t)\in \gph f:\ 0<t<1,\ d(0,\cp f(x))<ct^\alpha\}.
\]
By \cref{eq:bad-seq}, the point $(0,0)$ lies in $\overline{S}$.

\textit{Step 3:} \textit{Curve selection on a smooth stratum.}
By \cite[Lemma~8]{bolte2007lojasiewicz}, the set $S$ admits a Whitney stratification $\{X_i\}$ such that each stratum is a $C^2$ submanifold of $\R^{n+1}$ and $f$ is $C^2$ on the projection of each stratum to $\R^n$.
Pick a stratum $X$ whose closure contains $(0,0)$.
By the curve selection lemma, there exists a $C^2$ globally subanalytic curve
\[
\widetilde\gamma:(0,\varepsilon)\to X,\qquad \widetilde\gamma(t)\to (0,0)\ \text{as}\ t\downarrow 0.
\]
Writing $\widetilde\gamma(t)=(\gamma(t),f(\gamma(t)))$, we obtain a $C^2$ globally subanalytic curve $\gamma:(0,\varepsilon)\to \R^n$ such that \cref{eq:bad-seq} holds along $\gamma(t)$ for all small $t>0$:
\begin{equation}\label{eq:bad-curve}
d(0,\cp f(\gamma(t)))<cf(\gamma(t))^\alpha.
\end{equation}

\textit{Step 4:} \textit{Puiseux expansions and order comparison.}
Set $\varphi(t):=f(\gamma(t))$.
Since $\varphi$ and $\gamma$ are globally subanalytic, Fact~\ref{fact:puisuex} and Fact~\ref{fact:puiseux-derivative} yield (after shrinking $\varepsilon$) exponents $a,b>0$ and constants $A>0$, $v\neq 0$ such that, as $t\downarrow 0$,
\[
\varphi(t)=At^{a}+o(t^{a}),
\qquad
\gamma(t)=vt^{b}+o(t^{b}).
\]
In particular,
\[
| \gamma(t)| = | v|t^{b}+o(t^{b}),
\qquad
| \gamma'(t)| = b| v| t^{b-1}+o(t^{b-1}),
\qquad
\varphi'(t)= aA\, t^{a-1}+o(t^{a-1}).
\]
The growth bound \cref{eq:growth-bound} then gives
\begin{equation}
    \label{growth_coef}
    At^{a}+o(t^{a})=\varphi(t) \ge \kappa | \gamma(t)|^\beta
= \kappa | v|^\beta  t^{\beta b}+o(t^{\beta b}),
\end{equation}
hence $a\le \beta b$.
Second, since $f$ is $C^2$ on the projection of the chosen stratum $X$ onto $\R^n$, call it $B$, we have $\varphi'(t)=\langle \nabla_B f(\gamma(t)),\gamma'(t)\rangle$ and $\cp f(\gamma(t))\subseteq \{\nabla_B f(\gamma(t))\}+N_{\gamma(t)}B$ by \cite[Proposition 4]{bolte2007clarke}, where $\nabla_Bf$ is the Riemannian gradient of $f$ on $B$, so
\[
\varphi'(t)
\le | \nabla_B f(\gamma(t))|\,| \gamma'(t)|
\leq d(0,\cp f(\gamma(t)))\,| \gamma'(t)|.
\]
Combining with \cref{eq:bad-curve} yields
\[
\varphi'(t)\ \le\ c\,\varphi(t)^\alpha\, | \gamma'(t)|.
\]
Plugging the expansions into this inequality and comparing leading powers gives
\[
aA\,t^{a-1}+o(t^{a-1})
\le c\,A^\alpha\,t^{\alpha a}\,\bigl(b| v|\,t^{b-1}+o(t^{b-1})\bigr),
\]
so necessarily $a\ge \alpha a+b$, i.e., $a\ge \beta b$. Thus $a=\beta b$.

\textit{Step 5:} \textit{Leading coefficients and contradiction.}
Since $a=\beta b$, the coefficient comparison in \cref{growth_coef} implies $A\ge \kappa| v|^\beta$.
Using again $a=\beta b$ in the leading-term comparison of the derivative inequality gives
\[
aA  \le c A^\alpha  b| v|. \]
This implies that $\beta A^{1-\alpha} \le c| v|$. But $A\ge \sigma| v|^\beta$ implies
$$A^{1-\alpha}\ge \kappa^{1-\alpha} | v|^{\beta(1-\alpha)}=\kappa^{1-\alpha}| v|.$$
Therefore
\[
\beta\kappa^{1-\alpha}| v|  \le\beta A^{1-\alpha}\le c| v|,
\]
and hence $c\ge \beta \kappa^{1-\alpha}$, contradicting the choice of $c$. The contradiction shows that $f$ has \KL exponent $\alpha$ at $0$.

\subsection{Counterexamples}
\label{sec:ce}
\begin{example} 
    Let $f(x,y):=y^2+(x^2y-2y^2)_+$. Then the solution set of $f$ is $\{(x,y):~y=0\}$, which is an embedded submanifold of $\R^2$. Clearly, $f$ has growth exponent $2$ globally. Moreover, the function $f$ is semi-algebraic, locally Lipschitz and Clarke regular by \cite[Example 7.28]{rockafellar2009variational} as a max function of finitely many smooth functions. Moreover, when $x^2>2y$ and $y>0$, we know $f$ locally agrees with $y^2+y(x^2-2y)=x^2y-y^2$, which means that 
    \[   \nabla f(x,y)=\begin{bmatrix}
        2xy \\
        x^2-2y
    \end{bmatrix}.     \]
    Selecting arbitrary $t>0$ such that $x^2=2y+ty^2$ and $y>0$, we have
    \[   |\nabla f(x,y)|=\sqrt{ 4x^2y^2+(x^2-2y)^2}=\sqrt{4(2y+ty^2)y^2+t^2y^4}=O(y^{3/2})=O(f(x,y)^{3/4}),           \]
    which disproves that $f$ has \KL exponent $1/2$.
\end{example}
\begin{example}
    Define 
    \begin{equation*}
     f(x, y):= \begin{cases}|y|-\frac{3 x^2|y|^3}{|y|^3+16 x^6}, & (x, y) \neq(0,0), \\ 0, & (x, y)=(0,0) .\end{cases}
    \end{equation*}
    It is clear that $f$ is semi-algebraic. We next prove that $f$ is lower bounded by $(1-2^{-\frac{2}{3}})|y|$. To show this, notice that when $x=0$, we have 
    \[   f(x,y)=|y|\geq (1-2^{-\frac{2}{3}})|y|.        \]
    When $x\neq 0$, then there exists $z\geq 0$ such that $|y|=zx^2$, and by substituting $y=zx^2$ into the expression of $f$, we get that 
    \[  f(x,y)=|y|\left( 1-\frac{z^2x^{6}}{z^3x^6+16x^6} \right)=|y|\left(1-\frac{3z^2}{z^3+16} \right)       \]
    Using the inequality of arithmetic and geometric means, we have 
    \[  z^3+16=z^3/2+z^3/2+16\geq 3\sqrt[3]{4z^6}=3\sqrt[3]{4}z^2,        \]
    which implies that
    \[ f(x,y)\geq |y|\left(1-\frac{3z^2}{z^3+16}\right)\geq |y|\left( 1- \frac{1}{\sqrt[3]{4}}\right)=(1-2^{-\frac{2}{3}})|y|.        \]
    This proves that $f$ has growth exponent $1$ at $(0,0)$. Next, we verify local Lipschitz continuity of $f$. Let $\Omega$ consist of all the points where $f$ is differentiable. It is clear that $f$ is real analytic at $(x,y)$ when $y\neq 0$. If $f$ is differentiable at some $(x,0)$, then, since $f(x,0)=0$ and $f(x,y)\geq (1-2^{-\frac23})|y|$ holds globally, we know $\nabla f(x,0)=0$ by Fermat's rule. By considering the Taylor's expansion of $f$ at $(x,0)$, we see that the differentiability of $f$ at $(x,0)$ contradicts the sharp growth condition $f(x,y)\geq (1-2^{-\frac23})|y|$. Consequently, the set of points where $f$ is differentiable is exactly $\Omega=\{(x,y):~y\neq 0\}$. When $y\neq 0$, by direct calculation, we have 
    \begin{equation*}
    \partial_x f(x,y)= \frac{6x|y|^3 (32x^6 - |y|^3)}{(|y|^3 + 16x^6)^2}    ,\quad \partial_yf(x,y) = \mathrm{sign}(y) \left( 1 - \frac{144 x^8 y^2}{(|y|^3 + 16 x^6)^2} \right).
    \end{equation*}
    Our first task is to prove boundedness of both partial derivatives. When $x=0$, we see that $\partial_x f(x,y)=0$ and $\partial_yf(x,y)=1$. When $x\neq 0$, then by setting $z:=|y|/x^2$, we have 
    \begin{align*}
       | \partial_xf(x,y)|=\frac{6z^3|x|^7 |32x^6 - z^3x^6|}{(z^3x^6 + 16x^6)^2}=\frac{6z^3|x||32-z^3|}{(z^3+16)^2}\overset{\rm (a)}{\leq} \frac{6|x||32-z^3|}{z^3+16}\overset{\rm (b)}{\leq} 12|x|,
    \end{align*}
    where in (a) we have used the fact that $z\geq 0$ to prove ${z^3}/(z^3+16)\leq 1$, and in (b) to prove that $|32-z^3|\leq z^3+32\leq 2(z^3+16)$. For $\partial_y f(x,y)$ we have 
    \begin{align*}
        |\partial_yf(x,y)|=\left|1-\frac{144z^2x^{12}}{(z^3x^6+16x^6)^2}\right|= \left|1-\frac{144z^2}{(z^3+16)^2}\right|.
    \end{align*}
    Using the inequality of arithmetic and geometric means again, we have 
    \[   z^3+16=z^3+8+8\geq 3\sqrt[3]{64z^3}=12z,                   \]
    where equality holds if and only if $z=2$, which proves that $0\leq 144z^2/(z^3+16)^2\leq 1$, and hence  
    \[   |\partial_yf(x,y)|=  \left|1-\frac{144z^2}{(z^3+16)^2}\right|\leq 1.     \]
    Therefore, $\nabla f$ is bounded on any bounded subset of $\Omega$, and we can conclude that $f$ is Lipschitz continuous on $\{(x,y):~M\geq y>0,~|x|\leq M\}$ and $\{(x,y):~-M\leq y<0,~|x|\leq M\}$ as convex subsets of $\Omega$ for any $M>0$. The global estimation $|y|\geq f(x,y)\geq (1-2^{-\frac23})|y|$ proves that $f$ is continuous at $(x,0)$ for any $x\in \R$. This combined with the Lipschitz continuity of $f$ on $\{(x,y):~M\geq y>0,~|x|\leq M\}$ and $\{(x,y):~-M\leq y<0,~|x|\leq M\}$ proves that $f$ is Lipschitz continuous on the set $\{(x,y):~|x|\leq M,~|y|\leq M\}$. Therefore, we can conclude that $f$ is locally Lipschitz continuous.

   To see that $f$ does not have \KL exponent $0$ at $(0,0)$, let $x>0$ and $y=2x^2$, by direct calculation, we have 
   \[  d(0,\partial f(x,y))=|\nabla f(x,y)|=\sqrt{\partial_xf(x,y)^2+\partial_yf(x,y)^2}=\sqrt{4x^2+0}=2|x|.            \]
    which disproves the claim that $f$ has \KL exponent $0$ at $(0,0)$.
    
    Since at point where $f$ is $C^1$ the regular subdifferential $\widehat\partial f(x,y)=\nabla f(x,y)$ \cite[Exercise 8.8(a)]{rockafellar2009variational}, we then see that $\widehat\partial f=\nabla f$ on $\Omega$. To further determine $\widehat\partial f$ at point $(x,0)$, we calculate the subderivative $df(x,0)$. Since $f$ is locally Lipschitz continuous, we know the subderivative of $f$ agrees with the lower Dini directional derivative \cite[Exercise 9.15]{rockafellar2009variational}:
    \begin{align*}
        df(x,0)(d_x,d_y)&=\liminf_{t\downarrow 0}\frac{f(x+td_x,td_y)-f(x,0)}{t}=\liminf_{t\downarrow 0}\frac{f(x+td_x,td_y)}{t}\\
        &=\begin{cases}
            |d_y|\liminf_{t\downarrow 0}\left( 1-\frac{3(x+td_x)^2t^2d_y^2}{t^3|d_y|^3+16(x+td_x)^6} \right) & \text{if }  (d_x,d_y)\neq 0 \\
            0 & \text{if } (d_x,d_y)=0.
        \end{cases}
    \end{align*}
    Clearly, if $x\neq 0$, then we have $df(x,0)(d_x,d_y)=|d_y|$, and when $x=0$, we have
    \begin{align*}
             df(0,0)(d_x,d_y)=\begin{cases}
            |d_y|\liminf_{t\downarrow 0}\left( 1-\frac{3(td_x)^2t^2d_y^2}{t^3|d_y|^3+16(td_x)^6} \right)=|d_y| & \text{if }  (d_x,d_y)\neq 0 \\
            0 & \text{if } (d_x,d_y)=0.
        \end{cases}
    \end{align*}
    Since $\hat \partial f(x,y)=\{(v_x,v_y):~\forall (d_x,d_y)\in \R^2,~v_xd_x+v_yd_y\leq df(x,y)(d_x,d_y)\}$, we have proved that $\widehat\partial f(x,0)=(0,[-1,1])$ for all $x\in \R$. Next, we aim to prove the Clarke regularity of $f$ on $\R^2$. Since $f$ is $C^1$ near any point in $\Omega$, it is also Clarke regular at any point in $\Omega$ \cite[Exercise 8.20(a)]{rockafellar2009variational}. Therefore, it suffices to prove that $f$ is Clarke regular at $(x,0)$ for any $x\in \R$. In view of \cite[Corollary 8.11]{rockafellar2009variational}, that is to say we need to verify 
    \[  \partial f(x,0)=\widehat\partial f(x,0),~~\partial^\infty f(x,0)=\widehat\partial f(x,0).    \]
    Since we already know that $\widehat\partial f(x,0)=(0,[-1,1])$, we have $[\widehat\partial f(x,0)]^\infty=\{0\}$, where $$[\widehat\partial f(x,0)]^\infty=\{v:~\exists t_k\downarrow 0,~t_k\lambda_k\to v,~\lambda_k\in \hat \partial f(x,0)\},$$
    Moreover, by \cite[Theorem 9.13 (a) and (b)]{rockafellar2009variational}, the local Lipschitz continuity of $f$ implies that $\partial^\infty f(x,0)=\{0\}$. Hence, it suffices to prove that $\widehat\partial f(x,0)=\partial f(x,0)$. Let $(p,q)\in \partial f(x,0)$, then by definition \cite[Definition 8.3(b)]{rockafellar2009variational}, there exists $(p_k,q_k)\in \widehat\partial f(x^k,y^k)$ such that $(p_k,q_k)\to (p,q)$. If $y^k=0$ for infinitely many $k$, then we must have $(p,q)\in \widehat\partial f(x,0)$, since $\widehat\partial f(x,0)=(0,[-1,1])$ is the same for all $x\in \R$. Consequently, we may assume that $y_k\neq 0$ for all $k\in \mathbb N$, and in which case we have $(p_k,q_k)=(\partial_x f(x_k,y_k),\partial_y f(x_k,y_k))$ since $f$ is $C^1$ near $(x_k,y_k)\in \Omega$. The estimation $|\partial_y f(x_k,y_k)|=|q_k|\leq 1$ proves that $|q|\leq 1$. If $x=0$, then the estimation $|\partial_xf(x_k,y_k)|\leq 12|x_k|\to 0$ proves that $p=0$, and hence $(p,q)\in \widehat\partial f(0,0)$. Next, we assume $x\neq 0$. In this case, we have 
    \begin{align*}
        | \partial_x f(x_k,y_k)|= \frac{6|x_k||y_k|^3 |32x_k^6 - |y_k|^3|}{(|y_k|^3 + 16x_k^6)^2}\to 0,
    \end{align*}
    since $x_k\to x\neq0 $ and $y_k\to 0$, which also proves that $p=0$. Therefore, we have $(p,q)\in \widehat\partial f(x,0)$, and $f$ is Clarke regular at $(x,0)$ for all $x\in \R$. Finally, we can conclude that $f$ is Clarke regular on $\R^2$.
\end{example} 

\subsection{Proof of \cref{theorem:landscape}}
\label{subsec:proof_landscape}

    For all $(H,K) \in \R^{m\times r}\times \R^ {r\times n}$,
    \begin{align*}
        f_{\mathrm{a}}(X+H,Y+K) = & ~ \|(X+H)(Y+K)-M\|_F^ 2 \\
        = & ~ \|XY-M+XK+HY+HK\|_F^ 2 \\
        = & ~ \|XY-M\|_F^ 2+2\langle XY-M,XK+HY\rangle + \|XK+HY\|_F^ 2 + \\
        & ~ 2 \langle XY-M, HK \rangle_F + 2\langle XK+HY,HK\rangle  + \|HK\|_F^ 2 \\
        = & ~ f_{\mathrm{a}}(X,Y) +2\langle (XY-M)Y^ T,H\rangle +2\langle X^ T(XY-M),K\rangle_F +\\
        & ~ \|XK+HY\|_F^ 2 + 2 \langle XY-M, HK \rangle + o(\|H\|_F^ 2 +\|K\|_F^ 2) \\        
        = & ~ f_{\mathrm{a}}(X,Y) + \|XK+HY\|_F^ 2 + 2 \langle XY-M, HK \rangle + o(\|H\|_F^ 2 +\|K\|_F^ 2)
    \end{align*}
    where in the last equality, we assume that $(X,Y)$ is critical. With $\Delta  := XY-M$, the first-order optimality condition reads $\Delta Y^ T = 0$ and $X^ T \Delta=0$. Let $u\in \R^ m$ and $v\in \R^ n$ respectively denote left and right maximal singular vectors of $\Delta$. Naturally, $u\in \im \Delta \cap S^{m-1}$, $v\in \im \Delta^ T \cap S^{n-1}$, and $u^ T \Delta v = \|\Delta\|_2$. Since $Y\Delta^ T = 0$, we have $Yv = 0$. Let $(H,K) := (u a^ T, bv^ T)$ where $a,b\in \R^ r$. Compute
    \begin{align*}
        \|XK+HY\|_F^ 2 & = \|Xbv^ T+ua^ TY\|_F^ 2 = \|Xbv^ T\|_F^ 2+2\langle Xbv^ T,ua^ TY\rangle_F +\|ua^ TY\|_F^ 2 \\
        & = |Xb|^ 2|v|^ 2+2\langle Xb,ua^ TYv\rangle_F +|u|^ 2|a^ TY|^ 2 = |Xb|^ 2+|a^ TY|^ 2
    \end{align*}
    and
    $ \langle XY-M, HK \rangle_F  = \langle \Delta , u a^ Tbv^ T \rangle_F = \langle a,b\rangle \langle \Delta , uv^ T \rangle_F = \langle a,b\rangle u^ T \Delta v = \langle a,b\rangle \| \Delta\|_2.$
    
    By Sylvester's formula, $\rk X + \rk Y \leq \rk XY + r$. If $\rk XY<r$, then $\rk X<r$ or $\rk Y<r$. In the former case, by the rank theorem $r = \dim \ker X + \dim \im X$, so that $\dim \ker X \geq 1$. Let $w \in \ker X \cap S^{r-1}$ and $(a,b) := (tw,-w)$ where $t\in \R$. Observe that 
    $$|Xb|^ 2+|a^ TY|^ 2+ 2\langle a,b\rangle \| \Delta\|_2 = t^ 2|Xw|^ 2+ |w^ TY|^ 2 - 2t \langle w,w\rangle \|\Delta\|_2 = |w^ TY|^ 2 - 2t \|\Delta\|_2 < 0$$
    for all $t$ large enough unless $\Delta = 0$, in which case $(X,Y)$ is a global minimum of $f$. In the latter case, take instead $w \in (\im Y)^ \perp \cap S^ {r-1}$ and proceed similarly. 
    
    If $\rk XY = r$, then let $\overline{u}$ and $\overline{v}$ respectively be left and right minimal singular vectors of $XY$, with corresponding singular value $\sigma_r(XY)>0$. Take $(a,b)= (X^ T \overline{u},-Y\overline{v})$ and compute
    \begin{align*}
        |Xb|^ 2+|a^ TY|^ 2+ 2\langle a,b\rangle \| \Delta\|_2 & = |XY\overline{v}|^ 2+|\overline{u}^ TXY|^ 2 - 2\langle X^ T \overline{u} , Y\overline{v} \rangle \| \Delta\|_2 \\
        & = 2\sigma_r(XY)(\sigma_r(XY)-\|\Delta\|_2)< 0
    \end{align*}
    unless $\sigma_r(XY)\geq \|\Delta\|_2$, in which case $(X,Y)$ is a global minimum. 
%     Indeed, since $X^ T\Delta = 0$, $\im \Delta \subseteq \ker X^ T = (\im X)^{\perp} \subseteq (\im XY)^{\perp}$. Likewise, $\Delta Y^ T = 0$ implies $\im \Delta^ T \subseteq (\im (XY)^ T)^ \perp$. %Suppose $XY = U$. Then $\Delta$
% Given a singular value decomposition $X = U\Gamma_1 V^ T$, one thus has a block diagonal structure 
% $$\Sigma = \begin{pmatrix}
%     \Gamma_1 & 0 \\
%     0 & \Gamma_2
% \end{pmatrix}, ~~~ XY = U\begin{pmatrix}
%     \Gamma_1 & 0 \\
%     0 & 0
% \end{pmatrix}V^ T, ~~~\text{and}~~~ \Delta = U\begin{pmatrix}
%     0 & 0 \\
%     0 & \Gamma_2
% \end{pmatrix}V^ T,$$
% for some rectangular diagonal matrix $\Gamma_2$. As $M = XY-\Delta = U\Sigma V^ T$, the set of singular values of $M$ is the union of the set of singular values of $XY$ and $\Delta$. But since $\sigma_r(XY)\geq \|\Delta\|_2$, the singular values of $XY$ must be the maximal $r$ singular values of $M$. (Otherwise, $\Delta$ admits such a singular value and $\sigma_r(XY)< \|\Delta\|_2$.) 
Indeed, notice that 
\begin{align*}
   \det(\lambda^2 I-\lambda M^\top M)&= \det (\lambda^2I -\lambda(XY-\Delta)^\top(XY-\Delta))\\
    &= \det(\lambda^2 I-\lambda(Y^\top X^\top XY+\Delta^\top \Delta))   \\
    &=\det(\lambda^2 I-\lambda(Y^\top X^\top XY+\Delta^\top \Delta)+Y^\top X^\top XY\Delta^\top \Delta) \\
    &=\det(\lambda I - Y^\top X^\top XY)\det(\lambda I-\Delta^\top \Delta),
\end{align*}
which means that the nonzero singular values (counting multiplicities) of $M$ is the union of those of $XY$ and $\Delta$. Since $\sigma_r(XY)\geq \|\Delta\|_2$, we see that $\|\Delta\|_F^ 2 = \sigma_{r+1}^ 2 + \cdots + \sigma_m^ 2$. 

It follows that all the second-order stationary points have the same function value. Also, a global minimum exists since the set of bounded rank matrices $\R^{m\times n}_{\leq r}$ is closed and $\|\cdot-M\|_F^ 2$ is coercive. Since global minima are second-order stationary, $\min f_{\mathrm{a}} = \sigma_{r+1}^ 2 + \cdots + \sigma_m^ 2$ and $(X,Y)$ is globally optimal. This concludes the proof.

\subsection{Proof of \cref{corollary_landscape}}
\label{subsec:cor_landscape}
   Let $\Delta:=XX^\top -M$. First-order stationarity of $f_{\mathrm{s}}$ gives $\Delta X=0$. Then it is clear that $(X,X^\top)$ is a first-order stationary point of $f_{\mathrm{a}}$. Next, we prove that $\Delta\preceq 0$. For any $x\in \R^{n}$, we set $x=x_1+x_2$, where $x_1\in \im(X)$ and $x_2\in \im(X)^\perp=\ker(X^\top)$, and then we have 
   \[  x^\top \Delta x = (x_1+x_2)^\top\Delta (x_1+x_2)=x_2^\top \Delta x_2=x_2^\top (XX^\top -M)x_2=-x_2^\top Mx_2\leq  0.     \]
   Next, we focus on the second-order stationary equation. Similar to the calculation in \cref{theorem:landscape}, for all $H\in \R^{n\times r}$, it holds that 
   \[         \nabla^2f_{\mathrm{s}}(X)(H) =\|XH^\top + HX^\top\|_F^2+2\langle \Delta, HH^\top \rangle.              \]   
   As calculated in \cref{theorem:landscape}, for all $(A,B^\top)\in \R^{n\times r}\times \R^{r\times n}$, it holds that 
    \[    \nabla^2f_{\mathrm{a}}(X,X^\top)(A,B^\top)= \|XB^\top +AX^\top\|_F^2+2\langle \Delta, AB^\top\rangle.      \]
    Let $S:=(A+B)/2$ and $T:=(A-B)/2$. Then, we have $A=S+T$ and $B=S-T$, which further implies that 
    \begin{subequations}
        \begin{align}
             &\nabla^2f_{\mathrm{a}}(X,X^\top)(A,B^\top)\notag\\
             &= 2\langle \Delta, (S+T)(S-T)^\top\rangle+\|X(S-T)^\top +(S+T)X^\top\|_F^2 \label{27a} \\
             &=2\langle \Delta,SS^\top+TS^\top -TS^\top -TT^\top\rangle+\|XS^\top +SX^\top+(XT^\top -TX^\top)\|_F^2 \label{27b} \\ 
             &= 2\langle\Delta,SS^\top -TT^\top\rangle+\|XS^\top +SX^\top\|_F^2+ \|XT^\top -TX^\top\|_F^2 \label{27c} \\
            &\geq 2\langle \Delta,SS^\top\rangle+\|XS^\top +SX^\top\|_F^2\label{27d}
        \end{align}
    \end{subequations}
    Indeed, \cref{27a} and \cref{27b} follow from calculations. \cref{27c} follows from the fact that $\langle C,D \rangle=0 $ for symmetric $C$ and asymmetric $D$. In \cref{27d}, we have used the fact that $\Delta\preceq 0$ to show that $-\langle \Delta, TT^\top \rangle\geq 0$. Therefore, when $X$ is a second-order stationary point of $f_{\mathrm{s}}$, we see that $(X,X^\top)$ is a second-order stationary point of $f_{\mathrm{a}}$. Then, the result follows from \cref{theorem:landscape}. 

\bibliographystyle{abbrv}    
\bibliography{references}
\end{document}